\title[]{Higher-dimensional chiral algebras in the Jouanolou model and free-field realization}%
\author{Zhengping Gui, Minghao Wang, Brian R. Williams}%
\address{ZG: Center for Mathematics and Interdisciplinary Sciences, Fudan University, Shanghai
200433, China;
Shanghai Institute for Mathematics and Interdisciplinary Sciences (SIMIS), Shanghai
200433, China}
\address{MW, BW: Department of Mathematics, Boston University, United States}%
\email{ zgui@fudan.edu.cn, minghaow@bu.edu, bwill22@bu.edu}%
\thanks{}%
\subjclass{}%
\keywords{}%
\begin{document}

\begin{abstract}
 We appeal to the theory of Jouanolou torsors to model the coherent cohomology of configuration spaces of points in
affine space~$\mathbb{A}^d$.
Using this model, we develop the operadic notion of chiral operations, thus generalizing the notion of \textit{chiral
algebras} of \cite{BD} to higher dimensions.
To produce examples, we use a higher-dimensional conceptualization of the residue which is inspired by Feynman
graph integrals.
One of our main results is the realization, using higher chiral operations, of the higher-dimensional Kac--Moody and
Virasoro algebras.
\end{abstract}
\maketitle
\setcounter{tocdepth}{1}
\tableofcontents

% ----------------------------------------------------------------

\section*{Introduction}

The concept of a \textit{chiral algebra} was introduced by Beilinson and Drinfeld as a geometrization of the
so-called \textit{operator product expansion} which describes the algebraic structure of local operators in (real) two-dimensional chiral
conformal field theory (CFT) \cite{BD}.
The primary goal of this paper is to introduce a model for the algebraic structure behind the (derived) operator product
expansion in higher dimensional \textit{holomorphic} quantum field theory.

For a chiral algebra on an algebraic variety $X$, the chiral operations are parametrized by configuration spaces of
points on $X$.
For example, on $\mathbb{A}^1$ all such configuration spaces are affine, so algebraically it suffices to pass to the
algebra of holomorphic functions on configuration spaces.
On the other hand, for the configuration spaces of points in $\mathbb{A}^d, d > 1$ are no longer affine so to formulate a higher-dimensional theory of
chiral operations it is essential that we instead pass to the derived global sections.

In complex geometry, there is a strict commutative dg algebra which models derived global sections given by the Dolbeault
complex.
What we require is an analog of the Dolbeault complex that can be used in the algebraic setting.
Following the ideas of \cite{FHK}, we use the so-called Jouanolou construction to accomplish this.
We first find an affine variety, affinely fibered over the configuration space $\mathrm{Conf}_n(\mathbb{A}^d)$.
The Jouanolou trick is to then consider the relative de Rham complex of this fibration as the explicit derived model
for the configuration space.
When the number of points is $n=2$ we have $\mathrm{Conf}_2(\mathbb{A}^d) = (\mathbb{A}^d - \{0\})\times \mathbb{A}^d$ and this model is used in \cite{FHK} to give a definition of the dg Lie algebra of
derived currents valued in a Lie algebra (so, it plays the role of the Laurent series when $d=1$).

Chiral algebras are defined within the context of $\mathcal{D}$-modules.
For $\mathcal{D}$-module $\mathcal{A}$ defined on an algebraic curve $X$, the space of $n$-ary chiral operations, following \cite{BD}, is
\begin{equation}\label{}
  \mathrm{Hom}_{\mathcal{D}_{X^{n}}}\left(j_{*}j^*\mathcal{A}^{\boxtimes{n}},\Delta_* \mathcal{A}\right)
\end{equation}
where $j \colon \mathrm{Conf}_n(X) \to X \times \cdots \times X$ is the open immersion and $\Delta \colon X \to X \times
\cdots \times X$ is the diagonal embedding.
Combining chiral operations of all arities results in an operad, and a chiral algebra is simply a Lie algebra object in
this operad.
In generalizing this to $X = \mathbb{A}^d$ we use the Jouanolou trick to provide an explicit model for the $\mathcal{D}
$-module derived pushforward $\mathbf{R} j_* j^* \mathcal{A}^{\boxtimes n}$.

  Let $\mathcal{A}$ be a $\mathcal{D}$-module on $\mathbb{A}^d$.
  Then, for any finite set $I$, the Jouanolou model $\mathbf{J}_{\mathbb{A}^d}^I[\mathcal{A}]$ (see definition
  \ref{dfn:jmodel}) is a model for the $\mathcal{D}$-module derived pushforward $\mathbf{R} j_* j^* \mathcal{A}
  ^{\boxtimes I}$.
  Furthermore, as the set $I$~runs over finite sets, the collection
  \begin{equation}\label{}
    \left\{\op{Hom}_{\mathcal{D}_{(\mathbb{A}^{d})^I}} \left(\mathbf{J}^I_{\mathbb{A}^d} [\mathcal{A}] \; ,
    \; \Delta_*^I \mathcal{A} \right) \right\}_{I}
  \end{equation}
  forms a dg operad.
  Denote this dg operad by $\mathcal{P}_d$.
  Notice that by construction this operad is $\mathrm{GL}_d$-equivariant.
  This is the main advantage of the Jouanolou model as compared to the \v{C}ech--Thom--Sullivan cosimplicial model for configuration spaces used in \cite{FGY}.

  \begin{defn*}
  A $d$-dimensional \textit{\textbf{homotopy chiral algebra}}, in the Jouanolou
model, is a map of dg operads
  \begin{equation}\label{}
  \mathcal{L} \mathrm{ie}_\infty \to \mathcal{P}_d [\mathcal{A}]
  \end{equation}
  where $\mathcal{L}\mathrm{ie}_\infty$ is the dg operad of $L_\infty$ algebras.
\end{defn*}

Homotopy chiral algebras in the Jouanolou model define chiral algebras in the sense of \cite{FG}, but we make no
assertion that the resulting $\infty$-category of Jouanolou chiral algebras is equivalent to the $\infty$-categorical
description of \cite{FG}.

The next main goal of this article is to provide explicit descriptions and examples of chiral algebras in higher dimensions.
Firstly, we describe, in explicit detail, the chiral operations underlying the \textit{unit} chiral algebra on $\mathbb{A}^d$.
As is familiar from the theory of vertex algebras, the Cauchy residue encodes the chiral operation of the unit chiral algebra
when $d=1$.
We generalize the residue to higher-dimensional configuration spaces in the Jouanolou model.
The key to our definition of the residue is the use of Feynman graph integrals in higher-dimensional holomorphic quantum field theory.
The analysis which underlies the convergence of such graph integrals uses the compactification of Schwinger spaces
following \cite{wang2024feynman}.
A peculiar fact is that in dimensions $d > 1$ the unit chiral algebra (in the Jouanolou model) has nontrivial chiral
operations of arity $k$ for all $k = 1,2,3,\ldots$.

Using the higher-dimensional residue, we formulate a higher-dimensional version of Wick's theorem for
chiral algebras in section \ref{s:example}.
This leads to explicit descriptions of so-called \textit{free} chiral algebras (those which come from free holomorphic
quantum field theories); they are characterized, in part, by the property that their chiral operations are quantizations
of the chiralization of the shifted symplectic space $T^*[d-1] V$ where $V$ is a (possibly super) vector space.
The associated $d$-dimensional chiral algebra will be denoted $\cF_V$. 
It is a higher-dimensional analog of the free ghost system chiral algebras used in string theory.
Indeed, when $d=1$ this is the familiar $\beta-\gamma$ or $b-c$ system depending on the parity of $V$.

As an application of our formulation of free chiral algebras, we provide a higher-dimensional free-field realization of the higher-dimensional Kac--Moody algebra
of \cite{FHK} and the higher-dimensional Virasoro algebra of \cite{GWvir}.
We briefly recall these algebras.
For a Lie algebra~$\lie{g}$, higher-dimensional Kac--Moody algebras are dg Lie algebra central extensions of $\lie{g}$-valued
derived global sections of the structure sheaf on punctured affine space $\AA^d - \{0\}$.
Such central extensions, which are additionally translation and $GL_d$-invariant, are in bijective correspondence with
$\theta \in \op{Sym}^{d+1}(\lie{g}^*)^{G}$--that is, invariant polynomials on $\lie{g}$ of degree $d+1$.
The higher Virasoro algebras, on the other hand, are central extensions of the dg Lie algebra of derived sections of the tangent sheaf
on punctured affine space (the higher-dimensional Witt algebra).
Such central extensions correspond to degree $2d+2$ universal characteristic classes $H^{2d+2}
(BGL_d)$.
Explicit $L_\infty$ models for the higher Virasoro algebras appear in \cite{GWvir}.

We study realizations of these algebras on chiral algebras of free field type.
Such realizations have been found in terms of factorization algebras (in the style of \cite{CG1}) in \cite{GWkm,BWac}. 
Our approach here is completely within the theory of higher-dimensional chiral algebras, which we briefly set up and refer the reader to section \ref{s:example} for more details.
Denote the de Rham complex of a right $\mathcal{D}$-module $\cA$ on $X$ by $h_{X}(\mathcal{A})$.
If $\cA$ is a $d$-dimensional homotopy chiral algebra then the chiral operations endow $h_{\mathring{\AA}^d}(\cA)$ with the structure of an $L_\infty$-algebra.
For $\cA$ a chiral algebra of free field type, this $L_\infty$ algebra is a strict Lie algebra given explicitly by contractions in the spirit of Wick's theorem.
We show that to obtain a full ``quantum" realization on such free field chiral algebras, one must precisely use the central extensions of the previous paragraph.
We state here a general result which combines the free field realizations of Kac--Moody and Virasoro type.
In other words, we start with the Jouanolou model of the semi-direct product dg Lie algebra 
\begin{equation}\label{eq:semi}
  \R \Gamma(\AA^d - \{0\}, \mathcal{T}) \ltimes \R \Gamma(\AA^d - \{0\}, \lie{g} \otimes \mathcal{O}) 
\end{equation}
Central extensions of this algebra can be defined for any class in 
\[
    H^{2d+2}(BGL_d \times BG) .
\]

\begin{thm*}[See theorem \ref{FullCentral}]
\label{thm:}
Suppose that $V$ is a $\lie{g}$-representation and let $\cF_V$ be the free ghost chiral algebra on $\AA^2$ as defined in
Section \ref{s:example}.
Then, there is an $L_\infty$ homomorphism 
  \[
  \rho_V \colon (\lie{witt}_2^\bu \ltimes \lie{g}_2^\bu)^{\flat}_{\mathsf{c}} \to h_{\mathring{\AA}^2}(\cF_V) 
\]
where the domain is a model for the $L_\infty$ central extension of \eqref{eq:semi} corresponding to the class
\[
  \mathsf{c} = \left[\op{Todd}(\mathcal{T}) \op{Ch}(V)\right]_6 \in H^6(BGL_2 \times BG) .
\]
\end{thm*}
This result is specific to the $d=2$-dimensional case, but the generalization to $d>2$ will appear in upcoming work.

In the last part of the paper we focus on the case of complex dimension $d=2$.
We flip much of the logic of the bulk of this paper in reverse, by using the $L_\infty$ relations to explicitly
characterize Feynman graph integrals.
As we have stated, holomorphic quantum field theory provides our inspiration for the definition of the higher-dimensional residue in
the Jouanolou model.
Explicitly, our residues are given as sums over weights of Feynman graph integrals.
We show in section \ref{s:feynman} that these residues satisfy an appropriate $L_\infty$-relation, this is the essential
structure underlying the unit chiral algebra in higher dimensions.
In section \ref{s:A2}, we utilize the $L_\infty$-relations to formulate a \textit{recursive} formula for weights
of Feynman graph integrals in complex dimension two.
The formula holds for graphs of a special type, which we refer to as Laman type I'.\footnote{The term Laman graph is derived
from \cite{Laman1970OnGA}.}
We can state the recursive formula, heuristically, as follows.

\begin{thm*}[see theorem \ref{MainTheoremRecursiveRS} for the precise formulation]
  Suppose the complex dimension is $d=2$, and suppose that a chiral algebra has a two-ary operation expressed in terms
  of the residue
  as we have defined it.
Then, the chiral operation associated to an arbitrary graph $\Gamma$ of type Laman I' is completely fixed by the $L_\infty$
relations.
Namely, there is a recursive formula for the chiral operation associated to $\Gamma$ constructed using the residue.
\end{thm*}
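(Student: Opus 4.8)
The plan is to induct on the number of vertices of $\Gamma$, using the recursive Henneberg-type structure of Laman I' graphs as the skeleton of the induction and the $L_\infty$ relations of the operad map $\mathcal{L}\mathrm{ie}_\infty \to \mathcal{P}_2[\mathcal{A}]$ to propagate the given two-ary residue upward in arity. The base case is arity two, where by hypothesis the chiral operation is the residue defined in section \ref{s:feynman}. For the inductive step I will use the defining feature of the Laman I' class: every such graph $\Gamma$ on $n$ vertices carries a distinguished vertex $v$ of degree two whose deletion produces a Laman I' graph $\Gamma' = \Gamma \setminus v$ on $n-1$ vertices (the $0$-extension, or reverse Henneberg type-I, move). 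The operation attached to $\Gamma'$ is fixed by the inductive hypothesis, so the task reduces to pinning down the weight $w_\Gamma$ in terms of $w_{\Gamma'}$ and the two-ary residue.

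First I would write out the arity-$n$ relation
\begin{equation*}
\sum_{i+j=n+1}\ \sum_{\sigma}\ \pm\, l_j \circ_\sigma \bigl(l_i \otimes \mathrm{id}^{\otimes(j-1)}\bigr) \;=\; 0
\end{equation*}
in the $\op{Hom}$-complex of $\mathcal{P}_2[\mathcal{A}]$, where $l_1 = d$ is the Dolbeault-type differential of the Jouanolou model and $\circ_\sigma$ is operadic insertion. Separating the $i=1$ and $j=1$ terms isolates $d\,l_n$, so the relation rearranges into an expression for the differential of the $n$-ary operation as a sum of genuine compositions $l_i \circ l_j$ with $2 \le i,j \le n-1$. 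Passing to the residue presentation $l_k = \sum_\Gamma w_\Gamma\, O_\Gamma$ turns this into a Stokes-type identity for the Feynman weights: $d\,w_\Gamma$ equals a universal polynomial in the lower weights, in which the insertion $l_{n-1}\circ l_2$ contributes the term built from $w_{\Gamma'}$ and the two-ary residue attached along the pair $\{a,b\}$ adjacent to $v$, while every other contribution involves strictly fewer vertices and is therefore known by induction.

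Solving this identity for $w_\Gamma$ is where the dimension $d=2$ and the Laman sparsity condition enter decisively. The condition $\abs{E} = 2n-3$ is precisely the count that makes the integrand underlying $w_\Gamma$ a top-degree form in the relative de Rham complex of the Jouanolou fibration, so that the residue pairing extracting $w_\Gamma$ is nondegenerate; consequently the component of the rearranged relation carrying $w_\Gamma$ appears with a nonzero scalar coefficient and can be inverted. This yields the recursive formula, schematically
\begin{equation*}
w_\Gamma \;=\; \mathrm{Res}_{a,b}\bigl(w_{\Gamma'}\bigr) \;+\; (\text{lower graph corrections}),
\end{equation*}
with every term on the right already determined. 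Uniqueness --- that the operation is completely fixed, not merely expressible by one such formula --- then follows because any homotopy chiral algebra extending the given two-ary residue must satisfy the same relation, and the relation determines $w_\Gamma$ once the lower-arity data is fixed; induction on $n$ forces agreement at all arities.

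The main obstacle is twofold. First, one must ensure the Feynman graph integral defining $w_\Gamma$ converges and represents a genuine cochain, together with the Stokes identity above holding without anomalous boundary contributions; this is exactly where I will invoke the compactification of Schwinger space of \cite{wang2024feynman}, which controls the integrals along precisely the degenerations the $L_\infty$ relation organizes. Second, a Laman I' graph can admit several Henneberg decompositions, so I must show the recursion is independent of the chosen reduction vertex $v$. I expect this to follow from the higher-Jacobi coherence already built into the $L_\infty$ relations: two reductions correspond to two factorizations of the same iterated composition, which the relation at the next arity equates up to $d$-exact terms that vanish under the residue. Making this independence precise, alongside the degree and convergence bookkeeping special to $d=2$, is the crux of the argument.
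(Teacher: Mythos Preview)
Your inductive skeleton is right, but the mechanism you propose for the inductive step has a genuine gap. You write the $L_\infty$ relation as ``$d\,l_n = \text{lower compositions}$'' and then claim that the Laman edge count makes the residue pairing nondegenerate so that this can be inverted for $w_\Gamma$. But the $L_\infty$ relation in this operad is $\mu_n \circ \mathbf{d} = \sum \mu_j \circ \mu_i$, where $\mathbf{d}$ is the differential on the \emph{input} (the Jouanolou model), not on the weight. This relation tells you $\mu_n(\mathbf{d}\alpha)$ for every $\alpha$; it does not tell you $\mu_n(W_\Gamma)$ unless you can write $W_\Gamma = \mathbf{d}\alpha$ for some explicit $\alpha$. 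Your degree/nondegeneracy argument does not produce such a primitive, and without it the $L_\infty$ relation alone leaves $\mu_n(W_\Gamma)$ undetermined up to its value on $\mathbf{d}$-closed inputs.

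The paper fills exactly this gap with the Arnold relation (Corollary~\ref{RecursiveLemma}): in dimension $d=2$, the triangle of propagators attached at the new Henneberg~I' vertex satisfies
\[
P_{\star i}P_{\star j}P_{ij}(\tilde{\mathfrak z}_{ij}) \;=\; \mathbf{d}\bigl(P_{ij}P_{\star j}\,x_{\star i}\wedge x_{ij} \;+\; P_{ij}P_{\star i}\,x_{\star j}\wedge x_{ij}\bigr),
\]
so $W_{\Gamma^\star}$ is explicitly $\mathbf{d}$-exact. Only then does the $L_\infty$ relation become usable, and the paper further checks (Figure~\ref{Cancellations}) that the $\mu_n \circ \mu_{\{\star,i\}}$ and $\mu_n \circ \mu_{\{\star,j\}}$ terms vanish identically, leaving just two $\mu_2 \circ \mu_n$ terms. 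These are then evaluated by purely algebraic lemmas (\ref{IntegralXP}, \ref{IntegralR}, \ref{IntegralS}) on products of propagators---no Schwinger-space analysis enters the recursion itself. The Type~I' hypothesis is used precisely so that the smaller graph feeding into the inner $\mu_n$ is again Type~I' Laman. Your concern about independence of the Henneberg decomposition is not an obstacle in the paper's logic: all decompositions compute the same $\mu_n(W_\Gamma)$, which is defined \emph{a priori} by the Feynman integral.
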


As an application of this recursive formula, we explore a class of graph integrals when $d=2$.
We compute, in full generality and explicitly, the weight of Laman I' Feynman graphs at loop orders one, two, and three (the latter using
\texttt{Mathematica}).
At lowest order in derivatives, we find an exact agreement with the work of physicists in \cite{Gaiotto:2024gii}.
For example, in the case of the two-loop $\Theta$-diagram, in complex dimension $d=2$, we find the chiral operation
\begin{equation}\label{}
  \mu_4 (P_{30} P_{32} P_{10} P_{12}P_{20} d \mathbf{z}_{0,1,2,3}) = \frac{1}{24} \left(\lambda_1 \wedge (\lambda_3 + 2 \lambda_2)) \cdot
    (\lambda_3 \wedge (\lambda_1 + 2 \lambda_2) \right)\cdot d \mathbf{z}_0^{\blacklozenge} ,
\end{equation}
to lowest order in derivatives.
We introduce the notation in the main part of the text.
(for example,  $d \mathbf{z}^{\blacklozenge}$ denotes the holomorphic volume form shifted in cohomological degree, and
$P$ denotes the propagator).
Our computation is logically independent and more general than \cite{budzik2023feynman} in the sense that we also include arbitrary derivative insertions
at vertices. 

\subsection*{Conventions}
\begin{itemize}

  \item Let $\mathbf{k}$ be a field with $\mathrm{char}(\mathbf{k})=0$.  %which $\mathrm{char}(k)=0$.
        For an affine variety $X=\mathrm{Spec}(\mathbf{k}[X])$ over $\mathbf{k}$ and a quasi-coherent sheaf $M$ on $X$, we will not distinguish $M$ and $\Gamma(X,M)$ the space of global sections. We will mainly work over the complex numbers $\mathbf{k} = \CC$, but many definitions in this paper work for a general field $\mathbf{k}.$

\item We will denote by $\mathbf{fSet}$ the category of non-empty finite sets with surjective morphisms.
\item For ${I}=\{1,\dots,k\}\in {\mathbf{fSet}}$ and a variety $X$, we write $X^{{I}}$ for $X_1\times \cdots \times X_k$ where $X_i$ is a copy of $X$. Suppose that $M$ is a quasi-coherent sheaf over $X$, we will denote $M^{\boxtimes{I}}$ the exterior product $M_1\boxtimes \cdots \boxtimes M_k$ over $X^{{I}}$ where $M_i$ over $X_i=X$ is the sheaf $M$.
  \item A graded vector space $V$ is a direct sum of vector spaces $V=\mathop{\bigoplus}\limits_{i\in\mathbb{Z}}V_i$ and we say that $V_{i}$ is in degree $i$.
        We will use the $k$-th shift notation $V[k],k\in \mathbb{Z}$ as well as the suspension notation $s^{-k}V,k\in\mathbb{Z}$
$$
(V[k])_i=V_{i+k}.$$
We also use $|a|$ for the degree of a homogeneous element $a\in V.$
\item Given a manifold $M$, we use $\mathcal{A}^{\bu}(M)$ to denote the space
        of complex-valued differential forms on $M$.
        If $M$ is a
    complex manifold, to emphasize their bi-graded nature, we will use
    $\mathcal{A}^{\bu,\bu} (M)$ instead.
\item We use the following Koszul sign rules for integrals: If $M, N$ are manifolds whose dimensions are $m, n$ respectively. Let $\alpha \in \mathcal{A}^{i}(M),     \beta \in \mathcal{A}^{j}(N)$, then
$$
\int_M \int_N \alpha \wedge\beta = (- 1)^{n i} \int_M \alpha \int_N \beta .
$$
\end{itemize}
\subsection*{Acknowledgements} The authors would like to thank Si Li, Kai Wang and Keyou Zeng for helpful communications and discussions. Some of the results in this paper were announced at the 4th International Conference on Operad Theory and Related Topics (Tianjin, November 2024). Z.G. would like to thank the organizers and participants of this conference.
\section{The Jouanolou model and a definition of higher chiral algebra}\label{s:Jouanolou}

\subsection{The Jouanolou model for configuration space}
We follow \cite[Section~4.1.3, p.~279]{BD}. Suppose
\(\pi:Y\rightarrow X\) is a Jouanolou map; that is, \(Y\) is a torsor over
\(X\) for the action of a vector bundle, and \(Y\) is an affine scheme. Then,
for any quasi-coherent sheaf \(\mathcal F\) on \(X\), the global relative de
Rham complex
\[
  \Gamma\left(Y,\Omega^\bu_{Y/X}\otimes \pi^*\mathcal F\right)
\]
is a model for \(\R\Gamma(X,\mathcal F)\). Indeed, we have quasi-isomorphisms
\[
\Gamma\left(Y,\Omega^\bu_{Y/X}\otimes \pi^*\mathcal F\right)
\simeq
\R\Gamma\left(Y,\Omega^\bu_{Y/X}\otimes \pi^*\mathcal F\right)
\simeq
\R\Gamma\left(X,\R\pi_*(\Omega^\bu_{Y/X}\otimes\pi^*\mathcal F)\right).
\]
Because \(\pi\) is a Zariski locally trivial fibration with fiber
\(\AA^N\), the Poincar\'e lemma for differential forms on \(\AA^N\) implies
that the inclusions
\[
\mathcal F
\hookrightarrow
\pi_*(\Omega^\bu_{Y/X}\otimes\pi^*\mathcal F)
\hookrightarrow
\R\pi_*(\Omega^\bu_{Y/X}\otimes\pi^*\mathcal F)
\]
are quasi-isomorphisms.

For the finite set \(I=\{1,\ldots,k\}\), we define a Jouanolou torsor over
\(\op{Conf}_{I}(\AA^d)\) by
\[
\mathbb J^I_{\AA^d}
\define
\left\{
\sum_{\sss=1}^d
x^{\sss}_{ij}\left(z^{\sss}_i-z^{\sss}_j\right)=1
\;\middle|\;
1\leq i<j\leq k
\right\}
\subset
(\AA^d)^k\times (\AA^d)^{\binom{k}{2}},
\]
with the Jouanolou map
\[
  p_I:\mathbb J^I_{\AA^d}\longrightarrow \op{Conf}_{I}(\AA^d)
\]
given by the evident projection. The projection \(p_I\) is affine with fiber
isomorphic to \(\AA^{(d-1)\binom{k}{2}}\). It carries an action of the vector
bundle
\(\mathbb V^I_{\AA^d}\rightarrow \op{Conf}_{I}(\AA^d)\) defined by
\[
\mathbb V^I_{\AA^d}
\define
\left\{
\sum_{\sss=1}^d
v^{\sss}_{ij}\left(z^{\sss}_i-z^{\sss}_j\right)=0
\;\middle|\;
1\leq i<j\leq k
\right\}
\subset
(\AA^d)^k\times (\AA^d)^{\binom{k}{2}}.
\]

\begin{lem}
We have
\[
  \Omega^\bullet_{\mathbb J^I_{\AA^d}/\op{Conf}_{I}(\AA^d)}
  =
  \Omega^\bullet_{\mathbb J^I_{\AA^d}/(\AA^d)^I}.
\]
\end{lem}

\begin{proof}
This follows from a standard fact in algebraic geometry. Since
\(j_I:\op{Conf}_{I}(\AA^d)\hookrightarrow (\AA^d)^I\) is an open immersion,
\[
  \Omega^1_{\op{Conf}_{I}(\AA^d)/(\AA^d)^I}=0.
\]
The sequence of maps
\[
\mathbb J^I_{\AA^d}
\xrightarrow{p_I}
\op{Conf}_{I}(\AA^d)
\xrightarrow{j_I}
(\AA^d)^I
\]
induces an exact sequence
\[
  p_I^*\Omega^1_{\op{Conf}_{I}(\AA^d)/(\AA^d)^I}
  \longrightarrow
  \Omega^1_{\mathbb J^I_{\AA^d}/(\AA^d)^I}
  \longrightarrow
  \Omega^1_{\mathbb J^I_{\AA^d}/\op{Conf}_{I}(\AA^d)}
  \longrightarrow 0.
\]
The first term vanishes, so the lemma follows.
\end{proof}

We define the \textit{Jouanolou model} of the structure sheaf over
\(\op{Conf}_{I}(\AA^d)\) to be
\[
  \mathbf J^I_{\AA^d}
  \define
  \Gamma\left(
  \mathbb J^I_{\AA^d},
  \Omega^\bu_{\mathbb J^I_{\AA^d}/\op{Conf}_{I}(\AA^d)}
  \right).
\]
By the lemma, this model has the following explicit description:
\[
\mathbf J^I_{\AA^d}
=
\frac{
\kk\!\bigl[z^{\sss}_i,x^{\sss}_{jl},\mathbf d x^{\sss}_{jl}\bigr]
_{
\substack{\sss=1,\ldots,d\\ 1\leq i\leq k,\ 1\leq j<l\leq k}}
}{
\left\langle
\sum\limits_{\sss=1}^d x^{\sss}_{jl}(z^{\sss}_j-z^{\sss}_l)-1,
\quad
\sum\limits_{\sss=1}^d \mathbf d x^{\sss}_{jl}(z^{\sss}_j-z^{\sss}_l)
\;\middle|\;
1\leq j<l\leq k
\right\rangle
}.
\]
Here \(\mathbf d\) is the relative differential; it satisfies
\[
  \mathbf d(x^{\sss}_{jl})=\mathbf d x^{\sss}_{jl},
  \qquad
  \mathbf d z^{\sss}_i=0.
\]
We also use the variables \(x^{\sss}_{jl}\) and \(\mathbf d x^{\sss}_{jl}\) for
\(j>l\), with the conventions
\[
  x^{\sss}_{jl}=-x^{\sss}_{lj},
  \qquad
  \mathbf d x^{\sss}_{jl}=-\mathbf d x^{\sss}_{lj}.
\]

Given \(I\in\mathbf{fSet}\), let \(\mathcal D_{(\AA^d)^I}\) denote the sheaf of
algebraic differential operators on \((\AA^d)^I\). Its algebra of global
sections is generated by \(z^{\sss}_i\) and \(\partial_{z^{\scriptstyle\sss}_i}\), subject
to the usual relations
\[
  [\partial_{z^{\scriptstyle\sss}_i},z^{\ttt}_j]
  =
  \delta_{ij}\delta_{\sss\ttt}.
\]
We will mainly deal with quasi-coherent sheaves, including \(\mathcal D\)-modules,
on \((\AA^d)^k\). To simplify notation, we will not distinguish a quasi-coherent
sheaf from its space of global sections.

\begin{prop}\label{prop:dmodulestr}
The Jouanolou model \(\mathbf J^I_{\AA^d}\) has a left
\(\mathcal D_{(\AA^d)^I}\)-module structure. The functions \(z_i^{\sss}\) act by
multiplication, and the vector fields are determined by
\[
  [\partial_{z_i^{\scriptstyle\ttt}},\mathbf d]=0,
  \qquad
  \partial_{z_i^{\scriptstyle\ttt}}x^{\sss}_{jl}
  =
  \begin{cases}
  -x^{\ttt}_{jl}x^{\sss}_{jl}, & i=j,\\
  \phantom{-}x^{\ttt}_{jl}x^{\sss}_{jl}, & i=l,\\
  0, & i\notin\{j,l\},
  \end{cases}
\]
for \(1\leq \sss,\ttt\leq d\) and \(1\leq j<l\leq k\).
\end{prop}

\begin{proof}
Let
\[
  R_{jl}\define
  \sum_{\sss=1}^d x^{\sss}_{jl}(z^{\sss}_j-z^{\sss}_l)-1.
\]
It is enough to check that the vector fields preserve the ideal generated by
\(R_{jl}\) and \(\mathbf dR_{jl}\). If \(i=j\), then
\[
\partial_{z_j^{\scriptstyle\ttt}}R_{jl}
=
-x^{\ttt}_{jl}\sum_{\sss=1}^d x^{\sss}_{jl}(z^{\sss}_j-z^{\sss}_l)
+x^{\ttt}_{jl}
=
-x^{\ttt}_{jl}+x^{\ttt}_{jl}=0.
\]
If \(i=l\), the same computation gives
\[
\partial_{z_l^{\scriptstyle\ttt}}R_{jl}
=
 x^{\ttt}_{jl}\sum_{\sss=1}^d x^{\sss}_{jl}(z^{\sss}_j-z^{\sss}_l)
-x^{\ttt}_{jl}
=
0.
\]
If \(i\notin\{j,l\}\), the derivative is zero. Finally,
\([\partial_{z_i^{\scriptstyle\ttt}},\mathbf d]=0\) implies
\(\partial_{z_i^{\scriptstyle\ttt}}\mathbf dR_{jl}=\mathbf d(\partial_{z_i^{\scriptstyle\ttt}}R_{jl})=0\).
Thus the action preserves the defining ideal.
\end{proof}

\begin{rem}
The intuition behind this definition is that one can use the coordinate transformation
\[
  x^{\sss}_{kl}
  =
  \frac{u^{\sss}_{kl}}
  {\sum\limits_{\ttt=1}^d u^{\ttt}_{kl}(z^{\ttt}_k-z^{\ttt}_l)}.
\]
\end{rem}

\begin{defn}\label{dfn:jmodel}
Given a finite set \(I\) and an \(\mathcal O_{(\AA^d)^I}\)-module \(M_I\), the
\textit{Jouanolou model of \(M_I\)} is the dg
\(\mathcal O_{(\AA^d)^I}\)-module
\[
\begin{aligned}
\mathbf J^I_{\AA^d}(M_I)
&\define
\Gamma\left(
\mathbb J^I_{\AA^d},
\Omega^\bullet_{\mathbb J^I_{\AA^d}/\op{Conf}_{I}(\AA^d)}
\otimes p_I^*j_I^*M_I
\right)                                                     \\
&=
M_I\otimes_{\mathcal O_{(\AA^d)^I}}\mathbf J^I_{\AA^d}.
\end{aligned}
\]
If \(M_I\) is a right, respectively left, \(\mathcal D_{(\AA^d)^I}\)-module, then
\(\mathbf J^I_{\AA^d}(M_I)\) is a right, respectively left,
\(\mathcal D_{(\AA^d)^I}\)-module.
\end{defn}

\begin{rem}
One can show that \(\mathbf J^I_{\AA^d}(M_I)\) represents
\(\R j_{I*}j_I^*M_I\) in the derived category of \(\mathcal D\)-modules. In fact,
there is a universal Jouanolou model \cite[4.1.3(ii)]{BD}, defined as the jet
algebra
\[
  \mathcal J^I_{\AA^d}
  \define
  \mathscr J\mathbf J^I_{\AA^d}
  =
  \Omega^\bullet_{\mathscr J\mathbf J^{I,0}_{\AA^d}/\mathcal O_{(\AA^d)^I}}.
\]
In terms of this universal model, our Jouanolou model is obtained by imposing the
relations from Proposition~\ref{prop:dmodulestr}:
\[
\mathbf J^I_{\AA^d}
=
\frac{\mathcal J^I_{\AA^d}}
{\left\langle
\partial_{z_i^{\scriptstyle\ttt}}x^{\sss}_{ij}=-x^{\ttt}_{ij}x^{\sss}_{ij},
\quad
\partial_{z_j^{\scriptstyle\ttt}}x^{\sss}_{ij}=x^{\ttt}_{ij}x^{\sss}_{ij}
\;\middle|\;
 i<j,\ 1\leq\sss,\ttt\leq d
\right\rangle}.
\]
The universal Jouanolou model is also closely related to another model for the
configuration space,
\[
  \mathbf P^I_{\AA^d}
  =
  \frac{\mathcal J^I_{\AA^d}}
  {\left\langle
  (z_i^{\sss}-z_j^{\sss})(\partial_{z_i^{\scriptstyle\sss}}x^{\sss}_{ij})=x^{\sss}_{ij}
  \;\middle|\;
  i<j,\ 1\leq\sss\leq d
  \right\rangle},
\]
called the \textit{polysimplicial model}, which is considered in \cite{FGY}.
The quotient maps
\[
\mathbf J^I_{\AA^d}
\xleftarrow{\sim}
\mathcal J^I_{\AA^d}
\xrightarrow{\sim}
\mathbf P^I_{\AA^d}
\]
are quasi-isomorphisms as \(\mathcal D\)-modules. Furthermore, one can show that
these three models are flat \(\mathcal O_{(\AA^d)^I}\)-algebras. Hence, for a
\(\mathcal D\)-module \(M_I\),
\[
\mathbf J^I_{\AA^d}(M_I)
\xleftarrow{\sim}
\mathcal J^I_{\AA^d}(M_I)
\xrightarrow{\sim}
\mathbf P^I_{\AA^d}(M_I).
\]
The claim that \(\mathbf J^I_{\AA^d}(M_I)\simeq \R j_{I*}j_I^*M_I\) follows from
\(\mathbf P^I_{\AA^d}(M_I)\simeq \R j_{I*}j_I^*M_I\); see \cite{FGY}.
\end{rem}

Our main motivation for the Jouanolou model is to formulate the concept of
\textit{chiral operations} in higher dimensions. In complex dimension one, chiral
operations form the operadic structure controlling \textit{chiral algebras}.
Indeed, in \cite{BD}, for a \(\mathcal D_X\)-module \(M\), the space of chiral
operations is
\[
\operatorname{Hom}_{\mathcal D_{X^I}}
\left(
  j_*j^*M^{\boxtimes I},
  \Delta^{I/\{\star\}}_*M
\right),
\qquad
\Delta^{I/\{\star\}}:X^{\{\star\}}\hookrightarrow X^I.
\]
For \(X=\AA^1\), chiral operations form an operad for any
\(\mathcal D_{\AA^1}\)-module \(M=\mathcal A\); we denote this operad by
\(\mathcal P^{ch}_{\AA^1}[\mathcal A]\). Following Beilinson and Drinfeld, a
\textit{chiral algebra} structure on \(\mathcal A\) is a map of operads
\[
  \mathcal L\!\op{ie}\longrightarrow \mathcal P^{ch}_{\AA^1}[\mathcal A].
\]
Thus the Jouanolou model \(\mathbf J^I_{\AA^d}(M_I)\) can be thought of as the
domain of a higher-dimensional chiral operation. We now spell out the definition
of a higher-dimensional chiral algebra.

\subsection{The definition of higher chiral algebras}

Let \(I\in \mathbf{fSet}\), and let \(M\) be a right
\(\mathcal D_{\AA^d}\)-module. The space of global sections of the diagonal
\(\mathcal D\)-module pushforward of \(M\) is
\[
\Gamma\left((\AA^d)^I,\Delta^{I/\{\star\}}_*M\right)
=
M\otimes_{\kk[\lambda_\star]}\kk[\lambda_I],
\]
where
\(\Delta^{I/\{\star\}}:(\AA^d)^{\{\star\}}=\AA^d\hookrightarrow(\AA^d)^I\)
is the diagonal embedding and
\[
\kk[\lambda_\star]
\define
\kk[\lambda_\star^{\sss}\mid \sss=1,\ldots,d],
\qquad
\kk[\lambda_I]
\define
\kk[\lambda_i^{\sss}\mid i\in I,\ \sss=1,\ldots,d].
\]
The tensor notation \(-\otimes_{\kk[\lambda_\star]}\kk[\lambda_I]\) indicates
that
\[
  \lambda_\star^{\sss}=\sum_{i\in I}\lambda_i^{\sss},
  \qquad
  \sss=1,\ldots,d,
\]
and that \(\lambda_\star^{\sss}\) acts on \(M\) on the right as
\(\partial_{z^{\sss}}\).

The right \(\mathcal D_{(\AA^d)^I}\)-module structure on
\(M\otimes_{\kk[\lambda_\star]}\kk[\lambda_I]\) is given by
\[
  (n\otimes F)\cdot z_k^{\ttt}
  =
  (n\cdot z^{\ttt})\otimes F
  +
  n\otimes \partial_{\lambda_k^{\scriptstyle\ttt}}F,
  \qquad
  (n\otimes F)\cdot \partial_{z_k^{\scriptstyle\ttt}}
  =
  n\otimes F\lambda_k^{\ttt},
\]
for \(F\in\kk[\lambda_I]\).

For a non-empty finite set \(I\), we define the space of \(d\)-dimensional
chiral \(I\)-operations in the Jouanolou model to be the dg vector space
\[
\mathcal P^{ch,\mathbf J}_{\AA^d}[M](I\rightarrow\{\star\})
\define
\operatorname{Hom}_{\mathcal D_{(\AA^d)^I}}
\left(
\mathbf J^I_{\AA^d}(M^{\boxtimes I}),
\Delta^{I/\{\star\}}_*M
\right).
\]
These vector spaces are naturally cochain complexes with actions of the symmetric
groups.

\begin{defn}\label{SymmetricGroupAction}
We define the action of the symmetric group \(S_{|I|}\) on
\(\mathcal P^{ch,\mathbf J}_{\AA^d}[M](I\rightarrow\{\star\})\) by
\[
  (\sigma\mu)(-)\define\sigma\left(\mu(\sigma^{-1}-)\right),
\]
where the two symmetric-group actions are as follows.
\begin{itemize}
\item The action on \(\mathbf J^I_{\AA^d}(M^{\boxtimes I})\) is given by
\[
\begin{aligned}
&\sigma\left(
\alpha(z_i^{\sss},x_{ij}^{\sss},\mathbf d x_{ij}^{\sss})
\cdot m_1\boxtimes\cdots\boxtimes m_k
\right)                                                     \\
&\quad \define
(-1)^{\chi(m_1,\ldots,m_k;\sigma)}
\alpha(z_{\sigma(i)}^{\sss},x_{\sigma(i)\sigma(j)}^{\sss},
\mathbf d x_{\sigma(i)\sigma(j)}^{\sss})
\cdot
m_{\sigma^{-1}(1)}\boxtimes\cdots\boxtimes m_{\sigma^{-1}(k)}.
\end{aligned}
\]
Here \(m_i\in M\), and \(\chi(m_1,\ldots,m_k;\sigma)\) is the Koszul sign.

\item The action on \(\Delta^{\{1,\ldots,k\}/\{\star\}}_*M\) is given by
\[
  \sigma\left(m\otimes f(\lambda_1^{\sss},\ldots,\lambda_k^{\sss})\right)
  \define
  m\otimes f(\lambda_{\sigma(1)}^{\sss},\ldots,
  \lambda_{\sigma(k)}^{\sss}).
\]
\end{itemize}
\end{defn}

\begin{thm}\label{DgOperadStructure}
For a right \(\mathcal D_{\AA^d}\)-module \(M\), the collection
\[
  \{\mathcal P^{ch,\mathbf J}_{\AA^d}[M](k)\}_{k\geq 0},
\]
where
\[
\mathcal P^{ch,\mathbf J}_{\AA^d}[M](k)
=
\begin{cases}
0, & k=0,\\[2mm]
\operatorname{Hom}_{\mathcal D_{(\AA^d)^k}}
\left(
\mathbf J^k_{\AA^d}(M^{\boxtimes k}),
\Delta^{\{1,\ldots,k\}/\{\star\}}_*M
\right), & k\geq 1,
\end{cases}
\]
has the structure of a dg operad.
\end{thm}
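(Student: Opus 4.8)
The plan is to verify the three defining pieces of a dg operad structure: the symmetric group actions (already provided by Definition \ref{SymmetricGroupAction}), the partial composition maps, and the operad axioms (associativity, unitality, equivariance), all compatible with the internal differential. First I would define, for every surjection $\pi\colon I \twoheadrightarrow J$ in $\mathbf{fSet}$, a composition map
\[
\mathcal{P}^{ch,\mathbf{J}}_{\mathbb{A}^d}[M](J\to\{\star\}) \otimes \bigotimes_{j\in J} \mathcal{P}^{ch,\mathbf{J}}_{\mathbb{A}^d}[M](I_j \to \{j\}) \longrightarrow \mathcal{P}^{ch,\mathbf{J}}_{\mathbb{A}^d}[M](I\to\{\star\}),
\]
where $I_j := \pi^{-1}(j)$. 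This is the higher-dimensional, Jouanolou-model incarnation of the Beilinson--Drinfeld chiral composition.

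The central geometric input is a factorization property for the Jouanolou models. Let $U^{(\pi)} \subset (\mathbb{A}^d)^I$ be the open subscheme on which points lying in distinct blocks are required to be distinct (points within a single block are unconstrained), so that $j_I$ factors as $\mathrm{Conf}_I(\mathbb{A}^d) \hookrightarrow U^{(\pi)} \hookrightarrow (\mathbb{A}^d)^I$. Over $U^{(\pi)}$ the pairs of $I$ split into within-block pairs (which reproduce the defining relations of the $\mathbb{J}^{I_j}_{\mathbb{A}^d}$) and cross-block pairs (which, after collapsing each block to the partial diagonal $\Delta^{(\pi)}\colon (\mathbb{A}^d)^J \hookrightarrow (\mathbb{A}^d)^I$, reproduce the relations of $\mathbb{J}^J_{\mathbb{A}^d}$). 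I would record this as a lemma: over $U^{(\pi)}$ the Jouanolou model $\mathbf{J}^I_{\mathbb{A}^d}(M^{\boxtimes I})$ is naturally identified with the exterior product of the blockwise models $\mathbf{J}^{I_j}_{\mathbb{A}^d}(M^{\boxtimes I_j})$, organized over the coarse Jouanolou model $\mathbf{J}^J_{\mathbb{A}^d}$. Granting this, the composite of $\mu$ and $\{\nu_j\}$ is assembled in two stages: first apply $\boxtimes_{j} \nu_j$ to fuse each block to a single copy of $M$ sitting on $\Delta^{(\pi)}$, landing in $\Delta^{(\pi)}_*\,\mathbf{J}^J_{\mathbb{A}^d}(M^{\boxtimes J})$; then apply the pushforward $\Delta^{(\pi)}_*(\mu)$ and use $\Delta^{(\pi)}_*\Delta^{J/\{\star\}}_* = \Delta^{I/\{\star\}}_*$ to land in $\Delta^{I/\{\star\}}_* M$.

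With the composition in hand, I would check the remaining axioms. Equivariance with respect to the $S_{|I|}$-actions of Definition \ref{SymmetricGroupAction} follows because relabeling points commutes with the block decomposition and with the Koszul signs built into the action. Associativity for a composable pair $I \twoheadrightarrow J \twoheadrightarrow K$ reduces to the transitivity of the factorization identification (the $U^{(\pi)}$-restrictions are compatible for nested partitions) together with the functoriality $\Delta^{(\pi)}_*\Delta^{(\rho)}_* = \Delta^{(\rho\pi)}_*$ of diagonal pushforwards. Unitality is witnessed by the identity chiral operation in $\mathcal{P}^{ch,\mathbf{J}}_{\mathbb{A}^d}[M](1)$, where $\mathbf{J}^{\{1\}}_{\mathbb{A}^d}(M) = M = \Delta^{\{1\}/\{\star\}}_* M$. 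Finally, that all structure maps are $\mathcal{D}_{(\mathbb{A}^d)^I}$-linear cochain maps follows from Proposition \ref{prop:dmodulestr} (for the source $\mathcal{D}$-module structure) together with the explicit right $\mathcal{D}$-module structure on $\Delta^{I/\{\star\}}_* M$ recorded above, since each stage of the composite was built from $\mathcal{D}$-linear cochain maps.

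The hard part will be the explicit verification that the composition is a cochain map and is associative at the level of the concrete Jouanolou presentations. The delicate bookkeeping is twofold: one must track the relative de Rham differential $\mathbf{d}$ and the propagator relations $\partial_{z^t_j} x^s_{jl} = -x^t_{jl} x^s_{jl}$ of Proposition \ref{prop:dmodulestr} as they pass through the block-collapse, and one must verify that the cross-block propagators $x^s_{ab}$ (with $\pi(a)\neq\pi(b)$) merge consistently into the coarse propagators of $\mathbf{J}^J_{\mathbb{A}^d}$ after restriction to the partial diagonal, in a way compatible with the $\lambda$-variable substitutions $\lambda^s_{\star} = \sum_{i} \lambda^s_i$ governing $\Delta^{I/\{\star\}}_* M$. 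Establishing the factorization lemma with this much $\mathcal{D}$-module and differential-graded compatibility is where the real work lies; the operad axioms themselves are then comparatively formal.
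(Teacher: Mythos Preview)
Your approach is essentially the paper's: factor the Jouanolou model along the partition, apply the inner operations blockwise, collapse the cross-block variables to the partial diagonal, then apply the outer operation, with the operad axioms handled by citing the parallel construction in \cite{FGY}. The paper resolves your ``hard part'' concretely by introducing an intermediate \emph{partial Jouanolou model} $\mathbf{J}^{(I'),I''}_{\mathbb{A}^d}$ (retaining only the within-$I''$ and cross-block variables, so that $\mathbf{J}^{I}_{\mathbb{A}^d} = \mathbf{J}^{I'}_{\mathbb{A}^d} \otimes_{\mathcal{O}_{(\mathbb{A}^d)^{I'}}} \mathbf{J}^{(I'),I''}_{\mathbb{A}^d}$) and an explicit collapse map $c^{(I')}\colon x^s_{ij}\mapsto x^s_{\bullet j}$, $z^s_i\mapsto z^s_\bullet$ for $i\in I'$, $j\in I''$; this is precisely the ``consistent merging of cross-block propagators'' you anticipated, and it is well-defined because on the partial diagonal all the relations $\sum_s x^s_{ij}(z^s_i-z^s_j)=1$ with $i\in I'$ collapse to a single relation.
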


\begin{proof}
The construction of the operad structure is completely parallel to \cite{FGY}. We
only spell out how to compose two chiral operations.

Suppose that \(I=I'\bigsqcup I''\), and suppose we are given chiral operations
\[
\mu_{I'}\in
\mathcal P^{ch,\mathbf J}_{\AA^d}[M](I'\twoheadrightarrow\{\bullet\}),
\qquad
\mu_{\{\bullet\}\sqcup I''}\in
\mathcal P^{ch,\mathbf J}_{\AA^d}[M](\{\bullet\}\sqcup I''\twoheadrightarrow\{\star\}).
\]
We define their partial composition
\[
  \mu_{\{\bullet\}\sqcup I''}\circ\mu_{I'}
  \in
  \mathcal P^{ch,\mathbf J}_{\AA^d}[M](I\twoheadrightarrow\{\star\}).
\]

First, by abuse of notation, we extend \(\mu_{I'}\) to a map
\[
\mu_{I'}\in
\operatorname{Hom}_{\mathcal D_{(\AA^d)^I}}
\left(
\mathbf J^I_{\AA^d}(M^{\boxtimes I}),
\mathbf J^{\{\bullet\}\sqcup I''}_{\AA^d}(M\boxtimes M^{\boxtimes I''})
\otimes_{\kk[\lambda_\bullet]}\kk[\lambda_{I'}]
\right).
\]
To describe this extension, introduce the partial Jouanolou torsor
\[
\mathbb J^{(I'),I''}_{\AA^d}
\define
\left\{
\sum_{\sss=1}^d x_{ij}^{\sss}(z_i^{\sss}-z_j^{\sss})=1
\;\middle|\;
\begin{array}{l}
 i,j\in I'',\ i<j,\ \text{or}\\
 i\in I',\ j\in I''
\end{array}
\right\}
\subset
(\AA^d)^I\times (\AA^d)^{\binom{|I''|}{2}+|I'|\cdot |I''|}.
\]
Let
\[
\mathbf J^{(I'),I''}_{\AA^d}
\define
\Gamma\left(
\mathbb J^{(I'),I''}_{\AA^d},
\Omega^\bullet_{\mathbb J^{(I'),I''}_{\AA^d}/\op{Conf}_{I}(\AA^d)}
\right).
\]
Then
\[
\mathbf J^I_{\AA^d}
\cong
\left(
\mathbf J^{I'}_{\AA^d}
\otimes_{\mathcal O_{(\AA^d)^{I'}}}
\mathcal O_{(\AA^d)^I}
\right)
\otimes_{\mathcal O_{(\AA^d)^I}}
\mathbf J^{(I'),I''}_{\AA^d}.
\]

The chiral operation \(\mu_{I'}\) gives a map
\[
\begin{aligned}
\mathbf J^I_{\AA^d}(M^{\boxtimes I})
&=
\mathbf J^{(I'),I''}_{\AA^d}
\left(
\mathbf J^{I'}_{\AA^d}(M^{\boxtimes I'})
\boxtimes M^{\boxtimes I''}
\right)                                                     \\
&\longrightarrow
\mathbf J^{(I'),I''}_{\AA^d}
\left(
\Delta^{I'}_*M\boxtimes M^{\boxtimes I''}
\right).
\end{aligned}
\]
Consider the \(\mathcal O_{(\AA^d)^I}\)-module
\[
  (M\boxtimes M^{\boxtimes I''})
  \otimes_{\mathcal O_{(\AA^d)^I}}
  \mathbf J^{(I'),I''}_{\AA^d}.
\]
The identity map extends to a \(\mathcal D\)-module map
\[
\begin{aligned}
 v^{(I')}:&\
\mathbf J^{(I'),I''}_{\AA^d}
(\Delta^{I'}_*M\boxtimes M^{\boxtimes I''})=
\left((M\otimes_{\kk[\lambda_\bullet]}\kk[\lambda_{I'}])
\boxtimes M^{\boxtimes I''}\right)
\otimes_{\mathcal O_{(\AA^d)^I}}
\mathbf J^{(I'),I''}_{\AA^d}                                \\
&\longrightarrow
\left(
(M\boxtimes M^{\boxtimes I''})
\otimes_{\mathcal O_{(\AA^d)^I}}
\mathbf J^{(I'),I''}_{\AA^d}
\right)
\otimes_{\kk[\lambda_\bullet]}\kk[\lambda_{I'}].
\end{aligned}
\]

Next define
\[
  c^{(I')}:\mathbf J^{(I'),I''}_{\AA^d}
  \longrightarrow
  \mathbf J^{\{\bullet\}\sqcup I''}_{\AA^d}
\]
by
\[
\begin{array}{lll}
  x_{ij}^{\sss}\mapsto x_{\bullet j}^{\sss},
  & \mathbf d x_{ij}^{\sss}\mapsto \mathbf d x_{\bullet j}^{\sss},
  & z_i^{\sss}\mapsto z_\bullet^{\sss},\\[1mm]
  x_{jj'}^{\sss}\mapsto x_{jj'}^{\sss},
  & \mathbf d x_{jj'}^{\sss}\mapsto \mathbf d x_{jj'}^{\sss},
  & z_j^{\sss}\mapsto z_j^{\sss},
\end{array}
\qquad
\sss=1,\ldots,d,
\quad
 i\in I',\ j,j'\in I''.
\]
This is an explicit model for restriction along the diagonal that collapses the
labels in \(I'\) to the new label \(\bullet\). It induces, again by abuse of
notation, a map
\[
\begin{aligned}
 c^{(I')}:&\
\left(
(M\boxtimes M^{\boxtimes I''})
\otimes_{\mathcal O_{(\AA^d)^I}}
\mathbf J^{(I'),I''}_{\AA^d}
\right)
\otimes_{\kk[\lambda_\bullet]}\kk[\lambda_{I'}]              \\
&\longrightarrow
\left(
(M\boxtimes M^{\boxtimes I''})
\otimes_{\mathcal O_{(\AA^d)^{\{\bullet\}\sqcup I''}}}
\mathbf J^{\{\bullet\}\sqcup I''}_{\AA^d}
\right)
\otimes_{\kk[\lambda_\bullet]}\kk[\lambda_{I'}].
\end{aligned}
\]

Using \(c^{(I')}\), we obtain the \(\mathcal D\)-module map
\[
 c^{(I')}\circ v^{(I')}:
\mathbf J^{(I'),I''}_{\AA^d}
(\Delta^{I'}_*M\boxtimes M^{\boxtimes I''})
\longrightarrow
\mathbf J^{\{\bullet\}\sqcup I''}_{\AA^d}(M\boxtimes M^{\boxtimes I''})
\otimes_{\kk[\lambda_\bullet]}\kk[\lambda_{I'}].
\]
Thus we have a map
\[
\begin{aligned}
\mu_{I'\subset I}:\quad
\mathbf J^I_{\AA^d}(M^{\boxtimes I})
&\xrightarrow{\mu_{I'}\boxtimes\mathrm{Id}}
\mathbf J^{(I'),I''}_{\AA^d}
(\Delta^{I'}_*M\boxtimes M^{\boxtimes I''})                 \\
&\xrightarrow{c^{(I')}\circ v^{(I')}}
\mathbf J^{\{\bullet\}\sqcup I''}_{\AA^d}
(M\boxtimes M^{\boxtimes I''})
\otimes_{\kk[\lambda_\bullet]}\kk[\lambda_{I'}].
\end{aligned}
\]
We then compose with \(\mu_{\{\bullet\}\sqcup I''}\):
\[
\begin{aligned}
\mathbf J^I_{\AA^d}(M^{\boxtimes I})
&\xrightarrow{\mu_{I'\subset I}}
\mathbf J^{\{\bullet\}\sqcup I''}_{\AA^d}
(M\boxtimes M^{\boxtimes I''})
\otimes_{\kk[\lambda_\bullet]}\kk[\lambda_{I'}]              \\
&\xrightarrow{\mu_{\{\bullet\}\sqcup I''}}
M\otimes_{\kk[\lambda_\star]}\kk[\lambda_{\{\bullet\}\cup I''}]
\otimes_{\kk[\lambda_\bullet]}\kk[\lambda_{I'}]              \\
&=
M\otimes_{\kk[\lambda_\star]}\kk[\lambda_I].
\end{aligned}
\]
By a slight abuse of notation, we define
\[
  \mu_{\{\bullet\}\sqcup I''}\circ\mu_{I'}
  \define
  \mu_{\{\bullet\}\sqcup I''}\circ\mu_{I'\subset I}.
\]
These partial compositions, together with the symmetric-group actions above, define the
dg operad structure.
\end{proof}

We conclude this section with our main definition.

\begin{defn}
A \textit{homotopy chiral algebra} in the Jouanolou model is a dg
\(\mathcal D_{\AA^d}\)-module \(\mathcal A\) together with a map of dg operads
\[
  \mathcal L\!\op{ie}_\infty
  \longrightarrow
  \mathcal P^{ch,\mathbf J}_{\AA^d}[\mathcal A],
\]
where \(\mathcal L\!\op{ie}_\infty\) is the dg operad controlling
\(L_\infty\)-algebras.
\end{defn}

Explicitly, to construct such a chiral algebra, one must produce a collection of
\(\mathcal D\)-module maps \(\{\mu_I\}\) satisfying the \(L_\infty\)-relations.

\begin{rem}
A homotopy chiral algebra in the Jouanolou model defines a \(d\)-dimensional
chiral algebra, up to equivalence, in the sense of \cite{FG}.
\end{rem}

\begin{rem}
In dimension \(d=1\), the notion of an \(L_\infty\) chiral algebra has recently
appeared in \cite{malikov2024homotopy}. When \(d=1\), our definition reduces to
that of \textit{loc. cit.}
\end{rem}

\begin{rem}\label{rem:CompareJouanolouPoly}
Using the Thom--Sullivan--Whitney method for computing homotopy limits of
cosimplicial spaces, a different model for configuration spaces has recently been
presented in \cite{FGY}. The notion of a chiral algebra in \cite{FGY} is
equivalent, in a homotopical sense, to the notion we present here. On the other
hand, explicit examples are presented quite differently in the two models. For
instance, it is clear from our constructions that the examples of chiral algebras
which appear in Section~\ref{s:example} depend continuously on the input data.
Additionally, the Jouanolou model has a \(\mathrm{GL}_d\)-symmetry, which endows
the Jouanolou operad with a \(\mathrm{GL}_d\)-equivariant structure. Furthermore,
our later construction of the unit chiral algebra is \(\mathrm{GL}_d\)-equivariant,
which is not true for the model given in \cite{FGY}.

Another advantage of our approach is that chiral operations in the Jouanolou
model are manifestly related to holomorphic quantum field theory; indeed, our
construction of the \textit{unit} chiral algebra uses Feynman graph integrals, as
we will soon explain. Going in the opposite direction, we will see in
Section~\ref{s:A2} how our chiral operations can be used to recover explicit
formulas for expectation values in higher-dimensional quantum field theory.
\end{rem}

Before we move on, we point out a general construction which mimics a construction
for ordinary, \(d=1\), chiral algebras. Let \(h(-)\) denote the de Rham cohomology
functor. Thus, for a dg right \(\mathcal D\)-module \(\mathcal A\), we obtain
\[
  h(\mathcal A)
  \define
  \mathcal A\otimes_{\mathcal D_{\AA^d}}\mathcal O_{\AA^d}.
\]
If \(\mathcal A\) is a chiral algebra in the Jouanolou model, then the chiral
operations endow \(h(\mathcal A)\) with the structure of a sheaf of
\(L_\infty\)-algebras.

Let \(I=\{1,\ldots,m\}\). There is a natural map
\[
  \mathcal A^{\boxtimes m}
  \longrightarrow
  \mathbf J^I_{\AA^d}(\mathcal A^{\boxtimes I}).
\]
Postcomposing this map with the chiral operation \(\mu_I\), we obtain a map of dg
\(\mathcal D\)-modules
\[
  \mathcal A^{\boxtimes m}
  \longrightarrow
  \Delta_*^{I/\{\star\}}\mathcal A.
\]
Applying the de Rham functor \(h(-)\), we obtain
\[
  h(\mathcal A)^{\boxtimes m}
  \longrightarrow
  \Delta_{\cdot}^I h(\mathcal A),
\]
where \(\Delta_{\cdot}^I\) denotes the sheaf-theoretic pushforward along the diagonal.
Applying adjunction, we obtain the \(m\)-linear map
\[
  [-]_m:h(\mathcal A)\times\cdots\times h(\mathcal A)
  \longrightarrow
  h(\mathcal A).
\]
The \(L_\infty\)-relations satisfied by the chiral operations \(\{\mu_I\}\)
immediately yield the following.

\begin{prop}\label{prop:derham}
The operations \(\{[-]_m\}\) endow \(h(\mathcal A)\) with the structure of a sheaf
of \(L_\infty\)-algebras.
\end{prop}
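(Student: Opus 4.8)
The plan is to exhibit the passage $\{\mu_I\}\mapsto\{[-]_m\}$ as a composite of two morphisms of dg operads, so that the $L_\infty$ relations transport formally and never need to be written out by hand. Introduce the dg operad $\cP^{\ast}_{\AA^d}[\cA]$ of \emph{$\ast$-operations}, with arity-$k$ component $\op{Hom}_{\mathcal{D}_{(\AA^d)^k}}\!\big(\cA^{\boxtimes k},\Delta^{\{1,\dots,k\}/\{\star\}}_{*}\cA\big)$ and operadic composition defined exactly as in Theorem~\ref{DgOperadStructure}, but with every Jouanolou model replaced by the structure sheaf (equivalently, with all propagator variables $x^s_{jl},\mathbf{d}x^s_{jl}$ set to their constant values). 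I claim there are maps of dg operads
\[
\cP^{ch,\mathbf{J}}_{\AA^d}[\cA]\ \xrightarrow{\ r\ }\ \cP^{\ast}_{\AA^d}[\cA]\ \xrightarrow{\ h\ }\ \mathcal{E}nd_{h(\cA)},
\]
whose composite carries the chiral operations $\mu_I$ to the brackets $[-]_m$. Granting this, the hypothesis that $\cA$ is a homotopy chiral algebra supplies a map $\mathcal{L}\!\op{ie}_{\infty}\to\cP^{ch,\mathbf{J}}_{\AA^d}[\cA]$, and composing with $r$ and $h$ yields $\mathcal{L}\!\op{ie}_{\infty}\to\mathcal{E}nd_{h(\cA)}$, which is exactly an $L_\infty$-structure on $h(\cA)$; the operations $[-]_m$ are the images of the generators and hence automatically satisfy the $L_\infty$ relations.

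For the first map I would use the unit of adjunction $\mathrm{id}\to\R j_{I*}j_I^{\ast}$. Since $\mathbf{J}^{I}_{\AA^d}(\cA^{\boxtimes I})=\cA^{\boxtimes I}\otimes_{\cO_{(\AA^d)^I}}\mathbf{J}^{I}_{\AA^d}$ and $\cO_{(\AA^d)^I}\hookrightarrow\mathbf{J}^{I}_{\AA^d}$ is the inclusion of the constants (elements with no $\mathbf{d}x^s_{jl}$, sitting in relative de Rham degree zero), the natural map $\iota_I\colon\cA^{\boxtimes I}\hookrightarrow\mathbf{J}^{I}_{\AA^d}(\cA^{\boxtimes I})$ displayed just before the proposition is a map of dg $\mathcal{D}_{(\AA^d)^I}$-modules. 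Precomposition with $\iota_I$ defines $r$ in each arity. To check that $r$ respects operadic composition I would compare the composite built in Theorem~\ref{DgOperadStructure} with the analogous $\ast$-operation composite: restricting to the image of $\iota_I$ and using $\mathbf{J}^{I}_{\AA^d}=\mathbf{J}^{I'}_{\AA^d}\otimes\mathbf{J}^{(I'),I''}_{\AA^d}$, the map $v^{(I')}$ becomes the identity and $c^{(I')}$ becomes the genuine diagonal restriction $z^s_i\mapsto z^s_{\bullet}$, $i\in I'$, so the two composition squares commute. Compatibility with the $S_k$-action of Definition~\ref{SymmetricGroupAction} and with the differentials is immediate, since the inclusion of constants is equivariant and a chain map.

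For the second map I would use the de Rham functor $h(-)=(-)\otimes_{\mathcal{D}}\cO$. The inputs are the Künneth identification $h(\cA^{\boxtimes I})=h(\cA)^{\boxtimes I}$, the compatibility $h(\Delta^{I/\{\star\}}_{*}\cA)=\Delta^{I}_{\cdot}h(\cA)$ of de Rham with the diagonal $\mathcal{D}$-module pushforward (already invoked in the construction preceding the proposition), and the adjunction $\op{Hom}(\mathcal{F}^{\boxtimes I},\Delta^{I}_{\cdot}\mathcal{G})\cong\op{Hom}(\mathcal{F}^{\otimes I},\mathcal{G})$ coming from $\Delta^{\ast}(\mathcal{F}^{\boxtimes I})=\mathcal{F}^{\otimes I}$. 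Applying $h$ to a $\ast$-operation $\cA^{\boxtimes I}\to\Delta_{*}\cA$ and then this adjunction reproduces precisely the $m$-linear bracket $[-]_m$, and the operadic nature of $h$ reduces to the commutation of de Rham with the two diagonal pushforwards.

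The hard part will be the two composition-compatibility checks, namely that $r$ and $h$ genuinely respect operadic composition rather than being merely arity-wise maps. For $r$ this is the claim that specializing the propagator variables to their constants intertwines the Jouanolou composition (with its maps $v^{(I')},c^{(I')}$) with the naive $\ast$-composition; for $h$ it is the compatibility of de Rham with the pushforwards $\Delta_{*}$ (for $\mathcal{D}$-modules) and $\Delta_{\cdot}$ (for $\cO$-modules), including all Koszul signs from the cohomological shifts. Once these are established, the proposition follows formally from the fact that a composite of operad maps applied to a map out of $\mathcal{L}\!\op{ie}_{\infty}$ is again such a map.
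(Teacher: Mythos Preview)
Your proposal is correct and is a careful elaboration of what the paper treats as immediate: the paper gives no displayed proof, simply asserting that ``the $L_\infty$ relations which are satisfied by the chiral operations $\{\mu_I\}$ defining $\mathcal{A}$ immediately yield'' the result. Your factorization through the intermediate $\ast$-operad $\cP^{\ast}_{\AA^d}[\cA]$ is exactly the standard mechanism (cf.\ the passage from chiral to Lie$^{\ast}$ algebras in \cite{BD}) that makes this ``immediate'' claim precise, and the two compatibility checks you flag are the honest content; once those are done your argument goes through.
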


\section{Feynman graph integral construction}\label{s:feynman}

In this section, we will use Feynman graph integrals in the setting of holomorphic quantum field theory to construct the unit
$L_{\infty}$ chiral operations in any dimension.
In Section~\ref{s:feynman1d}, we recall the standard chiral operation in dimension $d=1$.
Of course, here, the chiral operations of the unit chiral algebra are strict in the $L_\infty$ sense.
In higher dimensions, we largely use the techniques developed in \cite{wang2024feynman}.

We will work over $\kk=\mathbb{C}$ throughout this section. The affine variety $\mathbb{A}^d$ is understood as $\mathbb{C}^d$ with the analytic topology. For example, $\mathcal{O}_{\mathbb{A}^d}$ is the sheaf of holomorphic functions on $\mathbb{C}^d$.

\begin{thm}
  The $\mathcal{D}$-module $\omega_{\mathbb{A}^d}[d-1]$ carries the structure of a $GL_d$-equivariant homotopy chiral
  algebra in the Jouanolou model.
  For each $k$, its $k$-ary operation is expressed in terms of the higher-dimensional residue, see
  Definition~\ref{dfn:res}.
  It is a model for the \textit{unit} homotopy chiral algebra.
\end{thm}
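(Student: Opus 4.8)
The plan is to construct the $L_\infty$ morphism explicitly, generator by generator, and then to verify its defining relations by a Stokes-type argument on a compactified Schwinger space. A map $\mathcal{L}\!\op{ie}_\infty \to \mathcal{P}^{ch,\mathbf{J}}_{\mathbb{A}^d}[\omega_{\mathbb{A}^d}[d-1]]$ amounts to a collection of chiral operations $\mu_k$, one for each arity, with $\mu_k$ of operadic degree $2-k$, subject to the $L_\infty$ relations. First I would record that $\omega_{\mathbb{A}^d}$ is the right $\mathcal{D}_{\mathbb{A}^d}$-module of top holomorphic forms, so that the target $\Delta^{\{1,\dots,k\}/\{\star\}}_*\omega_{\mathbb{A}^d}[d-1]$ is computed by the formula of the previous subsection as $\omega_{\mathbb{A}^d}[d-1]\otimes_{\kk[\lambda_\star]}\kk[\lambda_I]$, and that the source is $\mathbf{J}^k_{\mathbb{A}^d}((\omega_{\mathbb{A}^d}[d-1])^{\boxtimes k})$. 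I would then define $\mu_k$ to be the higher-dimensional residue of Definition \ref{dfn:res}, namely the sum over Feynman graphs $\Gamma$ on the vertex set $\{1,\dots,k\}$ of the associated graph-integral weight. The shift by $[d-1]$ is dictated by the fact that the relevant coherent cohomology of the punctured affine space is concentrated in degree $d-1$, so that the residue lands in the correct operadic degree and the $L_\infty$ equations become degree-homogeneous; this is also the structural reason why, for $d>1$, the operations $\mu_k$ are nonzero for every $k$ rather than only $k=2$.

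The verification has three ingredients. First, well-definedness and $\mathcal{D}$-linearity: I would check that the graph integrals descend to the Jouanolou quotient, i.e. respect the relations $\sum_s x^s_{ij}(z^s_i-z^s_j)=1$ and $\sum_s\mathbf{d}x^s_{ij}(z^s_i-z^s_j)=0$, and that they intertwine the right $\mathcal{D}$-module structures of Proposition \ref{prop:dmodulestr} and of $\Delta_* M$, with the $\lambda$-dependence of the target matched by derivative insertions at the vertices. Second, $GL_d$-equivariance, which is essentially automatic: the propagator, the residue pairing, and the Jouanolou torsor are all built naturally out of the linear coordinates $z^s$, so $GL_d$ acts compatibly on every ingredient of $\mu_k$ and no separate argument beyond naturality is required.

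The heart of the proof, and the main obstacle, is the $L_\infty$ relation itself. Using the operadic composition constructed in Theorem \ref{DgOperadStructure}, the relation at arity $k$ equates the Hom-complex differential of $\mu_k$, induced by the relative de Rham differential $\mathbf{d}$ on the source, with the signed sum $\sum_{i+j=k+1}\mu_i\circ\mu_j$ of compositions of lower operations. Geometrically, each $\mu_k$ is a graph integral over a compactification $\overline{S}_\Gamma$ of the Schwinger space following \cite{wang2024feynman}, and the relative de Rham differential corresponds to $d$ acting inside the integrand. By Stokes' theorem the integral of an exact form equals a sum of boundary integrals over the codimension-one faces of $\overline{S}_\Gamma$, and the key computation is to identify each such face with the degeneration in which a subset of the points collides. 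Matching these strata to the composition terms $\mu_i\circ\mu_j$, with the correct combinatorial signs, yields the relation. The genuinely hard part is the boundary analysis: one must show the graph integrals converge on the compactified domain, that only the expected codimension-one faces contribute, and that the induced boundary measure on each face factors as the product of the residues for the two colliding subgraphs. This is exactly where the compactification of \cite{wang2024feynman} does the work.

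Finally, to identify the result with the \emph{unit} homotopy chiral algebra, I would compare the lowest operation with the expected unit structure: the binary operation $\mu_2$, at lowest order in derivatives, recovers the higher-dimensional residue pairing extracting the degree-$(d-1)$ coherent cohomology class supported on the diagonal, which is precisely the structure map of the unit chiral algebra of \cite{BD} in the Jouanolou model. Together with the fact (Proposition \ref{prop:derham}) that the $\mu_k$ induce an $L_\infty$ structure on de Rham cohomology, this pins down $\omega_{\mathbb{A}^d}[d-1]$ as the unit, up to the equivalence discussed after the definition of homotopy chiral algebras.
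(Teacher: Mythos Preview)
Your proposal is correct and follows essentially the same approach as the paper: define the $k$-ary operations via the Feynman graph integrals of Definition~\ref{dfn:res}, verify $\mathcal{D}$-linearity and $GL_d$-equivariance directly, and then establish the $L_\infty$ relations by applying Stokes' theorem on the compactified Schwinger space $\bar{S}^+((0,+\infty)^{\vec{\Gamma}_1})$, whose codimension-one boundary strata (Proposition~\ref{boundary description}) factor as products indexed by subgraphs and match the operadic compositions $\tilde{\mu}_{\{\bullet\}\cup I\setminus I'}\circ\tilde{\mu}_{I'\subset I}$. One small imprecision: the operation $\mu_k$ is not literally a \emph{sum over graphs}; rather, each monomial in $x_{ij}^s,\mathbf{d}x_{ij}^s$ in the Jouanolou model determines a single decorated graph $(\vec{\Gamma},m,l)$, and the residue is defined monomial-by-monomial via that graph's integral and extended linearly.
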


\subsection{The unit chiral algebra on $\mathbb{A}^1$} \label{s:feynman1d}
We begin by reviewing the one-dimensional unit chiral algebra $\omega_{\mathbb{A}^1}$ and how Feynman graph integrals
can be used to repackage the chiral operations.
Our most novel viewpoint is the utility of Schwinger space integrals (see Appendix~\ref{Schwinger spaces}) to express
these operations.
This approach will apply more generally and allow us to construct the unit chiral operations in higher dimensions.

When $d=1$, the model for the source of two-ary chiral operations for $\omega_{\AA^1}$ is
$$
\mathbf{J}_{\mathbb{A}^1}^{\{1,2\}}(\omega_{\mathbb{A}^1}^{\boxtimes\{1,2\}})=\left\{f(z_{1},z_{2})dz_{1}\boxtimes dz_{2} \; | \; f\in \kk\left[z_{1},z_{2},\frac{1}{z_{1}-z_{2}}\right]\right\}
$$
while the target for the two-ary chiral operations is
\begin{equation}
  \omega_{\mathbb{A}^1} \otimes_{\kk[\lambda_\star]} \kk[\lambda_1,\lambda_2]
\end{equation}
where $\lambda_\star$ acts on $\omega_{\AA^1}$ by $\del_z$ and, on the left, acts by $\lambda_\star = \lambda_1 +
\lambda_2$.
\begin{defn}
  We define the chiral operation $\mu_2 = \mu_{\{1,2\}}$ for the unit $\omega_{\mathbb{A}^1}$ by the following formula:
    $$
    \mu_2(f(z_{1},z_{2})dz_{1}\boxtimes dz_{2})=\frac{1}{2\pi i}e^{-(\lambda_{1}+\lambda_{2})w}\left. \left(\oint_{z_{1}=z_2}f(z_{1},z_{2})e^{\lambda_{1}z_{1}+\lambda_{2}z_{2}}dz_{1}\boxtimes dz_{2}\right)\right|_{z_{2}=w},
    $$
    where $\oint_{z_{1}=z_{2}}$ is the contour integral of variable $z_{1}$ over a simple loop around $z_{2}$.
  \end{defn}

  We unpack the definition.
  Suppose that
$$\alpha=\frac{g(z_{1},z_{2})}{(z_{1}-z_{2})^{n}}dz_{1}\boxtimes dz_{2}\in \mathbf{J}_{\mathbb{A}^1}^{\{1,2\}}
(\omega_{\mathbb{A}^1}^{\boxtimes\{1,2\}}),
$$
where $g(z_{1},z_{2})\in \kk[z_{1},z_{2}]$.
Expanding, we find that
\begin{align*}
 \mu_2(\alpha)
 &=\frac{1}{2\pi i}e^{-(\lambda_{1}+\lambda_{2})w}\left. \left(\oint_{z_{1}=z_2}\frac{g(z_{1},z_{2})}{(z_{1}-z_{2})^{n}}e^{\lambda_{1}z_{1}+\lambda_{2}z_{2}}dz_{1}\boxtimes dz_{2}\right)\right|_{z_{2}=w}\\
 &= e^{-(\lambda_{1}+\lambda_{2})w}\frac{1}{(n-1)!}\left.\left(\partial_{z_{1}}^{n-1}\left(g(z_{1},w)e^{\lambda_{1}z_{1}+\lambda_{2}w}\right)\right)\right|_{z_{1}=w}dw
 \\
 &=\frac{1}{(n-1)!}
\left.\left((\partial_{z_{1}}+\lambda_{1})^{n-1}g(z_{1},w)\right)\right|_{z_{1}=w}dw,
\end{align*}
so $\mu_2(\alpha)\in \omega_{\mathbb{A}^{1}}\otimes_{\kk[\lambda_{\star}]}\kk[\lambda_{1},\lambda_{2}].$
For example, when $g = 1$ this reduces to
\begin{equation}
  \mu_2\left(\frac{d z_{1} \boxtimes d z_{2}}{(z_{1}-z_{2})^{n}}\right) = \frac{\lambda_{1}^{n-1}}{(n-1)!} \otimes d w
\end{equation}

\begin{prop}
    $\mu_2$ defines a chiral operation, i.e.,
    $$    \mu_2\in \cP^{ch}_{\AA^1}[\omega_{\AA^1}](2)
    $$
\end{prop}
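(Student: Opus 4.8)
The plan is to verify directly that $\mu_2$ is a morphism of right $\mathcal{D}_{(\mathbb{A}^1)^2}$-modules. Since $\mathrm{Conf}_2(\mathbb{A}^1)$ is already affine, the relative de Rham complex collapses: the defining relation $\mathbf{d}x_{12}(z_1-z_2)=0$ together with the invertibility of $z_1-z_2$ forces $\mathbf{d}x_{12}=0$, so the source $\mathbf{J}_{\mathbb{A}^1}^{\{1,2\}}(\omega_{\mathbb{A}^1}^{\boxtimes\{1,2\}})$ is concentrated in cohomological degree zero and there is no differential-compatibility condition to impose. Moreover, the explicit unpacking preceding the statement already shows that $\mu_2$ takes values in $\omega_{\mathbb{A}^1}\otimes_{k[\lambda_\star]}k[\lambda_1,\lambda_2]$, the residue producing a genuine polynomial in $\lambda_1$ so that the formal exponentials cancel. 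It therefore remains to verify that $\mu_2$ intertwines the right actions of the four generators $z_1,z_2,\partial_{z_1},\partial_{z_2}$ with the operations recorded in the right $\mathcal{D}$-module structure on $\Delta_*^{\{1,2\}/\{\star\}}\omega_{\mathbb{A}^1}$.

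The mechanism is the generating-function kernel $e^{\lambda_1 z_1+\lambda_2 z_2}$, for which $\partial_{z_k}e^{\lambda_1 z_1+\lambda_2 z_2}=\lambda_k\,e^{\lambda_1 z_1+\lambda_2 z_2}$ and $z_k\,e^{\lambda_1 z_1+\lambda_2 z_2}=\partial_{\lambda_k}e^{\lambda_1 z_1+\lambda_2 z_2}$. For $\partial_{z_1}$, which acts on $\omega_{\mathbb{A}^1}^{\boxtimes\{1,2\}}$ as $-\partial_{z_1}$ (the canonical right $\mathcal{D}$-module structure on top forms), integration by parts in the contour variable $z_1$---the integral of a total $z_1$-derivative around a closed loop vanishes---gives $\oint \partial_{z_1}(f)\,e^{\lambda_1 z_1+\lambda_2 z_2}dz_1=-\lambda_1\oint f\,e^{\lambda_1 z_1+\lambda_2 z_2}dz_1$; the two minus signs cancel and $\mu_2(\alpha\cdot\partial_{z_1})=\lambda_1\,\mu_2(\alpha)$, exactly the target rule $n\otimes F\cdot\partial_{z_k}=n\otimes F\lambda_k$. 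For $z_1$ (and likewise $z_2$) multiplication, writing $z_k\,e^{\lambda_1 z_1+\lambda_2 z_2}=\partial_{\lambda_k}e^{\lambda_1 z_1+\lambda_2 z_2}$ and applying the product rule to the overall weight $e^{-(\lambda_1+\lambda_2)w}$ splits $\mu_2(z_k\cdot\alpha)$ into $\partial_{\lambda_k}(\mu_2(\alpha))$ and $w\cdot\mu_2(\alpha)$, matching the two terms $n\otimes\partial_{\lambda_k}F$ and $n\cdot z\otimes F$ of the target multiplication rule.

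The genuinely delicate check---and the main obstacle---is compatibility with the generators indexed by the second point (both $z_2$ and $\partial_{z_2}$), since $\partial_{z_2}$ cannot be treated by integration by parts against the $z_1$-contour. Here the prefactor $e^{-(\lambda_1+\lambda_2)w}$ together with the diagonal restriction $z_2=w$ is precisely the bookkeeping that imposes the target's defining relation $\lambda_\star=\lambda_1+\lambda_2$ and realizes the pushforward $\Delta_*$: one must use that $\lambda_\star$ acts on $\omega_{\mathbb{A}^1}$ as $\partial_z$ and hence annihilates $dw$, so that in $\omega_{\mathbb{A}^1}\otimes_{k[\lambda_\star]}k[\lambda_1,\lambda_2]$ any element $(\lambda_1+\lambda_2)G$ paired with $dw$ vanishes and $\lambda_2$ can always be traded for $-\lambda_1$ after pairing. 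With this identity, the direct computation of $\mu_2(\alpha\cdot\partial_{z_2})$ and the target expression $\mu_2(\alpha)\cdot\partial_{z_2}=n\otimes F\lambda_2$ agree as elements of the target module; equivalently, one may verify the clean relation for the total translation $\partial_{z_1}+\partial_{z_2}$ (which corresponds to $\lambda_\star$ acting as $\partial_w$) and subtract the already-established $\partial_{z_1}$ relation. Running the four generators through these identities---tracking the right-module sign on $\omega^{\boxtimes\{1,2\}}$ against the integration-by-parts sign---completes the proof; the single family $\alpha=(z_1-z_2)^{-n}dz_1\boxtimes dz_2$, where $\mu_2(\alpha)=\frac{\lambda_1^{n-1}}{(n-1)!}\otimes dw$, already exhibits every cancellation and can be checked by hand.
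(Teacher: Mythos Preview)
Your proposal is correct and follows essentially the same approach as the paper: a direct verification that $\mu_2$ intertwines the right actions of the generators $z_i,\partial_{z_i}$, using the kernel identities $z_i e^{\lambda_1 z_1+\lambda_2 z_2}=\partial_{\lambda_i}e^{\lambda_1 z_1+\lambda_2 z_2}$ and $\partial_{z_i}e^{\lambda_1 z_1+\lambda_2 z_2}=\lambda_i e^{\lambda_1 z_1+\lambda_2 z_2}$ together with integration by parts in $z_1$. If anything, you are more explicit than the paper about the $\partial_{z_2}$ case: the paper treats $i=1,2$ uniformly and simply asserts that the total-derivative term vanishes, whereas you correctly identify that for $i=2$ this vanishing is not a contour-integral identity but rather holds only in the target module, via the relation $\lambda_\star\cdot dw = dw\cdot\partial_w = 0$ (one small overcautious remark: the $z_2$-multiplication case does not actually require this extra care---the $\partial_{\lambda_2}$ trick works symmetrically in $i=1,2$, as the paper's uniform treatment shows).
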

\begin{proof}
  We prove that $\mu_2$ is a $\mathcal{D}_{(\mathbb{A}^{1})^{2}}[(\mathbb{A}^{1})^{2}]$-module morphism.

Recall that the $\mathcal D$-module structure is determined by the rules
$$
\alpha \cdot z_{i}=z_{i}\alpha,\quad
\alpha\cdot \partial_{z_{i}}=-\partial_{z_{i}}\left(\frac{g(z_{1},z_{2})}{(z_{1}-z_{2})^{n}}\right)dz_{1}\boxtimes dz_{2} .
$$
Thus, we can directly compute:
\begin{align*}
  \mu_{2}(\alpha \cdot z_{i})&=\frac{1}{2\pi i}e^{-(\lambda_{1}+\lambda_{2})w}\left. \left(\oint_{z_{1}=z_2}\frac{z_{i}g(z_{1},z_{2})}{(z_{1}-z_{2})^{n}}e^{\lambda_{1}z_{1}+\lambda_{2}z_{2}}dz_{1}\boxtimes dz_{2}\right)\right|_{z_{2}=w}\\
&=\frac{1}{2\pi i}e^{-(\lambda_{1}+\lambda_{2})w}\left. \left(\oint_{z_{1}=z_2}\frac{g(z_{1},z_{2})}{(z_{1}-z_{2})^{n}}\partial_{\lambda_{i}}\left(e^{\lambda_{1}z_{1}+\lambda_{2}z_{2}}\right)dz_{1}\boxtimes dz_{2}\right)\right|_{z_{2}=w}\\
&=\frac{1}{2\pi i}(w+\partial_{\lambda_{i}})\left(e^{-(\lambda_{1}+\lambda_{2})w}\left. \left(\oint_{z_{1}=z_2}\frac{g(z_{1},z_{2})}{(z_{1}-z_{2})^{n}}e^{\lambda_{1}z_{1}+\lambda_{2}z_{2}}dz_{1}\boxtimes dz_{2}\right)\right|_{z_{2}=w}\right)\\
&=\mu_{2}(\alpha )\cdot z_{i},
\end{align*}
\begin{align*}
  \mu_{2}(\alpha \cdot \partial_{z_{i}})&=-\frac{1}{2\pi i}e^{-(\lambda_{1}+\lambda_{2})w}\left. \left(\oint_{z_{1}=z_2}\partial_{z_{i}}\left(\frac{g(z_{1},z_{2})}{(z_{1}-z_{2})^{n}}\right)e^{\lambda_{1}z_{1}+\lambda_{2}z_{2}}dz_{1}\boxtimes dz_{2}\right)\right|_{z_{2}=w}\\
&=-\frac{1}{2\pi i}e^{-(\lambda_{1}+\lambda_{2})w}\left. \left(\oint_{z_{1}=z_2}\partial_{z_{i}}\left(\frac{g(z_{1},z_{2})}{(z_{1}-z_{2})^{n}}e^{\lambda_{1}z_{1}+\lambda_{2}z_{2}}\right)dz_{1}\boxtimes dz_{2}\right)\right|_{z_{2}=w}\\
&+\frac{1}{2\pi i}e^{-(\lambda_{1}+\lambda_{2})w}\left. \left(\oint_{z_{1}=z_2}\frac{g(z_{1},z_{2})}{(z_{1}-z_{2})^{n}}\partial_{z_{i}}\left(e^{\lambda_{1}z_{1}+\lambda_{2}z_{2}}\right)dz_{1}\boxtimes dz_{2}\right)\right|_{z_{2}=w}\\
&=0+\frac{\lambda_{i}}{2\pi i}e^{-(\lambda_{1}+\lambda_{2})w}\left. \left(\oint_{z_{1}=z_2}\frac{g(z_{1},z_{2})}{(z_{1}-z_{2})^{n}}e^{\lambda_{1}z_{1}+\lambda_{2}z_{2}}dz_{1}\boxtimes dz_{2}\right)\right|_{z_{2}=w}\\
&=\mu_{2}(\alpha )\cdot \partial_{z_{i}}.
\end{align*}
\end{proof}

The special feature in dimension one is that the unit chiral algebra has trivial higher operations.
Notice that by simple degree reasons, the unit chiral algebra must necessarily have vanishing $k \geq 3$-ary
operations.
In fact, the operation $\mu_2$ satisfies the ordinary Jacobi identity.

\begin{prop}[\cite{BD}, see also \cite{HLdmodule}]
The operation $\mu_2$ satisfies the Jacobi identity, hence it defines the structure of a (strict, one-dimensional) chiral
algebra.
\end{prop}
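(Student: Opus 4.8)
The plan is to verify the two conditions that a morphism of operads $\mathcal{L}\!\op{ie} \to \cP^{ch}_{\AA^1}[\omega_{\AA^1}]$ imposes on the image of the generating bracket, namely antisymmetry under the $S_2$-action of Definition \ref{SymmetricGroupAction} and the three-term Jacobi relation in $\cP^{ch}_{\AA^1}[\omega_{\AA^1}](3)$. Antisymmetry is immediate from the defining formula: swapping $z_1 \leftrightarrow z_2$ reverses the orientation of the contour $\oint_{z_1 = z_2}$ and simultaneously permutes $(\lambda_1,\lambda_2)$, and the combination of these two effects matches the Koszul sign of the $S_2$-action on $\omega_{\AA^1}^{\boxtimes\{1,2\}}$. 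So the substance of the statement is the Jacobi identity, which I treat next.

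First I would make the operadic composition of Theorem \ref{DgOperadStructure} explicit in arity three. Writing $\alpha = f(z_1,z_2,z_3)\, dz_1 \boxtimes dz_2 \boxtimes dz_3$ with $f \in k[z_i,(z_i-z_j)^{-1}]$, the partial composite $\mu_2 \circ_{\{1,2\}} \mu_2$ is computed by an iterated residue: one first performs $\oint_{z_1=z_2}$, landing in the two-point model for the merged point $\bullet$ and the point $3$, carrying along the weight $e^{\lambda_1 z_1 + \lambda_2 z_2}$ and the substitution $z_\bullet = z_2$ coming from the map $c^{(\{1,2\})}$; one then performs $\oint_{z_\bullet = z_3}$ with weight $e^{\lambda_\bullet z_\bullet + \lambda_3 z_3}$, where $\lambda_\bullet = \lambda_1 + \lambda_2$. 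The Jacobi relation asserts that the signed sum of this composite over the three cyclic choices of the pair contracted first vanishes.

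By $\CC$-linearity and the left $\mathcal{D}_{(\AA^1)^3}$-module structure, I would reduce the identity to the generating elements
$$
\alpha_{a,b,c} = \frac{dz_1 \boxtimes dz_2 \boxtimes dz_3}{(z_1-z_2)^a (z_2-z_3)^b (z_3-z_1)^c}, \qquad a,b,c \geq 0,
$$
since every element of the source is obtained from these by multiplying by polynomials in the $z_i$, and multiplication by $z_i$ is controlled by the $\mathcal{D}$-module compatibility established in the previous proposition (so the full composite is a $\mathcal{D}$-module map and it suffices to check on generators). The heart of the matter is then a contour-deformation identity. After fixing the output variable $w = z_3$ and extracting the common weight $e^{\lambda_1 z_1 + \lambda_2 z_2 + \lambda_3 z_3}$, each cyclic term is an iterated residue of the meromorphic form $f\, e^{\lambda_1 z_1 + \lambda_2 z_2 + \lambda_3 z_3}\, dz_1 \wedge dz_2$ taken along a pair of nested small circles linking the diagonals. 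These iterated residues differ only by the nesting of the contours, and the basic Cauchy relation
$$
\oint_{|z_1 - z_3| > |z_2 - z_3|} - \oint_{|z_2 - z_3| > |z_1 - z_3|} = \oint_{z_1 \approx z_2}
$$
expresses the difference of two orderings as a residue along the remaining diagonal. Iterating this relation over the three diagonals shows that the signed cyclic sum telescopes to zero; this is precisely the vertex-algebra Borcherds/Jacobi identity in residue form.

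The main obstacle I anticipate is not the analytic input, since the residue theorem is classical, but the bookkeeping internal to the operad structure: correctly tracking the intermediate weight $\lambda_\bullet = \lambda_i + \lambda_j$ against the global $\lambda_\star = \lambda_1 + \lambda_2 + \lambda_3$ through the maps $v^{(I')}$ and $c^{(I')}$ of Theorem \ref{DgOperadStructure}, and matching the Koszul signs produced by the $S_3$-action of Definition \ref{SymmetricGroupAction}. Once the three terms are normalized to share a common set of variables and weights, the purely analytic contour-deformation argument closes the proof; alternatively, one may simply observe that this $\mu_2$ reproduces the standard chiral bracket on $\omega_{\AA^1}$ and invoke \cite{BD}.
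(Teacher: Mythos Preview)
The paper does not give a proof of this proposition: it is stated with attributions to \cite{BD} and \cite{HLdmodule} and then the text moves on. Your proposal therefore cannot be compared against an argument in the paper, but it is worth noting that your sketch is essentially the classical vertex-algebra proof one finds in the cited references: the reduction to generators, the identification of the three operadic composites with nested contour integrals, and the contour-deformation relation you display are exactly the Borcherds/Jacobi identity in residue form that underlies the $d=1$ theory.

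Your proof outline is correct in substance. The one point I would tighten is the reduction to the generators $\alpha_{a,b,c}$: you say that multiplication by polynomials in the $z_i$ is controlled by $\mathcal{D}$-module compatibility, but strictly speaking you need both composites $\mu_2 \circ_{\{i,j\}} \mu_2$ to be $\mathcal{D}_{(\AA^1)^3}$-module maps, which follows from the operad structure of Theorem \ref{DgOperadStructure} rather than only from the previous proposition about $\mu_2$ alone. Once that is in hand, your contour-deformation argument (or the final appeal to \cite{BD}) closes the proof.
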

In the definition of chiral operation for $\omega_{\mathbb{A}^{1}}$, we used the residue.
Now we introduce an alternative method to define the residue.
The advantage of this definition is that it lends itself to a natural higher-dimensional extension.
Thus, this is the approach we will take in the next section to define the higher-dimensional chiral operations (for the
unit chiral algebra).
The first step is to prove the following integral representation of the Cauchy kernel.

\begin{lem}
    $$
    \frac{1}{z}=-\int_{0}^{+\infty}e^{-zy}dy,
    $$
    where $y=\frac{\bar{z}}{t}$, and $dy$ is viewed as the differential form $$dy=\frac{1}{t} d\bar{z}-\frac{\bar{z}}{t^2} d t.$$
    The integration is over $t$. We call $t$ the Schwinger parameter.
\end{lem}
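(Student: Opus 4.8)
The plan is to read the right-hand side as a fiber integration of the $1$-form $e^{-zy}\,dy$ along the Schwinger parameter $t\in(0,+\infty)$, with the (anti)holomorphic coordinate $\bar{z}$ held fixed. First I would substitute $y=\bar{z}/t$ into the exponent. Since
$$
zy=\frac{z\bar{z}}{t}=\frac{|z|^{2}}{t},
$$
the integrand becomes $e^{-|z|^{2}/t}$, which is real and decays rapidly as $t\to 0^{+}$. This is exactly the purpose of the change of variables $y=\bar z/t$: it converts the oscillatory kernel $e^{-zy}$ into a genuinely decaying one, thereby guaranteeing absolute convergence of the $t$-integral for every $z\neq 0$.

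Next I would insert the stated expression $dy=\frac{1}{t}\,d\bar{z}-\frac{\bar{z}}{t^{2}}\,dt$. Because the integration is carried out over the single variable $t$, only the component of $dy$ proportional to $dt$ survives; the $d\bar{z}$-term is transverse to the $t$-direction and is annihilated by the fiber integration. Thus the right-hand side collapses to
$$
-\int_{0}^{+\infty}e^{-|z|^{2}/t}\left(-\frac{\bar{z}}{t^{2}}\right)dt=\bar{z}\int_{0}^{+\infty}e^{-|z|^{2}/t}\,\frac{dt}{t^{2}}.
$$
The remaining step is elementary: substituting $u=1/t$, so that $du=-dt/t^{2}$ and the limits swap, turns this into the standard integral $\int_{0}^{+\infty}e^{-|z|^{2}u}\,du=|z|^{-2}$, valid for $z\neq 0$. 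Hence the right-hand side equals $\bar{z}/|z|^{2}=\bar{z}/(z\bar{z})=1/z$, which is the claim.

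The only genuine subtlety, and what I expect to be the main point requiring care, is the bookkeeping around the differential form $dy$ and the precise meaning of ``integration over $t$.'' I would be explicit that $e^{-zy}\,dy$ is regarded as a $1$-form on a space carrying both $t$ and $\bar{z}$, and that the operation on the right is fiber integration along $t$, which by definition retains only the $dt$-coefficient and discards the $d\bar{z}$-term. Pinning down the sign requires tracking the orientation of $(0,+\infty)$ together with the explicit minus sign in $-\bar{z}/t^{2}$; once these conventions are fixed the computation is forced. I would also record that $z\neq 0$ is necessary for convergence, consistent with $1/z$ having a pole at the origin, so that the identity is understood away from the singular locus.
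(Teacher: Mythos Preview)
Your proof is correct and follows essentially the same approach as the paper: both reduce to the elementary identity $\int_{0}^{+\infty}e^{-|z|^{2}u}\,du=|z|^{-2}$ via the substitution $u=1/t$, after recognizing that only the $dt$-component of $dy$ contributes. The only difference is cosmetic direction---the paper starts from $1/z=\bar{z}/|z|^{2}$ and builds up to the integral, while you start from the integral and evaluate it down to $1/z$.
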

\begin{proof}
  \begin{equation}
        \frac{1}{z} =\frac{\bar{z}}{z\bar{z}}\\
    =
\int_{0}^{+\infty}\bar{z}e^{-z\bar{z}t}dt\\
        =
        -\int_{0}^{+\infty}e^{-\frac{z\bar{z}}{t}}\bar{z}d\left(\frac{1}{t}\right)\\
        =
-\int_{0}^{+\infty}e^{-zy}dy.
\end{equation}
\end{proof}

Let $\alpha=\frac{g(z)}{z^{n}}dz$. Using the above lemma, we can rephrase the residue as
\begin{align*}
\oint_{z=0}\alpha
&=
\oint_{z=0}\frac{g(z)}{z^{n}}dz \\
&=
\int_{(0,+\infty)^{n}}\int_{S^{1}}e^{-\left(\sum_{k=1}^{n}\frac{1}{t_{k}}\right)z\bar{z}}g(z)\prod_{k=1}^{n}dy_{k}dz,
\end{align*}
where $S^{1}=\{z\in \kk=\mathbb{C}\mid |z|^2=1\}$, $y_{k}=\frac{\bar{z}}{t_{k}}$.

Let $d_{\mathbb{A}^{1}}$, $d_{t}$ be the de Rham differential on $\mathbb{A}^{1}$ and $(0,+\infty)^{n}$ respectively.
Then the differential form appearing in the above formula is $(d_{\AA^{1}} + d_{t})$-closed:
$$
(d_{\mathbb{A}^{1}}+d_{t}) \left(e^{-\left(\sum_{k=1}^{n}\frac{1}{t_{k}}\right)z\bar{z}}g(z)\prod_{k=1}^{n}dy_{k}dz\right)=0.
$$
We will find an equivalent way to express the residue $\oint_{z=0} \alpha$ in terms of a homologous integration cycle.

Let
$$S^{+}((0,+\infty)^n)=\{(t_{1},\cdots,t_{n})\in(0,+\infty)^{n}|\sum_{k=1}^{n}t_{k}^{2}=1\} \subset (0,+\infty)^{n},
$$
using Stokes' theorem carefully gives
\begin{align*}
\oint_{z=0}\alpha
&=\int_{(0,+\infty)^{n}}\int_{S^{1}}e^{-\left(\sum_{k=1}^{n}\frac{1}{t_{k}}\right)z\bar{z}}g(z)\prod_{k=1}^{n}dy_{k}dz\\
&=\int_{S^{+}((0,+\infty)^n)}\int_{\mathbb{A}^{1}}e^{-\left(\sum_{k=1}^{n}\frac{1}{t_{k}}\right)z\bar{z}}g(z)\prod_{k=1}^{n}dy_{k}dz.
\end{align*}
Rather than constructing a homotopy between these two integration cycles, we provide a direct computation to obtain this result:
\begin{prop}
    Let $\alpha=\frac{g(z)}{z^{n}}dz$. Then
    $$
    \frac{1}{2\pi i}\int_{S^{+}((0,+\infty)^n)}\int_{\mathbb{A}^{1}}e^{-\left(\sum_{k=1}^{n}\frac{1}{t_{k}}\right)z\bar{z}}g(z)\prod_{k=1}^{n}dy_{k}dz=\frac{\partial_{z}^{n-1}g(0)}{(n-1)!}.
    $$
\end{prop}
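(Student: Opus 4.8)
The plan is to compute the left-hand side as an iterated integral: integrate over $\AA^1 = \CC$ first (for fixed Schwinger parameters) and over $S^+((0,+\infty)^n)$ second. Throughout I write $R = \sum_{k=1}^n t_k^{-1}$, expand $g(z) = \sum_m \frac{\del_z^m g(0)}{m!}z^m$, and abbreviate the integrand as $\eta := e^{-R z\bar z}\, g(z)\,\prod_{k=1}^n dy_k \wedge dz$. First I would expand $\prod_{k=1}^n dy_k$ using $dy_k = t_k^{-1}\,d\bar z - \bar z\, t_k^{-2}\, dt_k$. Since $d\bar z\wedge d\bar z = 0$, at most one factor contributes its $d\bar z$-part; the term in which no factor contributes $d\bar z$ is proportional to $dt_1\wedge\cdots\wedge dt_n$, an $n$-form in the $t$-variables, which restricts to zero on the $(n-1)$-dimensional cycle $S^+((0,+\infty)^n)$. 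Hence only the $n$ single-$d\bar z$ terms survive, and after reordering (tracking Koszul signs) one obtains
\[
  \prod_{k=1}^n dy_k \wedge dz = \bar z^{\,n-1}\Big(\textstyle\prod_{k}t_k^{-2}\Big)\, d\bar z\wedge dz \wedge \iota_E\big(dt_1\wedge\cdots\wedge dt_n\big) + (\text{terms vanishing on } S^+),
\]
where $E = \sum_j t_j\del_{t_j}$ is the Euler vector field and $\iota_E$ denotes contraction.

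Next I would carry out the $\CC$-integral for fixed $t$. The angular integral $\int_0^{2\pi} e^{i(m-(n-1))\theta}\,d\theta = 2\pi\,\delta_{m,n-1}$ annihilates every Taylor mode of the holomorphic function $g$ except $m = n-1$, thereby extracting exactly $\frac{\del_z^{n-1}g(0)}{(n-1)!}$, while the radial Gaussian integral $\int_0^\infty e^{-R r^2} r^{2n-1}\,dr = \frac{(n-1)!}{2R^n}$ (together with $d\bar z\wedge dz = 2i\,dx\wedge dy$) produces the factor $2\pi i\,(n-1)!\,R^{-n}$. Performing the $S^+$-integral afterwards, the whole expression collapses to
\[
  \frac{1}{2\pi i}\int_{S^+((0,+\infty)^n)}\int_{\AA^1}\eta = \frac{\del_z^{n-1}g(0)}{(n-1)!}\,(n-1)!\, J, \qquad J := \int_{S^+((0,+\infty)^n)} \frac{\prod_k t_k^{-2}}{R^n}\,\iota_E\big(dt_1\wedge\cdots\wedge dt_n\big),
\]
so it remains to establish $J = 1/(n-1)!$.

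To evaluate $J$, I would first observe that the integrand $\beta := R^{-n}\prod_k t_k^{-2}\,\iota_E(d^n t)$ is a closed, scale-invariant $(n-1)$-form on $(0,+\infty)^n$. Writing $\beta = \iota_E\gamma$ with $\gamma = R^{-n}\prod_k t_k^{-2}\,d^n t$, the coefficient is homogeneous of degree $-n$ while $L_E(d^n t) = n\, d^n t$, so $L_E\gamma = 0$; since $\gamma$ is a top-form, Cartan's formula gives $d\beta = d\iota_E\gamma = L_E\gamma - \iota_E d\gamma = 0$. Scale-invariance then lets me replace the spherical cycle by the cross-section $\Sigma = \{\sum_k t_k^{-1} = 1\}$, which meets each ray from the origin exactly once. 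Finally, the substitution $u_k = t_k^{-1}$ sends $\Sigma$ to the standard simplex $\{u_k > 0,\ \sum_k u_k = 1\}$ and, after a short computation using $dt_k = -u_k^{-2}\,du_k$, transforms $\beta$ into $(-1)^{n-1}(\sum_k u_k)^{-n}\,\iota_{E_u}(d^n u)$ with $E_u = \sum_j u_j\del_{u_j}$; its restriction to $\Sigma$ is the affine volume form of the simplex, whose total mass is $1/(n-1)!$. This yields $J = 1/(n-1)!$ and hence the proposition.

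The main obstacle is the bookkeeping of signs and orientations---the Koszul signs in the wedge expansion, the orientation of $S^+$ relative to the simplex cross-section, and the global Koszul rule for products of integrals fixed in the conventions---together with the justification that $S^+$ may be replaced by $\Sigma$. The latter is not purely formal, since both cycles are non-compact (their closures meet the coordinate hyperplanes), so one must either check directly that the boundary terms in Stokes' theorem vanish over the intervening shell, or, equivalently, realize $J$ as a convergent radial average $\int_{(0,+\infty)^n}\phi(r)\,\tfrac{dr}{r}\wedge\beta$ against a bump function $\phi$ with $\int_0^\infty \phi(r)\,\tfrac{dr}{r} = 1$, which legitimizes the reduction to the simplex using only scale-invariance and closedness.
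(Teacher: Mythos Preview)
Your argument is correct.  The main difference from the paper is in how the $\AA^1$--integral is carried out.  You expand $\prod_k dy_k$ directly, observe that only the single-$d\bar z$ terms survive on $S^+$, and then evaluate the resulting Gaussian moment in polar coordinates, the angular integration picking out the Taylor mode $m=n-1$.  The paper instead invokes Wick's formula, writing the $\AA^1$--integral as $\tfrac{1}{R}\,\iota_{\partial_{\bar z}}\,e^{R^{-1}\partial_z\partial_{\bar z}}(-)\big|_{z=0}$ and then passing the exponential through the $dy_k$-factors via the operator identity $e^{R^{-1}\partial_z\partial_{\bar z}}\circ(dy_i\wedge-)\circ e^{-R^{-1}\partial_z\partial_{\bar z}}=dy_i\wedge + d\!\big(\tfrac{t_i^{-1}}{R}\big)\partial_z$.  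Your approach is more elementary and self-contained for this one-dimensional case; the paper's operator method, however, is the template that is reused verbatim in the higher-dimensional Proposition~\ref{explicit formula}, which explains why the authors chose it here.

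For the Schwinger integral the two arguments coincide: both pass from $S^+$ to the cross-section $\{\sum t_k^{-1}=1\}$ using scale-invariance and then substitute $u_k=t_k^{-1}$ to land on the standard simplex.  On this point your caution about non-compact boundaries is unnecessary: since $\beta$ is basic for the $\mathbb{R}_+$--action (you checked $L_E\beta=0$ and $\iota_E\beta=0$), it \emph{is} the pullback of a form on $(0,+\infty)^n/\mathbb{R}_+$, and any global section of this quotient---whether $S^+$ or $\Sigma$---computes the same integral, with no Stokes argument needed.  The paper makes exactly this observation (``induces a well-defined differential form on $(0,+\infty)^n/\mathbb{R}_+$'') without further comment.
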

\begin{proof}
    %Let's first compute
    %$$
    %\frac{1}{2\pi %i}\int_{\mathbb{A}^{1}}e^{-\left(\sum_{k=1}^{n}\f%rac{1}{t_{k}}\right)z\bar{z}}g(z)\prod_{k=1}^{n}%dy_{k}dz.
%    $$
Noting that the integral is of Gaussian type, we apply Wick's theorem to obtain
\begin{equation}
    \frac{1}{2\pi i}\int_{\mathbb{A}^{1}}e^{-\left(\sum_{k=1}^{n}\frac{1}{t_{k}}\right)z\bar{z}}g(z)\prod_{k=1}^{n}dy_{k}dz
   =(-1)^{n-1}\left.\frac{1}{\sum_{k=1}^{n}\frac{1}{t_{k}}}\iota_{\partial_{\bar{z}}}e^{\frac{1}{\sum_{k=1}^{n}\frac{1}{t_{k}}}\partial_{z}\partial_{\bar{z}}}\left(g(z)\prod_{k=1}^{n}dy_{k}\right)\right|_{z=0},
\end{equation}
where $\iota_{\partial_{\bar{z}}}$ denotes contraction with the vector field $\partial_{\bar{z}}$.

Notice that as operators acting on differential forms, we have the following relation
\begin{align*}
    e^{\frac{1}{\sum_{k=1}^{n}\frac{1}{t_{k}}}\partial_{z}\partial_{\bar{z}}}\circ \left(dy_{i} \wedge - \right) \circ e^{\frac{-1}{\sum_{k=1}^{n}\frac{1}{t_{k}}}\partial_{z}\partial_{\bar{z}}}=dy_{i} \wedge +d(\frac{\frac{1}{t_{i}}}{\sum_{k=1}^{n}\frac{1}{t_{k}}})\partial_{z}.
\end{align*}
Thus we can express the Gaussian integration as
\begin{align*}
    &\frac{1}{2\pi i}\int_{\mathbb{A}^{1}}e^{-\left(\sum_{k=1}^{n}\frac{1}{t_{k}}\right)z\bar{z}}g(z)\prod_{k=1}^{n}dy_{k}dz\\
    &=
    (-1)^{n-1}\sum_{k=1}^{n}\left((-1)^{k-1}\frac{\frac{1}{t_{k}}}{\sum_{k''=1}^{n}\frac{1}{t_{k''}}}\prod_{k'\neq k}d(\frac{\frac{1}{t_{k'}}}{\sum_{k''=1}^{n}\frac{1}{t_{k''}}})\right)\partial_{z}^{n-1}g(0).
\end{align*}

Therefore,
\begin{align*}
     &\frac{1}{2\pi i}\int_{S^{+}((0,+\infty)^n)}\int_{\mathbb{A}^{1}}e^{-\left(\sum_{k=1}^{n}\frac{1}{t_{k}}\right)z\bar{z}}g(z)\prod_{k=1}^{n}dy_{k}dz\\
     &=
     (-1)^{n-1}\partial_{z}^{n-1}g(0)\int_{S^{+}((0,+\infty)^n)}\sum_{k=1}^{n}\left((-1)^{k-1}\frac{\frac{1}{t_{k}}}{\sum_{k''=1}^{n}\frac{1}{t_{k''}}}\prod_{k'\neq k}d(\frac{\frac{1}{t_{k'}}}{\sum_{k''=1}^{n}\frac{1}{t_{k''}}})\right).
\end{align*}

To compute the integral over $S^{+}((0,+\infty)^n)$, we apply localization to the following $\mathbb{R}_+ = (0,+\infty)$-action on $(0,+\infty)^n$:
$$
\lambda\cdot (t_{1},\cdots,t_{n})=(\lambda t_{1},\cdots,\lambda t_{n}),\quad \lambda\in \mathbb{R}_+.
$$

Notice that
$$
\sum_{k=1}^{n}\left((-1)^{k-1}\frac{\frac{1}{t_{k}}}{\sum_{k''=1}^{n}\frac{1}{t_{k''}}}\prod_{k'\neq k}d(\frac{\frac{1}{t_{k'}}}{\sum_{k''=1}^{n}\frac{1}{t_{k''}}})\right)
$$
induces a well-defined differential form on $(0,+\infty)^{n}/\mathbb{R}_+$, so we have
\begin{align*}
&(-1)^{n-1}\int_{S^{+}((0,+\infty)^n)}\sum_{k=1}^{n}\left((-1)^{k-1}\frac{\frac{1}{t_{k}}}{\sum_{k''=1}^{n}\frac{1}{t_{k''}}}\prod_{k'\neq k}d(\frac{\frac{1}{t_{k'}}}{\sum_{k''=1}^{n}\frac{1}{t_{k''}}})\right)\\
&=
(-1)^{n-1}\int_{\{(t_{1},\cdots,t_{n})\in((0,+\infty)^n)|\sum_{k=1}^{n}\frac{1}{t_{k}}=1\}}\sum_{k=1}^{n}\left((-1)^{k-1}\frac{\frac{1}{t_{k}}}{\sum_{k''=1}^{n}\frac{1}{t_{k''}}}\prod_{k'\neq k}d(\frac{\frac{1}{t_{k'}}}{\sum_{k''=1}^{n}\frac{1}{t_{k''}}})\right)\\
&=
\int_{\{(u_{1},\cdots,u_{n})\in((0,+\infty)^n)|\sum_{k=1}^{n}u_{k}=1\}}\sum_{k=1}^{n}\left((-1)^{k-1}u_{k}\prod_{k'\neq k}du_{k'}\right)\\
&=\frac{1}{(n-1)!}.
\end{align*}

Thus, the conclusion follows.
\end{proof}

Although the expression
$$
\frac{1}{2\pi i}\int_{S^{+}((0,+\infty)^n)}\int_{\mathbb{A}^{1}}e^{-\left(\sum_{k=1}^{n}\frac{1}{t_{k}}\right)z\bar{z}}g(z)\prod_{k=1}^{n}dy_{k}dz
$$
appears to be more complicated than the purely holomorphic contour integral
$$
\frac{1}{2\pi i}\oint_{z=0}\frac{g(z)}{z^{n}}dz,
$$
we will show that it can be extended to higher dimensional affine spaces in the next subsection.

\begin{rem}
    This computation is parallel to the calculation in \cite{li2023vertex}, where the author identifies the QME in BV formalism with a Maurer-Cartan equation in a vertex algebra.
\end{rem}

\subsection{Holomorphic Feynman graph integrals}

In this subsection, we lay out the prerequisite work on Feynman graph integrals which are central to the concept of
residues that we develop for the Jouanolou model of $(\omega_{\mathbb{A}^{d}}[d])^{\boxtimes  I}$.

Let $\mathcal{A}^{0,\bullet}(\mathrm{Conf}_{ I}(\mathbb{A}^d))$ be the Dolbeault complex of the structure sheaf $\mathcal{O}(\mathrm{Conf}_{ I}(\mathbb{A}^{d}))$. Define the map
$$
\Phi_I:\mathbf{J}^{ I}_{\mathbb{A}^{d}}\rightarrow\mathcal{A}^{0,\bullet}(\mathrm{Conf}_{ I}(\mathbb{A}^d))
$$
$$
\Phi_I(z_{i}^{\sss}) = z_{i}^{\sss},\quad \Phi_I(x_{ij}^{\sss}) = \frac{\bar{z}_{i}^{\sss} - \bar{z}_{j}^{\sss}}{|z_{i} - z_{j}|^{2}},\quad \Phi_I(\mathbf{d}x_{ij}^{\sss}) = \bar{\partial}\left(\frac{\bar{z}_{i}^{\sss} - \bar{z}_{j}^{\sss}}{|z_{i} - z_{j}|^{2}}\right).
$$
Here:
\begin{itemize}
    \item \(\sss \in \{1,2,\dots,d\}\) is the coordinate index.
    \item \(i, j \in  I\) are elements in the ordered set.
    \item \(|z_{i} - z_{j}|^{2} = \sum\limits_{\ttt=1}^{d}(z_{i}^{\ttt} - z_{j}^{\ttt})(\bar{z}_{i}^{\ttt} - \bar{z}_{j}^{\ttt})\) is the squared norm.
    \item \(\bar{\partial}\) is the Dolbeault differential.
\end{itemize}
It is easily checked that this map is well-defined and is a morphism of commutative dg algebras.
The map $\Phi_I$ is dense in cohomology.\footnote{In fact, $\Phi_I$
is injective, but we will not need that.}

We also point out that $\Phi_I$ is a $\mathcal{D}_{(\mathbb{A}^{d})^{ I}}$-module morphism.
Similarly, we can define a natural dg $\mathcal{D}_{(\mathbb{A}^{d})^{ I}}$-module
morphism
\[
  \mathbf{J}^{ I}_{\mathbb{A}^{d}}((\omega_{\mathbb{A}^{d}}[d])^{\boxtimes I}) \to \mathcal{A}^{0,\bullet}
  (\mathrm{Conf}_{ I}(\mathbb{A}^d),(\omega_{\mathbb{A}^{d}}[d])^{\boxtimes I}) .
\]

In the following discussion, we identify the elements of $\mathbf{J}^{ I}_{\mathbb{A}^{d}}((\omega_{\mathbb{A}^{d}}[d])^{\boxtimes I})$ with the corresponding elements in $\mathcal{A}^{0,\bullet}(\mathrm{Conf}_{ I}(\mathbb{A}^d),(\omega_{\mathbb{A}^{d}}[d])^{\boxtimes I})$.

\subsubsection{Schwinger space}

In our previous discussion of residues when $d=1$, we represented the residue kernel $\frac{1}{z}$ as an integral over Schwinger parameters.
Now our goal is to introduce similar integral representations of $\Phi_I(x_{ij}^{\sss})$ and $\Phi_I(\mathbf{d}x_{ij}^{\sss})$:

\begin{lem}
    We have the following equalities:
    \begin{enumerate}
        \item
        $$
        \Phi_I(x_{ij}^{\sss})=-\int_{0}^{+\infty}e^{-(z_{ij}|y_{ij})}dy_{ij}^{\sss}
        $$
        \item
        $$
        \Phi_I(\mathbf{d}x_{ij}^{\sss})=-\int_{0}^{+\infty}e^{-(z_{ij}|y_{ij})}(z_{ij}|dy_{ij})dy_{ij}^{\sss}
        $$
    \end{enumerate}
    Here:
        \begin{itemize}
            \item $y_{ij}^{\sss}=\frac{\bar{z}_{i}^{\scriptstyle\sss}-\bar{z}_{j}^{\scriptstyle\sss}}{t}$, where $t$ is called the Schwinger parameter.
            \item $z_{ij}^{\sss}=z_{i}^{\sss}-z_{j}^{\sss}$.
            \item
            $(z_{ij}| y_{ij})=\sum\limits_{\scriptstyle\sss=1}^{d}z_{ij}^{\sss}y_{ij}^{\sss}$ is the dot product.
            \item $dy_{ij}^{\sss}$ is the differential form
            $$
            dy_{ij}^{\sss}=\frac{d\bar{z}_{i}^{\sss}-d\bar{z}_{j}^{\sss}}{t}-\frac{(\bar{z}_{i}^{\sss}-\bar{z}_{j}^{\sss})dt}{t^{2}}.
            $$
            \item both integrations are with respect to the Schwinger parameter $t$.
        \end{itemize}
\end{lem}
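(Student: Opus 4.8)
The plan is to recognize both identities as the vector-valued analogue of the $d=1$ Lemma, obtained by integrating out a single Schwinger parameter $t\in(0,+\infty)$. Throughout I write $w^s := z_{ij}^s = z_i^s - z_j^s$, so that $|z_i-z_j|^2 = \sum_{s=1}^d w^s\bar w^s =: |w|^2$, which is nowhere vanishing on $\mathrm{Conf}_I(\AA^d)$. With this notation $(z_{ij}|y_{ij}) = |w|^2/t$ and $dy_{ij}^s = t^{-1}\,d\bar w^s - t^{-2}\bar w^s\,dt$, so the integrand is formally identical to the one-variable case with $z\bar z$ replaced by $|w|^2$.

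For part (1) I would compute the integral over $t$ directly. Since $(z_{ij}|y_{ij})$ depends only on $t$ for a fixed configuration, only the $dt$-component of $dy_{ij}^s$ contributes, and one is left with the elementary Schwinger integral
\[
-\int_0^{+\infty} e^{-(z_{ij}|y_{ij})}\, dy_{ij}^s = \int_0^{+\infty} e^{-|w|^2/t}\,\frac{\bar w^s}{t^2}\, dt \overset{u = 1/t}{=} \bar w^s\int_0^{+\infty} e^{-|w|^2 u}\, du = \frac{\bar w^s}{|w|^2} = i(x_{ij}^s),
\]
which is exactly the assertion, and is the vector-valued repackaging of the $d=1$ Lemma.

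For part (2) I see two routes. The clean route is to differentiate: since $i$ is a morphism of commutative dg algebras it sends $\mathbf{d}\mapsto\bar\partial$, so $i(\mathbf{d}x_{ij}^s) = \bar\partial\, i(x_{ij}^s)$. Writing $D := \bar\partial + d_t$ for the total differential on $\mathrm{Conf}_I(\AA^d)\times(0,+\infty)$, the relations $D\,y_{ij}^{s} = dy_{ij}^{s}$, $D(dy_{ij}^s) = 0$, and $D(z_{ij}|y_{ij}) = (z_{ij}|dy_{ij})$ give
\[
D\bigl(e^{-(z_{ij}|y_{ij})}\, dy_{ij}^s\bigr) = -\,e^{-(z_{ij}|y_{ij})}\,(z_{ij}|dy_{ij})\wedge dy_{ij}^s.
\]
Integrating over $t$, the $\bar\partial$-part commutes with the integral and reproduces $\bar\partial\, i(x_{ij}^s)$, while the $d_t$-exact part contributes only boundary terms at $t=0$ and $t=+\infty$, which vanish since $e^{-|w|^2/t}\to 0$ as $t\to 0^+$ and the surviving coefficient decays like $t^{-2}$ as $t\to+\infty$. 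This deduces (2) from (1). Alternatively, one may verify (2) by brute force: extract the $dt$-component of $(z_{ij}|dy_{ij})\wedge dy_{ij}^s$, reduce the resulting $t$-integral to $\int_0^{+\infty} e^{-|w|^2/t}\, t^{-3}\, dt = |w|^{-4}$, and match against the direct Dolbeault computation
\[
\bar\partial\!\left(\frac{\bar w^s}{|w|^2}\right) = \frac{1}{|w|^4}\left(|w|^2\, d\bar w^s - \bar w^s\sum_{s'=1}^d w^{s'}\, d\bar w^{s'}\right).
\]

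The only genuinely delicate point is the bookkeeping of Koszul signs when integrating out $t$ — in particular the orientation convention for $\int_0^{+\infty}(\cdots)\wedge dt$ fixed in the Conventions — together with the justification that the boundary terms in the Stokes step of the clean route really vanish. Both are routine but must be tracked carefully so that the overall signs in (1) and (2) come out as stated; the self-consistency of the direct computation in part (1) fixes the convention once and for all.
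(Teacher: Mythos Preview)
Your proposal is correct and follows essentially the same route as the paper: part (1) is the direct Schwinger integral via the substitution $u=1/t$, and your ``clean route'' for part (2) is exactly the paper's argument---apply $\bar\partial$ to (1), pass it under the integral (the paper invokes dominated convergence here), add the harmless $d_t$ to form the total differential, and compute. Your explicit remark that the $d_t$-exact piece contributes only vanishing boundary terms is precisely the step the paper uses in passing from $\int\bar\partial(\cdots)$ to $\int(\bar\partial+d_t)(\cdots)$.
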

\begin{proof}
    We note
    \begin{align*}
        \Phi_I(x_{ij}^{\sss})
        &=
        \frac{\bar{z}_{i}^{\sss} - \bar{z}_{j}^{\sss}}{|z_{i} - z_{j}|^{2}}\\
        &=
        (\bar{z}_{i}^{\sss} - \bar{z}_{j}^{\sss})\int_{0}^{+\infty}e^{-|z_{i} - z_{j}|^{2}u}du\\
        &=
        -(\bar{z}_{i}^{\sss} - \bar{z}_{j}^{\sss})\int_{0}^{+\infty}e^{-\frac{|z_{i} - z_{j}|^{2}}{t}}d\left(\frac{1}{t}\right)\\
        &=
        -\int_{0}^{+\infty}e^{-(z_{ij}| y_{ij})}dy_{ij}^{\sss}.
    \end{align*}
    In the last line we have used the fact that only the $d t$ component contributes to the integral.

    By the dominated convergence theorem for derivatives, we can interchange the order of integration and differentiation when $z_{i}\neq z_{j}$. Thus
    \begin{align*}
        \Phi_I(\mathbf{d}x_{ij}^{\sss})
        &=
        -\bar{\partial}\left(\int_{0}^{+\infty}e^{-(z_{ij}| y_{ij})}dy_{ij}^{\sss}\right)\\
        &=
        \int_{0}^{+\infty}\bar{\partial}\left(e^{-(z_{ij}| y_{ij})}dy_{ij}^{\sss}\right)\\
        &=
        \int_{0}^{+\infty}(\bar{\partial}+d_{t})\left(e^{-(z_{ij}| y_{ij})}dy_{ij}^{\sss}\right)\\
        &=
        -\int_{0}^{+\infty}e^{-(z_{ij}| y_{ij})}(z_{ij}| dy_{ij})dy_{ij}^{\sss},
    \end{align*}
    where $d_t$ is the de Rham differential over the Schwinger parameter space.
    Again, we emphasize that only the term proportional to $dt$ will contribute to the integral.
\end{proof}

The following proposition is useful and follows by direct computation.

\begin{prop}\label{Lie and interior product}
    For $i,j \in  I$ define the vector field
    $$ V_{ij}=\sum_{\scriptstyle\sss=1}^{d}(\bar{z}_i^{\sss}\partial_{\bar{z}_{i}^{\scriptstyle\sss}}+\bar{z}_j^{\sss}\partial_{\bar{z}_{j}^{\scriptstyle\sss}})+t\partial_{t}.
    $$
    If $L_{V_{ij}}$ and $\iota_{V_{ij}}$ denote the Lie derivative and interior product operator, respectively, then we have the following identities:
$$
\left\{
\begin{array}{cc}
     L_{V_{ij}}e^{-(z_{ij}| y_{ij})}dy_{ij}^{\sss} & =0,  \\
      L_{V_{ij}}e^{-(z_{ij}| y_{ij})}\cdot(z_{ij}|dy_{ij})dy_{ij}^{\sss} & =0,\\
      \iota_{V_{ij}}e^{-(z_{ij}| y_{ij})}dy_{ij}^{\sss} & =0,\\
      \iota_{V_{ij}}e^{-(z_{ij}| y_{ij})}\cdot(z_{ij}|dy_{ij})dy_{ij}^{\sss} & =0.
\end{array}
\right.
$$
\end{prop}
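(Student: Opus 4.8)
The plan is to recognize $V_{ij}$ as the Euler (scaling) vector field for the $\mathbb{R}_+$-action that simultaneously rescales the antiholomorphic coordinates $\bar z_i^s,\bar z_j^s$ and the Schwinger parameter $t$, while fixing the holomorphic coordinates $z_i^s,z_j^s$. Concretely, the flow generated by $V_{ij}$ sends $(\bar z_i^s,\bar z_j^s,t)\mapsto(e^{\tau}\bar z_i^s,e^{\tau}\bar z_j^s,e^{\tau}t)$. Under this scaling the quantity $y_{ij}^s=(\bar z_i^s-\bar z_j^s)/t$ has weight $0$, since numerator and denominator scale with equal weight, and likewise $(z_{ij}\,|\,y_{ij})=|z_i-z_j|^2/t$ has weight $0$ because $z_{ij}^s$ is fixed while $\bar z_{ij}^s$ and $t$ scale together. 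The whole proposition is a formal consequence of these two invariances.

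First I would record the two elementary identities $V_{ij}(y_{ij}^s)=0$ and $V_{ij}\big((z_{ij}\,|\,y_{ij})\big)=0$, each a one-line chain-rule computation: applying $\sum_t(\bar z_i^t\partial_{\bar z_i^t}+\bar z_j^t\partial_{\bar z_j^t})+t\partial_t$ to $y_{ij}^s$ gives $(\bar z_i^s-\bar z_j^s)/t$ from the $\bar z$-terms and $-(\bar z_i^s-\bar z_j^s)/t$ from the $t\partial_t$ term, and these cancel; the same cancellation holds for $(z_{ij}\,|\,y_{ij})$. Since $e^{-(z_{ij}|y_{ij})}$ and $e^{-(z_{ij}|y_{ij})}(z_{ij}\,|\,y_{ij})$ are functions of the single invariant $(z_{ij}\,|\,y_{ij})$, they are themselves annihilated by $V_{ij}$.

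For the two Lie-derivative identities I would invoke Cartan calculus. Because $dy_{ij}^s=d(y_{ij}^s)$ is exact (the displayed formula is exactly $(\bar\partial+d_t)y_{ij}^s$, with no holomorphic differential since $y_{ij}^s$ is independent of $z$), the relation $L_{V_{ij}}d=d\,L_{V_{ij}}$ yields $L_{V_{ij}}dy_{ij}^s=d\big(V_{ij}(y_{ij}^s)\big)=0$. Combining this with $L_{V_{ij}}e^{-(z_{ij}|y_{ij})}=0$, and the analogous vanishing for the weight-zero function $e^{-(z_{ij}|y_{ij})}(z_{ij}\,|\,y_{ij})$, via the Leibniz rule for $L_{V_{ij}}$ gives the first two identities. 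For the interior-product identities, both forms have the shape $f\,dy_{ij}^s$ with $f$ a function, so by $C^\infty$-linearity $\iota_{V_{ij}}\!\big(f\,dy_{ij}^s\big)=f\,\iota_{V_{ij}}dy_{ij}^s=f\,V_{ij}(y_{ij}^s)=0$.

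There is no genuine obstacle here; the only point to get right is the bookkeeping of scaling weights, namely confirming that $t$ carries the same weight as $\bar z_i^s-\bar z_j^s$ so that $y_{ij}^s$ and hence $e^{-(z_{ij}|y_{ij})}$ are invariant. Once that is fixed, all four identities follow formally from invariance together with the facts that $L_{V_{ij}}$ is a derivation commuting with $d$ and that $\iota_{V_{ij}}$ is linear over functions in its form argument. I would present the direct chain-rule cancellation rather than the flow itself, since it is self-contained and equally short.
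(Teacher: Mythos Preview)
Your proof is correct and is exactly the kind of direct computation the paper has in mind; the paper itself does not give a detailed argument beyond the remark that the proposition ``is a direct computation.'' Your conceptual packaging---recognizing $V_{ij}$ as the Euler field for the simultaneous $\mathbb{R}_+$-rescaling of $\bar z_i,\bar z_j,t$, verifying $V_{ij}(y_{ij}^s)=0$, and then using $L_{V_{ij}}d=dL_{V_{ij}}$ and $\iota_{V_{ij}}dy_{ij}^s=V_{ij}(y_{ij}^s)$---is a clean way to organize that computation and matches how the invariance is subsequently used in the proof of Proposition~\ref{useful property}.
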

%\begin{proof}
%    These identities follow from direct computations.
%\end{proof}

\subsubsection{Feynman graph integrals}
Next, we introduce a graphical language to encode elements in the Jouanolou model.
To this end, we introduce some concepts from graph theory.
\begin{defn}
  A \textit{directed graph} $\vec{\Gamma}$ consists of the following data:
    \begin{enumerate}
        \item A set $\vec{\Gamma}_{0}$ of vertices. We use $|\Gamma_{0}|$ to denote the number of vertices.
        \item An ordered set $\vec{\Gamma}_{1}$ of directed edges. We use $|\Gamma_{1}|$ to denote the number of directed edges.
        \item Two maps
        $$
        t,h:\vec{\Gamma}_{1}\rightarrow\vec{\Gamma}_{0},
        $$
        which are assignments of tail and head to each directed edge. We require that
        $$
        t(e)\neq h(e)
        $$
        for any $e\in\vec{\Gamma}_{1}$, i.e., the graph $\vec{\Gamma}$ has no self-loops.
    \end{enumerate}
    Furthermore, we say that a directed graph $\vec{\Gamma}$ is \textit{decorated} if we have a special edge $e_{l}\in \vec{\Gamma}_{1}$ and a map
    $$
    \mmm \colon \vec{\Gamma}_{1}\rightarrow\{1,2,\dots,d\}.
    $$
\end{defn}

We will use $(\vec{\Gamma},\mmm,l)$ to denote a decorated directed graph. For simplicity, we will also call a decorated
directed graph a \textit{Feynman graph} in this article.

If $\vec{\Gamma}$ is a directed graph, we may use the following notations to describe $\vec{\Gamma}_{0}$ and $\vec{\Gamma}_{1}$:
$$
\begin{cases}
    \vec{\Gamma}_{0}=\{1<\cdots<n=|\Gamma_{0}|\},\\
    \vec{\Gamma}_{1}=\{e_{1}<\cdots<e_{|\Gamma_{1}|}\}.
\end{cases}
$$
\begin{defn}\label{dfn:poly}
    Let $(\vec{\Gamma},\mmm,l)$ be a Feynman graph. We define
    $$
    p_{(\vec{\Gamma},\mmm,l)}(x_{e}^{\mmm(e)},\mathbf{d}x_{e}^{\mmm(e)})=\prod_{e<e_{l}\in\vec{\Gamma}_{1}}x_{e}^{\mmm(e)}\prod_{e_l\leq e'\in\vec{\Gamma}_{1}}\mathbf{d}x_{e'}^{\mmm(e')}\in \kk[x_{ij}^{\sss},\mathbf{d}x_{ij}^{\sss}]_{i<j\in\vec{\Gamma}_{0}}^{1\leq \sss\leq d},
    $$
    where the order of the factors is determined by the order of $\vec{\Gamma}_{1}$. We have used the convention
    $$
    x_{e}^{\mmm(e)}=x_{t(e)h(e)}^{\mmm(e)}=-x_{h(e)t(e)}^{\mmm(e)}.
    $$
    We call $p_{(\vec{\Gamma},\mmm,l)}$ the corresponding monomial of $(\vec{\Gamma},\mmm,l)$.
\end{defn}

As we vary over all Feynman graphs, the monomials $\{p_{(\vec{\Gamma},\mmm,l)}\}$ span $\kk[x_{ij}^{\sss},\mathbf{d}x_{ij}^{\sss}]_{i<j\in\vec{\Gamma}_{0}}^{1\leq \sss\leq d}$, so we can use Feynman graphs to represent elements in $\kk[x_{ij}^{\sss},\mathbf{d}x_{ij}^{\sss}]_{i<j\in\vec{\Gamma}_{0}}^{1\leq \sss\leq d}$.

\begin{defn}
  Let $\vec{\Gamma}$ be a directed graph. The \textit{incidence matrix} $\rho=(\rho_{ei})_{e\in\vec{\Gamma}_{1},
  i\in\vec{\Gamma}_{0}}$ is
    $$
    \rho_{ei}=
    \begin{cases}
        1, &\text{if }t(e)=i,\\
        -1, &\text{if }h(e)=i,\\
        0, &\text{otherwise.}
    \end{cases}
    $$
    The \textit{weighted Laplacian matrix} is the $\Gamma_0 \times \Gamma_0$ matrix defined by
    $$
    M_{\vec{\Gamma}}(t)_{ij}=
    \sum_{e\in\vec{\Gamma}_{1}}\rho_{ei}\frac{1}{t_{e}}\rho_{ej}
    $$
    It is defined for $t = (t_e) \in (0,+\infty)^{\Gamma_1}$.
\end{defn}
\begin{prop}\label{Minverse}
    Let $\vec{\Gamma}$ be a connected directed graph.
    Then the matrix $M_{\vec{\Gamma}}(t)=(M_{\vec{\Gamma}}(t)_{ij})_{i,j\in\vec{\Gamma}_{0}-\{n\}}$ is invertible. We use $M^{-1}_{\vec{\Gamma}}(t)=(M^{-1}_{\vec{\Gamma}}(t)_{ij})_{i,j\in\vec{\Gamma}_{0}-\{n\}}$ to denote the inverse matrix.
\end{prop}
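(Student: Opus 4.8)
The plan is to recognize $M_{\vec\Gamma}(t)$ as a principal submatrix of the weighted graph Laplacian and to exploit its Gram form. Writing $w_e = 1/t_e > 0$ and $D = \mathrm{diag}(w_e)_{e \in \vec\Gamma_1}$, the defining formula $M_{\vec\Gamma}(t)_{ij} = \sum_{e} \rho_{ei} w_e \rho_{ej}$ exhibits the full Laplacian $L := \rho^{T} D \rho$ as a symmetric positive semidefinite matrix indexed by $\vec\Gamma_0 \times \vec\Gamma_0$: for any vector $v$ indexed by the vertices one has $v^{T} L v = \sum_{e} w_e (\rho v)_e^2 \geq 0$, with every $w_e > 0$.

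First I would pin down the kernel of $L$. Since $D$ is positive definite, $v^{T} L v = 0$ forces $\rho v = 0$; and because the no-self-loop hypothesis guarantees that each edge-row of $\rho$ has exactly one $+1$ (at $t(e)$) and one $-1$ (at $h(e)$), we have $(\rho v)_e = v_{t(e)} - v_{h(e)}$. Hence $\rho v = 0$ means $v$ is constant along every edge. Connectedness of $\vec\Gamma$ then forces $v$ to be a single global constant, so that $\ker L = \mathrm{span}(\mathbf 1)$ is one-dimensional.

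Next, $M_{\vec\Gamma}(t)$ is precisely the matrix obtained from $L$ by deleting the row and column indexed by the vertex $n$, so it suffices to show this reduced matrix is positive definite. I would argue as follows: suppose $u$ is a vector indexed by $\vec\Gamma_0 - \{n\}$ with $u^{T} M_{\vec\Gamma}(t) u = 0$, and extend it to $\tilde u$ on all of $\vec\Gamma_0$ by setting $\tilde u_n = 0$. Then $\tilde u^{T} L \tilde u = u^{T} M_{\vec\Gamma}(t) u = 0$, so $\tilde u \in \ker L = \mathrm{span}(\mathbf 1)$; but $\tilde u_n = 0$ forces $\tilde u = 0$, whence $u = 0$. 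Thus $M_{\vec\Gamma}(t)$ is positive definite, and in particular invertible.

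The only genuinely graph-theoretic input is the kernel computation, and the implication $\rho v = 0 \Rightarrow v \text{ constant}$ is the step I expect to require the most care to phrase precisely (one must be clear that ``connected'' refers to the underlying undirected graph). Everything else is linear algebra. As a remark, one can alternatively invoke the weighted Matrix--Tree theorem to identify $\det M_{\vec\Gamma}(t) = \sum_{T} \prod_{e \in T} w_e$, the sum over spanning trees $T$ of $\vec\Gamma$, which is manifestly positive for a connected graph with positive weights; this gives a more explicit proof of invertibility and will be convenient later when working with the entries of $M^{-1}_{\vec\Gamma}(t)$.
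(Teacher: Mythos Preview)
Your argument is correct, but your primary route differs from the paper's. The paper simply defers to Kirchhoff's weighted Matrix--Tree theorem (stated in the appendix), which gives
\[
\det M_{\vec\Gamma}(t)=\frac{\sum_{T\in\mathrm{Tree}(\vec\Gamma)}\prod_{e\notin T}t_e}{\prod_{e\in\vec\Gamma_1}t_e},
\]
and from this (together with an analogous cofactor identity) extracts an explicit closed formula for $M_{\vec\Gamma}(t)^{-1}_{ij}$ in terms of spanning trees and $2$-cuts; invertibility is then immediate since a connected graph has at least one spanning tree. Your Gram-form/positive-definiteness argument is more elementary and self-contained---it needs no combinatorial identities, only the observation that $\ker(\rho^T D\rho)$ consists of vertex-constant vectors---but it does not produce the explicit entries of the inverse. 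That explicitness is not cosmetic here: the paper relies on the tree/cut formula to show that $d^{-1}_{\vec\Gamma}(t)_{ei}$ extends smoothly to the compactified Schwinger space and is uniformly bounded, facts which drive the convergence of the Feynman integrals. You anticipated this in your closing remark about the Matrix--Tree theorem, and indeed that alternative is exactly what the paper does.
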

\begin{proof}
    See Corollary \ref{det of laplacian} for an explicit description of its inverse.
\end{proof}

    Thus, we can define the following graphical Green's function $d^{-1}_{\vec{\Gamma}}(t)=(d^{-1}_{\vec{\Gamma}}(t)
    _{ei})_{e\in\vec{\Gamma}_{1},i\in\vec{\Gamma}_{0}-\{n\}}$ by the formula
    $$
    d^{-1}_{\vec{\Gamma}}(t)_{ei}=\sum_{j=1}^{n-1}\frac{1}{t_{e}}\rho_{ej}M^{-1}_{\vec{\Gamma}}(t)_{ji}.
    $$
    It is defined for $t \in (0,+\infty)^{\Gamma_1}$.

%We collect some facts in graph theory in Appendix \ref{graph theory}.

Using these notations, we can finally describe the type of integrals that we are interested in.
\begin{defn}
    Let $(\vec{\Gamma},\mmm,l)$ be a Feynman graph and denote $\Gamma_{0}= I$. Given
    $$
    \beta\in (\omega_{\mathbb{A}^{d}}[d])^{\boxtimes I}
    $$
    and
    $$
    e^{\sum\limits_{i=1}^{n}(z_{i}|\lambda_{i})}=e^{\sum\limits_{\scriptstyle\sss=1}^{d}\sum\limits_{i=1}^{n}z^{\scriptstyle\sss}_{i}\lambda^{\scriptstyle\sss}_{i}}\in C^{\infty}((\mathbb{A}^{d})^{ I}\times (\mathbb{A}^{d})^{ I}),
    $$
    the Feynman graph integrand is defined by
    \begin{align*}
        &W((\vec{\Gamma},\mmm,l),\beta e^{\sum\limits_{i=1}^{n}(z_{i}|\lambda_{i})})\\
        &=
        \int_{(\mathbb{A}^{d})^{ I-\{n\}}}p_{(\vec{\Gamma},\mmm,l)}(-e^{-(z_{e}| y_{e})}dy_{e}^{\mmm(e)},-e^{-(z_{e}| y_{e})}(z_{e}|dy_{e})dy_{e}^{\mmm(e)})e^{\sum\limits_{i=1}^{n}(z_{i}|\lambda_{i})}\otimes \beta,
    \end{align*}
    where $z_{e}=z_{t(e)}-z_{h(e)}$, $y_{e}=\frac{\bar{z}_{t(e)}-\bar{z}_{h(e)}}{t_{e}}$.
\end{defn}

The following proposition follows from Wick's formula for Gaussian integrals.

\begin{prop}\label{explicit formula}
  For each $t = (t_{e})\in(0,+\infty)^{\Gamma_1}$ the integral
    $$
    W((\vec{\Gamma},\mmm,l),\beta e^{\sum\limits_{i=1}^{n}(z_{i}|\lambda_{i})})
    $$
    over $(\mathbb{A}^d)^{I-\{n\}}$ converges.
    Moreover,
    \begin{enumerate}
        \item If $\vec{\Gamma}$ is disconnected,
        $$
        W((\vec{\Gamma},\mmm,l),\beta e^{\sum\limits_{i=1}^{n}(z_{i}|\lambda_{i})})=0.
        $$
        \item If $\vec{\Gamma}$ is connected, we have the following explicit formula:
        \begin{align*}
            &W((\vec{\Gamma},\mmm,l),\beta e^{\sum\limits_{i=1}^{n}(z_{i}|\lambda_{i})})\\
            &=(-2\pi i)^{d(n-1)}\left(
        \iota_{\prod_{i=1}^{n-1}(d^{d}\tilde{z}_{i}d^{d}\bar{\tilde{z}}_{i})}\circ p_{(\vec{\Gamma},\mmm,l)}(-d\hat{y}_{e}^{\mmm(e)},-(\sum_{\scriptstyle\sss=1}^{d}(\partial_{\lambda_{e}^{\scriptstyle\sss}}+\tilde{z}_{e}^{\sss})\circ d\hat{y}_{e}^{\sss})\circ d\hat{y}_{e}^{\mmm(e)})
        \right.\\
        &\left.\left.(1\otimes \beta)\right)\right|_{\tilde{z}_{i}=0,1\leq i\leq n-1} e^{(\tilde{z}_{n}|\sum\limits_{i=1}^{n}\lambda_{i})}.
        \end{align*}
    \end{enumerate}
  \end{prop}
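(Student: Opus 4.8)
The plan is to evaluate $W((\vec{\Gamma},m,l),\beta e^{\sum_{i}(z_i|\lambda_i)})$ as a finite-dimensional complex Gaussian integral over the configuration variables $z_i$, $i\in I-\{n\}$, at fixed Schwinger parameters $t=(t_e)$, and to read off the answer from Wick's theorem. First I would rewrite the total exponent appearing in the integrand. Since $(z_e|y_e)=\tfrac{1}{t_e}|z_{t(e)}-z_{h(e)}|^2$ and $z_e^s=\sum_{i}\rho_{ei}z_i^s$, summing over edges gives $\sum_{e}(z_e|y_e)=\sum_{s=1}^d\sum_{i,j}z_i^s\,M_{\vec{\Gamma}}(t)_{ij}\,\bar z_j^s$; after fixing $z_n$ the quadratic part of the Gaussian is therefore governed precisely by the reduced weighted Laplacian $M_{\vec{\Gamma}}(t)$ of Proposition \ref{Minverse}, while $e^{\sum_i(z_i|\lambda_i)}$ contributes a purely holomorphic linear source.

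For convergence in the connected case I would observe that the reduced weighted Laplacian is positive definite, being the Laplacian of a connected graph with positive edge weights $1/t_e$ after deletion of the vertex $n$. The Gaussian factor then provides quadratic decay dominating both the polynomial from $\beta$ and the linear growth of the holomorphic source, so the integral converges absolutely.

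The vanishing for disconnected $\vec{\Gamma}$ I would obtain from a form-degree deficiency rather than from the Gaussian. Writing $\vec{\Gamma}=\vec{\Gamma}'\sqcup\vec{\Gamma}''$ with $n\in\vec{\Gamma}''$, every factor $dy_e^s$ coming from an edge $e$ of $\vec{\Gamma}'$ involves only the differences $d\bar z_{t(e)}^s-d\bar z_{h(e)}^s$, and hence lies in the subspace of antiholomorphic one-forms orthogonal to the center-of-mass directions $\sum_{i\in\vec{\Gamma}'_0}d\bar z_i^s$. Since every vertex of $\vec{\Gamma}'$ is integrated over, producing the antiholomorphic volume form on $(\mathbb{A}^d)^{\vec{\Gamma}'_0}$ would require saturating all $d\,|\vec{\Gamma}'_0|$ antiholomorphic directions; the center-of-mass directions can never be supplied, so the top antiholomorphic component of the integrand is identically zero and the integral vanishes, with no convergence subtlety.

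Finally, for the explicit formula in the connected case I would apply the differential-form version of Wick's theorem exactly as in the $d=1$ computation carried out above: integration against $e^{-\sum_{s,i,j}z_i^s M_{ij}\bar z_j^s}$ yields the normalization $(-2\pi i)^{d(n-1)}$ together with the contraction operator built from $M^{-1}_{\vec{\Gamma}}(t)$, and the commutation identity between this operator and wedging with the $dy_e$ (the multivariable analogue of the relation used for $d=1$) produces the stated replacements $dy_e^s\mapsto d\hat y_e^s$ and turns each factor $z_e^s$ occurring in the $\mathbf{d}x$-type edges into the operator $\partial_{\lambda_e^s}+\tilde z_e^s$, with the Green's function data repackaged into $d^{-1}_{\vec{\Gamma}}(t)$; the interior product $\iota_{\prod_i(d^d\tilde z_i\,d^d\bar{\tilde z}_i)}$ then extracts the coefficient of the configuration volume form, and the evaluation at $\tilde z_i=0$ for $i\neq n$ sets the remaining sources. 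The main obstacle I anticipate is the sign-correct bookkeeping of this step: carrying the Koszul signs generated by ordering the $dy_e$ according to $\vec{\Gamma}_1$ through the Wick contractions, and checking that the resulting sum of contractions reorganizes exactly into the graphical Green's function $d^{-1}_{\vec{\Gamma}}(t)$ rather than some other contraction of $M^{-1}_{\vec{\Gamma}}(t)$.
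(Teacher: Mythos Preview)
Your proposal is correct and follows essentially the same approach as the paper: the disconnected case is handled by the interior product with the center-of-mass antiholomorphic vector $\sum_{i\in\vec{\Gamma}'_0}\partial_{\bar z_i}$ (your orthogonality observation is exactly this), and the connected case is computed by recognizing the reduced weighted Laplacian $M_{\vec\Gamma}(t)$ in the Gaussian, applying Wick's formula, and then commuting the contraction operator past the $dy_e^s$ factors to produce the $d\hat y_e^s$ and the $\partial_{\lambda_e^s}+\tilde z_e^s$ substitutions. The paper carries out the bookkeeping you flag as the main obstacle via an intermediate change of antiholomorphic basis $d\bar{\tilde z}_i\to d\bar{z'}_i$ built from $M^{-1}_{\vec\Gamma}(t)$, which absorbs the determinant factor and makes the commutation identity $e^{\sum\partial_{\tilde z}\partial_{\bar z'}}\circ dy_e^s=d\tilde y_e^s\circ e^{\sum\partial_{\tilde z}\partial_{\bar z'}}$ clean; you may find this device useful when tracking the signs.
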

  In this proposition, we have introduced the following notations:
    \begin{itemize}
      \item We introduce the following coordinates on $(\mathbb{A}^{d})^{ I}=(\mathbb{A}^d)^n$:
$$
\begin{cases}
    z_{i}=\tilde{z}_{i}+\tilde{z}_{n}, &\text{if }i\neq n,\\
    z_{n}=\tilde{z}_{n}.
\end{cases}
$$
        \item $\iota_{\prod_{i=1}^{n-1}(d^{d}\tilde{z}_{i}d^{d}\bar{\tilde{z}}_{i})}$: if $
        \gamma\in \mathcal{A}^{d(n-1),d(n-1)}((\mathbb{A}^d)^{ I-\{n\}})
        $ is a top form, there exists a unique
        $
        \gamma'\in C^{\infty}((\mathbb{A}^d)^{ I-\{n\}}),
        $
        such that
        $
        \gamma=\gamma'\prod_{i=1}^{n-1}(d^{d}\tilde{z}_{i}d^{d}\bar{\tilde{z}}_{i}).
        $
        We define
        $$
        \iota_{\prod_{i=1}^{n-1}(d^{d}\tilde{z}_{i}d^{d}\bar{\tilde{z}}_{i})}\gamma=\gamma'.
        $$
        If $\gamma$ is not a top form, we define
        $$
        \iota_{\prod_{i=1}^{n-1}(d^{d}\tilde{z}_{i}d^{d}\bar{\tilde{z}}_{i})}\gamma=0.
        $$
        \item $d\hat{y}_{e}^{\sss}$: it is an operator defined by
        $$
        d\hat{y}_{e}^{\sss}=\sum_{j=1}^{n-1}\left(d^{-1}_{\vec{\Gamma}}(t)_{ej}d\bar{\tilde{z}}_{j}^{\sss}+d(d^{-1}_{\vec{\Gamma}}(t)_{ej})(\partial_{\tilde{z}_{j}^{\scriptstyle\sss}}+\lambda_{j}^{\sss})\right),
        $$
        where $e\in \vec{\Gamma}_{1}$, $1\leq \sss\leq d$.
    \end{itemize}
    \begin{proof}[Proof of Proposition \ref{explicit formula}]
    To simplify notation, we define
    \[
    \tilde{W}=p_{(\vec{\Gamma},\mmm,l)}(-e^{-(z_{e}| y_{e})}dy_{e}^{\mmm(e)},-e^{-(z_{e}| y_{e})}(z_{e}| dy_{e})dy_{e}^{\mmm(e)})e^{\sum\limits_{i=1}^{n}(z_{i}|\lambda_{i})}\otimes \beta.
  \]

    We first prove part (1). If $\vec{\Gamma}$ is disconnected, let $\vec{\Gamma}'$ be a connected component of $\vec{\Gamma}$, such that $n\notin \vec{\Gamma}'$. Let $V=\sum\limits_{i\in\vec{\Gamma}'_{0}}\partial_{\bar{z}_{i}}$, $\iota_V$ is the interior product operator of $V$. We can check that
    $$
    \iota_V(\tilde{W})=0,
    $$
    so $\tilde{W}$ is not a top form on $(\mathbb{A}^d)^{ I-\{n\}}$. Hence $$
    W((\vec{\Gamma},\mmm,l),\beta e^{\sum\limits_{i=1}^{n}(z_{i}|\lambda_{i})})=\int_{(\mathbb{A}^{d})^{ I-\{n\}}}\tilde{W}=0.
    $$

    Now we prove (2). If $\vec{\Gamma}$ is connected, by using coordinates $\{\tilde{z}_{i}\}$, we have
    \begin{align*}
        \tilde{W}&=e^{-\sum\limits_{i=1}^{n-1}\sum\limits_{j=1}^{n-1}(\tilde{z}_{i}| \bar{\tilde{z}}_{j})M_{\vec{\Gamma}}(t)_{ij}}p_{(\vec{\Gamma},\mmm,l)}(-dy_{e}^{\mmm(e)},-(\tilde{z}_{e}| dy_{e})dy_{e}^{\mmm(e)})e^{\sum\limits_{i=1}^{n-1}(\tilde{z}_{i}|\lambda_{i})+(\tilde{z}_{n}|\sum\limits_{i=1}^{n}\lambda_{i})}\otimes \beta.
    \end{align*}

    Since $\vec{\Gamma}$ is connected, $M_{\vec{\Gamma}}(t)$ is invertible by Proposition \ref{Minverse}.
    As our integral over $(\mathbb{A}^d)^{ I-\{n\}}$ is of Gaussian type, we apply Wick's formula to get
    \begin{align*}
        &W((\vec{\Gamma},\mmm,l),\beta e^{\sum\limits_{i=1}^{n}(z_{i}|\lambda_{i})})\\
        &=\int_{(\mathbb{A}^d)^{ I-\{n\}}}e^{-\sum\limits_{i=1}^{n-1}\sum\limits_{j=1}^{n-1}(\tilde{z}_{i}| \bar{\tilde{z}}_{j})M_{\vec{\Gamma}}(t)_{ij}}\prod_{i=1}^{n-1}(d^{d}\tilde{z}_{i}d^{d}\bar{\tilde{z}}_{i})\wedge \iota_{\prod_{i=1}^{n-1}(d^{d}\tilde{z}_{i}d^{d}\bar{\tilde{z}}_{i})}\\
        &\left(p_{(\vec{\Gamma},\mmm,l)}(-dy_{e}^{\mmm(e)},-(\tilde{z}_{e}|dy_{e})dy_{e}^{\mmm(e)})e^{\sum\limits_{i=1}^{n-1}(\tilde{z}_{i}|\lambda_{i})+(\tilde{z}_{n}|\sum\limits_{i=1}^{n}\lambda_{i})}\otimes \beta\right)\\
        &=(-2\pi i)^{d(n-1)}\frac{1}{(\det M_{\vec{\Gamma}}(t))^{d}}\left(
        \iota_{\prod_{i=1}^{n-1}(d^{d}\tilde{z}_{i}d^{d}\bar{\tilde{z}}_{i})}
        \circ
        e^{\sum\limits_{\scriptstyle\sss=1}^{d}\sum\limits_{i=1}^{n-1}\sum\limits_{j=1}^{n-1} M^{-1}_{\vec{\Gamma}}(t)_{ij}\partial_{\tilde{z}_{i}^{\scriptstyle\sss}}\partial_{\bar{\tilde{z}}_{j}^{\scriptstyle\sss}}}
        \right.\\
        &\left.\left.p_{(\vec{\Gamma},\mmm,l)}(-dy_{e}^{\mmm(e)},-(\tilde{z}_{e}| dy_{e})dy_{e}^{\mmm(e)})e^{\sum\limits_{i=1}^{n-1}(\tilde{z}_{i}|\lambda_{i})+(\tilde{z}_{n}|\sum\limits_{i=1}^{n}\lambda_{i})}\otimes \beta\right)\right|_{\tilde{z}_{i}=0,1\leq i\leq n-1}.
    \end{align*}
To simplify the notation further, it is convenient to choose the following basis for one-forms:
    $$
    \begin{cases}
        d\tilde{z}_{i}=d\tilde{z}_{i}&\text{for }1\leq i\leq n,\\
        d\bar{\tilde{z}}_{i}=\sum\limits_{j=1}^{n-1}d\left(
        M^{-1}_{\vec{\Gamma}}(t)_{ij}\bar{z'}_{j}
        \right)=\sum\limits_{j=1}^{n-1}\left(
        d(M^{-1}_{\vec{\Gamma}}(t)_{ij})\bar{z'}_{j}+M^{-1}_{\vec{\Gamma}}(t)_{ij}d\bar{z'}_{j}
        \right) &\text{for }1\leq i\leq n-1,\\
        d\bar{\tilde{z}}_{n}=d\bar{\tilde{z}}_{n}\\
        dt_{e}=dt_{e}&\text{for }e\in\vec{\Gamma}_{1},
    \end{cases}
    $$
    where $\bar{z'}_{i}$ is a smooth vector-valued function defined by
    $$
    \bar{z'}_{i}=\sum_{j=1}^{n-1}M_{\vec{\Gamma}}(t)_{ij}\bar{z}_{j}.
    $$
    The dual basis for vectors transforms as follows:
    $$
    \begin{cases}
        \partial_{\tilde{z}_{i}}\rightarrow \partial_{\tilde{z}_{i}}&\text{for }1\leq i\leq n,\\
        \partial_{\bar{\tilde{z}}_{i}}\rightarrow \sum\limits_{j=1}^{n-1}
        M_{\vec{\Gamma}}(t)_{ij}\partial_{\bar{z'}_{j}}
         &\text{for }1\leq i\leq n-1,\\
        \partial_{\bar{\tilde{z}}_{n}}\rightarrow \partial_{\bar{\tilde{z}}_{n}}\\
        \partial_{t_{e}}\rightarrow\partial_{t_{e}}-\sum\limits_{i,j,k\in  I-\{n\}}M_{\vec{\Gamma}}(t)_{ik}\partial_{t_{e}}(M^{-1}_{\vec{\Gamma}}(t)_{kj})(\bar{z'}_{j}|\partial_{\bar{z'}_{i}})&\text{for }e\in\vec{\Gamma}_{1}.

    \end{cases}
    $$

    Notice that
    $$
    \iota_{\prod_{i=1}^{n-1}(d^{d}\tilde{z}_{i}d^{d}\bar{\tilde{z}}_{i})}=(\det M_{\vec{\Gamma}}(t))^{d}\iota_{\prod_{i=1}^{n-1}(d^{d}\tilde{z}_{i}d^{d}\bar{z'}_{i})},
    $$
    so the Feynman graph integrand becomes
    \begin{align*}
        &W((\vec{\Gamma},\mmm,l),\beta e^{\sum\limits_{i=1}^{n}(z_{i}|\lambda_{i})})\\
        &=(-2\pi i)^{d(n-1)}\left(
        \iota_{\prod_{i=1}^{n-1}(d^{d}\tilde{z}_{i}d^{d}\bar{z'}_{i})}
        \circ
        e^{\sum\limits_{\scriptstyle\sss=1}^{d}\sum\limits_{i=1}^{n-1}\partial_{\tilde{z}_{i}^{\scriptstyle\sss}}\partial_{\bar{z'}_{i}^{\scriptstyle\sss}}}
        \right.\\
        &\left.\left.p_{(\vec{\Gamma},\mmm,l)}(-dy_{e}^{\mmm(e)},-(\tilde{z}_{e}| dy_{e})dy_{e}^{\mmm(e)})e^{\sum\limits_{i=1}^{n-1}(\tilde{z}_{i}|\lambda_{i})+(\tilde{z}_{n}|\sum\limits_{i=1}^{n}\lambda_{i})}\otimes \beta\right)\right|_{\tilde{z}_{i}=0,1\leq i\leq n-1}.
    \end{align*}

    Notice that, when applied to the exponential, we can trade multiplication by the coordinate function $\tilde{z}^{\sss}_e$
    with the derivative with respect to $\lambda_e^{\sss}=\lambda_{t(e)}^{\sss}-\lambda_{h(e)}^{\sss}$:
    $$
    \tilde{z}^{\sss}_{e}e^{\sum\limits_{i=1}^{n-1}(\tilde{z}_{i}|\lambda_{i})+(\tilde{z}_{n}|\sum\limits_{i=1}^{n}\lambda_{i})}=\partial_{\lambda_{e}^{\scriptstyle\sss}}e^{\sum\limits_{i=1}^{n-1}(\tilde{z}_{i}|\lambda_{i})+(\tilde{z}_{n}|\sum\limits_{i=1}^{n}\lambda_{i})}=(\partial_{\lambda_{t(e)}^{\scriptstyle\sss}}-\partial_{\lambda_{h(e)}^{\scriptstyle\sss}})e^{\sum\limits_{i=1}^{n-1}(\tilde{z}_{i}|\lambda_{i})+(\tilde{z}_{n}|\sum\limits_{i=1}^{n}\lambda_{i})},
    $$
    It follows that
    \begin{align*}
        &p_{(\vec{\Gamma},\mmm,l)}(-dy_{e}^{\mmm(e)},-(\tilde{z}_{e}| dy_{e})dy_{e}^{\mmm(e)})e^{\sum\limits_{i=1}^{n-1}(\tilde{z}_{i}|\lambda_{i})+(\tilde{z}_{n}|\sum\limits_{i=1}^{n}\lambda_{i})}\\
        &=
        p_{(\vec{\Gamma},\mmm,l)}(-dy_{e}^{\mmm(e)},-(\sum_{\scriptstyle\sss=1}^{d}\partial_{\lambda_{e}^{\scriptstyle\sss}}\circ dy_{e}^{\sss})\circ dy_{e}^{\mmm(e)})e^{\sum\limits_{i=1}^{n-1}(\tilde{z}_{i}|\lambda_{i})+(\tilde{z}_{n}|\sum\limits_{i=1}^{n}\lambda_{i})}.
    \end{align*}
    We also notice that, as operators, one has
    \begin{align*}
        e^{\sum\limits_{\scriptstyle\sss=1}^{d}\sum\limits_{i=1}^{n-1}\partial_{\tilde{z}_{i}^{\scriptstyle\sss}}\partial_{\bar{z'}_{i}^{\scriptstyle\sss}}}\circ dy_{e}^{\sss}=d\tilde{y}_{e}^{\sss}\circ
        e^{\sum\limits_{\scriptstyle\sss=1}^{d}\sum\limits_{i=1}^{n-1}\partial_{\tilde{z}_{i}^{\scriptstyle\sss}}\partial_{\bar{z'}_{i}^{\scriptstyle\sss}}},
    \end{align*}
    where
    $$
    d\tilde{y}_{e}^{\sss}=\sum_{j=1}^{n-1}\left(
        d\left(d^{-1}_{\vec{\Gamma}}(t)_{ej}\bar{z'}_{j}^{\sss}\right)+d\left(d^{-1}_{\vec{\Gamma}}(t)_{ej}\right)\partial_{\tilde{z}_{j}^{\scriptstyle\sss}}
        \right)
    $$
    Thus we have
    \begin{align*}
        &e^{\sum\limits_{\scriptstyle\sss=1}^{d}\sum\limits_{i=1}^{n-1}\partial_{\tilde{z}_{i}^{\scriptstyle\sss}}\partial_{\bar{z'}_{i}^{\scriptstyle\sss}}}\circ
        p_{(\vec{\Gamma},\mmm,l)}(-dy_{e}^{\mmm(e)},-(\sum\limits_{\scriptstyle\sss=1}^{d}\partial_{\lambda_{e}^{\scriptstyle\sss}}\circ dy_{e}^{\sss})\circ dy_{e}^{\mmm(e)})\\
        &=
        p_{(\vec{\Gamma},\mmm,l)}(-d\tilde{y}_{e}^{\mmm(e)},-(\sum\limits_{\scriptstyle\sss=1}^{d}\partial_{\lambda_{e}^{\scriptstyle\sss}}\circ d\tilde{y}_{e}^{\sss})\circ d\tilde{y}_{e}^{\mmm(e)})\circ
        e^{\sum\limits_{\scriptstyle\sss=1}^{d}\sum\limits_{i=1}^{n-1}\partial_{\tilde{z}_{i}^{\scriptstyle\sss}}\partial_{\bar{z'}_{i}^{\scriptstyle\sss}}}.
    \end{align*}
    Applying these relations, the Feynman graph integrand becomes
    \begin{align*}
        &W((\vec{\Gamma},\mmm,l),\beta e^{\sum\limits_{i=1}^{n}(z_{i}|\lambda_{i})})\\
        &=(-2\pi i)^{d(n-1)}\left(
        \iota_{\prod_{i=1}^{n-1}(d^{d}\tilde{z}_{i}d^{d}\bar{z'}_{i})}\circ p_{(\vec{\Gamma},\mmm,l)}(-d\tilde{y}_{e}^{\mmm(e)},-(\sum_{\scriptstyle\sss=1}^{d}\partial_{\lambda_{e}^{\scriptstyle\sss}}\circ d\tilde{y}_{e}^{\sss})\circ d\tilde{y}_{e}^{\mmm(e)})
        \right.\\
        &\left.\left.\circ e^{\sum\limits_{i=1}^{n-1}(\tilde{z}_{i}|\lambda_{i})+(\tilde{z}_{n}|\sum\limits_{i=1}^{n}\lambda_{i})}(1\otimes \beta)\right)\right|_{\tilde{z}_{i}=0,1\leq i\leq n-1}.
    \end{align*}
   The factor $e^{\sum\limits_{\scriptstyle\sss=1}^{d}\sum\limits_{i=1}^{n-1}\partial_{\tilde{z}_{i}^{\scriptstyle\sss}}\partial_{\bar{z'}_{i}^{\scriptstyle\sss}}}$ disappears since $e^{\sum\limits_{i=1}^{n-1}(\tilde{z}_{i}|\lambda_{i})+(\tilde{z}_{n}|\sum\limits_{i=1}^{n}\lambda_{i})}(1\otimes \beta)$ is holomorphic. We further notice that
    $$
    d\tilde{y}_{e}^{\sss}\circ e^{\sum\limits_{i=1}^{n-1}(\tilde{z}_{i}|\lambda_{i})+(\tilde{z}_{n}|\sum\limits_{i=1}^{n}\lambda_{i})}=e^{\sum\limits_{i=1}^{n-1}(\tilde{z}_{i}|\lambda_{i})+(\tilde{z}_{n}|\sum\limits_{i=1}^{n}\lambda_{i})}\circ d\tilde{\tilde{y}}_{e}^{\sss},
    $$
    where
    $$
    d\tilde{\tilde{y}}_{e}^{\sss}=d\tilde{y}_{e}^{\sss}+\sum_{j=1}^{n-1}d\left(d^{-1}_{\vec{\Gamma}}(t)_{ej}\right)\lambda_{j}^{\sss}.
    $$
    Then we have
    \begin{align*}
        &p_{(\vec{\Gamma},\mmm,l)}(-d\tilde{y}_{e}^{\mmm(e)},-(\sum_{\scriptstyle\sss=1}^{d}\partial_{\lambda_{e}^{\scriptstyle\sss}}\circ d\tilde{y}_{e}^{\sss})\circ d\tilde{y}_{e}^{\mmm(e)})\circ e^{\sum\limits_{i=1}^{n-1}(\tilde{z}_{i}|\lambda_{i})+(\tilde{z}_{n}|\sum\limits_{i=1}^{n}\lambda_{i})}\\
        &=e^{\sum\limits_{i=1}^{n-1}(\tilde{z}_{i}|\lambda_{i})+(\tilde{z}_{n}|\sum\limits_{i=1}^{n}\lambda_{i})}\circ p_{(\vec{\Gamma},\mmm,l)}(-d\tilde{\tilde{y}}_{e}^{\mmm(e)},-(\sum_{\scriptstyle\sss=1}^{d}(\partial_{\lambda_{e}^{\scriptstyle\sss}}+\tilde{z}_{e}^{\sss})\circ d\tilde{\tilde{y}}_{e}^{\sss})\circ d\tilde{\tilde{y}}_{e}^{\mmm(e)}).
    \end{align*}

    Finally, we get
    \begin{align*}
        &W((\vec{\Gamma},\mmm,l),\beta e^{\sum\limits_{i=1}^{n}(z_{i}|\lambda_{i})})\\
        &=(-2\pi i)^{d(n-1)}\left(
        \iota_{\prod_{i=1}^{n-1}(d^{d}\tilde{z}_{i}d^{d}\bar{z'}_{i})}\circ p_{(\vec{\Gamma},\mmm,l)}(-d\tilde{\tilde{y}}_{e}^{\mmm(e)},-(\sum_{\scriptstyle\sss=1}^{d}(\partial_{\lambda_{e}^{\scriptstyle\sss}}+\tilde{z}_{e}^{\sss})\circ d\tilde{\tilde{y}}_{e}^{\sss})\circ d\tilde{\tilde{y}}_{e}^{\mmm(e)})
        \right.\\
        &\left.\left.(1\otimes \beta)\right)\right|_{\tilde{z}_{i}=0,1\leq i\leq n-1} e^{(\tilde{z}_{n}|\sum\limits_{i=1}^{n}\lambda_{i})}\\
        &=(-2\pi i)^{d(n-1)}\left(
        \iota_{\prod_{i=1}^{n-1}(d^{d}\tilde{z}_{i}d^{d}\bar{\tilde{z}}_{i})}\circ p_{(\vec{\Gamma},\mmm,l)}(-d\hat{y}_{e}^{\mmm(e)},-(\sum_{\scriptstyle\sss=1}^{d}(\partial_{\lambda_{e}^{\scriptstyle\sss}}+\tilde{z}_{e}^{\sss})\circ d\hat{y}_{e}^{\sss})\circ d\hat{y}_{e}^{\mmm(e)})
        \right.\\
        &\left.\left.(1\otimes \beta)\right)\right|_{\tilde{z}_{i}=0,1\leq i\leq n-1} e^{(\tilde{z}_{n}|\sum\limits_{i=1}^{n}\lambda_{i})}.\\
    \end{align*}
\end{proof}

We have shown that the expression $W((\vec{\Gamma},\mmm,l),\beta e^{\sum\limits_{i=1}^{n}(z_{i}|\lambda_{i})})$ is a
well-defined differential form in the variables $t = (t_e) \in (0,+\infty)^{|\Gamma_1|}$.
Now we prove that this integrand is integrable over
$$
S^{+}((0,+\infty)^{|\Gamma_{1}|})=\{(t_{e_{1}},\dots,t_{e_{|\Gamma_{1}|}})\in(0,+\infty)^{|\Gamma_{1}|}|\sum_{i=1}^{|\Gamma_{1}|}t_{i}^{2}=1\}.
$$
We use the compactification technique of Schwinger spaces from \cite{Wang:2024tjf,wang2024feynman}.
We refer to $(0,+\infty)^{\vec{\Gamma}_{1}}$ as the \textit{Schwinger space} of the graph $\vec{\Gamma}$.
In order to integrate, we fix the orientation by the following
    $$
    \int_{(0,L)^{|\Gamma_{1}|}}\prod_{e\in\vec{\Gamma}_{1}}dt_{e}=L^{|\Gamma_{1}|},
    $$
    where $L>0$.

There is a natural partial compactification of $(0,+\infty)^{|\Gamma_{1}|}$, which is constructed by iterated real
blow up along corners of $[0,+\infty)^{|\Gamma_{1}|}$. We collect basic properties of the partially compactified
Schwinger spaces in the Appendix \ref{Schwinger spaces}. We use $\widetilde{[0,+\infty)}^{|\Gamma_{1}|}$ to denote the partially compactified Schwinger space of $\vec{\Gamma}$.
One of the key properties of this partial compactification is that the closure of $S^{+}((0,+\infty)^{|\Gamma_{1}|})$
in $\widetilde{[0,+\infty)}^{|\Gamma_{1}|}$, which we denote by
    $$
    \bar{S}^{+}((0,+\infty)^{|\Gamma_{1}|}),
    $$
    is compact.
    Additionally, we will use the fact that the graphical Green's function
    \[
      d^{-1}_{\vec{\Gamma}}(t)_{ei}, \quad i\in \vec{\Gamma}_{0}-\{n\},\quad e\in \vec{\Gamma}_{1} ,
    \]
    which is originally defined on $(0,+\infty)^{|\Gamma_1|}$, can be extended to a smooth function on $\widetilde{[0,
    +\infty)}^{\vec{\Gamma}_{1}}$, see Lemma \ref{extended functions} in the appendix.

\begin{prop}
    Let $(\vec{\Gamma},\mmm,l)$ be a Feynman graph and denote $\Gamma_{0}= I$.
    Given
    $$
    \beta\in (\omega_{\mathbb{A}^{d}}[d])^{\boxtimes I}
    $$
    and
    $$
    e^{\sum\limits_{i=1}^{n}(z_{i}|\lambda_{i})}=e^{\sum\limits_{\scriptstyle\sss=1}^{d}\sum\limits_{i=1}^{n}z^{\scriptstyle\sss}_{i}\lambda^{\scriptstyle\sss}_{i}}\in C^{\infty}((\mathbb{A}^{d})^{ I}\times (\mathbb{A}^{d})^{ I}),
    $$
    the Feynman graph integrand
    $$
    W((\vec{\Gamma},\mmm,l),\beta e^{\sum\limits_{i=1}^{n}(z_{i}|\lambda_{i})})
    $$
    can be extended to a smooth differential form on $\widetilde{[0,+\infty)}^{\vec{\Gamma}_{1}}$. In particular,
    $$
    \int_{S^{+}((0,+\infty)^{\vec{\Gamma}_{1}})}W((\vec{\Gamma},\mmm,l),\beta e^{\sum\limits_{i=1}^{n}(z_{i}|\lambda_{i})})
    $$
    is convergent.
\end{prop}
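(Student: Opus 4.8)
The plan is to reduce the claim to the explicit formula of Proposition \ref{explicit formula} together with the smooth extension of the graphical Green's function recorded in lemma \ref{extended functions}, after which convergence is immediate from compactness. If $\vec{\Gamma}$ is disconnected then $W((\vec{\Gamma},m,l),\beta e^{\sum(z_i|\lambda_i)})=0$ by Proposition \ref{explicit formula}(1) and there is nothing to prove, so I assume $\vec{\Gamma}$ connected; then $M_{\vec{\Gamma}}(t)$ is invertible by Proposition \ref{Minverse} and the closed expression of Proposition \ref{explicit formula}(2) is available for every $t\in(0,+\infty)^{|\Gamma_1|}$.

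The first step is to isolate the $t$-dependence. In the formula of Proposition \ref{explicit formula}(2) the variable $t$ enters only through the operators $d\hat{y}_e^s$, and by definition
$$
d\hat{y}_e^s = \sum_{j=1}^{n-1}\left(d^{-1}_{\vec{\Gamma}}(t)_{ej}\, d\bar{\tilde{z}}_j^s + d\bigl(d^{-1}_{\vec{\Gamma}}(t)_{ej}\bigr)(\partial_{\tilde{z}_j^s}+\lambda_j^s)\right).
$$
Thus, viewed as a differential form on the open Schwinger space $(0,+\infty)^{|\Gamma_1|}$, the integrand $W$ is a finite sum of products of the functions $d^{-1}_{\vec{\Gamma}}(t)_{ej}$ and their de Rham differentials $d(d^{-1}_{\vec{\Gamma}}(t)_{ej})$, with coefficients built from the interior product $\iota_{\prod(d^d\tilde{z}_i d^d\bar{\tilde{z}}_i)}$, the substitutions $\tilde{z}_e^s\mapsto\partial_{\lambda_e^s}$, the holomorphic datum $\beta$, and the factor $e^{(\tilde{z}_n|\sum\lambda_i)}$. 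Each of these operations is algebraic in the one-forms $d\hat{y}_e^s$ and carries no further dependence on $t$; after evaluating at $\tilde{z}_i=0$ for $1\le i\le n-1$ the coefficients are smooth (indeed polynomial in $\lambda$ and holomorphic in $\tilde{z}_n$) and $t$-independent.

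The second step invokes lemma \ref{extended functions}: although $d^{-1}_{\vec{\Gamma}}(t)_{ei}$ is a priori only defined on $(0,+\infty)^{|\Gamma_1|}$ and is singular where $\det M_{\vec{\Gamma}}(t)$ degenerates as some $t_e\to 0$ or $t_e\to\infty$, it extends to a smooth function on the partial compactification $\widetilde{[0,+\infty)}^{\vec{\Gamma}_1}$. Consequently its exterior derivative extends to a smooth one-form there. Since smooth functions and forms on $\widetilde{[0,+\infty)}^{\vec{\Gamma}_1}$ are closed under finite sums and wedge products, the assembled form $W$ extends to a smooth differential form on $\widetilde{[0,+\infty)}^{\vec{\Gamma}_1}$.

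For the conclusion, the closure $\bar{S}^{+}((0,+\infty)^{|\Gamma_1|})$ inside $\widetilde{[0,+\infty)}^{\vec{\Gamma}_1}$ is compact, and the integral of a smooth form over a compact manifold with corners is finite; hence $\int_{S^{+}((0,+\infty)^{\vec{\Gamma}_1})}W$ converges. The one genuinely nontrivial input is lemma \ref{extended functions}: the naive boundary behavior of $M^{-1}_{\vec{\Gamma}}(t)$ is singular, and it is precisely the iterated real blow-up of the corners of $[0,+\infty)^{|\Gamma_1|}$ that resolves these singularities into smooth functions. The remainder of the argument is the bookkeeping above, which repackages Proposition \ref{explicit formula} so that the extension lemma applies verbatim.
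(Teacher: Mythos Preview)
Your proof is correct and follows essentially the same approach as the paper: use the explicit formula of Proposition \ref{explicit formula} to express $W$ through the operators $d\hat{y}_e^s$, invoke lemma \ref{extended functions} to extend $d^{-1}_{\vec{\Gamma}}(t)_{ei}$ smoothly to $\widetilde{[0,+\infty)}^{\vec{\Gamma}_1}$, and conclude convergence from compactness of $\bar{S}^{+}((0,+\infty)^{\vec{\Gamma}_1})$. Your treatment is simply more detailed in spelling out the bookkeeping and the disconnected case.
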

\begin{proof}
    By Proposition \ref{explicit formula}, we have an explicit formula for $
    W((\vec{\Gamma},\mmm,l),\beta e^{\sum\limits_{i=1}^{n}(z_{i}|\lambda_{i})})
    $. Since $d^{-1}_{\vec{\Gamma}}(t)_{ei}$ is smooth on $\widetilde{[0,+\infty)}^{\vec{\Gamma}_{1}}$, the operator $d\hat{y}_{e}^{\sss}$ is also smooth. So $
    W((\vec{\Gamma},\mmm,l),\beta e^{\sum\limits_{i=1}^{n}(z_{i}|\lambda_{i})})
    $ can be extended to a smooth differential form. Finally, since $\bar{S}^{+}((0,+\infty)^{\vec{\Gamma}_{1}})$ is compact, the integral
    $$
    \int_{S^{+}((0,+\infty)^{\vec{\Gamma}_{1}})}W((\vec{\Gamma},\mmm,l),\beta e^{\sum\limits_{i=1}^{n}(z_{i}|\lambda_{i})})=
    \int_{\bar{S}^{+}((0,+\infty)^{\vec{\Gamma}_{1}})}W((\vec{\Gamma},\mmm,l),\beta e^{\sum\limits_{i=1}^{n}(z_{i}|\lambda_{i})})
    $$
    is convergent.
\end{proof}

\subsubsection{Properties of Feynman graph integrals}

The next result is about the scaling invariance of the Feynman graph integrals that we are studying.

\begin{prop}\label{useful property}
    Let $(\vec{\Gamma},\mmm,l)$ be a Feynman graph such that $\vec{\Gamma}_{0}= I$. Given
    $$
    \beta\in (\omega_{\mathbb{A}^{d}}[d])^{\boxtimes I}
    $$
    and
    $$
e^{\sum\limits_{i=1}^{n}(z_{i}|\lambda_{i})}=e^{\sum\limits_{\scriptstyle\sss=1}^{d}\sum\limits_{i=1}^{n}z^{\scriptstyle\sss}_{i}\lambda^{\scriptstyle\sss}_{i}}\in C^{\infty}((\mathbb{A}^{d})^{ I}\times (\mathbb{A}^{d})^{ I}),
    $$
    then we have the following consequences:
    \begin{enumerate}
        \item
        Let $\mathbb{R}_{+}$ act on Schwinger space by rescaling
        $$
        \lambda\cdot(t_{e_{1}},\dots,t_{e_{|\Gamma_{1}|}})=(\lambda\cdot t_{e_{1}},\dots,\lambda\cdot t_{e_{|\Gamma_{1}|}}),
        $$
        where $\lambda\in \mathbb{R}_+$, $(t_{e_{1}},\dots,t_{e_{|\Gamma_{1}|}})\in (0,+\infty)^{\vec{\Gamma}_{1}}$.
        Then, the Feynman graph integrand
        $$
        W((\vec{\Gamma},\mmm,l),\beta e^{\sum\limits_{i=1}^{n}(z_{i}|\lambda_{i})})
        $$
        is $\mathbb{R}_+$-invariant and hence induces a smooth differential form on $(0,+\infty)^{\vec{\Gamma}_{1}}/\mathbb{R}_+$.
        \item Let $S\subset(0,+\infty)^{\vec{\Gamma}_{1}}$ be a submanifold, such that the natural map $S\rightarrow(0,+\infty)^{\vec{\Gamma}_{1}}/\mathbb{R}_+$ is a diffeomorphism. Then we have
        $$
        \int_{S}W((\vec{\Gamma},\mmm,l),\beta e^{\sum\limits_{i=1}^{n}(z_{i}|\lambda_{i})})=
        \int_{S^{+}((0,+\infty)^{\vec{\Gamma}_{1}})}W((\vec{\Gamma},\mmm,l),\beta e^{\sum\limits_{i=1}^{n}(z_{i}|\lambda_{i})}).
        $$
    \end{enumerate}
\end{prop}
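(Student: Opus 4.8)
The plan is to prove that $W$ is a \emph{basic} form for the scaling $\mathbb{R}_+$-action and then deduce both assertions formally. First I note that when $\vec\Gamma$ is disconnected the integrand vanishes identically by Proposition \ref{explicit formula}(1), so both claims are trivial; I therefore assume $\vec\Gamma$ connected, so that $M_{\vec\Gamma}(t)$ is positive definite (Proposition \ref{Minverse}) and the Gaussian integrand decays rapidly along the fibers. I work on the total space $P=(\mathbb{A}^d)^{I\setminus\{n\}}\times(0,+\infty)^{\vec\Gamma_1}$ with projection $\pi\colon P\to(0,+\infty)^{\vec\Gamma_1}$, so that $W=\pi_*\tilde W$ is the fiberwise integral of the integrand $\tilde W$ of Proposition \ref{explicit formula}. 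Using the coordinates $\tilde z_i$ of that proposition, I introduce the Euler field $\mathcal E=\mathcal E_z+\mathcal E_t$ scaling the antiholomorphic fiber coordinates and the Schwinger parameters,
$$\mathcal E_z=\sum_{i=1}^{n-1}\sum_{s=1}^d\bar{\tilde z}_i^s\,\partial_{\bar{\tilde z}_i^s},\qquad \mathcal E_t=\sum_{e\in\vec\Gamma_1}t_e\,\partial_{t_e}.$$

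The first step is to check that $\tilde W$ is invariant and horizontal for $\mathcal E$. In the coordinates $\tilde z_i$ the integrand depends on $(\bar{\tilde z}_1,\dots,\bar{\tilde z}_{n-1})$ and $t$ only through the combinations $y_e=(\bar{\tilde z}_{t(e)}-\bar{\tilde z}_{h(e)})/t_e$, and since $\mathcal E$ scales numerator and $t_e$ with the same weight, each $y_e$ is $\mathcal E$-invariant and $\iota_{\mathcal E}dy_e^s=y_e^s-y_e^s=0$; this is exactly the edgewise content of Proposition \ref{Lie and interior product}. As $L_{\mathcal E}$ and $\iota_{\mathcal E}$ are derivations, and the holomorphic factor $e^{\sum_i(z_i|\lambda_i)}\otimes\beta$ is an $\mathcal E$-invariant $0$-form killed by $\iota_{\mathcal E}$, I conclude $L_{\mathcal E}\tilde W=0$ and $\iota_{\mathcal E}\tilde W=0$ (this covers both factor types, $-e^{-(z_e|y_e)}dy_e^{m(e)}$ and $-e^{-(z_e|y_e)}(z_e|dy_e)dy_e^{m(e)}$, simultaneously).

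Next I transfer these identities through $\pi_*$. Because $\mathcal E_t$ is horizontal and $\pi$-related to $\bar{\mathcal E}_t=\sum_e t_e\partial_{t_e}$, fiber integration commutes with $\iota_{\mathcal E_t}$ and hence with $L_{\mathcal E_t}=d\iota_{\mathcal E_t}+\iota_{\mathcal E_t}d$; because the fiber $\mathbb{C}^{d(n-1)}$ is even-dimensional and the integrand decays, $\pi_*$ also commutes with $d$ with no boundary contribution. Finally, for the \emph{vertical} field $\mathcal E_z$ the contraction $\iota_{\mathcal E_z}$ drops the fiber degree below the top, so $\pi_*\iota_{\mathcal E_z}(-)=0$. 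Combining these with $\iota_{\mathcal E}\tilde W=0$ and $L_{\mathcal E}\tilde W=0$ and the Cartan formula $L_{\mathcal E_z}=d\iota_{\mathcal E_z}+\iota_{\mathcal E_z}d$ gives
$$\iota_{\bar{\mathcal E}_t}W=\pi_*\iota_{\mathcal E_t}\tilde W=-\pi_*\iota_{\mathcal E_z}\tilde W=0,\qquad L_{\bar{\mathcal E}_t}W=-\pi_*L_{\mathcal E_z}\tilde W=-d\,\pi_*\iota_{\mathcal E_z}\tilde W-\pi_*\iota_{\mathcal E_z}d\tilde W=0.$$
Thus $W$ is basic and descends to a smooth $\bar W$ on $(0,+\infty)^{\vec\Gamma_1}/\mathbb{R}_+$, which is assertion (1). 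As a cross-check, the closed form of Proposition \ref{explicit formula} makes this manifest: $t\mapsto\lambda t$ sends $M_{\vec\Gamma}(t)\mapsto\lambda^{-1}M_{\vec\Gamma}(t)$, so the Green's functions $d^{-1}_{\vec\Gamma}(t)_{ei}$ are scale invariant, whence so is $W$.

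For (2), I use that $(0,+\infty)^{\vec\Gamma_1}\to Q:=(0,+\infty)^{\vec\Gamma_1}/\mathbb{R}_+$ is a trivial principal $\mathbb{R}_+$-bundle, so the basic form $W$ equals $\pi_Q^*\bar W$. Any $S$ with $S\to Q$ a diffeomorphism is a section $\sigma_S\colon Q\to(0,+\infty)^{\vec\Gamma_1}$, so $\int_S W=\int_Q\sigma_S^*\pi_Q^*\bar W=\int_Q\bar W$, independent of $S$; applying this to the sphere $S^+((0,+\infty)^{\vec\Gamma_1})$, which is itself such a section, yields $\int_S W=\int_{S^+}W$, with convergence guaranteed by the compactification of appendix \ref{Schwinger spaces}. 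I expect the only genuinely delicate point to be the passage through $\pi_*$ in the third paragraph: one must justify interchanging fiber integration with $d$, $L_{\mathcal E_t}$, and $\iota_{\mathcal E_t}$ and the vanishing of boundary terms at fiber infinity, which is precisely where connectedness enters through the positive definiteness of $M_{\vec\Gamma}(t)$ and the resulting Gaussian decay.
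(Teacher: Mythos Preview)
Your proof is correct and follows essentially the same route as the paper: introduce the combined Euler field $\mathcal E=\mathcal E_z+\mathcal E_t$ (the paper's $V=V_1+V_2$), use Proposition~\ref{Lie and interior product} to see that $\tilde W$ is basic for $\mathcal E$, and then push this through the fiber integral to conclude that $W$ is basic for $\mathcal E_t$. Your treatment is in fact somewhat more explicit than the paper's---where the paper writes ``by Stokes' theorem'' and ``by type reasons'' you spell out the Cartan decomposition and the fiber-degree argument, and you also add the direct cross-check via the scale invariance of $d^{-1}_{\vec\Gamma}(t)$ from Proposition~\ref{explicit formula}.
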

\begin{proof}
  Part (2) is a direct consequence of (1), so we prove (1).
We need to show that the weight is an equivariant, or basic, differential form with respect to this group action.
    Let $V=V_{1}+V_{2}$, where $V_{1}=\sum\limits_{\scriptstyle\sss=1}^{d}\sum\limits_{i=1}^{n-1}\bar{\tilde{z}}_{i}^{\sss}
    \partial_{\bar{\tilde{z}}_{i}^{\scriptstyle\sss}}$, $V_{2}=\sum\limits_{e\in\vec{\Gamma}_{1}}t_{e}\partial_{t_{e}}$.
    Notice that $V_2$ is the infinitesimal generator for the $\mathbb{R}_+$-action as defined in the statement.

    As before, for
    shorthand notation define
    $$
    \tilde{W}=p_{(\vec{\Gamma},\mmm,l)}(-e^{-(z_{e}| y_{e})}dy_{e}^{\mmm(e)},-e^{-(z_{e}| y_{e})}(z_{e}|dy_{e})dy_{e}^{\mmm(e)})e^{\sum\limits_{i=1}^{n}(z_{i}|\lambda_{i})}\otimes \beta.
    $$
    By Proposition \ref{Lie and interior product}, we have
    $$
    \begin{cases}
        L_V\tilde{W}=0,\\
        \iota_V\tilde{W}=0.
    \end{cases}
    $$
    Thus
    \begin{align*}
        L_{V_2}W((\vec{\Gamma},\mmm,l),\beta e^{\sum\limits_{i=1}^{n}(z_{i}|\lambda_{i})})=\int_{(\mathbb{A}^{d})^{ I-\{n\}}}L_{V_2}\tilde{W}=-\int_{(\mathbb{A}^{d})^{ I-\{n\}}}L_{V_1}\tilde{W}=0
    \end{align*}
    by Stokes' theorem.
    Similarly,
    $$
    \iota_{V_2}W((\vec{\Gamma},\mmm,l),\beta e^{\sum\limits_{i=1}^{n}(z_{i}|\lambda_{i})})=\pm\int_{(\mathbb{A}^{d})^{ I-\{n\}}}\iota_{V_1}\tilde{W}=0
    $$
    by type reasons.
\end{proof}
We turn to the dependency of the integration $\int_{S^{+}((0,+\infty)^{\vec{\Gamma}_{1}})}W((\vec{\Gamma},\mmm,l),\beta
e^{\sum\limits_{i=1}^{n}(z_{i}|\lambda_{i})})$ on the actual Feynman graph $(\vec{\Gamma},\mmm,l)$.
This is summarized in the following proposition.
\begin{prop}\label{well-defineness}
        Let $(\vec{\Gamma},\mmm,l)$ be a Feynman graph such that $\vec{\Gamma}_{0}= I$. Given
    $$
    \beta\in (\omega_{\mathbb{A}^{d}}[d])^{\boxtimes I}
    $$
    and
    $$
    e^{\sum\limits_{i=1}^{n}(z_{i}|\lambda_{i})}=e^{\sum\limits_{\scriptstyle\sss=1}^{d}\sum\limits_{i=1}^{n}z^{\scriptstyle\sss}_{i}\lambda^{\scriptstyle\sss}_{i}}\in C^{\infty}((\mathbb{A}^{d})^{ I}\times (\mathbb{A}^{d})^{ I}),
    $$
    then we have the following consequences:
    \begin{enumerate}
        \item Let $1\leq l_{1},l_{2}< l$ and $l\leq l'_{1},l'_{2}\leq |\Gamma_{1}|$. We use $\sigma_{l_{1}l_{2}}(\vec{\Gamma})$ ($\sigma_{l'_{1}l'_{2}}(\vec{\Gamma})$) to denote the Feynman graph $\vec{\Gamma}$ with the order of $e_{l_{1}},e_{l_{2}}\in\vec{\Gamma}_{1}$ ($e_{l'_{1}},e_{l'_{2}}\in\vec{\Gamma}_{1}$) interchanged. Then we have
        $$
        \begin{cases}
            \int_{S^{+}((0,+\infty)^{\sigma_{l_{1}l_{2}}(\vec{\Gamma}_{1})})}W((\sigma_{l_{1}l_{2}}(\vec{\Gamma}),\mmm,l),\beta e^{\sum\limits_{i=1}^{n}(z_{i}|\lambda_{i})})=
        \int_{S^{+}((0,+\infty)^{\vec{\Gamma}_{1}})}W((\vec{\Gamma},\mmm,l),\beta e^{\sum\limits_{i=1}^{n}(z_{i}|\lambda_{i})})\\
        \int_{S^{+}((0,+\infty)^{\sigma_{l'_{1}l'_{2}}(\vec{\Gamma}_{1})})}W((\sigma_{l'_{1}l'_{2}}(\vec{\Gamma}),\mmm,l),\beta e^{\sum\limits_{i=1}^{n}(z_{i}|\lambda_{i})})=
        -\int_{S^{+}((0,+\infty)^{\vec{\Gamma}_{1}})}W((\vec{\Gamma},\mmm,l),\beta e^{\sum\limits_{i=1}^{n}(z_{i}|\lambda_{i})})
        \end{cases}
        $$
        \item For $1\leq \sss\leq d$ and $i,j\in \vec{\Gamma}_{0}$, let $(\vec{\Gamma}',\mmm^{\sss},l)$ be the Feynman graph which satisfies the following:
        \begin{itemize}
            \item $\vec{\Gamma}\subset\vec{\Gamma}'$ is a directed subgraph.
            \item $\vec{\Gamma}'_{0}=\vec{\Gamma}_{0}$.
            \item $\vec{\Gamma}'_{1}=\{e_{0}\}\cup\vec{\Gamma}_{1}$, where $t(e_{0})=i$, $h(e_{0})=j$.
            \item $\mmm^{\sss}(e_{0})=\sss$.
        \end{itemize}
        Then we have
        \begin{align}\label{well-defined formula}
        \sum_{\scriptstyle\sss=1}^{d}\int_{S^{+}((0,+\infty)^{\vec{\Gamma}'_{1}})}W((\vec{\Gamma}',\mmm^{\sss},l),z_{ij}^{\sss}\beta e^{\sum\limits_{i=1}^{n}(z_{i}|\lambda_{i})})=
        \int_{S^{+}((0,+\infty)^{\vec{\Gamma}_{1}})}W((\vec{\Gamma},\mmm,l),\beta e^{\sum\limits_{i=1}^{n}(z_{i}|\lambda_{i})}).
        \end{align}
        \item For $1\leq \sss\leq d$ and $i,j\in \vec{\Gamma}_{0}$, let $(\vec{\Gamma}'',\mmm^{\sss},l)$ be the Feynman graph which satisfies the following:
        \begin{itemize}
            \item $\vec{\Gamma}\subset\vec{\Gamma}''$ is a directed subgraph.
            \item $\vec{\Gamma}''_{0}=\vec{\Gamma}_{0}$.
            \item $\vec{\Gamma}''_{1}=\vec{\Gamma}_{1}\cup\{e_{|\vec{\Gamma}_{1}|+1}\}$, where $t(e_{|\vec{\Gamma}_{1}|+1})=i$, $h(e_{|\vec{\Gamma}_{1}|+1})=j$.
            \item $\mmm^{\sss}(e_{|\vec{\Gamma}_{1}|+1})=\sss$.
        \end{itemize}
        Then we have
        $$
        \sum_{\scriptstyle\sss=1}^{d}\int_{S^{+}((0,+\infty)^{\vec{\Gamma}''_{1}})}W((\vec{\Gamma}'',\mmm^{\sss},l),z_{ij}^{\sss}\beta e^{\sum\limits_{i=1}^{n}(z_{i}|\lambda_{i})})=0.
        $$
    \end{enumerate}
\end{prop}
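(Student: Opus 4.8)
The three parts reflect three distinct features of the construction, so I would prove them separately, reserving most of the effort for part (2).

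For part (1) I would isolate two independent sources of signs. By Definition \ref{dfn:poly}, the edges with $e<e_l$ enter $p_{(\vec{\Gamma},m,l)}$ through the commuting symbols $x_e^{m(e)}$, while the edges with $e\geq e_l$ enter through $\mathbf{d}x_{e'}^{m(e')}$; after the substitution defining the integrand, an $x$-type factor becomes the $1$-form $-e^{-(z_e|y_e)}dy_e^{m(e)}$ and a $\mathbf{d}x$-type factor becomes the $2$-form $-e^{-(z_e|y_e)}(z_e|dy_e)dy_e^{m(e)}$. Interchanging two factors of equal parity $p$ produces the Koszul sign $(-1)^p$, independent of the intervening factors: this is $-1$ when $e_{l_1},e_{l_2}$ are $x$-type (so $l_1,l_2<l$) and $+1$ when $e_{l'_1},e_{l'_2}$ are $\mathbf{d}x$-type (so $l\leq l'_1,l'_2$). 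Independently, interchanging the two edges transposes the Schwinger coordinates, hence reverses the ambient orientation of $(0,+\infty)^{\vec{\Gamma}_1}$ normalized by $\int_{(0,L)^{|\Gamma_1|}}\prod_e dt_e=L^{|\Gamma_1|}$, and with it the orientation of $S^{+}$, contributing a further factor $-1$. Multiplying the two signs yields $+1$ in the first case and $-1$ in the second, as claimed.

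Part (3) is immediate after summing over $s$: the new edge $e=e_{|\Gamma_1|+1}$ is $\mathbf{d}x$-type, so
\[
\sum_{s=1}^{d} z_{ij}^{s}\left(-e^{-(z_e|y_e)}(z_e|dy_e)dy_e^{s}\right)=-e^{-(z_e|y_e)}(z_e|dy_e)\wedge(z_e|dy_e)=0,
\]
since $(z_e|dy_e)$ is a $1$-form with vanishing wedge square. The integrand vanishes identically, hence so does the integral. For part (2) the analogous computation for the $x$-type edge $e_0$ (with $t(e_0)=i$, $h(e_0)=j$) does not vanish but instead realizes the torsor relation $\sum_s z_{ij}^s x_{ij}^s=1$ at the level of Schwinger integrals:
\[
\sum_{s=1}^{d} z_{e_0}^{s}\left(-e^{-(z_{e_0}|y_{e_0})}dy_{e_0}^{s}\right)=-e^{-(z_{e_0}|y_{e_0})}(z_{e_0}|dy_{e_0})=(\bar{\partial}+d_{t_{e_0}})\,e^{-(z_{e_0}|y_{e_0})}.
\]
Writing $\tilde{W}$ for the integrand attached to $(\vec{\Gamma},m,l;\beta)$, the summed $\vec{\Gamma}'$-integrand becomes $\tilde{W}'=(\bar{\partial}+d_{t_{e_0}})e^{-(z_{e_0}|y_{e_0})}\wedge\tilde{W}$. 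Since $z_{ij}^s$ is holomorphic it is annihilated by the scaling vector field appearing in the proof of Proposition \ref{useful property}(1), which therefore continues to show that $W'=\sum_s W((\vec{\Gamma}',m^s,l),z_{ij}^s\beta\,e^{\sum_{i}(z_i|\lambda_i)})$ is $\mathbb{R}_+$-basic. Proposition \ref{useful property}(2) then lets me replace the sphere $S^{+}((0,+\infty)^{\vec{\Gamma}'_1})$ by the transverse slice $S=S^{+}((0,+\infty)^{\vec{\Gamma}_1})\times(0,+\infty)_{t_{e_0}}$ and to integrate over $t_{e_0}$ first. Only the $dt_{e_0}$-component of $\tilde{W}'$ can fill the $t_{e_0}$-direction of $S$, and its $t_{e_0}$-integral is
\[
\int_{0}^{+\infty}\partial_{t_{e_0}}e^{-|z_{e_0}|^{2}/t_{e_0}}\,dt_{e_0}=\left[e^{-|z_{e_0}|^{2}/t_{e_0}}\right]_{0}^{+\infty}=1
\]
for $z_{e_0}\neq0$, while the $\bar{\partial}$-component carries no $dt_{e_0}$ and drops out. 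What remains is $\int_{S^{+}((0,+\infty)^{\vec{\Gamma}_1})}\int_{(\mathbb{A}^d)^{I-\{n\}}}\tilde{W}$, which is the right-hand side of \eqref{well-defined formula}.

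The main obstacle is the rigor of this last reduction: justifying the exchange of the $z$-integration with the $t_{e_0}$-integration, and confirming that the $\bar{\partial}$-component genuinely contributes nothing. I would control the Fubini step using the compactness of $\bar{S}^{+}$ together with the Gaussian decay in the explicit formula of Proposition \ref{explicit formula}, dispose of the $\bar{\partial}$-component by the degree count just indicated, and observe that the locus $z_{e_0}=0$ where the $t_{e_0}$-integral degenerates has measure zero and is harmless.
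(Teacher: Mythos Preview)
Your approach is essentially the paper's: for part (2) you invoke Proposition~\ref{useful property}(2) to pass to the product slice $S^{+}((0,+\infty)^{\vec{\Gamma}_1})\times(0,+\infty)_{t_{e_0}}$ and then integrate out $t_{e_0}$. The paper packages the $t_{e_0}$-integration slightly differently, by appealing directly to the Schwinger representation $i(x_{ij}^{s})=-\int_{0}^{+\infty}e^{-(z_{ij}|y_{ij})}dy_{ij}^{s}$ already established as a lemma; this turns the extra edge factor back into $i(x_{ij}^{s})$ and the sum over $s$ then collapses via the torsor relation $\sum_s z_{ij}^{s}\,i(x_{ij}^{s})=1$. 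Your total-derivative rewriting $\sum_s z_{e_0}^{s}(-e^{-(z_{e_0}|y_{e_0})}dy_{e_0}^{s})=(\bar\partial+d_{t_{e_0}})e^{-(z_{e_0}|y_{e_0})}$ amounts to the same computation done in the other order, and both routes invoke the same Fubini swap you flag.

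There is one genuine gap: your slice argument presupposes $\vec{\Gamma}_1\neq\emptyset$, since otherwise $S^{+}((0,+\infty)^{\vec{\Gamma}_1})$ is not available and Proposition~\ref{useful property}(2) does not apply. The paper treats this boundary case separately: when $|\Gamma_0|\geq 3$ both $\vec{\Gamma}$ and $\vec{\Gamma}'$ are disconnected, so both sides of \eqref{well-defined formula} vanish by Proposition~\ref{explicit formula}(1); when $\vec{\Gamma}_0=\{1,2\}$ one checks by a short direct computation (the $\vec{\Gamma}'$ integrand becomes a $\bar\partial$-exact form over $\mathbb{A}^d$) that the left side vanishes as well. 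You should add this case distinction.

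Your argument for part (3) via $(z_e|dy_e)\wedge(z_e|dy_e)=0$ is cleaner than what the paper's ``similar arguments'' would literally suggest (namely, integrate out the extra parameter and use $\sum_s z_{ij}^{s}\,i(\mathbf{d}x_{ij}^{s})=0$); it kills the integrand pointwise and avoids any analytic issues. Your treatment of part (1) is more explicit than the paper's ``direct observation'' and the sign bookkeeping is correct.
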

\begin{proof}
    Part (1) is a direct observation.
    We prove part (2). First assume $\vec{\Gamma}_{1}\neq\emptyset$. To simplify notation, we define
    $$
    \tilde{W}=p_{(\vec{\Gamma},\mmm,l)}(-e^{-(z_{e}|y_{e})}dy_{e}^{\mmm(e)},-e^{-(z_{e}| y_{e})}(z_{e}|dy_{e})dy_{e}^{\mmm(e)})e^{\sum\limits_{i=1}^{n}(z_{i}|\lambda_{i})}\otimes \beta.
    $$
    Consider the submanifold
    $$
    S=S^{+}((0,+\infty)^{\vec{\Gamma}_{1}})\times(0,+\infty)\subset S^{+}((0,+\infty)^{\vec{\Gamma}'_{1}}).
    $$
    Notice that the natural map from $S$ to $(0,+\infty)^{\vec{\Gamma}'_{1}}/\mathbb{R}_+$ is a diffeomorphism. Hence
    \begin{align*}
        &\sum_{\scriptstyle\sss=1}^{d}\int_{S^{+}((0,+\infty)^{\vec{\Gamma}'_{1}})}W((\vec{\Gamma}',\mmm^{\sss},l),z_{ij}^{\sss}\beta e^{\sum\limits_{i=1}^{n}(z_{i}|\lambda_{i})})\\
        &=
        \sum_{\scriptstyle\sss=1}^{d}\int_{S^{+}((0,+\infty)^{\vec{\Gamma}_{1}})}\int_{(0,+\infty)}W((\vec{\Gamma}',\mmm^{\sss},l),z_{ij}^{\sss}\beta e^{\sum\limits_{i=1}^{n}(z_{i}|\lambda_{i})})\\
        &=\sum_{\scriptstyle\sss=1}^{d}\int_{S^{+}((0,+\infty)^{\vec{\Gamma}_{1}})}\int_{(\mathbb{A}^{d})^{ I-\{n\}}}z_{ij}^{\sss}x_{ij}^{\sss}\tilde{W}\\
        &=
        \int_{S^{+}((0,+\infty)^{\vec{\Gamma}_{1}})}W((\vec{\Gamma},\mmm,l),\beta e^{\sum\limits_{i=1}^{n}(z_{i}|\lambda_{i})}).
    \end{align*}
    When $\vec{\Gamma}_{1}=\emptyset$, $|\Gamma_{0}|\geq3$, $\vec{\Gamma}$ and $\vec{\Gamma}'$ are disconnected, so both sides of formula~\eqref{well-defined formula} are zero. When $\vec{\Gamma}_{1}=\emptyset$, $i=1$, $j=2$, $\vec{\Gamma}_{0}=\{1,2\}$, we have
    \begin{align*}
        &\sum_{\scriptstyle\sss=1}^{d}\int_{S^{+}((0,+\infty)^{\vec{\Gamma}'_{1}})}W((\vec{\Gamma}',\mmm^{\sss},l),z_{ij}^{\sss}\beta e^{\sum\limits_{i=1}^{n}(z_{i}|\lambda_{i})})\\
        &=
        \left.\left(
        -\int_{\mathbb{A}^{d}}e^{-(z_{12}| y_{12})}(z_{12}| dy_{12})\beta e^{(z_{1}| \lambda_{1})+(z_{2}|\lambda_{2})}
        \right)\right|_{t_{12}=1}\\
        &=
        \left.\left(
        \int_{\mathbb{A}^{d}}\bar{\partial}\left(e^{-(z_{12}| y_{12})}\beta e^{(z_{1}| \lambda_{1})+(z_{2}|\lambda_{2})}
        \right)\right)\right|_{t_{12}=1}\\
        &=0.
    \end{align*}
    Since $\vec{\Gamma}$ is disconnected,
    \begin{align*}
        &\int_{S^{+}((0,+\infty)^{\vec{\Gamma}_{1}})}W((\vec{\Gamma},\mmm,l),\beta e^{\sum\limits_{i=1}^{n}(z_{i}|\lambda_{i})})\\
        &=0\\
        &=
        \sum_{\scriptstyle\sss=1}^{d}\int_{S^{+}((0,+\infty)^{\vec{\Gamma}'_{1}})}W((\vec{\Gamma}',\mmm^{\sss},l),z_{ij}^{\sss}\beta e^{\sum\limits_{i=1}^{n}(z_{i}|\lambda_{i})}).
    \end{align*}
    Finally, part (3) follows by similar arguments.
\end{proof}

\subsection{Higher residues from Feynman graphs}

Finally, we arrive at the definition of the residue on the model $\mathbf{J}^{ I}_{\mathbb{A}^{d}}((\omega_{\mathbb{A}
^{d}}[d])^{\boxtimes I})$ for configuration space.

\begin{defn}\label{dfn:res}
    Let
    $$
    \alpha=p(x_{ij}^{\sss},\mathbf{d}x_{ij}^{\sss})\otimes\beta\in \mathbf{J}_{ I}^{\mathbb{A}^{d}}((\omega_{\mathbb{A}^{d}}[d])^{\boxtimes I}),
    $$
    where $p(x_{ij}^{\sss},\mathbf{d}x_{ij}^{\sss})$ is a monomial with coefficient $1$.
    Let
    $$
    e^{\sum\limits_{i=1}^{n}(z_{i}|\lambda_{i})}=e^{\sum\limits_{\scriptstyle\sss=1}^{d}\sum\limits_{i=1}^{n}z^{\scriptstyle\sss}_{i}\lambda^{\scriptstyle\sss}_{i}}\in C^{\infty}((\mathbb{A}^{d})^{ I}\times (\mathbb{A}^{d})^{ I}),
    $$
    then the \textit{residue} of $\alpha e^{\sum\limits_{i=1}^{n}(z_{i}|\lambda_{i})}$ is defined by
    \begin{align*}
        &\frac{-1}{(-2\pi i)^{d(n-1)}}\oint_{z_{1},\dots z_{n-1}=z_{n}}\alpha e^{\sum\limits_{i=1}^{n}(z_{i}|\lambda_{i})}\\
        &\define\frac{(-1)^{\frac{1}{2}(|\Gamma_{1}|-l)(|\Gamma_{1}|-l+1)+1}}{(-2\pi i)^{d(n-1)}}\int_{S^{+}((0,+\infty)^{\vec{\Gamma}_{1}})}W((\vec{\Gamma},\mmm,l),\beta e^{\sum\limits_{i=1}^{n}(z_{i}|\lambda_{i})}),
    \end{align*}
    where $(\vec{\Gamma},\mmm,l)$ is a Feynman graph with corresponding polynomial $p_{(\vec{\Gamma},\mmm,l)}=p(x_{ij}^{\sss},
    \mathbf{d}x_{ij}^{\sss})$, see definition \ref{dfn:poly}.
    We define residues for general elements in $\mathbf{J}^{ I}_{\mathbb{A}^{d}}((\omega_{\mathbb{A}^{d}}[d])^{\boxtimes
    I})$ by linearity.
\end{defn}

We observe that by Proposition \ref{well-defineness}, residues are well-defined.
The following proposition highlights key features of our higher residue defined on the Jouanolou model.
In part, it implies that the formal $\mathcal{D}_{(\mathbb{A}^{d})^{ I}}$-module structure on $\mathbf{J}^{ I}_{\mathbb{A}^{d}}$ coincides with the action by derivatives on the residues we have just defined.

\begin{prop}\label{residue 0}
    Let $\alpha\in \mathbf{J}^{ I}_{\mathbb{A}^{d}}((\omega_{\mathbb{A}^{d}}[d])^{\boxtimes I})$, $e^{\sum\limits_{i=1}^{n}(z_{i}|\lambda_{i})}\in C^{\infty}((\mathbb{A}^{d})^{ I}\times (\mathbb{A}^{d})^{ I})$, and $1\leq \sss\leq d$. We have

    \begin{enumerate}
        \item If $i\in I-\{n\}$, then
        $$
        \frac{-1}{(-2\pi i)^{d(n-1)}}\oint_{z_{1},\dots z_{n-1}=z_{n}}\left(\alpha e^{\sum\limits_{i=1}^{n}(z_{i}|\lambda_{i})}\right)\cdot \partial_{z_{i}^{\scriptstyle\sss}}=0.
        $$
        \item If $i=n$, then
        \begin{align*}
            &\frac{-1}{(-2\pi i)^{d(n-1)}}\oint_{z_{1},\dots z_{n-1}=z_{n}}\left(\alpha e^{\sum\limits_{i=1}^{n}(z_{i}|\lambda_{i})}\right)\cdot \partial_{z_{i}^{\scriptstyle\sss}}\\
            &=\left(\frac{-1}{(-2\pi i)^{d(n-1)}}\oint_{z_{1},\dots z_{n-1}=z_{n}}\alpha e^{\sum\limits_{i=1}^{n}(z_{i}|\lambda_{i})}\right)\cdot \partial_{z_{n}^{\scriptstyle\sss}}.
        \end{align*}
    \end{enumerate}
    \begin{proof}
        Without loss of generality, we assume $\alpha=p_{(\vec{\Gamma},\mmm,l)}\otimes\beta$, where $(\vec{\Gamma},\mmm,l)$ is a Feynman graph. We can also assume $\vec{\Gamma}_{1}\neq\emptyset$, $t(e_{1})=i$.

        Let $\vec{\Gamma}'\subset\vec{\Gamma}$ be a subgraph, such that $\vec{\Gamma}_{0}'=\vec{\Gamma}_{0}$, $\vec{\Gamma}_{1}'=\vec{\Gamma}_{1}-\{e_{1}\}$. Let
        $$
        \tilde{W}'=p_{(\vec{\Gamma}',\mmm|_{\vec{\Gamma}'},l)}(-e^{-(z_{e}| y_{e})}dy_{e}^{\mmm(e)},-e^{-(z_{e}| y_{e})}(z_{e}| dy_{e})dy_{e}^{\mmm(e)})e^{\sum\limits_{i=1}^{n}(z_{i}|\lambda_{i})}\otimes \beta.
        $$
        When $1<l$, we have
        $$
        \partial_{z_{i}^{\scriptstyle\sss}}x_{e_{1}}^{\mmm(e_{1})}=-x_{e_{1}}^{\sss}x_{e_{1}}^{\mmm(e_{1})}.
        $$
        Let's prove
        \begin{align*}
            &\int_{S^{+}((0,+\infty)^{|\Gamma_{1}|+1})}\int_{(\mathbb{A}^{d})^{ I-\{n\}}}
            e^{-\frac{(z_{e_{1}}| \bar{z}_{e_{1}})}{t_{e_{0}}}}d(\frac{\bar{z}_{e_{1}}^{\sss}}{t_{e_{0}}})e^{-\frac{(z_{e_{1}}|\bar{z}_{e_{1}})}{t_{e_{1}}}}d(\frac{\bar{z}_{e_{1}}^{\mmm(e_{1})}}{t_{e_{1}}})\tilde{W}'\\
            &=
            -\int_{S^{+}((0,+\infty)^{\vec{\Gamma}_{1}})}\int_{(\mathbb{A}^{d})^{ I-\{n\}}}\partial_{z_{i}^{\scriptstyle\sss}}\left(e^{-\frac{(z_{e_{1}}|\bar{z}_{e_{1}})}{t_{e_{1}}}}d(\frac{\bar{z}_{e_{1}}^{\mmm(e_{1})}}{t_{e_{1}}})\right)\tilde{W}'.
        \end{align*}

        When $\vec{\Gamma}_{1}'\neq\emptyset$, let
        $$
        S=S^{+}((0,+\infty)^{\vec{\Gamma}_{1}'})\times(0,+\infty)^{2}\subset(0,+\infty)^{|\Gamma_{1}|+1}.
        $$
        Since the natural map from $S$ to $(0,+\infty)^{|\Gamma_{1}|+1}/\mathbb{R}_+$ is a diffeomorphism, we have
        \begin{align*}
            &\int_{S^{+}((0,+\infty)^{|\Gamma_{1}|+1})}\int_{(\mathbb{A}^{d})^{ I-\{n\}}}
            e^{-\frac{(z_{e_{1}}| \bar{z}_{e_{1}})}{t_{e_{0}}}}d(\frac{\bar{z}_{e_{1}}^{\sss}}{t_{e_{0}}})e^{-\frac{(z_{e_{1}}| \bar{z}_{e_{1}})}{t_{e_{1}}}}d(\frac{\bar{z}_{e_{1}}^{\mmm(e_{1})}}{t_{e_{1}}})\tilde{W}'\\
            &=
            \int_{S^{+}((0,+\infty)^{\vec{\Gamma}'_{1}})}\int_{(\mathbb{A}^{d})^{ I-\{n\}}}\Phi_I(x_{e_{1}}^{\sss})\Phi_I(x_{e_{1}}^{\mmm(e_{1})})\tilde{W}'\\
            &=
            -\int_{S^{+}((0,+\infty)^{\vec{\Gamma}'_{1}})}\int_{(\mathbb{A}^{d})^{ I-\{n\}}}\partial_{z_{i}^{\scriptstyle\sss}}(\Phi_I(x_{e_{1}}^{\mmm(e_{1})}))\tilde{W}'\\
            &=
            -\int_{S^{+}((0,+\infty)^{\vec{\Gamma}_{1}})}\int_{(\mathbb{A}^{d})^{ I-\{n\}}}\partial_{z_{i}^{\scriptstyle\sss}}\left(e^{-\frac{(z_{e_{1}}| \bar{z}_{e_{1}})}{t_{e_{1}}}}d(\frac{\bar{z}_{e_{1}}^{\mmm(e_{1})}}{t_{e_{1}}})\right)\tilde{W}'.
        \end{align*}
        When $\vec{\Gamma}_{1}'=\emptyset$, this can be proved by direct computation.

        Similar arguments prove the case when $l=1$.

        Now, (1) follows from the fact that the integral of a total derivative is zero. By the dominated convergence theorem for derivatives, we can interchange the order of taking derivative and integration. This proves (2).
    \end{proof}
\end{prop}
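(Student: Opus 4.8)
The plan is to reduce everything, by linearity, to the case of a single Feynman-graph monomial $\alpha = p_{(\vec{\Gamma},m,l)}\otimes\beta$, and then to trace how the right $\mathcal{D}$-action by $\partial_{z_i^s}$ propagates through the Schwinger-integral presentation of the residue in definition \ref{dfn:res}. By the tensor-product formula for the right $\mathcal{D}$-module structure on $\mathbf{J}^I_{\mathbb{A}^d}((\omega_{\mathbb{A}^d}[d])^{\boxtimes I}) = (\omega_{\mathbb{A}^d}[d])^{\boxtimes I}\otimes_{\mathcal{O}}\mathbf{J}^I_{\mathbb{A}^d}$, the operator $\partial_{z_i^s}$ splits into its canonical action on the $\omega$-factor $\beta$ together with its action on the $x$-variables governed by proposition \ref{prop:dmodulestr}, namely $\partial_{z_i^s}x_e^{m(e)} = -x_e^s\,x_e^{m(e)}$ on each edge $e$ with $t(e)=i$ (and the analogous expression when $h(e)=i$), while $\partial_{z_i^s}$ commutes with $\mathbf{d}$. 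I may assume $\vec{\Gamma}_1\neq\emptyset$ and, fixing one incident edge, that $t(e_1)=i$; the general incidence is handled by the Leibniz rule.

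The heart of the argument is the interpretation of the factor $-x_{e_1}^s$ produced by $\partial_{z_i^s}$ as the insertion of a new edge. Using the Schwinger representations for $i(x_{ij}^s)$ and $i(\mathbf{d}x_{ij}^s)$, multiplying by $i(x_{e_1}^s)$ is exactly the effect of adjoining to $\vec{\Gamma}$ a parallel edge $e_0$ with $t(e_0)=i$, $h(e_0)=h(e_1)$, $m(e_0)=s$, carrying its own Schwinger parameter $t_{e_0}$. I would then organize the resulting two-edge integral over the enlarged Schwinger space $(0,+\infty)^{|\Gamma_1|+1}$ by invoking the scaling invariance of proposition \ref{useful property} and the edge-manipulation identities of proposition \ref{well-defineness}: choosing the section $S = S^+((0,+\infty)^{\vec{\Gamma}_1'})\times(0,+\infty)^2$, which maps diffeomorphically onto $(0,+\infty)^{|\Gamma_1|+1}/\mathbb{R}_+$, lets me rewrite the inserted integral as $-\int\partial_{z_i^s}\bigl(e^{-(z_{e_1}|y_{e_1})}\,d(\bar z_{e_1}^{m(e_1)}/t_{e_1})\bigr)\tilde W'$, where $\tilde W'$ is the integrand attached to $\vec{\Gamma}' = \vec{\Gamma}\setminus\{e_1\}$. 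In other words, the full effect of the right action is to replace the integrand by $\partial_{z_i^s}$ of a smooth form on the fiber.

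The proof then splits according to whether the differentiated coordinate is integrated or not. When $i\in I-\{n\}$ the coordinate $z_i = \tilde z_i + \tilde z_n$ is one of the fiber variables of $(\mathbb{A}^d)^{I-\{n\}}$, so after the reorganization above the integrand over the fiber is a genuine total derivative (equivalently a $\bar{\partial}$-exact form); its integral therefore vanishes, and since the Feynman integrand extends smoothly to the compactified Schwinger space $\widetilde{[0,+\infty)}^{\vec{\Gamma}_1}$ with $\bar S^+$ compact, no boundary term survives. This gives part (1). When $i=n$ the coordinate $z_n = \tilde z_n$ is the surviving base point and is not integrated; here $\partial_{z_n^s}$ instead lands on the exponential $e^{(z_n|\lambda_n)}$, producing the multiplicative factor $\lambda_n^s$, which is precisely the right $\mathcal{D}$-action $\cdot\,\partial_{z_n^s}$ on the target $\omega_{\mathbb{A}^d}\otimes_{\kk[\lambda_\star]}\kk[\lambda_I]$. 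Differentiation under the integral sign is legitimate by the dominated convergence theorem away from the diagonal, yielding part (2).

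I expect the main obstacle to be the bookkeeping in the second paragraph: verifying that the factor created by the $x$-derivative really corresponds to the claimed parallel-edge insertion, with the correct signs from the ordering conventions in definition \ref{dfn:poly}, and that the passage between Schwinger sections via the $\mathbb{R}_+$-scaling preserves the integral. Two degenerate configurations also demand separate direct computation: the case $l=1$, in which $e_1$ sits among the $\mathbf{d}x$-factors rather than the $x$-factors, and the case $\vec{\Gamma}_1'=\emptyset$, where the base graph has no remaining edges and the vanishing must be checked by an explicit $\bar{\partial}$-exactness computation on $\mathbb{A}^d$ as in the analogous step of proposition \ref{well-defineness}. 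Once these are in place, the Stokes/total-derivative vanishing and the dominated-convergence interchange are routine.
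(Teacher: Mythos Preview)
Your proposal is correct and follows essentially the same route as the paper: reduce to a monomial, interpret the extra $x_{e_1}^s$ produced by the $\mathcal{D}$-action as a parallel-edge insertion, use the $\mathbb{R}_+$-scaling invariance with the section $S = S^+((0,+\infty)^{\vec{\Gamma}_1'})\times(0,+\infty)^2$ to rewrite the integrand as a total $\partial_{z_i^s}$-derivative, and then invoke Stokes for (1) and dominated convergence for (2); you also correctly flag the degenerate cases $l=1$ and $\vec{\Gamma}_1'=\emptyset$. One small imprecision: in your account of (2) you say $\partial_{z_n^s}$ ``lands on the exponential $e^{(z_n|\lambda_n)}$, producing $\lambda_n^s$,'' but the residue in general still depends on $z_n$ through $\beta$ (not only through the exponential, which moreover carries $\sum_i\lambda_i$, not just $\lambda_n$), so the statement is really just that $\partial_{z_n^s}$ commutes with the integral---the identification with multiplication by $\lambda_n^s$ on $\Delta_*\omega$ happens one level up, in the proof that $\tilde\mu_I$ is a $\mathcal{D}$-module map.
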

\subsection{Construction of the unit chiral algebra on $\mathbb{A}^d$} \label{s:unit}

We conclude this section with a construction of an $L_{\infty}$ chiral algebra structure on $\omega_{\mathbb{A}^{d}}[d-1]$ by using residues.
This is precisely the \textit{unit chiral algebra} in dimension $d$.

We first introduce the following definition.
\begin{defn}
    Let $\alpha\in \mathbf{J}^{ I}_{\mathbb{A}^{d}}((\omega_{\mathbb{A}^{d}}[d])^{\boxtimes I})$, the shifted $n$-ary operation
    $$
    \tilde{\mu}_{ I}:\mathbf{J}^{ I}_{\mathbb{A}^{d}}((\omega_{\mathbb{A}^{d}}[d])^{\boxtimes I})\rightarrow \omega_{\mathbb{A}^{d}}[d]\otimes_{\kk[\lambda_{\star}]}\kk[\lambda_{1},\dots,\lambda_{n}]
    $$
    is given by
    $$
    \tilde{\mu}_{ I}(\alpha)=\frac{-e^{-(\lambda_{\star}| w)}}{(-2\pi i)^{d(n-1)}}\left.\left(\oint_{z_{1},\cdots z_{n-1}=z_{n}}\alpha e^{\sum\limits_{i=1}^{n}(z_{i}|\lambda_{i})}\right)\right|_{z_n=w},
    $$
    where $\lambda_{\star}=\sum\limits_{i=1}^{n}\lambda_{i}$.
\end{defn}
\begin{rem}
    By Proposition \ref{explicit formula}, we know $\tilde{\mu}_{ I}(\alpha)$ is a well-defined element of $\omega_{\mathbb{A}^{d}}[d]\otimes_{\kk[\lambda_{\star}]}\kk[\lambda_{1},\dots,\lambda_{n}]$, i.e., $\tilde{\mu}_{ I}(\alpha)$ is a polynomial with respect to $\{\lambda_{i}\}_{i\in I}$.
\end{rem}

As mentioned in Remark \ref{rem:CompareJouanolouPoly}, the Jouanolou model has the advantage that it carries a $\mathrm{GL}_d$-action. Here we give the detailed definition.
\begin{defn}
   The Jouanolou model $\mathbf{J}^{ I}_{\mathbb{A}^{d}}((\omega_{\mathbb{A}^{d}}[d])^{\boxtimes I})$ and the pushforward $\omega_{\mathbb{A}^{d}}[d]\otimes_{\kk[\lambda_{\bullet}]}\kk[\lambda_{1},\dots,\lambda_{n}]$ carry a natural $\mathrm{GL}_d$-action which can be described as follows:
   $$
   (z_i^{\1},\dots,z_i^{\dd})\mapsto  (z_i^{\1},\dots,z_i^{\dd})A,\quad i=1,\dots,n,
   $$
    $$
   (dz_i^{\1},\dots,dz_i^{\dd})\mapsto  (dz_i^{\1},\dots,dz_i^{\dd})A,\quad i=1,\dots,n,
   $$
    $$
   (x_{ij}^{\1},\dots,x_{ij}^{\dd})\mapsto  (x_{ij}^{\1},\dots,x_{ij}^{\dd})(A^{T})^{-1},\quad i,j=1,\dots,n,
   $$
 $$
   (\lambda_i^{\1},\dots,\lambda_i^{\dd})\mapsto  (\lambda_i^{\1},\dots,\lambda_i^{\dd})(A^{T})^{-1},\quad i=\star,1,\dots,n,
   $$
   where $A\in \mathrm{GL}_d$.
\end{defn}
The following proposition says that $\tilde{\mu}_{ I}(\alpha)$ defines a degree 1 $\mathrm{GL}_d$-equivariant chiral operation.
\begin{prop}
    $\tilde{\mu}_{ I}(\alpha)$ defines a cohomological degree $1$ element of
    \[
      \mathrm{Hom}_{\mathcal{D}_{(\mathbb{A}^d)^{I}}}\left(\mathbf{J}_{\mathbb{A}^d}^{I}((\omega_{\mathbb{A}^{d}}[d]))^{\boxtimes I}),\Delta^{I/\{\star\}}_*(\omega_{\mathbb{A}^{d}}[d])\right)
    \]
which is also $\mathrm{GL}_d$-equivariant.
\end{prop}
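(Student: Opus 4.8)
The plan is to check three things about the linear map $\tilde\mu_I$: that it raises cohomological degree by one, that it intertwines the right $\mathcal{D}_{(\mathbb{A}^d)^I}$-module structures on source and target, and that it is $\mathrm{GL}_d$-equivariant. Well-definedness of the residue and the fact that the output is polynomial in the $\lambda_i$ are already supplied by Proposition \ref{well-defineness} and Proposition \ref{explicit formula}, so I would not revisit them.

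For the degree I would argue by a form-degree count in the Schwinger integral. Writing $\alpha=p_{(\vec\Gamma,m,l)}\otimes\beta$, the monomial carries $|\Gamma_1|-l+1$ factors of $\mathbf{d}x$ (each of degree one) while $\beta\in(\omega_{\mathbb{A}^d}[d])^{\boxtimes I}$ sits in degree $-dn$, so $\deg\alpha=|\Gamma_1|-l+1-dn$. By Proposition \ref{explicit formula}, the product $p_{(\vec\Gamma,m,l)}(-d\hat y^{m(e)},\dots)$ contributes $2|\Gamma_1|-l+1$ factors $d\hat y$; after $\iota_{\prod_{i=1}^{n-1}(d^d\tilde z_i d^d\bar{\tilde z}_i)}$ absorbs $d(n-1)$ of them into the antiholomorphic directions and $\tilde z_i=0$ is imposed, what remains is a form on Schwinger space of degree $(2|\Gamma_1|-l+1)-d(n-1)$ valued in the output volume form $dw$. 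Since $S^{+}((0,+\infty)^{\Gamma_1})$ has dimension $|\Gamma_1|-1$, the integral vanishes unless $(2|\Gamma_1|-l+1)-d(n-1)=|\Gamma_1|-1$, i.e. unless $|\Gamma_1|-l+2=d(n-1)$, which is exactly the condition $\deg\alpha=-d-1$. As the target $\omega_{\mathbb{A}^d}[d]\otimes_{k[\lambda_\star]}k[\lambda_1,\dots,\lambda_n]$ lies in degree $-d$, we get $\tilde\mu_I(\alpha)=0$ unless $\deg\alpha=-d-1$, and otherwise $\deg\tilde\mu_I(\alpha)=\deg\alpha+1$; hence $\tilde\mu_I$ has cohomological degree one.

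For $\mathcal{D}$-linearity I would treat the generators $z_k^t$ and $\partial_{z_k^t}$ separately. Compatibility with multiplication by $z_k^t$ is the same computation as in the one-dimensional case: inside the residue one replaces $z_k^t e^{\sum_i(z_i|\lambda_i)}$ by $\partial_{\lambda_k^t}e^{\sum_i(z_i|\lambda_i)}$, pulls $\partial_{\lambda_k^t}$ through the integration, and then differentiating the prefactor $e^{-(\lambda_\star|w)}$ together with the restriction $z_n=w$ reproduces exactly the target action $(n\otimes F)\cdot z_k^t=nw^t\otimes F+n\otimes\partial_{\lambda_k^t}F$. Compatibility with $\partial_{z_k^t}$ is precisely Proposition \ref{residue 0}: combining part (1) ($k\neq n$) with the Leibniz identity $(\alpha e^{\sum})\cdot\partial_{z_k^t}=(\alpha\cdot\partial_{z_k^t})e^{\sum}\pm\lambda_k^t\,\alpha e^{\sum}$ forces $\tilde\mu_I(\alpha\cdot\partial_{z_k^t})=\pm\lambda_k^t\,\tilde\mu_I(\alpha)$, matching $(n\otimes F)\cdot\partial_{z_k^t}=n\otimes F\lambda_k^t$, while part (2) ($k=n$) realizes $\partial_{z_n^t}$ as the right action $\partial_{z^t}$ on the diagonal factor $\omega_{\mathbb{A}^d}[d]$; this is consistent because the tensor product is taken over $k[\lambda_\star]$, with $\lambda_\star^t$ acting as $\partial_{z^t}$.

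The $\mathrm{GL}_d$-equivariance is the most delicate point, since the analytic representative $i$ used to compute the residue involves $|z_i-z_j|^2$, which is invariant only under the unitary group. I would first establish equivariance for $A\in U(d)$: there $\bar A=(A^T)^{-1}$, so the transformation law $x_{ij}^s\mapsto x_{ij}^s(A^T)^{-1}$ of the Jouanolou model matches the transformation of $i(x_{ij}^s)$, and in the Gaussian integral over $(\mathbb{A}^d)^{I-\{n\}}$ the holomorphic and antiholomorphic volume factors $(\det A)^{n-1}$ and $(\overline{\det A})^{\,n-1}=(\det A)^{-(n-1)}$ cancel, leaving exactly the single factor $\det A$ that transforms the output form $dw$ together with $\lambda_i\mapsto\lambda_i(A^T)^{-1}$. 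Both sides of the identity $\tilde\mu_I(A\cdot\alpha)=A\cdot\tilde\mu_I(\alpha)$ are, for fixed $\alpha$, regular (rational) functions of the entries of $A\in\mathrm{GL}_d(\mathbb{C})$ with values in the target; since they agree on the Zariski-dense real form $U(d)$, they agree on all of $\mathrm{GL}_d(\mathbb{C})$. I expect this passage — extracting a genuinely $\mathrm{GL}_d$-covariant algebraic output from a representative that is only $U(d)$-covariant — to be the main obstacle, with the determinant cancellation in the Gaussian integration as its key computational input.
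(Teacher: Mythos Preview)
Your treatment of the $\mathcal{D}$-module compatibility and the degree count follows the same computations as the paper; the paper records the degree constraint more compactly as $|\Gamma_1|+|p|=d(n-1)+|\Gamma_1|-1$ together with $|\beta|=-dn$, but this is your form-degree count rewritten.

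Where your argument genuinely diverges is the $\mathrm{GL}_d$-equivariance. The paper does not pass through the analytic map $i$ at all: it works directly with the closed formula of Proposition~\ref{explicit formula}(2), in which the Gaussian has already been integrated out and $\tilde z_i$, $\bar{\tilde z}_i$ may be treated as formally independent variables transforming as $\tilde z\mapsto\tilde z A$ and $\bar{\tilde z}\mapsto\bar{\tilde z}(A^T)^{-1}$ for \emph{any} $A\in\mathrm{GL}_d$. One then reads off that the contraction $\iota_{\prod d^d\tilde z_i\,d^d\bar{\tilde z}_i}$ and the operator $\sum_s(\partial_{\lambda_e^s}+\tilde z_e^s)\circ d\hat y_e^s$ are invariant while $(d\hat y_e^1,\dots,d\hat y_e^d)$ transforms by $(A^T)^{-1}$, so the whole expression is manifestly covariant. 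Your route --- establish $U(d)$-equivariance via the analytic representative (where the Jacobian factors from $d^d\tilde z$ and $d^d\bar{\tilde z}$ genuinely cancel), then extend to $\mathrm{GL}_d(\mathbb{C})$ by Zariski density of $U(d)$ --- is also valid: the regularity in $A$ you need holds because the Jouanolou action and the target action are polynomial in the entries of $A$ and $(\det A)^{-1}$, and $U(d)$ is indeed Zariski-dense in $\mathrm{GL}_d(\mathbb{C})$. The paper's approach buys a shorter direct argument with no analytic continuation; yours buys a proof that does not require inspecting the post-Gaussian formula term by term, at the cost of the density step.
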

\begin{proof}
    We first prove that $\tilde{\mu}_{ I}$ is a $\mathcal{D}_{(\mathbb{A}^{d})^{ I}}$-module map.
    First, observe that functions act in the following way:
    \begin{align*}
        &\tilde{\mu}_{ I}(\alpha\cdot z_{i}^{\sss})\\
        &=
        \frac{-e^{-(\lambda_{\star}| w)}}{(-2\pi i)^{d(n-1)}}\left.\left(\oint_{z_{1},\cdots z_{n-1}=z_{n}}z_{i}^{\sss}\alpha e^{\sum\limits_{j=1}^{n}(z_{j}|\lambda_{j})}\right)\right|_{z_n=w}\\
        &=
        \frac{-e^{-(\lambda_{\star}| w)}}{(-2\pi i)^{d(n-1)}}\left.\left(\oint_{z_{1},\cdots z_{n-1}=z_{n}}\alpha \partial_{\lambda_{i}^{\scriptstyle\sss}}e^{\sum\limits_{j=1}^{n}(z_{j}|\lambda_{j})}\right)\right|_{z_n=w}\\
        &=(w+\partial_{\lambda_{i}^{\scriptstyle\sss}})\frac{-e^{-(\lambda_{\star}| w)}}{(-2\pi i)^{d(n-1)}}\left.\left(\oint_{z_{1},\cdots z_{n-1}=z_{n}}\alpha e^{\sum\limits_{j=1}^{n}(z_{j}|\lambda_{j})}\right)\right|_{z_n=w}\\
        &=
        \tilde{\mu}_{ I}(\alpha)\cdot z_{i}^{\sss},
    \end{align*}
    Similarly, derivatives act as
    \begin{align*}
        &\tilde{\mu}_{ I}(\alpha\cdot \partial_{z_{i}^{\scriptstyle\sss}})\\
        &=
        \frac{e^{-(\lambda_{\star}| w)}}{(-2\pi i)^{d(n-1)}}\left.\left(\oint_{z_{1},\cdots z_{n-1}=z_{n}}\partial_{z_{i}^{\scriptstyle\sss}}(\alpha) e^{\sum\limits_{j=1}^{n}(z_{j}|\lambda_{j})}\right)\right|_{z_n=w}\\
        &=
        \frac{e^{-(\lambda_{\star}| w)}}{(-2\pi i)^{d(n-1)}}\left.\left(\oint_{z_{1},\cdots z_{n-1}=z_{n}}\partial_{z_{i}^{\scriptstyle\sss}}(\alpha e^{\sum\limits_{j=1}^{n}(z_{j}|\lambda_{j})})\right)\right|_{z_n=w}\\
        &-
        \frac{e^{-(\lambda_{\star}| w)}}{(-2\pi i)^{d(n-1)}}\left.\left(\oint_{z_{1},\cdots z_{n-1}=z_{n}}\lambda_{i}^{\sss}\alpha e^{\sum\limits_{j=1}^{n}(z_{j}|\lambda_{j})}\right)\right|_{z_n=w}\\
        &=
        \frac{e^{-(\lambda_{\star}| w)}}{(-2\pi i)^{d(n-1)}}\left.\left(\oint_{z_{1},\cdots z_{n-1}=z_{n}}\partial_{z_{i}^{\scriptstyle\sss}}(\alpha e^{\sum\limits_{j=1}^{n}(z_{j}|\lambda_{j})})\right)\right|_{z_n=w}+
        \tilde{\mu}_{ I}(\alpha)\cdot \partial_{z_{i}^{\scriptstyle\sss}}.
    \end{align*}
    Using Proposition~\ref{residue 0}, the first term is $0$ when $i\neq n$. When $i=n$, the first term is
    $$
    (-\lambda_{\star}+\partial_{w})\frac{e^{-(\lambda_{\star}| w)}}{(-2\pi i)^{d(n-1)}}\left.\left(\oint_{z_{1},\cdots z_{n-1}=z_{n}}\alpha e^{\sum\limits_{j=1}^{n}(z_{j}|\lambda_{j})}\right)\right|_{z_n=w},
    $$
    which is zero as an element in
    $$
    \omega_{\mathbb{A}^{d}}[d]\otimes_{\kk[\lambda_{\star}]}\kk[\lambda_{1},\dots,\lambda_{n}].
    $$

    We next check that $\tilde{\mu}$ is of cohomological degree $1$.
    Assume
    $$
    \alpha=p\otimes\beta\in\mathbf{J}^{ I}_{\mathbb{A}^{d}}\otimes_{\mathcal{O}_{(\mathbb{A}^{d})^{ I}}}(\omega_{\mathbb{A}^{d}}[d])^{\boxtimes I},
    $$
    we use $|p|$ and $|\beta|$ to denote the homological degree of $p$ and $\beta$ respectively. To make sure the integral
    $$
    \oint_{z_{1},\cdots z_{n-1}=z_{n}}p\otimes\beta e^{\sum\limits_{j=1}^{n}(z_{j}|\lambda_{j})}
    $$
    is non-zero, we have
    $$
    \begin{cases}
        |\Gamma_{1}|+|p|=d(|\Gamma_{0}|-1)+|\Gamma_{1}|-1\\
        |\beta|=-d|\Gamma_{0}|
    \end{cases}.
    $$
    So $|\tilde{\mu}_{ I}(\alpha)|=-d=|p|+|\beta|+1=|\alpha|+1$.

    For the $\mathrm{GL}_d$-equivariance, we use Proposition \ref{explicit formula} (2). In that explicit formula, we can formally treat $\tilde{z}_i$ and $\bar{\tilde{z}}_i$ as independent variables and declare that under the action $A\in \mathrm{GL}_d$
$$
   (\tilde{z}_i^{\1},\dots,\tilde{z}_i^{\dd})\mapsto  (\tilde{z}_i^{\1},\dots,\tilde{z}_i^{\dd})A,\quad i=1,\dots,n,
   $$
   $$
(\bar{\tilde{z}}_i^{\1},\dots,\bar{\tilde{z}}_i^{\dd})\mapsto  (\bar{\tilde{z}}_i^{\1},\dots,\bar{\tilde{z}}_i^{\dd})(A^{T})^{-1},\quad i=1,\dots,n.
   $$
   Then in the expression of $W((\vec{\Gamma},\mmm,l),\beta e^{\sum\limits_{i=1}^{n}(z_{i}|\lambda_{i})})$, we see that both $\iota_{\prod_{i=1}^{n-1}(d^{d}\tilde{z}_{i}d^{d}\bar{\tilde{z}}_{i})}$ and $\sum\limits_{\scriptstyle\sss=1}^{d}(\partial_{\lambda_{e}^{\scriptstyle\sss}}+\tilde{z}_{e}^{\sss})\circ d\hat{y}_{e}^{\sss}$ are invariant, and for $e\in \Gamma_1$
$$
(d\hat{y}_{e}^{\1},\dots,d\hat{y}_{e}^{\dd})\mapsto(d\hat{y}_{e}^{\1},\dots,d\hat{y}_{e}^{\dd})(A^{T})^{-1}
$$
   has the expected transformation rule.
\end{proof}

Thus, $\{\tilde \mu_I\}$ defines a chiral operation of the appropriate degree.
For such operations to define a chiral algebra, we need to check that $\tilde{\mu}_{ I}$ satisfies the following shifted $L_{\infty}$ relations.
This is the main result of this section.

\begin{thm}
    $\{\tilde{\mu}_{ I}\}$ satisfies the following properties:
    \begin{enumerate}
        \item $\tilde{\mu}_{ I}=\tilde{\mu}_{ I_{\sigma_{ii'}}}\circ{\sigma_{ii'}}$, where $\sigma_{ii'}$ is the permutation of $i\neq i'\in I$ (recall it is defined in Definition \ref{SymmetricGroupAction}).
        \item For any finite set $I$ one has (recall the notation in the proof of Theorem \ref{DgOperadStructure})
        $$
        -\tilde{\mu}_{ I}\circ \mathbf{d}=\sum_{ I'\subset I}\tilde{\mu}_{\{\bullet\}\cup I- I'}\circ \tilde{\mu}_{ I'\subset I}
        $$
        where the sum is over subsets $I' \subset I$.
    \end{enumerate}
\end{thm}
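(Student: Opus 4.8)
For the symmetry relation, the plan is to verify directly that the integral representation of Definition \ref{dfn:res} is equivariant under relabelling of marked points. A transposition $\sigma_{ii'}$ relabels the vertices of the underlying Feynman graph $(\vec{\Gamma},m,l)$, which permutes its edge set and hence reorders the factors of $p_{(\vec{\Gamma},m,l)}$; the resulting sign is exactly the one recorded in Proposition \ref{well-defineness}(1). Relabelling the vertices themselves amounts to a change of variables $z_i \leftrightarrow z_{i'}$ in the Gaussian fibre integral defining $W$. The remaining task is to match the Koszul sign $\chi(m_1,\dots,m_k;\sigma)$ and the target action from Definition \ref{SymmetricGroupAction} against the signs produced by these two moves. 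Working in the translation-centred coordinates $\tilde{z}_i = z_i - z_n$ of Proposition \ref{explicit formula} shows that the apparent dependence on the distinguished vertex $n$ is spurious, so the identity $\tilde{\mu}_I = \tilde{\mu}_{I_{\sigma_{ii'}}} \circ \sigma_{ii'}$ holds.

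\textbf{Part (2).} For the $L_\infty$ relation, my strategy is a two-step Stokes'-theorem argument: first on the configuration-space fibre, then on the compactified Schwinger space $\bar{S}^{+}((0,+\infty)^{\vec{\Gamma}_{1}})$. The starting point is that promoting a factor $x_e$ to $\mathbf{d}x_e$ corresponds, under the Schwinger integral representation of the preceding lemma, to applying the total differential $\bar\partial + d_t$ to the factor $e^{-(z_e|y_e)}dy_e^{m(e)}$ (this is the computation $i(\mathbf{d}x_{ij}^s) = \int_0^\infty (\bar\partial + d_t)(\dots)$). Since $\mathbf{d}$ acts as a derivation and the factors $e^{\sum(z_i|\lambda_i)}$ and $\beta$ are holomorphic, the integrand $\tilde{W}_{\mathbf{d}\alpha}$ attached to $\mathbf{d}\alpha$ equals $(\bar\partial + d_t)\tilde{W}_\alpha$ up to sign. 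Integrating over the fibre $(\mathbb{A}^d)^{I-\{n\}}$, the $\bar\partial$-part is a total derivative in the fibre antiholomorphic variables and integrates to zero by Gaussian decay, so $\tilde{\mu}_I(\mathbf{d}\alpha)$ reduces, up to the fibre-integration Koszul sign, to $\int_{S^+} d_t W_\alpha$.

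The heart of the argument is the second Stokes step. By Stokes' theorem on the compactification, $\int_{\bar{S}^{+}} d_t W_\alpha = \int_{\partial\bar{S}^{+}} W_\alpha$. In the iterated real blow-up of Appendix \ref{Schwinger spaces}, the codimension-one faces of $\partial\bar{S}^{+}$ are indexed by degenerations in which the Schwinger parameters of a subset of edges collapse relative to the rest; such a locus is precisely the contraction of the subgraph supported on a subset $I'\subset I$ of vertices. On each such face the graphical Green's function $d^{-1}_{\vec{\Gamma}}(t)$ and the operators $d\hat{y}_e^s$ of Proposition \ref{explicit formula} degenerate so that the face integral factorizes as an inner integral over the collapsing cluster, reproducing $\tilde{\mu}_{I'\subset I}$, times an outer integral over the contracted graph, reproducing $\tilde{\mu}_{\{\bullet\}\cup I - I'}$. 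Summing over all faces produces $\sum_{I'\subset I}\tilde{\mu}_{\{\bullet\}\cup I - I'}\circ\tilde{\mu}_{I'\subset I}$, and the overall minus sign on the left follows by combining the trade $\bar\partial\mapsto -d_t$, the fibre-integration sign, and the orientation convention on $\bar{S}^{+}$.

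The step I expect to be the main obstacle is this boundary bookkeeping: verifying that the degeneration of $d^{-1}_{\vec{\Gamma}}(t)$ on each face makes the face integral factorize on the nose as the partial composition of Theorem \ref{DgOperadStructure}, and that the orientation signs on the faces, the edge-reordering signs of Proposition \ref{well-defineness}(1), and the Koszul signs from the $(\omega_{\mathbb{A}^d}[d])^{\boxtimes I}$ factors all assemble consistently into the stated relation. Controlling the smooth extension of the integrand to $\widetilde{[0,+\infty)}^{\vec{\Gamma}_{1}}$ near the corners, so that no face is lost or double-counted, is precisely where the compactification results of the appendix are needed.
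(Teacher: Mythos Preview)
Your approach to Part (2) is essentially the paper's: identify the integrand for $\mathbf{d}\alpha$ with $(\bar\partial + d_t)\tilde{W}_\alpha$, kill the $\bar\partial$ piece by fibre integration, then apply Stokes' theorem over the compactified Schwinger sphere $\bar{S}^+$ and invoke the boundary decomposition of Proposition \ref{boundary description} to produce the operadic composition. The paper's proof is in fact terser than yours on the boundary bookkeeping --- it simply writes down the factorised double integral over $S^+((0,+\infty)^{\vec{\Gamma}_1-\vec{\Gamma}'_1})\times S^+((0,+\infty)^{\vec{\Gamma}'_1})$ and declares it equal to $\sum_{I'}\tilde{\mu}_{\{\bullet\}\cup I-I'}\circ\tilde{\mu}_{I'\subset I}$ --- so your concerns about matching face orientations and edge-reordering signs are legitimate, but the paper does not spell them out further either.

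For Part (1) there is a genuine gap. The claim that swapping $i \leftrightarrow i'$ ``amounts to a change of variables $z_i \leftrightarrow z_{i'}$ in the Gaussian fibre integral'' is only correct when neither index equals $n$: the fibre integral is over $(\mathbb{A}^d)^{I-\{n\}}$, so $z_n$ is \emph{not} an integration variable, and the transposition $\sigma_{in}$ changes which coordinate is held fixed. Passing to the translation-centred coordinates $\tilde z_j=z_j-z_n$ does not by itself repair this, because the two integrals (centred at $z_n$ versus centred at $z_i$) differ by a factor of the form $e^{(\lambda_\star\,|\,z'_i-z'_n)}$ coming from the weight $e^{\sum_j(z_j|\lambda_j)}$. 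The paper handles this case separately: it first uses the $\mathcal{D}$-module property to reduce to $\beta=\prod_j d^d z_j$, then applies the specific affine change $z_j = z'_j + 2w - z'_n - z'_i$, and finally invokes the relation $d^d w \cdot \lambda_\star^s = 0$ in the target $\omega_{\mathbb{A}^d}[d]\otimes_{k[\lambda_\star]}k[\lambda_I]$ to kill the surplus exponential. That last step --- the defect lives in the image of $\lambda_\star$ and hence vanishes in the quotient --- is the mechanism your sketch is missing.
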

\begin{proof}
    We first prove (1). When $i,i'\neq n$, it is trivial. Assume $i<i'=n$. Since $\tilde{\mu}_{ I}$ is a $\mathcal{D}_{(\mathbb{A}^{d})^{ I}}$-module morphism, we only need to prove this in the case when
    $$
    \alpha=p(x_{i}^{\sss},\mathbf{d}x_{i}^{\sss})\otimes\prod_{i=1}^{n}d^{d}z_{i}.
    $$
    In this case,
    $$
    \tilde{\mu}_{ I}(\alpha)=\frac{-e^{-(\lambda_{\star}| w)}}{(-2\pi i)^{d(n-1)}}\left.\left(\oint_{z_{1},\cdots z_{n-1}=z_{n}}\alpha e^{\sum\limits_{j=1}^{n}(z_{j}|\lambda_{j})}\right)\right|_{z_{n}=w}.
    $$
    Using the coordinate transformation
    $$
    z_{j}=z'_{j}+2w-z'_{n}-z'_{i},
    $$
    we have
    $$
    \tilde{\mu}_{ I}(\alpha)=\frac{-e^{-(\lambda_{\star}| w)}}{(-2\pi i)^{d(n-1)}}\left.\left(\oint_{z'_{1},\cdots, z'_{i-1},z'_{n}, z'_{i+1},\cdots, z'_{n-1}=z'_{i}}(-1)^{d^2}\alpha e^{\sum\limits_{j=1}^{n}(z'_{j}|\lambda_{j})+(\lambda_{\star}|(z'_i-z'_n)})\right)\right|_{z'_{i}=w}.
    $$
    By Proposition \ref{explicit formula} and the fact that $d^{d}w\cdot\lambda_{\star}^{\sss}=0$, we have
    $$
    \tilde{\mu}_{ I}(\alpha)=\frac{-e^{-(\lambda_{\star}| w)}}{(-2\pi i)^{d(n-1)}}\left.\left(\oint_{z'_{1},\cdots, z'_{i-1},z'_{n}, z'_{i+1},\cdots, z'_{n-1}=z'_{i}}{\sigma_{in}}(\alpha) e^{\sum\limits_{j=1}^{n}(z'_{j}| \lambda_{j})}\right)\right|_{z'_{i}=w}=\tilde{\mu}_{ I_{\sigma_{in}}}\circ{\sigma_{in}}(\alpha).
    $$

    Now we prove (2). Assume $\alpha=p_{(\vec{\Gamma},\mmm,l)}\otimes\beta$, where $(\vec{\Gamma},\mmm,l)$ is a Feynman graph. Let
    $$
    \tilde{W}=p_{(\vec{\Gamma},\mmm,l)}(-e^{-(z_{e}| y_{e})}dy_{e}^{\mmm(e)},-e^{-(z_{e}| y_{e})}(z_{e}|dy_{e})dy_{e}^{\mmm(e)})e^{\sum\limits_{i=1}^{n}(z_{i}|\lambda_{i})}\otimes\beta.
    $$
    By Stokes' theorem and Proposition \ref{boundary description},
    \begin{align*}
        &-\tilde{\mu}_{ I}(\mathbf{d}\alpha)\\
        &=\frac{e^{-(\lambda_{\star}| w)}(-1)^{\frac{1}{2}(|\Gamma_{1}|-l)(|\Gamma_{1}|-l+1)}}{(-2\pi i)^{d(n-1)}}\left.\left(\int_{S^{+}((0,+\infty)^{\vec{\Gamma}_{1}})}\int_{(\mathbb{A}^{d})^{ I-\{n\}}}(\bar{\partial}+d_t)\tilde{W}\right)\right|_{z_n=w}\\
        &=
        \frac{e^{-(\lambda_{\star}| w)}(-1)^{\frac{1}{2}(|\Gamma_{1}|-l)(|\Gamma_{1}|-l+1)}}{(-2\pi i)^{d(n-1)}}\left.\left(\int_{S^{+}((0,+\infty)^{\vec{\Gamma}_{1}})}d_{t}\left(\int_{(\mathbb{A}^{d})^{ I-\{n\}}}\tilde{W}\right)\right)\right|_{z_n=w}\\
        &=
        \frac{e^{-(\lambda_{\star}| w)}(-1)^{\frac{1}{2}(|\Gamma_{1}|-l)(|\Gamma_{1}|-l+1)}}{(-2\pi i)^{d(n-1)}}\left(\sum_{\vec{\Gamma}'\subset \vec{\Gamma}}\mathrm{sgn}(\sigma_{\vec{\Gamma}'_{1}\subset \vec{\Gamma}_{1}})\int_{S^{+}((0,+\infty)^{\vec{\Gamma}_{1}- \vec{\Gamma}'_{1}})}\int_{S^{+}((0,+\infty)^{\vec{\Gamma}'_{1}})}\right.\\
        &
        \left.\left.\int_{(\mathbb{A}^{d})^{\{\bullet\}\cup\vec{\Gamma}_{0}-\vec{\Gamma}'_{0}-\{n\}}}\int_{(\mathbb{A}^{d})^{|\Gamma'_{0}|-1}}\tilde{W}\right)\right|_{z_n=w}\\
        &=
        \sum_{ I'\subset I}\tilde{\mu}_{\{\bullet\}\cup I- I'}\circ \tilde{\mu}_{ I'\subset I}(\alpha).
    \end{align*}
\end{proof}

Finally, using the canonical d\'{e}calage isomorphism
$$
\mathrm{Sym}^{\boxtimes I}(\omega_{\mathbb{A}^{d}}[d])\cong \wedge^{\boxtimes I}(\omega_{\mathbb{A}^{d}}[d-1])[n],
$$
the shifted $L_{\infty}$ chiral algebra structure $\{\tilde{\mu}_{ I}\}_{ I\in \mathbf{fSet}}$ on $\omega_{\mathbb{A}^{d}}[d]$ corresponds to an $L_{\infty}$ chiral algebra structure on $\omega_{\mathbb{A}^{d}}[d-1]$.
\begin{defn}
  The $L_{\infty}$-algebra structure on $\omega_{\mathbb{A}^{d}}[d-1]$ is called the \textit{unit} $L_{\infty}$ chiral
  algebra structure. We denote it by $\omega_{\mathbb{A}^d}^{\blacklozenge}$. The corresponding $L_{\infty}$ operations are
  denoted by $\{\mu^{\omega}_{ I}\}_{ I\in \mathbf{fSet}}$.
\end{defn}

\section{Free-field realization via chiral algebras}\label{s:example}

We provide examples of chiral algebras in the Jouanolou model.
These include commutative chiral algebras and free ghost chiral algebras.
Additionally, we provide a construction of a chiral algebra enhancement of the higher-dimensional Kac--Moody algebra as
constructed in \cite{FHK}.
This is a generalization of the relationship between the (universal) WZW/current chiral algebras and affine Kac--Moody algebras. We will assume that $\kk=\mathbb{C}$ in this section. However, everything works for the general ground field $\mathbf{k}$ if one accepts the existence of a unit chiral algebra over $\kk$.

For simplicity of notation, we write $\omega$ instead of $\omega_{\mathbb{A}^d}$ throughout this section.
\subsection{Commutative chiral algebras}
Following \cite{BD}, we define the notion of commutative chiral algebras.
\begin{defn}
   Let $\mathcal{A}$ be a chiral algebra with zero differential on $\mathbb{A}^d$. We say that $\mathcal{A}$ is
   \textit{commutative} if the composition
$$
\Gamma\left((\mathbb{A}^d)^{{I}},\mathcal{A}^{\boxtimes {I}}\right)\hookrightarrow \mathbf{J}^{{I}}_{\mathbb{A}^d}(\mathcal{A})\xrightarrow{\mu_{{I}}}\mathcal{A}\otimes_{\mathbf{k}[\lambda_{\star}]}\mathbf{k}[\lambda_I]
$$
    vanishes for all $I$ with $|I|\geq 2$.
    Here, the first map is the inclusion of global sections into derived global sections.
\end{defn}

A graded commutative $\mathcal{D}_{\mathbb{A}^d}$-algebra $\mathbf{B}$ is a left $\mathcal{D}_{\mathbb{A}^d}$-module with a commutative product
$$
m\colon\mathbf{B}\otimes\mathbf{B}\rightarrow \mathbf{B}
$$
which is a $\mathcal{D}_{\mathbb{A}^d}$-module map.
\begin{prop}\label{prop:commop}
  Let $\mathbf{B}$ be a commutative $\mathcal{D}_{\mathbb{A}^d}$-algebra.
  There is a chiral algebra structure on $\mathcal{B}\define\mathbf{B}\otimes_{\mathcal{O}_{\mathbb{A}^d}} \omega_{\mathbb{A}^d}^{\blacklozenge}$ which makes $\mathcal{B}$ into a commutative chiral algebra.
\end{prop}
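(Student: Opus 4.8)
The plan is to build the chiral operations on $\mathcal{B}$ by tensoring the unit residue operations $\{\tilde\mu^\omega_I\}$ of the previous section with the iterated commutative product on $\mathbf{B}$, and then to deduce the chiral ($L_\infty$) relations and the commutativity from the corresponding facts about $\omega^{\blacklozenge}$ and about $\mathbf{B}$ separately. The guiding principle is that $\mathcal{B}$ should be an honest tensor product of the (commutative, degree-zero) $\mathcal{D}$-algebra $\mathbf{B}$ with the unit chiral algebra.

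First I would record the module-theoretic decomposition. Since $\mathbf{B}$ is a left $\mathcal{D}_{\mathbb{A}^d}$-module and $\omega^{\blacklozenge}=\omega_{\mathbb{A}^d}[d-1]$ is a right one, the tensor $\mathcal{B}=\mathbf{B}\otimes_{\mathcal{O}}\omega^{\blacklozenge}$ is again a right $\mathcal{D}_{\mathbb{A}^d}$-module. Because $\mathbf{J}^I(-)$ is computed as $(-)\otimes_{\mathcal{O}}\mathbf{J}^I$, there is a canonical identification
\begin{equation*}
\mathbf{J}^I_{\mathbb{A}^d}(\mathcal{B}^{\boxtimes I})\cong \mathbf{B}^{\boxtimes I}\otimes_{\mathcal{O}_{(\mathbb{A}^d)^I}}\mathbf{J}^I_{\mathbb{A}^d}((\omega_{\mathbb{A}^d}[d])^{\boxtimes I}),
\end{equation*}
the shift being absorbed by the décalage isomorphism. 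A typical element is $(b_1\boxtimes\cdots\boxtimes b_n)\otimes\gamma$ with $b_i\in\mathbf{B}$ and $\gamma$ in the unit Jouanolou model. I would then define the operations by
\begin{equation*}
\tilde\mu^{\mathcal{B}}_I\big((b_1\boxtimes\cdots\boxtimes b_n)\otimes\gamma\big):=m_I(b_1,\ldots,b_n)\cdot\tilde\mu^\omega_I(\gamma),
\end{equation*}
where $m_I\colon\mathbf{B}^{\otimes I}\to\mathbf{B}$ is the iterated associative product restricted to the diagonal $z_1=\cdots=z_n=w$, and $\tilde\mu^\omega_I(\gamma)\in\omega_{\mathbb{A}^d}[d]\otimes_{\mathbf{k}[\lambda_\star]}\mathbf{k}[\lambda_I]$ is the unit residue operation of Definition \ref{dfn:res}. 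I would verify that $\tilde\mu^{\mathcal{B}}_I$ is a $\mathcal{D}_{(\mathbb{A}^d)^I}$-module map of cohomological degree one and is $S_{|I|}$-equivariant; these follow termwise from the same properties of $\tilde\mu^\omega_I$ already established for the unit, together with the fact that $m$ is a map of $\mathcal{D}$-modules, using that $\partial_{z_i}$ acts on $(b_1\boxtimes\cdots\boxtimes b_n)\otimes\gamma$ by the Leibniz rule and that $m$ intertwines this action on the diagonal.

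The heart of the argument is the chiral relation. Here I would exploit that the differential $\mathbf{d}$ acts only on the Jouanolou factor $\gamma$, since $\mathbf{B}$ is concentrated in degree zero with zero differential; thus $\tilde\mu^{\mathcal{B}}_I\circ\mathbf{d}$ equals $m_I\otimes(\tilde\mu^\omega_I\circ\mathbf{d})$. Under the operadic composition of Theorem \ref{DgOperadStructure}, the partial composite $\tilde\mu^{\mathcal{B}}_{\{\bullet\}\cup I\setminus I'}\circ\tilde\mu^{\mathcal{B}}_{I'\subset I}$ factors as the unit composite $\tilde\mu^\omega_{\{\bullet\}\cup I\setminus I'}\circ\tilde\mu^\omega_{I'\subset I}$ on the $\omega$-factor, while on the $\mathbf{B}$-factor it becomes $m_{\{\bullet\}\cup I\setminus I'}\circ(m_{I'}\otimes\mathrm{id})$; associativity and commutativity of $m$ collapse the latter to $m_I$ independently of the partition $I=I'\sqcup I''$. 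Summing over $I'\subset I$, the $\mathbf{B}$-factor is the common constant $m_I$ and the identity reduces exactly to the shifted $L_\infty$ relation satisfied by $\{\tilde\mu^\omega_I\}$ established in the construction of the unit chiral algebra. After décalage, $\{\tilde\mu^{\mathcal{B}}_I\}$ therefore defines a chiral algebra structure on $\mathcal{B}$. Commutativity is then immediate: restricting along $\Gamma((\mathbb{A}^d)^I,\mathcal{B}^{\boxtimes I})\hookrightarrow\mathbf{J}^I(\mathcal{B}^{\boxtimes I})$ amounts to taking $\gamma$ with trivial Jouanolou part, i.e. the monomial of the empty graph; for $|I|\geq2$ that graph is disconnected, so $\tilde\mu^\omega_I(\gamma)=0$ by Proposition \ref{explicit formula}(1), whence $\tilde\mu^{\mathcal{B}}_I$ vanishes on global sections.

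The main obstacle I expect is the bookkeeping in the composition step: verifying that restricting the $b_i$ to the diagonal commutes with the partial-diagonal restriction morphisms $c^{(I')}$ and $v^{(I')}$ appearing in Theorem \ref{DgOperadStructure}, and that the interaction of the $\mathbf{B}$-multiplications with the $\lambda_i$-variables produced by the residue is consistent—in particular that the Koszul signs and the décalage signs match on the two sides of the $L_\infty$ identity. Once this sign and intertwining bookkeeping is pinned down, every remaining verification is a direct consequence of the already-proven properties of $\tilde\mu^\omega_I$ and of the associative, commutative, $\mathcal{D}$-linear structure on $\mathbf{B}$.
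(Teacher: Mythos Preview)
Your proposal is correct and follows essentially the same approach as the paper: define the operations by tensoring the unit residue operations $\mu^\omega_I$ with the iterated product $m^{(I)}$ on $\mathbf{B}$, and reduce the $L_\infty$ relations to those of the unit via associativity of $m$. The paper's proof is terser than yours---it writes $\mu^{\mathcal{B}}_I$ as the composite
\[
\mathbf{J}^I(\mathcal{B}^{\boxtimes I})\xrightarrow{\mu^\omega_I\otimes\mathrm{Id}}\bigl(\omega^{\blacklozenge}\otimes_{\mathbf{k}[\lambda_\star]}\mathbf{k}[\lambda_I]\bigr)\otimes_{\mathcal{O}_{(\mathbb{A}^d)^I}}\mathbf{B}^{\boxtimes I}\xrightarrow{\sim}\omega^{\blacklozenge}\otimes_{\mathbf{k}[\lambda_\star]}\bigl(\mathbf{B}^{\otimes I}\otimes\mathbf{k}[\lambda_I]\bigr)\xrightarrow{m^{(I)}}\mathcal{B}\otimes_{\mathbf{k}[\lambda_\star]}\mathbf{k}[\lambda_I]
\]
and then simply asserts that the $L_\infty$ relations follow from those of the unit; it does not spell out the commutativity argument at all. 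Your disconnected-graph argument for commutativity via Proposition~\ref{explicit formula}(1) is correct and is a useful addition. The ``main obstacle'' you flag---the interaction of the $b_i$ with the $\lambda$-variables and the diagonal restrictions $c^{(I')},v^{(I')}$---is exactly what the paper's functorial chain of isomorphisms is designed to handle cleanly: by writing everything as $\mathcal{D}$-module maps and tensoring over $\mathcal{O}_{(\mathbb{A}^d)^I}$ rather than naively restricting to the diagonal, the $\lambda$-bookkeeping becomes tautological, so you may find that presentation more economical than your elementwise formula.
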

\begin{proof}
  We define
  $$
  \mu^{\mathcal{B}}_{{I}}:\mathbf{J}^{{I}}_{\mathbb{A}^d}(\mathcal{B}^{\boxtimes I})\rightarrow \mathcal{B}\otimes_{\mathbf{k}[\lambda_{\star}]}\mathbf{k}[\lambda_{I}]
  $$
  to be the composition of the following sequence of maps
  $$
\mathbf{J}^{{I}}_{\mathbb{A}^d}(\mathcal{B}^{\boxtimes{I}})=  \mathbf{J}^{{I}}_{\mathbb{A}^d}((\omega_{\mathbb{A}^d}^{\blacklozenge})^{\boxtimes I})\otimes_{\mathcal{O}_{(\mathbb{A}^d)^I}} \mathbf{B}^{\boxtimes{I}}\xrightarrow{\mu^{\omega}_{{I}}\otimes \mathrm{Id}}\left(\omega_{\mathbb{A}^d}^{\blacklozenge}\otimes_{\mathbf{k}[\lambda_{\star}]}\mathbf{k}[\lambda_{I}]\right)\otimes_{\mathcal{O}_{(\mathbb{A}^d)^I}}\mathbf{B}^{\boxtimes{I}}
  $$
  $$
  \xrightarrow{\sim}\omega_{\mathbb{A}^d}^{\blacklozenge}\otimes_{\mathbf{k}[\lambda_{\star}]}\mathbf{k}[\lambda_{I}]\otimes_{\mathbf{k}[\lambda_{I}]} \left( \mathbf{B}^{\boxtimes I}\otimes \mathbf{k}[\lambda_{I}]\right)  \xrightarrow{\sim}\omega_{\mathbb{A}^d}^{\blacklozenge}\otimes_{\mathbf{k}[\lambda_{\star}]} \left( \mathbf{B}^{\otimes{I}}\otimes \mathbf{k}[\lambda_{I}]\right)
  $$
  $$
  \xrightarrow{m^{({I})}}\omega_{\mathbb{A}^d}^{\blacklozenge}\otimes_{\mathbf{k}[\lambda_{\star}]} \left( \mathbf{B}\otimes \mathbf{k}[\lambda_{I}]\right)\xrightarrow{\sim}\mathcal{B}\otimes_{\mathbf{k}[\lambda_{\star}]}\mathbf{k}[\lambda_{I}].
  $$
 Here $m^{(I)}$ is the iterated commutative product. The fact that $\{\mu_I^{\mathcal{B}}\}$ satisfy the $L_\infty$ relations follows immediately from the
  $L_\infty$-relations of the unit chiral algebra.
\end{proof}

\subsection{Free ghost chiral algebras}
The following set of examples are immediate generalizations of familiar
\(d=1\) ghost-type chiral algebras, such as the \(\beta-\gamma\) and \(b-c\)
systems. Let \(F\) be a locally free \(\mathbb Z\)-graded
\(\mathcal O_{\mathbb A^d}\)-module, and let
\[
  F_{\mathcal D}
  \define
  F\otimes_{\mathcal O_{\mathbb A^d}}\mathcal D_{\mathbb A^d}
\]
be the induced \(\mathcal D_{\mathbb A^d}\)-module. We assume that \(F\) is
equipped with a non-degenerate graded skew-symmetric pairing of graded
\(\mathcal O_{\mathbb A^d}\)-modules
\begin{equation}\label{eq:pairing}
  \langle -,-\rangle:
  F\otimes_{\mathcal O_{\mathbb A^d}}F
  \longrightarrow
  \omega_{\mathbb A^d}[d-1].
\end{equation}

We introduce the following graded commutative $\mathcal{D}$-algebra
$$
\mathbf{F} =\mathrm{Sym}\left(F_{\mathcal{D}} \otimes \omega^{-1}\right)
$$

\begin{rem}
  In the case that $F= \omega_{\mathbb{A}^d}\oplus \mathcal{O}[1-d]$, with the obvious pairing, the chiral algebra we are about to define is associated to the
  following first-order action functional in physics
  \begin{equation}\label{}
    \int_{\mathbb{C}^d} \beta \overline{\partial} \gamma
  \end{equation}
  where $\beta$ is a form of Dolbeault type $(d,d-1)$ and $\gamma$ is a smooth function.
  Indeed, when $d=1$ this returns the ordinary $\beta-\gamma$ system used in superstring theory.
  To obtain the $b-c$ system we must introduce an additional $\mathbb{Z}/2$ grading and consider the parity shift of this
  example.
  The same generalization can be made in the higher-dimensional setting as well, though we will not pursue it here.
\end{rem}

\begin{defn}{\label{WickContraction}}
Define the $\mathcal{D}$-module map
$$
e^{\mathcal{P}_{\{1,2\}}}\colon \mathbf{J}^{\{1,2\}}_{\mathbb{A}^d}(\mathbf{F}^{\boxtimes 2})\rightarrow \mathbf{J}^{\{1,2\}}
_{\mathbb{A}^d}(\mathbf{F}^{\boxtimes 2})
$$
by extending the $\langle - , - \rangle$ pairing
$$
\mathcal{P}_{\{1,2\}} = \langle - , - \rangle\cdot P_{12}  \colon(F\otimes\omega^{-1})\boxtimes F[1-d]\rightarrow \mathbf{J}^{\{1,2\}}_{\mathbb{A}^d}(\mathcal{O}\boxtimes \mathcal{O})
$$
by the Leibniz rule. Here we define the element
$$
P_{12}=\sum^d_{\sss=1}(-1)^{\sss-1}x_{12}^s\cdot \mathbf{d}x_{12}^{\1}\cdots \widehat{\mathbf{d}x_{12}^{\sss}}\cdots \mathbf{d}x_{12}^{\dd},
$$
 which under the map $x^{\sss}_{12}\mapsto \frac{\bar{z}_{1}^{\scriptstyle{\sss}} - \bar{z}_{2}^{
 \scriptstyle{\sss}}}{|z_{1} - z_{2}|^{2}}$ is exactly the Bochner-Martinelli kernel.
\end{defn}
Denote
\[
  \mathcal P_I
  \define
  \sum_{\{i,j\}\subset I}
  \mathcal P_{\{i,j\}}.
\]
If \(I=I'\bigsqcup I''\), we write
\[
  \mathcal P_{I'I''}
  \define
  \sum_{i'\in I',\,i''\in I''}
  \mathcal P_{\{i',i''\}}.
\]
Thus
\[
  \mathcal P_I
  =
  \mathcal P_{I'}
  +
  \mathcal P_{I'I''}
  +
  \mathcal P_{I''}.
\]

\begin{defn}\label{FreeChiralDefn}
  We define the chiral operations $\{\mu_I^{\mathcal{F}}\}$ on $\mathcal{F}=\mathbf{F}\otimes_{\mathcal{O}_{\mathbb{A}^d}} \omega_{\mathbb{A}^d}^{\blacklozenge}$ as follows.
  Let $\{\mu_I^{\cF,comm}\}$ be the commutative chiral operations as constructed in Proposition \ref{prop:commop}.
  Then, define the new chiral operations $\mu_I^{\mathcal{F}}$ by the composition:
  \[
\mu^{\mathcal F}_{I}
=
\mu^{\mathcal F,\mathrm{comm}}_{I}
\circ
e^{\mathcal P_I}
:
\mathbf J_{\mathbb A^d}^{I}
\left(
\mathcal F^{\boxtimes I}
\right)
\longrightarrow
\mathcal F\otimes_{\mathbf k[\lambda_\star]}\mathbf k[\lambda_I].
\]
\end{defn}

We have the following lemma.

\begin{lem}
    Let $\alpha_{{I'}}\in \mathbf{J}_{\mathbb{A}^d}^{{I'}}((\omega_{\mathbb{A}^d}^{\blacklozenge})^{\boxtimes {I'}}) $,
    $b'\in \mathbf{F}^{\boxtimes {I'}}$ and $b''\in \mathbf{F}^{\boxtimes {I''}}$. We have
    \begin{multline}
    m^{(I')}\left(\mu^{\omega}_{{I'}}(\alpha_{{I'}})(    e^{{\mathcal{P}}_{{I'}{I''}}}(b'\boxtimes
    b''\cdot f ))\right)=\\ e^{{\mathcal{P}}_{\{\bullet\}{I''}}}m^{(I')}\left(\mu^{\omega}_{{I'}}(\alpha_{{I'}})(b'\boxtimes b''\cdot f) \right)\in \mathbf{J}^{\{\bullet\}\bigsqcup{I''}}_{\mathbb{A}^d}(\mathcal{F}\boxtimes \mathcal{F}^{\boxtimes{I''}})\otimes_{k[\lambda_{\bullet}]}k[\lambda_{I'}]
  \end{multline}
    for all $f\in \mathbf{J}^{({I'}),{{I''}}}_{\mathbb{A}^d}(\mathcal{O}_{\mathbb{A}^d}^{\boxtimes{I'}}\boxtimes(\omega_{\mathbb{A}^d}^{\blacklozenge})^{\boxtimes{I''}})
$ (recall the notation in the proof of Theorem \ref{DgOperadStructure}).
\end{lem}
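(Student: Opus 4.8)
The plan is to reduce the identity to three ingredients: the transformation of the propagator $P_{i'j}$ under the collapse map $c^{(I')}$ from the proof of Theorem \ref{DgOperadStructure}, the linearity of the operation $\mu^{\omega}_{I'}(\alpha_{I'})(-)$ over the coefficient algebra $\mathbf{J}^{(I'),I''}_{\mathbb{A}^d}$, and the fact that the Wick contraction $e^{\mathcal{P}}$ of Definition \ref{WickContraction} is the exponential of a biderivation.

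First I would record the propagator transformation law. Recall that $c^{(I')}$ sends $x^s_{i'j}\mapsto x^s_{\bullet j}$ and $z^s_{i'}\mapsto z^s_{\bullet}$ for $i'\in I'$, $j\in I''$, and commutes with the relative differential $\mathbf{d}$, so $\mathbf{d}x^s_{i'j}\mapsto \mathbf{d}x^s_{\bullet j}$. Since $P_{i'j}=\sum_{s}(-1)^{s-1}x^s_{i'j}\,\mathbf{d}x^1_{i'j}\cdots\widehat{\mathbf{d}x^s_{i'j}}\cdots\mathbf{d}x^d_{i'j}$ is built solely from these variables, we obtain $c^{(I')}(P_{i'j})=P_{\bullet j}$, hence $c^{(I')}(\mathcal{P}_{i'j})=\mathcal{P}_{\bullet j}$ once the $i'$-field slot is identified with the corresponding factor of the collapsed $\bullet$-field.

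Next I would expand $e^{\mathcal{P}_{I'I''}}=\prod_{i'\in I',\,j\in I''}e^{\mathcal{P}_{i'j}}$ by the Wick formula; each factor introduces a propagator $P_{i'j}$ lying in $\mathbf{J}^{(I'),I''}_{\mathbb{A}^d}$ and contracts a field factor at $i'$ against one at $j$. The operation $\mu^{\omega}_{I'}(\alpha_{I'})(-)$ is the $\mu^{\omega}_{I'}\boxtimes\mathrm{Id}$ step of the composition, which acts only on the inner factor $\mathbf{J}^{I'}_{\mathbb{A}^d}$ and is by construction linear over $\mathbf{J}^{(I'),I''}_{\mathbb{A}^d}$ and over the $\mathbf{F}$-fields; in particular the residue generates only $\lambda$-dependence and never differentiates the $\lambda$-independent propagators. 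Thus $\mu^{\omega}_{I'}(\alpha_{I'})(-)$ passes through $e^{\mathcal{P}_{I'I''}}$ term by term. Applying then $c^{(I')}$ and the iterated product $m^{(I')}$, every $P_{i'j}$ becomes $P_{\bullet j}$ and the $i'$-field is absorbed into $m^{(I')}(b')=\prod_{i'\in I'}b'_{i'}$. Because $\mathcal{P}_{\bullet j}$ is a biderivation in its $\bullet$-argument, its action on this product obeys the Leibniz rule and distributes over the factors, reproducing exactly the sum $\sum_{i'\in I'}\mathcal{P}_{i'j}$ (now carrying the collapsed propagator $P_{\bullet j}$) that the left-hand side produced before collapse; exponentiating, and using that contractions on distinct field factors commute, gives $e^{\mathcal{P}_{\{\bullet\}I''}}=\prod_{j\in I''}e^{\mathcal{P}_{\bullet j}}$, which is precisely the $c^{(I')}\circ m^{(I')}$-image of $e^{\mathcal{P}_{I'I''}}$. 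This matches the two sides.

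The main obstacle will be the Koszul-sign bookkeeping in the last step: one must verify that the signs generated when $\mu^{\omega}_{I'}$ and $c^{(I')}$ move past the form-valued propagators $P_{i'j}$ (of Dolbeault degree $d-1$ under the map $i$), together with the signs from the graded-skew pairing $\langle-,-\rangle$ and the symmetric-algebra structure of $\mathbf{F}$, agree with those produced by distributing $\mathcal{P}_{\bullet j}$ over $m^{(I')}(b')$ via the Leibniz rule. Once the biderivation property and the $\mathbf{J}^{(I'),I''}_{\mathbb{A}^d}$-linearity are in hand the equality is forced, and the remaining work is purely the confirmation that these signs coincide, which follows from the Koszul sign conventions of this paper.
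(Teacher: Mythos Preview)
Your two-step strategy matches the paper's: first move $e^{\mathcal{P}_{I'I''}}$ through the chiral operation, then show that $m^{(I')}$ intertwines it with $e^{\mathcal{P}_{\{\bullet\}I''}}$. The second step is exactly what you describe via the biderivation/Leibniz argument together with the collapse $c^{(I')}(P_{i'j})=P_{\bullet j}$; the paper compresses this into the single line $m^{(I')}(e^{\mathcal{P}_{I'I''}}-)=e^{\mathcal{P}_{\{\bullet\}I''}}m^{(I')}(-)$.

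For the first step, however, your justification is incomplete. You treat $\mu^{\omega}_{I'}(\alpha_{I'})(-)$ as only the $\mu^{\omega}_{I'}\boxtimes\mathrm{Id}$ stage and argue linearity over $\mathbf{J}^{(I'),I''}_{\mathbb{A}^d}$, but the partial composition from Theorem \ref{DgOperadStructure} also includes the map $v^{(I')}$, which converts the polynomials in $\lambda_{i'}$ produced by the residue into actual derivatives $\partial_{z^s_{i'}}$. These \emph{do} hit the propagators $P_{i'j}$ inside $\mathcal{P}_{i'j}$, so the claim that the residue ``never differentiates the $\lambda$-independent propagators'' is not correct as stated. The paper sidesteps this entirely by observing that $e^{\mathcal{P}_{I'I''}}$ is a $\mathcal{D}$-module map (this is how it is introduced in Definition \ref{WickContraction}); since $\mu^{\omega}_{I'}(\alpha_{I'})(-)$ is a $\mathcal{D}$-module map as well, the two commute. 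That single observation replaces your linearity argument, takes care of the $v^{(I')}$ step, and also dissolves the Koszul-sign bookkeeping you flag as the main obstacle at the end.
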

\begin{proof}
Since $  e^{{\mathcal{P}}_{{I'}{I''}}}$ is a $\mathcal{D}$-module map, we have
$$
m^{(I')}\left(\mu^{\omega}_{{I'}}(\alpha_{{I'}})(    e^{{\mathcal{P}}_{{I'}{I''}}}(b'\boxtimes b''\cdot f ))\right)=m^{(I')}\left(e^{{\mathcal{P}}_{{I'}{I''}}}(\mu^{\omega}_{{I'}}(\alpha_{{I'}})(    b'\boxtimes b''\cdot f ))\right).
$$
Then the lemma follows from the fact that (recall the proof of Proposition \ref{prop:commop})
\[
  m^{(I')}
  \left(
  e^{\mathcal P_{I'I''}}(-)
  \right)
  =
  e^{\mathcal P_{\{\bullet\}I''}}
  m^{(I')}(-).
\]

\iffalse

    This follows from the following observation
    $$
    \lambda^s_{i'}\cdot \left(b'\boxtimes b''\cdot f\cdot e^{\overleftarrow{\mathcal{P}}_{\vec{I'}\vec{I''}}}\right)=\partial_{z^s_{i'}}b'\boxtimes b''\cdot f\cdot e^{\overleftarrow{\mathcal{P}}_{\vec{I'}\vec{I''}}}+b'\boxtimes b''\cdot e^{\overleftarrow{\mathcal{P}}_{\vec{I'}\vec{I''}}}\cdot \lambda^s_{i'} \cdot f.
    $$
\fi

\end{proof}

We can now prove the following.
\begin{prop}
The chiral operations \(\{\mu^{\mathcal F}_{I}\}\) define an
\(L_\infty\)-chiral algebra structure on
\[
  \mathcal F
  =
  \mathbf F\otimes_{\mathcal O_{\mathbb A^d}}
  \omega_{\mathbb A^d}^{\blacklozenge}
\]
for every locally free graded \(\mathcal O_{\mathbb A^d}\)-module \(F\)
equipped with a pairing as in \eqref{eq:pairing}.
\end{prop}
\begin{proof}
From the definition of  $\{\mu^{\mathcal{F}}_{{I}}\}$ and the above lemma, we have
\begin{align*}
&\mu^{\mathcal{F}}_{{I}}\left(\mathbf{d}(\alpha_{{I'}}b'\boxtimes b''\cdot f)\right)\\
&=\mu^{\mathcal{F},comm}_{{I}}\left(e^{{\mathcal{P}}_{{I}}}\mathbf{d}(\alpha_{{I'}}b'\boxtimes b''\cdot f)\right)
=\mu^{\mathcal{F},comm}_{{I}}\left(\mathbf{d}(e^{{\mathcal{P}}_{{I}}}(\alpha_{{I'}}b'\boxtimes b''\cdot f))\right)\\
&=\sum \mu^{\mathcal{F},comm}_{\bullet\bigsqcup {I}-{I'}}\circ \mu^{\mathcal{F},comm}_{{I'}\subset{I}}\left(e^{{\mathcal{P}}_{{I'}}}e^{{\mathcal{P}}_{{I'}{I''}}}e^{{\mathcal{P}}_{{I''}}}(\alpha_{{I'}}b'\boxtimes b''\cdot f) \right)\\
&=\sum \mu^{\mathcal{F},comm}_{\bullet\bigsqcup {I}-{I'}}\circ \left(e^{{\mathcal{P}}_{\bullet{I''}}}e^{{\mathcal{P}}
_{{I''}}} \mu^{\mathcal{F},comm}_{{I'}\subset {I}}(e^{{\mathcal{P}}_{{I'}}}\alpha_{{I'}}b'\boxtimes b''\cdot f)\right)\\
&=\sum \mu^{\mathcal{F}}_{\bullet\bigsqcup{I}-{I'}}\circ \mu^{\mathcal{F}}_{{I'}\subset {I}}\left(\alpha_{{I'}}b'\boxtimes b''\cdot f\right),
\end{align*}
which proves the theorem.
 \end{proof}
\begin{rem}
  This is the Wick theorem in higher dimensions, which may be more familiar in the context of the operator product
  expansion for vertex algebras.
  See \cite{gui2023,gui2022elliptictracemapchiral} for the $d=1$ Wick theorem in the language of chiral algebras.  
\end{rem}
\begin{rem}
  Note that while we used the commutative chiral operations as part of the definition of $\{\mu_I^{\mathcal{F}}\}$,
  this new chiral algebra is \textit{not} commutative.
\end{rem}

From Proposition \ref{prop:derham} we know that the de Rham functor applied to $\mathcal{F}$ yields an $L_\infty$
algebra $h(\mathcal{F})$.
In this case, this $L_\infty$ algebra is, in fact, a graded Lie algebra.
Explicitly, this graded Lie algebra has underlying graded vector space
\begin{equation}\label{}
  \mathrm{Sym} \left(F \otimes \omega^{-1}\right) \otimes \omega [d-1] .
\end{equation}
The bracket is the natural extension of $\langle - , - \rangle$ by the graded Leibniz rule.
In particular, $h(\mathcal{F})$ is a graded Poisson algebra.

\subsection{The Faonte-Hennion-Kapranov central extension via chiral operations}

In this section, denote $\mathsf{A}^\bu_d \define \mathbf{J}_{\mathring{\mathbb{A}}^d}$ the Jouanolou model of
punctured affine space~$\mathring{\mathbb{A}}^d = \mathbb{A}^d \setminus \{0\}$.
Equivalently, this is the quotient of the Jouanolou model of two points in $\mathbb{A}^d$ by the translations.
It is a commutative dg algebra model for the derived global sections of the structure sheaf of the punctured affine
space $\mathring{\mathbb{A}}^d$.
For a Lie algebra $\mathfrak{g}$, introduce the following dg Lie algebra
$$
\mathfrak{g}^{\bullet}_d=\mathfrak{g}\otimes_{\mathbf{k}} \mathsf{A}_d^{\bullet}.
$$
This dg Lie algebra was introduced in \cite{FHK} as a higher-dimensional generalization of current algebras which
appear in symmetries of chiral conformal field theory \cite{GWkm}.
One of the main results of \cite{FHK} is that invariant polynomials of degree $(d+1)$ on the Lie algebra $\mathfrak{g}$ give
rise to nontrivial $L_\infty$ central extensions of this dg Lie algebra.
The centrally extended algebras provide higher-dimensional, or multivariate, generalizations of the famous affine Kac--Moody algebras.

Tied to the usual affine Kac--Moody algebra is a well-known chiral (and vertex) algebra.
In fact, it is a special case of a general enveloping algebra construction of a chiral algebra from any Lie$^\star$ algebra.
The notion of a Lie$^\star$ algebra can be generalized to higher dimensions.
While we do not pursue the construction here, we do expect a universal enveloping-type construction which associates to
such a higher-dimensional Lie$^\star$ algebra a higher chiral algebra in the sense of this work.
We leave this problem to future work.

Instead, we will give a more direct interpretation of the higher-dimensional Kac--Moody algebra in terms of its "free
field realization" using the higher Wick's theorem of the previous section.

Let $V$ be a $\mathfrak{g}$-representation and consider the graded $\mathcal{O}$-module
\[
  F = \underline{V^*} \oplus \underline{V} \otimes
\omega [d-1] = V^* \otimes \mathcal{O}\oplus V \otimes \omega [d-1]
\]
equipped with the obvious pairing in the sense of \eqref{eq:pairing}.
(We will assume that $V$ is an ordinary vector space, rather than a graded one, for simplicity.)
The free ghost chiral algebra $\mathcal{F}$ associated to $V$, as defined in the previous section, can be identified,
as graded $\mathcal{D}$-module, with
\begin{equation}\label{GhostChiralV}
 \mathcal{F}=\mathrm{Sym}(\underline{V^{\vee}}_{\mathcal{D}}\otimes\omega^{-1}\oplus \underline{V} [1-d]\otimes\omega_{\mathcal{D}}\otimes\omega^{-1})\otimes\omega[d-1].   
\end{equation}

%It contains the Lie* algebra
%$$
%\underline{V^{\vee}}_{\mathcal{D}}[d-1]\oplus\underline{V}_{\mathcal{D}}\otimes\omega_{\mathcal{D}}\oplus \omega[d-1].
%$$
Given an \(L_{\infty}\)-chiral algebra \(\mathcal A\), we consider the following higher-dimensional generalization of the modes Lie algebra
(compare with \cite[\S 4.1]{BZF}, where the notation
\(\mathrm U'(-)\) is used):
\[
  \modes(\mathcal A)
 \define
  \frac{
  \mathsf A_d^\bu\otimes_{\mathcal O}\mathcal A
  }{
  \operatorname{Im}\,\partial
  } .
\]
Here \(\operatorname{Im}\,\partial\) is the space of total derivatives, spanned
by the right action of the vector fields
\(\partial_{z^{\sss}}\), \(\sss=1,\ldots,d\). This construction is simply
Proposition~\ref{prop:derham} applied to the restriction of \(\mathcal A\) to
the punctured affine space \(\mathring{\mathbb A}^d\), with
\(\mathsf A_d^\bu\) serving as the Jouanolou model for functions on
\(\mathring{\mathbb A}^d\). Thus the \(L_{\infty}\)-chiral algebra structure on
\(\mathcal A\) induces an ordinary \(L_{\infty}\)-algebra structure on
\(\modes(\mathcal A)\).

The unit embedding
\[
  \omega_{\mathbb A^d}^{\blacklozenge}
  \hookrightarrow
  \mathcal F
\]
induces a morphism of \(L_{\infty}\)-algebras
\[
  \modes(\omega_{\mathbb A^d}^{\blacklozenge})
  \longrightarrow
  \modes(\mathcal F).
\]
The \(L_{\infty}\)-algebra
\(\modes(\omega_{\mathbb A^d}^{\blacklozenge})\) is abelian and central in
\(\modes(\mathcal F)\).

The \(\mathfrak g\)-representation \(V\) defines a cochain map
\[
  \rho_{\mathrm{aff}}
  :
  \mathfrak g_d^\bullet
  \longrightarrow
  (V^\vee\otimes_{\mathbf k}V)\otimes_{\mathbf k}\mathsf A_d^\bu
  \hookrightarrow
  \modes(\mathcal F).
\]
Composing with the quotient map gives
\[
  \overline{\rho}_{\mathrm{aff}}
  :
  \mathfrak g_d^\bullet
  \longrightarrow
  \modes(\mathcal F)/
  \modes(\omega_{\mathbb A^d}^{\blacklozenge}).
\]

\begin{prop}
The map
\[
  \overline{\rho}_{\mathrm{aff}}
  :
  \mathfrak g_d^\bullet
  \longrightarrow
  \modes(\mathcal F)/
  \modes(\omega_{\mathbb A^d}^{\blacklozenge})
\]
is a strict morphism of \(L_{\infty}\)-algebras. Here ``strict'' means that only
the linear component is nonzero, and this linear component intertwines the
differentials and all multi-brackets.
\end{prop}

\begin{proof}
The map \(\rho_{\mathrm{aff}}\) is a cochain map by construction. For the binary
bracket, the one-contraction term in \(\modes(\mathcal F)\) gives the ordinary
current bracket on \(\mathfrak g_d^\bullet\), while the remaining scalar term
lies in the central subalgebra
\(\modes(\omega_{\mathbb A^d}^{\blacklozenge})\). Hence the binary bracket is
intertwined after passing to the quotient.

For arity \(n\geq 3\), the source \(\mathfrak g_d^\bullet\) has no higher
brackets. On the target side, the higher brackets of the affine currents land in
\(\modes(\omega_{\mathbb A^d}^{\blacklozenge})\), and therefore vanish in the
quotient. Thus \(\overline{\rho}_{\mathrm{aff}}\) is a strict
\(L_{\infty}\)-morphism.
\end{proof}

Consider the pullback square
\[
\begin{tikzcd}
\widetilde{\mathfrak g_d^\bullet} \arrow[dd] \arrow[rr]
&  &
\modes(\mathcal F) \arrow[dd]
\\
&  &
\\
\mathfrak g_d^\bullet \arrow[rr, "\overline{\rho}_{\mathrm{aff}}"']
&  &
{\modes(\mathcal F)/
\modes(\omega_{\mathbb A^d}^{\blacklozenge})}.
\end{tikzcd}
\]
We obtain a central extension
\[
0
\longrightarrow
\modes(\omega_{\mathbb A^d}^{\blacklozenge})
\longrightarrow
\widetilde{\mathfrak g_d^\bullet}
\longrightarrow
\mathfrak g_d^\bullet
\longrightarrow
0.
\]
Using the residue map
\[
  \operatorname{Res}
  :
  \modes(\omega_{\mathbb A^d}^{\blacklozenge})
  =
  \frac{
  \mathsf A_d^\bu\otimes_{\mathcal O}
  \omega_{\mathbb A^d}^{\blacklozenge}
  }{
  \operatorname{Im}\,\partial
  }
  \longrightarrow
  \mathbb C,
\]
we push out the above central extension and obtain a one-dimensional
\(\mathbb C\)-central extension of \(\mathfrak g_d^\bullet\):
\[
\begin{tikzcd}
0 \arrow[r]
&
{\modes(\omega_{\mathbb A^d}^{\blacklozenge})}
\arrow[r] \arrow[d, "\operatorname{Res}"']
&
\widetilde{\mathfrak g_d^\bullet}
\arrow[r] \arrow[d]
&
\mathfrak g_d^\bullet
\arrow[r] \arrow[d, no head, equal]
&
0
\\
0 \arrow[r]
&
\mathbb C
\arrow[r]
&
\mathfrak g_d^{\bullet\flat}
\arrow[r]
&
\mathfrak g_d^\bullet
\arrow[r]
&
0 .
\end{tikzcd}
\]

We will use the following standard way of extracting a Lie algebra cocycle from
such a central extension. Suppose that
\[
0
\longrightarrow
\mathbb C
\longrightarrow
\mathfrak l^\flat
\xrightarrow{\pi}
\mathfrak l
\longrightarrow
0
\]
is a central extension of \(L_{\infty}\)-algebras, where \(\mathfrak l\) is an
ordinary dg Lie algebra and the morphisms are strict. Assume also that
\(\mathfrak l^\flat\) and \(\mathfrak l\) are \(\mathbb Z_{\geq 0}\)-graded.
For the extensions considered here, we choose a cochain splitting
\[
  s \colon \mathfrak l\longrightarrow \mathfrak l^\flat
\]
of graded vector spaces. Define cochains
\[
  \mathsf{c}_n\in
  \operatorname{Hom}^{2-n}
  \left(
  \bigwedge^n\mathfrak l,\mathbb C
  \right),
  \qquad n\geq 2,
\]
by
\[
  {\mathsf{c}_2}(x_1,x_2)
  \define
  [s(x_1),s(x_2)]^{\flat}_2
  -
  s\left([x_1,x_2]_2\right),
\]
and, for \(n\geq 3\),
\[
  \mathsf{c}_n(x_1,\ldots,x_n)
  \define
  [s(x_1),\ldots,s(x_n)]^{\flat}_n .
\]
The \(L_{\infty}\)-identities for \(\mathfrak l^\flat\) imply that
\(\mathsf{c}= (\mathsf{c}_n)_{n\geq 2}\) is a Chevalley--Eilenberg cocycle. Changing the splitting
changes \(\mathsf{c}\) by a coboundary. Hence the central extension determines a
cohomology class
\[
  [\mathsf{c}]\in
  \mathbb H^2_{\mathrm{Lie}}(\mathfrak l;\mathbb C).
\]
We compare this cohomology class to one which defines a higher Kac--Moody extension in the sense of \cite{FHK}.
To every symmetric degree $d+1$ invariant polynomial $\theta$ on $\lie{g}$ there is a functional
\[
  \gamma_{\theta} \colon \left(\mathfrak{g}_d^{\bullet}[1]\right)^{\otimes_{\mathbf{k}}d+1}\rightarrow \mathbf{k}
\]
defined by
\[
  \gamma_{\theta} \left((x_0\otimes f_0)\otimes(x_1\otimes f_1)\otimes \cdots \otimes (x_d\otimes f_d) \right)=\theta (x_0,\dots,x_d)\cdot \mathrm{Res}\left(f_0\cdot \partial f_1\wedge\cdots \wedge \partial f_d\right).
\]
This functional is a cocycle and induces a cohomology class 
\[
  [\gamma_{\theta}]\in \mathbb{H}^2_{\mathrm{Lie}}(\mathfrak{g}_d^{\bullet};\mathbb{C}) .
\]

Define the symmetric polynomial
$$
p^V_d(x_0,\dots,x_{d})=\frac{1}{(d+1)!}\sum_{\sigma\in S_{d+1}}\mathrm{tr}_V\left(x_{\sigma(0)}\cdots x_{\sigma(d)}\right).
$$
Where $\op{tr}_V$ is the trace taken in the representation $V$.
\iffalse
\begin{prop}
    Define the map
    $$
\gamma^{ch}:\left(\mathbf{J}^{2,\bullet}_{\mathbb{A}^d}(\mathcal{F}\boxtimes \mathcal{O})[1]\right)^{\otimes_{\mathbf{k}} d+1}\rightarrow \mathbf{k}[z^r]^{1\leq r\leq n}
    $$
    by sending $\left(f(z_1,z)a_1\boxtimes 1\right)\otimes\cdots \otimes \left(f(z_{d+1},z)a_{d+1}\boxtimes 1\right) $ to    $$\mathrm{Res}\left(\mu_{d+1}(a_1\boxtimes\cdots \boxtimes a_{d+1})\cdot f_1(z_1,z)\cdots f_{d+1}(z_{d+1},z)\right)
    $$
    where $\mu_{d+1}$ is the $(d+1)$st chiral operation underlying $\mathcal{F}$ as defined in the previous section.
    Then the cocycle $\gamma_P$ defined in \cite{FHK} is given by the composition $\gamma^{ch}\circ\iota$.
\end{prop}
\fi
\begin{prop}\label{PureAffine}
    The cohomology class associated to the central extension
    \[
    0\rightarrow \mathbb{C}\rightarrow{\mathfrak{g}_{2}^{\bullet\flat}}\rightarrow {\mathfrak{g}_{2}^{\bullet}}\rightarrow 0
    \]
    is equal to the class $[\gamma_{p^V_2}]\in \mathbb{H}^2_{\mathrm{Lie}}(\mathfrak{g}_{2}^{\bullet};\mathbb{C})$ of \cite{FHK}.
\end{prop}
\iffalse
\begin{proof}
By construction, 
 $$\mathrm{Res}\left(\mu^{\mathcal{F}}_{3}(\rho_{\mathrm{aff}}(x_0\otimes f_0)\boxtimes\rho_{\mathrm{aff}}(x_1\otimes f_1) \boxtimes \rho_{\mathrm{aff}}(x_2\otimes f_2))\right)
    $$
    Recall that, according to Definition \ref{FreeChiralDefn}, $\mu^{\mathcal{F}}_{3}$ is defined via Wick contraction using $P_{ij}=x_{12}^1\mathbf{d}x_{12}^{2}-x_{12}^2\mathbf{d}x_{12}^{1}$. Performing this contraction yields a factor of $\frac{1}{6}\sum_{\sigma\in S_{3}}\mathrm{tr}_V\!\left(x_{\sigma(0)} x_{\sigma(1)}x_{\sigma(2)}\right)$ multiplied by the residue of a chiral operation involving three copies of $P_{ij}$. Invoking the notation established in Sections \ref{PropagatorNotation}, \ref{ResidueOp}, and \ref{Formulas}, we obtain
\[
   \frac{1}{6}\sum_{\sigma\in S_{3}}\mathrm{tr}_V\!\left(x_{\sigma(0)} x_{\sigma(1)}x_{\sigma(2)}\right)\cdot \mathrm{Res}\left( \mu_{\{o,1,2\}}\bigl(P_{1o}P_{12}P_{2o}d\mathbf{z}^{\blacklozenge}_{\{o,1,2\}}\bigr)\cdot f_0\boxtimes f_1\boxtimes f_2\right).
\]
Applying formula (\ref{OneloopConst}) yields
\[
   \frac{1}{6}\sum_{\sigma\in S_{3}}\mathrm{tr}_V\!\left(x_{\sigma(0)} x_{\sigma(1)}x_{\sigma(2)}\right)\cdot \mathrm{Res}\left( \lambda_1\wedge\lambda_2\cdot f_0\boxtimes f_1\boxtimes f_2\right)
   =\frac{1}{6}\sum_{\sigma\in S_{3}}\mathrm{tr}_V\!\left(x_{\sigma(0)} x_{\sigma(1)}x_{\sigma(2)}\right)\cdot \mathrm{Res}\bigl( f_0\partial f_1\partial f_2\bigr).
\]
\end{proof}
\fi
\begin{proof}
We use the notation of Sections \ref{PropagatorNotation}, \ref{ResidueOp},
and \ref{Formulas}. Label the three inputs by \(\{o,1,2\}\), with \(o\)
corresponding to \(a_0\). Write
\[
a_i=x_i\otimes f_i\in \mathfrak g^\bullet_2,\qquad i=0,1,2.
\]
We use the splitting induced by
\[
s(a)=\rho_{\mathrm{aff}}(a)\in h_{\mathring{\AA}^2}(\mathcal F).
\]

By Definition \ref{FreeChiralDefn}, the operation on \(\mathcal F\) is
\[
\mu_I^{\mathcal F}
=
\mu_I^{\mathcal F,\mathrm{comm}}\circ e^{P_I}.
\]
Thus it is computed by Wick contraction followed by the unit chiral operation.
In dimension \(2\), the arity-\(n\) unit operation is zero unless the scalar
input contains exactly \(2n-3\) propagator factors. Therefore the arity-three
operation kills all Wick terms with fewer than three propagators.

Let us first note that the central cochain has no arity-two contribution. The
one-contraction term is exactly the ordinary current bracket and hence is
accounted for by the strict map \(\rho_{\mathrm{aff}}\). The only possible scalar
term is the double contraction. Its scalar factor is
\[
P_{o1}P_{1o}.
\]
But in dimension \(2\), using \(P_{1o}=P_{o1}\), this product is
\[
P_{o1}^2=0
\]
by graded commutativity, since \(P_{o1}\) is a Jouanolou one-form. Hence
\(C^{\mathrm{aff}}_2=0\). Similarly, for arity \(n>3\), a complete contraction
of \(n\) quadratic currents contains at most \(n\) propagators, whereas the
unit \(n\)-operation requires \(2n-3\) propagators. Since \(n<2n-3\), no
central term occurs. Thus only \(C^{\mathrm{aff}}_3\) remains.

For three quadratic affine currents, the complete Wick contractions are exactly
the two cyclic contractions
\[
(o\to 1\to 2\to o),
\qquad
(o\to 2\to 1\to o).
\]
The first gives the trace factor \(\operatorname{tr}_V(x_0x_1x_2)\) and the
scalar propagator
\[
P_{1o}P_{12}P_{2o}.
\]
By formula \eqref{OneloopConst},
\[
\mu_{\{o,1,2\}}
\left(
P_{1o}P_{12}P_{2o}\,
d\mathbf z^{\blacklozenge}_{\{o,1,2\}}
\right)
=
\frac12\,\lambda_1\wedge\lambda_2\,
d\mathbf z^{\blacklozenge}_o .
\]
Therefore this oriented loop contributes
\[
\frac12\operatorname{tr}_V(x_0x_1x_2)\,
\operatorname{Res}
\left(
f_0\,\partial f_1\wedge\partial f_2
\right).
\]

The second cyclic contraction is obtained by interchanging \(1\) and \(2\). It
gives the trace factor \(\operatorname{tr}_V(x_0x_2x_1)\). Applying
\eqref{OneloopConst} to the ordered triple \((o,2,1)\) gives
\[
\frac12\operatorname{tr}_V(x_0x_2x_1)\,
\operatorname{Res}
\left(
f_0\,\partial f_2\wedge\partial f_1
\right)
\]
as a cochain on \(a_0\wedge a_2\wedge a_1\). Rewriting it as a cochain on
\(a_0\wedge a_1\wedge a_2\) introduces the transposition sign, which cancels
the sign from
\[
\partial f_2\wedge\partial f_1
=
-\partial f_1\wedge\partial f_2.
\]
Hence the second loop contributes
\[
\frac12\operatorname{tr}_V(x_0x_2x_1)\,
\operatorname{Res}
\left(
f_0\,\partial f_1\wedge\partial f_2
\right).
\]

Combining the two cyclic contractions gives
\[
  \frac12
\left(
\operatorname{tr}_V(x_0x_1x_2)
+
\operatorname{tr}_V(x_0x_2x_1)
\right)
\operatorname{Res}
\left(
f_0\,\partial f_1\wedge\partial f_2
\right).
\]
By cyclicity of the trace, we note
\[
\frac12
\left(
\operatorname{tr}_V(x_0x_1x_2)
+
\operatorname{tr}_V(x_0x_2x_1)
\right)
=
\frac16
\sum_{\sigma\in S_3}
\operatorname{tr}_V
\left(
x_{\sigma(0)}x_{\sigma(1)}x_{\sigma(2)}
\right).
\]
Thus, a cocycle representative for the class associated to our central extension is
\[
a_0 \wedge a_1 \wedge a_2 \mapsto p_2^V(x_0,x_1,x_2)\,
\operatorname{Res}
\left(
f_0\,\partial f_1\wedge\partial f_2
\right)
=
\gamma_{p_2^V}(a_0\wedge a_1\wedge a_2).
\]
Since all other arity components vanish, the free-field central extension
represents the Faonte--Hennion--Kapranov class $[\gamma_{p_2^V}]$.
\end{proof}
\begin{rem}
We anticipate that an analogous result should be valid in arbitrary dimensions, by comparison with \cite{GWkm}.
Nevertheless, this comparison is not completely straightforward, and we leave a detailed analysis to future work.
      %The only Feynman graph that contributes non-trivially to the residue is the wheel graph with $(d+1)$-vertices, where there is a single Wick contraction between neighboring vertices.
 % In \cite{GWkm} we have evaluated the weight of this diagram in line with the stated result.
  %For another computation of the case when $d=2$, see section \ref{s:2d1loop}.\Gui{seperate into remarks}
\end{rem}

\subsection{Higher Virasoro structures}

Let $\mathfrak{witt}_d^\bu$ denote the Jouanolou model of the derived global sections of the tangent sheaf on
punctured affine space $\mathring{\mathbb{A}}^d$.
In \cite{GWvir} a homomorphism 
\begin{equation}
    H^{2d+2} (BGL_d) \to \mathbb{H}^2_{Lie}(\lie{witt}_d^\bu) 
\end{equation}
is produced and is shown to be an isomorphism when $d=2$.
Similarly, as in the previous section, we now provide an explicit construction of such central extensions using chiral operations of a chiral algebra of free-field type.

%Recall that $\mathfrak{witt}^{\bullet}_{2}\simeq \mathbf{J}^{\bullet}_{\mathring{\mathbb{A}}^2}\cdot \partial_{z^1}\oplus\mathbf{J}^{\bullet}_{\mathring{\mathbb{A}}^2}\cdot \partial_{z^2}$. We take $V=\mathbb{C}\cdot \mathbf{v}$ to be a one-dimensional vector space and denote $\mathcal{F}$ to be the corresponding free ghost chiral algebra defined by (\ref{GhostChiralV}). We construct the following map
Recall that $\mathfrak{witt}^{\bullet}_{d}\simeq \oplus^d_{\sss=1}\mathbf{J}^{\bullet}_{\mathring{\mathbb{A}}^d}\cdot \partial_{z^{{\scriptstyle \sss}}}$. We take $V$ to be an $r$-dimensional vector space and denote $\mathcal{F}$ to be the corresponding free ghost chiral algebra defined by (\ref{GhostChiralV}). We construct the following map
$$
\rho_{vir}\colon \mathfrak{witt}^{\bullet}_{d}\rightarrow \modes(\mathcal{F})
$$
defined by (we choose a basis $\{\mathbf{v}_{\ell}\}^r_{\ell=1}$ of $V$ and its dual $\{\mathbf{v}^{\vee}_{\ell}\}^r_{\ell=1}$)
$$
\rho_{vir}(\sum^d_{\sss=1}T^{\sss}(z)\partial_{z^{\sss}})=\sum^r_{\ell=1}\sum^d_{{\sss}=1}T^{\sss}(z)\cdot(\mathbf{v}_{\ell}^{\vee}\otimes \partial_{z^{{\scriptstyle \sss}}})\cdot \mathbf{v}_{\ell},\quad T^{\sss}(z)\in \mathbf{J}^{\bullet}_{\mathring{\mathbb{A}}^d}.
$$

Parallel to the previous section, we have a central extension
\[
0\rightarrow\C\rightarrow \mathfrak{witt}^{\bullet\flat}_{d}\rightarrow \mathfrak{witt}^{\bullet}_{d}\rightarrow 0
\]
and the corresponding cohomology class $[\mathsf{c}^{\mathrm{vir}}]\in \mathbb{H}^2_{\mathrm{Lie}}(\mathfrak{witt}^{\bullet}_{d};\mathbb{C})$.
%we define
%$$
%\gamma_{vir}\colon \wedge^3 \mathfrak{witt}^{\bullet}_{2}\rightarrow \mathbb{C}
%$$
%by the formula
%$$
%\gamma_{vir}(T_1,T_2,T_3)\define\mathrm{Res}\left(\mu^{\mathcal{F}}_3(\rho_{vir}(T_1)\boxtimes \rho_{vir}(T_2)\boxtimes \rho_{vir}(T_3) )\right).
%$$

%Using the explicit result in the next section, $\gamma_{vir}$ can be computed explicitly.

We focus on the case $d=2$. Recall that in \cite{GWvir}, there are two distinguished cohomology classes 
\[
(\mathrm{ch}^{\mathcal{T}}_1)^3,\mathrm{ch}^{\mathcal{T}}_1\mathrm{ch}^{\mathcal{T}}_2 \in \mathbb{H}^2_{\mathrm{Lie}}(\mathfrak{witt}^{\bullet}_{2};\mathbb{C}) \cong H^6(BGL_2)
\]
which take the explicit form
\[
(\mathrm{ch}^{\mathcal{T}}_1)^3\left(T_1\wedge T_2\wedge T_3\right)=2\sum_{\sigma\in \mathrm{Cyc}}\mathrm{Res}(\mathrm{div}(T_{\sigma(1)})\cdot \partial\mathrm{div}(T_{\sigma(2)})\cdot\partial\mathrm{div}(T_{\sigma(3)})),
\]
\[
\mathrm{ch}^{\mathcal{T}}_1\mathrm{ch}^{\mathcal{T}}_2\left(T_1\wedge T_2\wedge T_3\right)=\mathrm{Res}(\mathrm{div}(T_1)\cdot \langle\partial T_2,\partial T_3\rangle+\mathrm{div}(T_2)\cdot \langle\partial T_3,\partial T_1\rangle+\mathrm{div}(T_3)\cdot \langle\partial T_1,\partial T_2\rangle).
\]
    Here $\mathrm{div}(T)=\sum^2\limits_{\sss=1}\partial_{z^{{\scriptstyle \sss}}} T^{\sss}(z)$ is the divergence and \(\langle \partial T_i,\partial T_j\rangle
\define
\sum\limits_{\rrr,\sss=1}^{2}
\partial(\partial_{z^{\scriptstyle\rrr}}T_i^{\sss})
\wedge
\partial(\partial_{z^{{\scriptstyle \sss}}}T_j^{\rrr}).
\)
    %In the above, we extend $\langle-,-\rangle$ to one form valued vector field $\partial T\define\sum^2\limits_{s=1}\partial T^s(z)\partial_{z^s}$.

    \begin{prop}\label{PureVira}
        We have
        \[
        [\mathsf{c}^{\mathrm{vir}}]=r\cdot [\mathrm{Todd}(\mathcal{T})]_6\quad \in \mathbb{H}^2_{\mathrm{Lie}}(\mathfrak{witt}^{\bullet}_{2};\mathbb{C}),
        \]
        where
        \[
        [\mathrm{Todd}(\mathcal{T})]_6 = \left[\frac{1}{48}(\mathrm{ch}^{\mathcal{T}}_1)^3-\frac{1}{24}\mathrm{ch}^{\mathcal{T}}_1\mathrm{ch}^{\mathcal{T}}_2\right]
        \].
    \end{prop}
\begin{proof}
We use the splitting induced by
\[
s(T)=\rho_{vir}(T)\in \modesS(\mathcal F).
\]
The Wick-contraction and propagator-counting argument in the proof of
Proposition \ref{PureAffine} applies verbatim to these quadratic fields. Hence the
central cochain is supported only in arity three.

For three Virasoro fields, the complete contractions are the two oriented cyclic
one-loop contractions. The contraction of the \(V\)-indices gives the factor
\(\dim V=r\). The only difference from Proposition \ref{PureAffine} is that each
insertion of
\[
\rho_{vir}
\left(
\sum_{\sss=1}^2T^{\sss}(z)\partial_{z^{{\scriptstyle \sss}}}
\right)
=
\sum_{\ell=1}^{r}\sum_{\sss=1}^2
T^{\sss}(z)\cdot
(\mathbf v_\ell^\vee\otimes\partial_{z^{{\scriptstyle \sss}}})\cdot\mathbf v_\ell
\]
places one first-order derivative on the corresponding propagator. Thus the scalar
part of the one-loop contraction is the operation on three first-order differentiated
propagators computed in \eqref{Oneloop3order}.

We interpret the resulting polynomial in the variables \(\lambda_i^{\sss}\) by the
diagonal \(\mathcal D\)-module convention: \(\lambda_i^{\sss}\) acts as
\(\partial_{z_i^{\scriptstyle\sss}}\) on the coefficient of the \(i\)-th labelled input; after this
action one restricts to the diagonal and applies \(\operatorname{Res}\). Since
\(\operatorname{Res}\) factors through the quotient by total derivatives, the
coefficient calculation recorded in Appendix \ref{app:ViraCoefficient} gives
\[
\mathsf{c}^{\mathrm{vir}}_3
=
r\left(
\frac{1}{48}(\mathrm{ch}^{\mathcal T}_1)^3
-
\frac{1}{24}\mathrm{ch}^{\mathcal T}_1\mathrm{ch}^{\mathcal T}_2
\right).
\]
All other arity components vanish by the same argument as in
Proposition \ref{PureAffine}. Therefore
\[
[\mathsf{c}^{\mathrm{vir}}]
=
r\cdot[\mathrm{Todd}(\mathcal T)]_6
\in
\mathbb H^2_{\mathrm{Lie}}
(\mathfrak{witt}^{\bullet}_{2};\mathbb C).
\]
\end{proof}
\iffalse
\begin{prop}
    $\gamma_{vir}$ is a cocycle and
    $$
    \gamma_{vir}(T_1,T_2,T_3)=\frac{1}{72}\mathrm{Res}(\mathrm{div}(T_1)\cdot \langle\partial T_1,\partial T_2\rangle+\mathrm{div}(T_2)\cdot \langle\partial T_1,\partial T_2\rangle+\mathrm{div}(T_3)\cdot \langle\partial T_1,\partial T_2\rangle)
    $$
    $$
    - \frac{1}{72}\mathrm{Res(\mathrm{div(T_1)}\cdot \partial}\mathrm{div(T_2)}\cdot\partial\mathrm{div(T_3)}).
    $$
    Here $\mathrm{div}(T)=\sum^2\limits_{s=1}\partial_{z^s}T^s(z)$ is the divergence and
    $$
   \langle T,R\rangle\define\sum^2\limits_{r=1}\sum^2\limits_{s=1}\partial_{z^r} T^s(z)\cdot \partial_{z^s}R^r(z).
    $$
    In the above, we extend $\langle-,-\rangle$ to one form valued vector field $\partial T\define\sum^2\limits_{s=1}\partial T^s(z)\partial_{z^s}$.
\end{prop}
\begin{proof}
    Direct computation by using the result in section \ref{s:2d1loop}.
\end{proof}
\fi
\iffalse
\begin{rem}
This result is compatible with a local universal index theorem in the spirit of \cite{Beilinson:1986zw}.
  Indeed, up to an overall factor, this central extension corresponds to the degree six part of the universal Todd class
  $\op{Td}_6 = \frac{1}{24} c_1 c_2 = \frac{1}{24} (c_1^3 - c_1 \op{ch}_2)$, where $\op{ch}_2$ is the second component
  of the universal Chern character. 
\end{rem}
\fi
\begin{figure}[htbp]
    \centering

\tikzset{every picture/.style={line width=0.75pt}} %set default line width to 0.75pt        

\begin{tikzpicture}[x=0.75pt,y=0.75pt,yscale=-1,xscale=1]
%uncomment if require: \path (0,417); %set diagram left start at 0, and has height of 417

%Straight Lines [id:da022484464267321647] 
\draw  [dash pattern={on 0.84pt off 2.51pt}]  (100,129) -- (294,57) ;
\draw [shift={(294,57)}, rotate = 339.64] [color={rgb, 255:red, 0; green, 0; blue, 0 }  ][fill={rgb, 255:red, 0; green, 0; blue, 0 }  ][line width=0.75]      (0, 0) circle [x radius= 3.35, y radius= 3.35]   ;
\draw [shift={(100,129)}, rotate = 339.64] [color={rgb, 255:red, 0; green, 0; blue, 0 }  ][fill={rgb, 255:red, 0; green, 0; blue, 0 }  ][line width=0.75]      (0, 0) circle [x radius= 3.35, y radius= 3.35]   ;
%Straight Lines [id:da25174522741486005] 
\draw  [dash pattern={on 0.84pt off 2.51pt}]  (100,129) -- (280,168) ;
\draw [shift={(280,168)}, rotate = 12.23] [color={rgb, 255:red, 0; green, 0; blue, 0 }  ][fill={rgb, 255:red, 0; green, 0; blue, 0 }  ][line width=0.75]      (0, 0) circle [x radius= 3.35, y radius= 3.35]   ;
\draw [shift={(100,129)}, rotate = 12.23] [color={rgb, 255:red, 0; green, 0; blue, 0 }  ][fill={rgb, 255:red, 0; green, 0; blue, 0 }  ][line width=0.75]      (0, 0) circle [x radius= 3.35, y radius= 3.35]   ;
%Straight Lines [id:da6265474210201364] 
\draw  [dash pattern={on 0.84pt off 2.51pt}]  (100,129) -- (376,129) ;
\draw [shift={(376,129)}, rotate = 0] [color={rgb, 255:red, 0; green, 0; blue, 0 }  ][fill={rgb, 255:red, 0; green, 0; blue, 0 }  ][line width=0.75]      (0, 0) circle [x radius= 3.35, y radius= 3.35]   ;
\draw [shift={(100,129)}, rotate = 0] [color={rgb, 255:red, 0; green, 0; blue, 0 }  ][fill={rgb, 255:red, 0; green, 0; blue, 0 }  ][line width=0.75]      (0, 0) circle [x radius= 3.35, y radius= 3.35]   ;
%Shape: Circle [id:dp14659768521661376] 
\draw  [color={rgb, 255:red, 155; green, 155; blue, 155 }  ,draw opacity=1 ][fill={rgb, 255:red, 155; green, 155; blue, 155 }  ,fill opacity=0.31 ] (226,115.5) .. controls (226,66.07) and (266.07,26) .. (315.5,26) .. controls (364.93,26) and (405,66.07) .. (405,115.5) .. controls (405,164.93) and (364.93,205) .. (315.5,205) .. controls (266.07,205) and (226,164.93) .. (226,115.5) -- cycle ;
%Straight Lines [id:da8017386506037337] 
\draw    (324,221) -- (324,263) ;
\draw [shift={(324,266)}, rotate = 270] [fill={rgb, 255:red, 0; green, 0; blue, 0 }  ][line width=0.08]  [draw opacity=0] (10.72,-5.15) -- (0,0) -- (10.72,5.15) -- (7.12,0) -- cycle    ;
%Straight Lines [id:da3117145569804225] 
\draw  [dash pattern={on 0.84pt off 2.51pt}]  (136,314) -- (342,300) ;
\draw [shift={(342,300)}, rotate = 356.11] [color={rgb, 255:red, 0; green, 0; blue, 0 }  ][fill={rgb, 255:red, 0; green, 0; blue, 0 }  ][line width=0.75]      (0, 0) circle [x radius= 3.35, y radius= 3.35]   ;
\draw [shift={(136,314)}, rotate = 356.11] [color={rgb, 255:red, 0; green, 0; blue, 0 }  ][fill={rgb, 255:red, 0; green, 0; blue, 0 }  ][line width=0.75]      (0, 0) circle [x radius= 3.35, y radius= 3.35]   ;
%Straight Lines [id:da017084737285074536] 
\draw    (325,327) -- (325,348) ;
\draw [shift={(325,351)}, rotate = 270] [fill={rgb, 255:red, 0; green, 0; blue, 0 }  ][line width=0.08]  [draw opacity=0] (10.72,-5.15) -- (0,0) -- (10.72,5.15) -- (7.12,0) -- cycle    ;
%Straight Lines [id:da8143170884983099] 
\draw  [dash pattern={on 0.84pt off 2.51pt}]  (326,366) ;
\draw [shift={(326,366)}, rotate = 0] [color={rgb, 255:red, 0; green, 0; blue, 0 }  ][fill={rgb, 255:red, 0; green, 0; blue, 0 }  ][line width=0.75]      (0, 0) circle [x radius= 3.35, y radius= 3.35]   ;
\draw [shift={(326,366)}, rotate = 0] [color={rgb, 255:red, 0; green, 0; blue, 0 }  ][fill={rgb, 255:red, 0; green, 0; blue, 0 }  ][line width=0.75]      (0, 0) circle [x radius= 3.35, y radius= 3.35]   ;

% Text Node
\draw (308,37.4) node [anchor=north west][inner sep=0.75pt]  [font=\scriptsize]  {$\rho _{vir}( T_{1})$};
% Text Node
\draw (349,98.4) node [anchor=north west][inner sep=0.75pt]  [font=\scriptsize]  {$\rho _{vir}( T_{2})$};
% Text Node
\draw (298,171.4) node [anchor=north west][inner sep=0.75pt]  [font=\scriptsize]  {$\rho _{vir}( T_{3})$};
% Text Node
\draw (351,235.4) node [anchor=north west][inner sep=0.75pt]  [font=\scriptsize]  {$\mu _{3}^{\mathcal{F}}( \rho _{vir}( T_{1}) \boxtimes \rho _{vir}( T_{2}) \boxtimes \rho _{vir}( T_{3}))$};
% Text Node
\draw (392,335.4) node [anchor=north west][inner sep=0.75pt]  [font=\scriptsize]  {$\mathrm{R} es( -)$};

\end{tikzpicture}
    \caption{Virasoro central extension via the chiral operation}
    \label{ViraFig}
\end{figure}

\begin{rem} 
 This result is compatible with a local universal index theorem in the spirit of \cite{Beilinson:1986zw}. More generally, in dimension $d$, we expect an isomorphism $H^{2d+2}(BU(d)) \cong
H^2_{\op{Lie}} (\mathfrak{witt}_d)$.
Thus, we conjecture a complete characterization of central extensions of the $d$-dimensional Witt
algebra in terms of universal characteristic classes.
\end{rem}

We can further combine the Virasoro and Kac-Moody central extensions. We can use the natural action of $ \mathfrak{witt}^{\bullet}_{d}$ on $\mathfrak{g}_d^{\bullet}$ to form a semi-direct product of dg Lie algebras \(\mathfrak{witt}^{\bullet}_{d}\ltimes \mathfrak{g}_d^{\bullet}\). We can define
$$
\rho_{\mathrm{vir}\ltimes \mathrm{aff}}=\rho_{\mathrm{vir}}\oplus \rho_{ \mathrm{aff}}\colon \mathfrak{witt}^{\bullet}_{d}\ltimes \mathfrak{g}_d^{\bullet}\rightarrow \modes(\mathcal{F}).
$$
As before, we get a central extension of $L_{\infty}$-algebras
\[
0\rightarrow\C\rightarrow \left(\mathfrak{witt}^{\bullet}_{d}\ltimes \mathfrak{g}_d^{\bullet}\right)^{\flat}\rightarrow \mathfrak{witt}^{\bullet}_{d}\ltimes \mathfrak{g}_d^{\bullet}\rightarrow 0,
\]
which produces a cohomology class $[\mathsf{c}^{\mathrm{vir}\ltimes \mathrm{aff}}]\in \mathbb{H}^2_{\mathrm{Lie}}(\mathfrak{witt}^{\bullet}_{d}\ltimes \mathfrak{g}_d^{\bullet};\mathbb{C})$.

Specializing in the case $d=2$, we construct
\[
\mathrm{ch}^{V}_3 \define \gamma_{p_2^V}( (x_0\otimes f_0)\wedge (x_1\otimes f_1)\wedge (x_2\otimes f_2))=\frac{1}{6}\sum_{\sigma\in S_{3}}\mathrm{tr}_V\left(x_{\sigma(0)} x_{\sigma(1)}x_{\sigma(2)}\right)\cdot \mathrm{Res}\left(f_0\cdot \partial f_1 \wedge \partial f_2\right),
\]
\[
\mathrm{ch}^{\mathcal T}_1\mathrm{ch}^{V}_2
\left(
T\wedge (x_0\otimes f_0)\wedge (x_1\otimes f_1)
\right)
=
\mathrm{tr}_V(x_0x_1)\,
\mathrm{Res}
\left(
\mathrm{div}(T)\,
\partial f_0\wedge\partial f_1
\right),
\]
\[
\mathrm{ch}^{V}_1\mathrm{ch}^{\mathcal T}_2
\left(
(x_0\otimes f_0)\wedge T_1\wedge T_2
\right)
=
\mathrm{tr}_V(x_0)\,
\mathrm{Res}
\left(
f_0\,
\langle \partial T_1,\partial T_2\rangle
\right),
\]
\[
\mathrm{ch}^{V}_1(\mathrm{ch}^{\mathcal T}_1)^2
\left(
(x_0\otimes f_0)\wedge T_1\wedge T_2
\right)
=
2\,\mathrm{tr}_V(x_0)\,
\mathrm{Res}
\left(
f_0\,
\partial\mathrm{div}(T_1)\wedge
\partial\mathrm{div}(T_2)
\right).
\]
Proposition \ref{PureAffine} and \ref{PureVira} can be enhanced to the following theorem.
\begin{thm}\label{FullCentral}
    Define the following class in $ \mathbb{H}^2_{\mathrm{Lie}}(\mathfrak{witt}^{\bullet}_{2}\ltimes \mathfrak{g}_2^{\bullet};\mathbb{C})$
    \[
     [\mathrm{Todd}(\mathcal{T})\mathrm{Ch}(V)]_6 \define [\mathrm{ch}^{V}_3]+\frac{1}{2}[\mathrm{ch}^{\mathcal{T}}_1\mathrm{ch}^{V}_2]+\left[\frac{1}{8}\mathrm{ch}^{V}_1(\mathrm{ch}^{\mathcal{T}}_1)^2-\frac{1}{12}\mathrm{ch}^{V}_1\mathrm{ch}^{\mathcal{T}}_2\right]+ r\cdot \left[\frac{1}{48}(\mathrm{ch}^{\mathcal{T}}_1)^3-\frac{1}{24}\mathrm{ch}^{\mathcal{T}}_1\mathrm{ch}^{\mathcal{T}}_2\right].
    \]
    Then
    \[
    [\mathsf{c}^{\mathrm{vir}\ltimes \mathrm{aff}}]=[\mathrm{Todd}(\mathcal{T})\mathrm{Ch}(V)]_6\in \mathbb{H}^2_{\mathrm{Lie}}(\mathfrak{witt}^{\bullet}_{2}\ltimes \mathfrak{g}_2^{\bullet};\mathbb{C}).
    \]
\end{thm}
\iffalse
\begin{proof}
    The proof is again parallel to Proposition \ref{PureAffine} and \ref{PureVira}, however, here we need to compute all chiral operations for different kinds of derivatives of $P$'s. This is done by invoking formulas (\ref{OneloopConst}), (\ref{Oneloop1order}), (\ref{Oneloop2order}) and (\ref{Oneloop3order}).
\end{proof}
\Gui{Probably I just add all maple code for the lengthy computations}
\fi
\begin{proof}
We use the splitting induced by
\[
s=\rho_{\mathrm{vir}\ltimes\mathrm{aff}}
=
\rho_{\mathrm{vir}}\oplus\rho_{\mathrm{aff}}
:
\mathfrak{witt}^{\bullet}_{2}\ltimes\mathfrak g^{\bullet}_{2}
\longrightarrow
\modesS(\mathcal F).
\]
The Wick-contraction and propagator-counting argument in the proof of
Proposition \ref{PureAffine} applies verbatim. Hence the associated central
cochain is supported only in arity three.

It remains to evaluate the arity-three complete cyclic contractions. By
skew-symmetry, it is enough to consider the four types of inputs
\[
(A,A,A),\qquad (T,A,A),\qquad (A,T,T),\qquad (T,T,T),
\]
where \(A=x\otimes f\in \mathfrak g^\bullet_2\) and
\(T\in\mathfrak{witt}^\bullet_2\). The \((A,A,A)\)-term is exactly
Proposition \ref{PureAffine}, and the \((T,T,T)\)-term is exactly
Proposition \ref{PureVira}. For the two mixed terms, the only new point is the
number of first-order derivatives placed on the propagators: one derivative in
the \((T,A,A)\)-case and two derivatives in the \((A,T,T)\)-case. Thus the
scalar coefficients are the ones computed in \eqref{Oneloop1order} and
\eqref{Oneloop2order}.

Interpreting the resulting \(\lambda\)-polynomials by the diagonal
\(\mathcal D\)-module convention, restricting to the diagonal, and applying
\(\operatorname{Res}\), the coefficient calculation recorded in
Appendix \ref{app:CentralCoefficientCalculation} gives
\[
C^{\mathrm{vir}\ltimes\mathrm{aff}}_{3}
=
\mathrm{ch}^{V}_{3}
+\frac12\,\mathrm{ch}^{\mathcal T}_{1}\mathrm{ch}^{V}_{2}
+
\left(
\frac18\,\mathrm{ch}^{V}_{1}(\mathrm{ch}^{\mathcal T}_{1})^{2}
-
\frac1{12}\,\mathrm{ch}^{V}_{1}\mathrm{ch}^{\mathcal T}_{2}
\right)
+
r\left(
\frac1{48}(\mathrm{ch}^{\mathcal T}_{1})^{3}
-
\frac1{24}\mathrm{ch}^{\mathcal T}_{1}\mathrm{ch}^{\mathcal T}_{2}
\right).
\]
All other arity components vanish by the same argument as in
Proposition \ref{PureAffine}. Therefore
\[
[\mathsf{c}^{\mathrm{vir}\ltimes\mathrm{aff}}]
=
[\mathrm{Todd}(\mathcal T)\mathrm{Ch}(V)]_{6}
\in
\mathbb H^2_{\mathrm{Lie}}
(\mathfrak{witt}^{\bullet}_{2}\ltimes \mathfrak g^{\bullet}_{2};\mathbb C).
\]
\end{proof}

\section{Chiral algebras on $\mathbb A^2$ in more detail}
\label{s:A2}

In this section we unpack the chiral operations of the unit chiral algebra on
\(\mathbb A^2\).  For simplicity, we write \(\mu_k\) for the \(k\)-ary chiral
operation of the unit chiral algebra \(\omega_{\mathbb A^2}[1]\) defined in
Section~\ref{s:unit}.  Throughout this section we work over \(\kk=\mathbb C\).
The main recursive statement, Theorem~\ref{MainTheoremRecursiveRS}, is
independent of this specialization: it holds over a general ground field
\(\mathbf k\), provided the unit chiral algebra over \(\mathbf k\) is available.

\paragraph{Conventions for Section~\ref{s:A2}.}
Vertex labels are denoted by \(i,j,k,o,\star,\bullet\).  Component indices in
\(\mathbb A^2\) are displayed using the fixed component-index font:
\[
  \sss\in\{\1,\2\}.
\]
Thus
\[
  z_i^{\1},z_i^{\2},\qquad
  x_{ij}^{\1},x_{ij}^{\2},\qquad
  \lambda_i^{\1},\lambda_i^{\2}
\]
denote components, not powers.  For any two ordered pairs
\(f=(f^{\1},f^{\2})\) and \(g=(g^{\1},g^{\2})\), we use
\[
  (f|g)\define f^{\1}g^{\1}+f^{\2}g^{\2},
  \qquad
  f\wedge g\define f^{\1}g^{\2}-f^{\2}g^{\1}.
\]
When expressions involve both the \(D\)-module variables \(\lambda^{\sss}\) and
the coordinate variables \(z^{\sss},x^{\sss}\), we move the \(\lambda\)'s to the
right using the \(D\)-module convention.  In particular, the antisymmetric
combinations satisfy
\[
  \lambda\wedge z
  =
  \lambda^{\1}z^{\2}-\lambda^{\2}z^{\1}
  =
  z^{\2}\lambda^{\1}-z^{\1}\lambda^{\2}
  =
  -\,z\wedge\lambda
\]
and
\[
  \lambda\wedge x
  =
  \lambda^{\1}x^{\2}-\lambda^{\2}x^{\1}
  =
  x^{\2}\lambda^{\1}+x^{\1}x^{\2}
  -
  x^{\1}\lambda^{\2}-x^{\1}x^{\2}
  =
  -\,x\wedge\lambda .
\]
Finally, for \(i\in I\), we denote the shifted top form by
\[
  d\mathbf z_i^{\blacklozenge}
  \define
  \bigl(dz_i^{\1}\wedge dz_i^{\2}\bigr)[1],
  \qquad
  d\mathbf z_I^{\blacklozenge}
  \define
  \boxtimes_{i\in I}d\mathbf z_i^{\blacklozenge}.
\]
\subsection{The propagator}\label{PropagatorNotation}

We begin with the basic propagator.

\begin{defn}\label{dfn:prop}
  The propagator \(P_{12}\) is the element of
  \(\mathbf J_{\mathbb A^2}^{\{1,2\}}\) given by
  \[
    P_{12}
    =
    x_{12}^{\1}\mathbf d x_{12}^{\2}
    -
    x_{12}^{\2}\mathbf d x_{12}^{\1}.
  \]
  More generally, for any ordered pair of distinct vertices \(i,j\), we use
  \[
    P_{ij}
    \define
    x_{ij}^{\1}\mathbf d x_{ij}^{\2}
    -
    x_{ij}^{\2}\mathbf d x_{ij}^{\1}.
  \]
\end{defn}

With the coordinate convention above, \(x_{ji}=-x_{ij}\), and hence
\[
  P_{ji}=P_{ij}.
\]
Products of propagators are products of odd one-forms, so the displayed order of
the factors is part of the convention.

Our first result is the simplest nontrivial computation of the chiral two-ary operation.

\begin{prop}
  We have the following identity:
  \[
    \mu_{2}\bigl(
      P_{12}
      d\mathbf z_{1}^{\blacklozenge}
      \boxtimes
      d\mathbf z_{2}^{\blacklozenge}
    \bigr)
    =
    d\mathbf z^{\blacklozenge}.
  \]
\end{prop}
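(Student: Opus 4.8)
The plan is to evaluate the two–ary operation directly from its definition as a signed sum of Feynman graph integrals, using the explicit Wick formula of Proposition~\ref{explicit formula}. First I would decompose the propagator $P_{12}=x_{12}^1\mathbf{d}x_{12}^2-x_{12}^2\mathbf{d}x_{12}^1$ into its two monomials. By Definition~\ref{dfn:poly} each monomial is the polynomial $p_{(\vec\Gamma,m,l)}$ of a \emph{banana graph}: two vertices $\vec\Gamma_0=\{1,2\}$ joined by two edges $e_1,e_2$ (both oriented $1\to 2$), with $l=2$ so that $e_1$ carries the $x$–factor and $e_2=e_l$ the $\mathbf{d}x$–factor; the first monomial has $(m(e_1),m(e_2))=(1,2)$ and the second $(2,1)$ with an overall sign. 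Unwinding the definition of $\tilde\mu_{\{1,2\}}$ (and the d\'ecalage identification of $\tilde\mu$ with $\mu_2$) through Definition~\ref{dfn:res}, the quantity $\mu_2(P_{12}\,d\mathbf{z}_1^{\blacklozenge}\boxtimes d\mathbf{z}_2^{\blacklozenge})$ becomes the signed sum of the two residues $\oint_{z_1=z_2}$, each an integral over $S^{+}((0,+\infty)^{\vec\Gamma_1})$ of the corresponding weight $W$ with $\beta=d\mathbf{z}_1^{\blacklozenge}\boxtimes d\mathbf{z}_2^{\blacklozenge}$.

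Next I would specialize Proposition~\ref{explicit formula} to $n=2$, $d=2$. Here there is a single internal vertex, so the weighted Laplacian $M_{\vec\Gamma}(t)$ is the $1\times 1$ scalar $t_{e_1}^{-1}+t_{e_2}^{-1}$, and the graphical Green's functions are $d^{-1}_{\vec\Gamma}(t)_{e_1 1}=u$ and $d^{-1}_{\vec\Gamma}(t)_{e_2 1}=1-u$ with $u=t_{e_1}^{-1}/(t_{e_1}^{-1}+t_{e_2}^{-1})$; in particular the $\mathbb{R}_+$–quotient of Schwinger space is one–dimensional and, by Proposition~\ref{useful property}, the integral over $S^{+}$ collapses to an integral over $u\in(0,1)$. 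The operators $d\hat y_e^{s}=d^{-1}_{\vec\Gamma}(t)_{e1}\,d\bar{\tilde z}_1^{s}+d(d^{-1}_{\vec\Gamma}(t)_{e1})(\partial_{\tilde z_1^{s}}+\lambda_1^{s})$ are then explicit. I would apply the interior product $\iota_{d^2\tilde z_1\,d^2\bar{\tilde z}_1}$ to extract the top form: the holomorphic factor $d\tilde z_1^1 d\tilde z_1^2$ is supplied by $\beta$, while the antiholomorphic factor $d\bar{\tilde z}_1^1 d\bar{\tilde z}_1^2$ together with the single Schwinger one–form $du$ must come from the product of the three $d\hat y$–factors produced by one $x$– and one $\mathbf{d}x$–substitution. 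Since $\beta$ is $\lambda$–independent, every $\partial_{\lambda_e^{s}}$ annihilates it; the surviving \emph{$\lambda$–independent} contribution is exactly the one in which a $\partial_{\tilde z_1}$ from one $d\hat y$ differentiates the multiplication operator $\tilde z_e^{s}$ appearing in the $\mathbf{d}x$–substitution, the other two factors supplying $d\bar{\tilde z}_1^{1}\wedge d\bar{\tilde z}_1^{2}$. Summing the two monomials assembles precisely the antisymmetric (Bochner--Martinelli) combination, and the spurious $\lambda$– and $w$–dependent terms cancel between them.

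Finally the remaining one–dimensional integral is $\int_0^1 du=1$, exactly as in the $d=1$ normalization computation, and collecting the prefactors (the two $(-2\pi i)^{d(n-1)}=(-2\pi i)^2$ factors cancel, and the sign $(-1)^{\frac12(|\Gamma_1|-l)(|\Gamma_1|-l+1)+1}$ from Definition~\ref{dfn:res} together with the d\'ecalage sign reduces to $+1$) yields exactly $d\mathbf{z}^{\blacklozenge}$ with unit coefficient and no $\lambda$–dependence. As an independent check one may note that $P_{12}$ and $d\mathbf{z}_i^{\blacklozenge}$ carry definite $\mathrm{GL}_2$–weights, so by the $\mathrm{GL}_2$–equivariance of the operation the output is forced to be a scalar multiple of $d\mathbf{z}^{\blacklozenge}$ carrying no free index, which excludes all $\lambda$–dependence; this alone reduces the whole statement to computing a single constant, which the Schwinger integral fixes to $1$. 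I expect the \textbf{main obstacle} to be the bookkeeping of signs and normalizations together with the operator–ordering subtlety in the previous paragraph---namely isolating the unique $\lambda$–independent term in which $\partial_{\tilde z_1}$ hits the $\tilde z_e^{s}$ multiplication---and verifying that the unwanted terms cancel upon antisymmetrization of the two monomials.
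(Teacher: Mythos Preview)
Your proposal is correct and follows essentially the same route as the paper: decompose $P_{12}$ into its two monomials, identify each with the two--edge banana graph with $l=2$, apply Proposition~\ref{explicit formula} together with the explicit Green's functions $d^{-1}_{\vec\Gamma}(t)_{e_1 1}=t_{e_2}/(t_{e_1}+t_{e_2})$ and $d^{-1}_{\vec\Gamma}(t)_{e_2 1}=t_{e_1}/(t_{e_1}+t_{e_2})$, sum the two contributions, and use Proposition~\ref{useful property} to pass to the slice $\{t_{e_1}+t_{e_2}=1\}$ where the resulting one--form is literally $du$ integrating to $1$. Your $\mathrm{GL}_2$--equivariance remark, which forces the output to be a $\lambda$--independent multiple of $d\mathbf{z}^{\blacklozenge}$ a priori, is a pleasant shortcut not spelled out in the paper and would let you bypass most of the operator--ordering bookkeeping you flag as the main obstacle.
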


\begin{proof}
  The element \(P_{12}\) contains two terms. We first identify the Feynman graphs
  corresponding to these two monomials. Let \(\vec{\Gamma}\) be the directed graph
  constructed as follows:
  \begin{itemize}
    \item \(\vec{\Gamma}_{0}=\{1,2\}\) and \(\vec{\Gamma}_{1}=\{e_{1},e_{2}\}\).
    \item \(t(e_{1})=t(e_{2})=1\) and \(h(e_{1})=h(e_{2})=2\).
  \end{itemize}
  We consider two Feynman graphs
  \((\vec{\Gamma},{\mmm}_{1},2)\) and \((\vec{\Gamma},{\mmm}_{2},2)\), where
  \({\mmm}_{1}\) and \({\mmm}_{2}\) are defined by
  \[
    \begin{cases}
      {\mmm}_{1}(e_{1})=\1,\quad {\mmm}_{1}(e_{2})=\2,\\
      {\mmm}_{2}(e_{1})=\2,\quad {\mmm}_{2}(e_{2})=\1.
    \end{cases}
  \]
  Then
  \[
    p_{(\vec{\Gamma},{\mmm}_{1},2)}
    =
    x_{12}^{\1}\mathbf d x_{12}^{\2},
    \qquad
    p_{(\vec{\Gamma},{\mmm}_{2},2)}
    =
    x_{12}^{\2}\mathbf d x_{12}^{\1}.
  \]
  The graphical Green's function is
  \[
    \begin{cases}
      d^{-1}_{\vec{\Gamma}}(t)_{e_{1}1}
      =
      \dfrac{t_{e_{2}}}{t_{e_{1}}+t_{e_{2}}},\\[0.8em]
      d^{-1}_{\vec{\Gamma}}(t)_{e_{2}1}
      =
      \dfrac{t_{e_{1}}}{t_{e_{1}}+t_{e_{2}}}.
    \end{cases}
  \]

  By Proposition \ref{explicit formula}, we have
  \begin{align*}
    &\mu_{2}\bigl(
      p_{(\vec{\Gamma},{\mmm}_{1},2)}
      d\mathbf z^{\blacklozenge}_{1}
      \boxtimes
      d\mathbf z^{\blacklozenge}_{2}
    \bigr) \\
    &=
    \left.
    \left(
      \int_{S^{+}((0,+\infty)^{\vec{\Gamma}_{1}})}
      \iota_{d^{2}\tilde z_{1}d^{2}\bar{\tilde z}_{1}}
      \circ
      (-d\hat y_{e_{1}}^{\1})
      \circ
      \left(
        -
        \sum_{\sss=1}^{2}
        \bigl(
          \partial_{\lambda_{e_{2}}^{\scriptstyle \sss}}
          +
          \tilde z_{e_{2}}^{\sss}
        \bigr)
        \circ
        d\hat y_{e_{2}}^{\sss}
      \right)
      \circ
      d\hat y_{e_{2}}^{\2}
      \bigl(
        d^{2}z_{1}\boxtimes d^{2}z_{2}
      \bigr)
    \right)
    \right|_{\tilde z_{1}=0,\tilde z_{2}=w} \\
    &=
    \left(
      \int_{S^{+}((0,+\infty)^{\vec{\Gamma}_{1}})}
      \frac{t_{e_{1}}}{t_{e_{1}}+t_{e_{2}}}
      \left(
        \frac{t_{e_{1}}}{t_{e_{1}}+t_{e_{2}}}
        d\left(
          \frac{t_{e_{2}}}{t_{e_{1}}+t_{e_{2}}}
        \right)
        -
        \frac{t_{e_{2}}}{t_{e_{1}}+t_{e_{2}}}
        d\left(
          \frac{t_{e_{1}}}{t_{e_{1}}+t_{e_{2}}}
        \right)
      \right)
    \right)d^{2}w.
  \end{align*}
  Similarly, we have
  \begin{align*}
    &\mu_{2}\bigl(
      -p_{(\vec{\Gamma},{\mmm}_{2},2)}
      d\mathbf z^{\blacklozenge}_{1}
      \boxtimes
      d\mathbf z^{\blacklozenge}_{2}
    \bigr)\\
    &=
    \left(
      \int_{S^{+}((0,+\infty)^{\vec{\Gamma}_{1}})}
      \frac{t_{e_{2}}}{t_{e_{1}}+t_{e_{2}}}
      \left(
        \frac{t_{e_{1}}}{t_{e_{1}}+t_{e_{2}}}
        d\left(
          \frac{t_{e_{2}}}{t_{e_{1}}+t_{e_{2}}}
        \right)
        -
        \frac{t_{e_{2}}}{t_{e_{1}}+t_{e_{2}}}
        d\left(
          \frac{t_{e_{1}}}{t_{e_{1}}+t_{e_{2}}}
        \right)
      \right)
    \right)d^{2}w.
  \end{align*}
  Combining these computations and using Proposition \ref{useful property}, we obtain
  \begin{align*}
    &\mu_{2}\bigl(
      P_{12}
      d\mathbf z^{\blacklozenge}_{1}
      \boxtimes
      d\mathbf z^{\blacklozenge}_{2}
    \bigr)\\
    &=
    \left(
      \int_{S^{+}((0,+\infty)^{\vec{\Gamma}_{1}})}
      \left(
        \frac{t_{e_{1}}}{t_{e_{1}}+t_{e_{2}}}
        d\left(
          \frac{t_{e_{2}}}{t_{e_{1}}+t_{e_{2}}}
        \right)
        -
        \frac{t_{e_{2}}}{t_{e_{1}}+t_{e_{2}}}
        d\left(
          \frac{t_{e_{1}}}{t_{e_{1}}+t_{e_{2}}}
        \right)
      \right)
    \right)d^{2}w\\
    &=
    \left(
      \int_{\{(t_{e_{1}},t_{e_{2}})\in (0,+\infty)^2
      \mid t_{e_{1}}+t_{e_{2}}=1\}}
      \left(
        \frac{t_{e_{1}}}{t_{e_{1}}+t_{e_{2}}}
        d\left(
          \frac{t_{e_{2}}}{t_{e_{1}}+t_{e_{2}}}
        \right)
        -
        \frac{t_{e_{2}}}{t_{e_{1}}+t_{e_{2}}}
        d\left(
          \frac{t_{e_{1}}}{t_{e_{1}}+t_{e_{2}}}
        \right)
      \right)
    \right)d^{2}w.
  \end{align*}
  The integral above produces \(1\), as desired.
\end{proof}

\begin{rem}
  This element is closed. In fact, any closed two-operation is a multiple of
  \(\mu_{2}\).
\end{rem}
\subsection{Residues as two-ary chiral operations}\label{ResidueOp}

Our next result computes the value of the two-ary chiral operation on arbitrary
derivatives of the propagator. To do this, we introduce a generating function for
the derivatives of \(P_{12}\). Instead of carrying out an explicit Feynman diagram
calculation, we characterize the expression using the \(D\)-module relations,
together with the computation from the previous section.

Introduce new formal variables
\[
  \mathfrak z_{12}
  =
  \bigl(\mathfrak z_{12}^{\1},\mathfrak z_{12}^{\2}\bigr)
\]
and form the generating series
\[
  P_{12}(\mathfrak z_{12})
  \define
  \sum_{r,k\geq 0}
  \frac{1}{r!k!}
  \bigl(\partial_{z^{\scriptstyle\1}_1}\bigr)^r
  \bigl(\partial_{z^{\scriptstyle\2}_1}\bigr)^k
  P_{12}\cdot
  \bigl(\mathfrak z_{12}^{\1}\bigr)^r
  \bigl(\mathfrak z_{12}^{\2}\bigr)^k .
\]
Here \(\mathfrak z_{12}\) is only a formal variable. It is not the coordinate
difference \(z_1-z_2\).

For notational convenience, set
\[
  (x_{12}\mid \mathfrak z_{12})
  \define
  x_{12}^{\1}\mathfrak z_{12}^{\1}
  +
  x_{12}^{\2}\mathfrak z_{12}^{\2}.
\]

\begin{lem}
  We have
  \[
    P_{12}(\mathfrak z_{12})
    =
    \frac{P_{12}}
    {\bigl(1+(x_{12}\mid \mathfrak z_{12})\bigr)^2},
  \]
  and, similarly,
  \[
    x_{12}^{\sss}(\mathfrak z_{12})
    =
    \frac{x_{12}^{\sss}}
    {1+(x_{12}\mid \mathfrak z_{12})},
    \qquad
    \sss\in\{\1,\2\}.
  \]
  These equalities are understood as formal power series in
  \(\mathfrak z_{12}^{\1},\mathfrak z_{12}^{\2}\).
\end{lem}

\begin{proof}
  By Proposition \ref{prop:dmodulestr}, for
  \(\ii_1,\ldots,\ii_\ell\in\{\1,\2\}\), we have
  \[
    (-x_{12}^{\ii_1})\cdots(-x_{12}^{\ii_\ell})P_{12}
    =
    \frac{
      \partial_{z^{\scriptstyle\ii_1}_1}\cdots
      \partial_{z^{\scriptstyle\ii_\ell}_1}P_{12}
    }{(\ell+1)!},
  \]
  and %\Gui{Too small indices???}
  \[
    (-x_{12}^{\ii_1})\cdots(-x_{12}^{\ii_\ell})x_{12}^{\sss}
    =
    \frac{
      \partial_{z^{\scriptstyle\ii_1}_1}\cdots
      \partial_{z^{\scriptstyle\ii_\ell}_1}x_{12}^{\sss}
    }{\ell!},
    \qquad
    \sss\in\{\1,\2\}.
  \]
  Hence
  \[
    P_{12}(\mathfrak z_{12})
    =
    \sum_{\ell\geq 0}
    (\ell+1)
    \bigl(-(x_{12}\mid \mathfrak z_{12})\bigr)^\ell
    P_{12}
    =
    \frac{P_{12}}
    {\bigl(1+(x_{12}\mid \mathfrak z_{12})\bigr)^2},
  \]
  and
  \[
    x_{12}^{\sss}(\mathfrak z_{12})
    =
    \sum_{\ell\geq 0}
    \bigl(-(x_{12}\mid \mathfrak z_{12})\bigr)^\ell
    x_{12}^{\sss}
    =
    \frac{x_{12}^{\sss}}
    {1+(x_{12}\mid \mathfrak z_{12})}.
  \]
\end{proof}

\begin{prop}
  We have the following explicit formula for \(\mu_2\):
  \[
    \mu_2\left(
      P_{12}(\mathfrak z_{12})\cdot
      d\mathbf z_{1}^{\blacklozenge}
      \boxtimes
      d\mathbf z_{2}^{\blacklozenge}
    \right)
    =
    d\mathbf z^{\blacklozenge}
    \otimes
    e^{-(\lambda_1\mid\mathfrak z_{12})},
  \]
  where
  \[
    (\lambda_1\mid\mathfrak z_{12})
    \define
    \lambda_1^{\1}\mathfrak z_{12}^{\1}
    +
    \lambda_1^{\2}\mathfrak z_{12}^{\2}.
  \]
\end{prop}

\begin{proof}
  By the \(D\)-module relation for the chiral operation, we have
  \[
    \mu_2\left(
      \partial_{z^{\sss}_1}Q\cdot
      d\mathbf z_{1}^{\blacklozenge}
      \boxtimes
      d\mathbf z_{2}^{\blacklozenge}
    \right)
    =
    -\,
    \mu_2\left(
      Q\cdot
      d\mathbf z_{1}^{\blacklozenge}
      \boxtimes
      d\mathbf z_{2}^{\blacklozenge}
    \right)\lambda_1^{\sss},
    \qquad
    \sss\in\{\1,\2\}.
  \]
  Applying this repeatedly to \(Q=P_{12}\), and using the computation
  \[
    \mu_2\left(
      P_{12}\cdot
      d\mathbf z_{1}^{\blacklozenge}
      \boxtimes
      d\mathbf z_{2}^{\blacklozenge}
    \right)
    =
    d\mathbf z^{\blacklozenge}
  \]
  from the previous section, gives
  \[
    \mu_2\left(
      \partial_{z^{\scriptstyle\ii_1}_1}\cdots
      \partial_{z^{\scriptstyle\ii_\ell}_1}P_{12}\cdot
      d\mathbf z_{1}^{\blacklozenge}
      \boxtimes
      d\mathbf z_{2}^{\blacklozenge}
    \right)
    =
    (-\lambda_1^{\ii_1})\cdots
    (-\lambda_1^{\ii_\ell})
    d\mathbf z^{\blacklozenge},
    \qquad
    \ii_1,\ldots,\ii_\ell\in\{\1,\2\}.
  \]
  Summing the resulting formal power series in
  \(\mathfrak z_{12}^{\1},\mathfrak z_{12}^{\2}\), we obtain
  \[
    \mu_2\left(
      P_{12}(\mathfrak z_{12})\cdot
      d\mathbf z_{1}^{\blacklozenge}
      \boxtimes
      d\mathbf z_{2}^{\blacklozenge}
    \right)
    =
    d\mathbf z^{\blacklozenge}
    \otimes
    e^{-(\lambda_1\mid\mathfrak z_{12})}.
  \]
\end{proof}

We next record the Arnold relation in the Jouanolou model. In this paragraph,
\(z_{ij}=z_i-z_j\) denotes the actual coordinate difference, not the formal variable
\(\mathfrak z_{12}\). We use the convention
\[
  x_{ij}\wedge x_{kl}
  \define
  x_{ij}^{\1}x_{kl}^{\2}
  -
  x_{ij}^{\2}x_{kl}^{\1}.
\]

\begin{prop}[Arnold relation]
  We have
  \[
    P_{12}P_{13}
    +
    P_{23}P_{12}
    +
    P_{13}P_{23}
    =
    \mathbf d\left(
      P_{13}\cdot x_{12}\wedge x_{23}
      +
      P_{12}\cdot x_{13}\wedge x_{23}
      +
      P_{23}\cdot x_{12}\wedge x_{13}
    \right).
  \]
\end{prop}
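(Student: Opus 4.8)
The plan is to verify this as an identity of cohomological degree-two elements in the commutative dg algebra $\mathbf{J}^{\{1,2,3\}}_{\mathbb{A}^2}$ — it is the algebraic avatar, in the Jouanolou model, of the classical Arnold relation among the forms $d\log(z_i-z_j)$. Everything will be driven by the two defining relations attached to each pair $ij$: the normalization $\sum_{s=1,2} x_{ij}^s(z_i^s-z_j^s)=1$ and its differential $\sum_{s=1,2}\mathbf{d}x_{ij}^s(z_i^s-z_j^s)=0$. Writing $w_{ij}^s:=z_i^s-z_j^s$, I would first isolate a reduction lemma collapsing all one-form data of a single pair onto the propagator $P_{ij}$. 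From the differential relation one has $w_{ij}^1\mathbf{d}x_{ij}^1=-w_{ij}^2\mathbf{d}x_{ij}^2$; multiplying on the right (resp. left) by $\mathbf{d}x_{ij}^1$ or $\mathbf{d}x_{ij}^2$ and using that one-forms square to zero gives $w_{ij}^s\,\mathbf{d}x_{ij}^1\mathbf{d}x_{ij}^2=0$ for $s=1,2$; multiplying the normalization relation by $\mathbf{d}x_{ij}^1\mathbf{d}x_{ij}^2$ then forces $\mathbf{d}x_{ij}^1\mathbf{d}x_{ij}^2=0$, whence $\mathbf{d}P_{ij}=2\,\mathbf{d}x_{ij}^1\mathbf{d}x_{ij}^2=0$. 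The same two relations yield the rewriting rules $\mathbf{d}x_{ij}^1=-P_{ij}\,w_{ij}^2$ and $\mathbf{d}x_{ij}^2=P_{ij}\,w_{ij}^1$, each checked by expanding $P_{ij}=x_{ij}^1\mathbf{d}x_{ij}^2-x_{ij}^2\mathbf{d}x_{ij}^1$, substituting the differential relation, and applying the normalization.

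With the lemma in hand I would expand the right-hand side by the graded Leibniz rule. Because each $P_{ij}$ is closed, every term $(\mathbf{d}P_{ij})\cdot(x_{kl}\wedge x_{mn})$ vanishes, and the right-hand side collapses to
\[
-P_{13}\,\mathbf{d}(x_{12}\wedge x_{23})-P_{12}\,\mathbf{d}(x_{13}\wedge x_{23})-P_{23}\,\mathbf{d}(x_{12}\wedge x_{13}),
\]
the minus signs coming from $|P_{ij}|=1$. Expanding $\mathbf{d}(x_{ij}\wedge x_{kl})=\mathbf{d}x_{ij}\wedge x_{kl}+x_{ij}\wedge\mathbf{d}x_{kl}$ and applying the rewriting rules turns each factor $\mathbf{d}x_{ij}$ into $P_{ij}\,w_{ij}^{\perp}$ with $w_{ij}^{\perp}=(-w_{ij}^2,w_{ij}^1)$. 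Using the elementary identities $w^{\perp}\wedge y=-(w|y)$ and $y\wedge w^{\perp}=(y|w)$, every wedge collapses to a scalar dot product times a single propagator, so that after reordering the two degree-one factors (tracking the Koszul sign $P_{ij}P_{kl}=-P_{kl}P_{ij}$) the right-hand side becomes a combination of $P_{12}P_{13}$, $P_{12}P_{23}$, $P_{13}P_{23}$ whose coefficients are dot products of the form $(x_{ij}|w_{kl})$.

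To finish I would evaluate those coefficients using the additivity $w_{13}=w_{12}+w_{23}$, which splits every $w_{13}$-pairing, together with the three normalizations $(x_{ij}|w_{ij})=1$. After splitting and collecting, the off-diagonal pairings $(x_{12}|w_{23})$, $(w_{12}|x_{23})$, $(w_{12}|x_{13})$, $(x_{13}|w_{23})$ cancel against one another in pairs, while the diagonal pairings reduce to $1$, leaving exactly the prescribed combination of propagator products. The main obstacle is precisely this final bookkeeping: one must keep the Koszul signs from reordering the $P_{ij}$ aligned with the signs from the wedge/perp identities and from the Leibniz rule, since a single sign slip alters which off-diagonal dot products cancel. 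The reduction lemma is what makes the whole verification tractable, since it eliminates all genuine two-form data and reduces the identity to a linear computation in the scalars $(x_{ij}|w_{kl})$ that is completely governed by the three normalization constants.
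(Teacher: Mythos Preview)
Your plan is correct and rests on the same two identities the paper uses, namely $\mathbf{d}P_{ij}=0$ and the rewriting rule $P_{ij}\,z_{ij}^s=\pm\,\mathbf{d}x_{ij}^{\,3-s}$ (your $w_{ij}^{\perp}$ formulation), together with the cocycle $z_{13}=z_{12}+z_{23}$ and the normalizations $(x_{ij}\,|\,z_{ij})=1$. The difference is purely one of direction: the paper starts from a single product on the left, inserts $1=(x_{23}\,|\,z_{23})$, splits $z_{23}=z_{13}-z_{12}$, and applies $P_{ij}z_{ij}^s=\pm\,\mathbf{d}x_{ij}^{\,3-s}$ to obtain an expression $P_{12}P_{13}=\pm P_{12}(\mathbf{d}x_{13}\wedge x_{23})\pm P_{13}(\mathbf{d}x_{12}\wedge x_{23})$; the remaining products are handled the same way and the three are then matched against the Leibniz expansion of the right-hand side. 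You instead expand the right-hand side first and push the rewriting rules through to land on a linear combination of the $P_{ij}P_{kl}$ with coefficients $(x_{ab}\,|\,z_{cd})$ that collapse under the cocycle and normalization. Your route is a bit longer but more systematic and makes the cancellation of the off-diagonal pairings completely transparent; the paper's route is shorter because it sidesteps the full six-term expansion by treating one product at a time. Either way the content is the same computation, and your warning about the Koszul signs is well placed: that is indeed where all the work lies.
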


\begin{proof}
  We use the Jouanolou relation
  \[
    x_{23}^{\1}z_{23}^{\1}
    +
    x_{23}^{\2}z_{23}^{\2}
    =
    1
  \]
  and the identity
  \[
    z_{23}^{\sss}=z_{13}^{\sss}-z_{12}^{\sss},
    \qquad
    \sss\in\{\1,\2\}.
  \]
  Then
  \begin{align*}
    P_{12}P_{13}
    &=
    P_{12}P_{13}
    \bigl(
      x_{23}^{\1}z_{23}^{\1}
      +
      x_{23}^{\2}z_{23}^{\2}
    \bigr)\\
    &=
    P_{12}P_{13}
    \left(
      x_{23}^{\1}
      (z_{13}^{\1}-z_{12}^{\1})
      +
      x_{23}^{\2}
      (z_{13}^{\2}-z_{12}^{\2})
    \right)\\
    &=
    P_{13}\cdot \mathbf d x_{12}\wedge x_{23}
    -
    P_{12}\cdot \mathbf d x_{13}\wedge x_{23}.
  \end{align*}
  The analogous identities obtained by permuting the vertices \(1,2,3\), together
  with the fact that the propagators are odd one-forms, give the displayed identity.
\end{proof}

\begin{cor}\label{RecursiveLemma}
  We have
  \[
    P_{12}P_{13}P_{23}
    =
    \mathbf d\left(
      P_{13}P_{23}\cdot x_{12}\wedge x_{23}
      +
      P_{12}P_{23}\cdot x_{13}\wedge x_{23}
    \right).
  \]
\end{cor}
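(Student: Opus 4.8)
The plan is to derive the identity directly from the Arnold relation of the preceding proposition by multiplying it on the left by the propagator $P_{23}$ and simplifying. Two structural facts carry the argument. First, $P_{23}$ has odd cohomological degree, so $P_{23}^2 = 0$; this is immediate since the only surviving terms in $P_{23}^2$ are proportional to the symmetric combination $\mathbf{d}x_{23}^1\wedge\mathbf{d}x_{23}^2 + \mathbf{d}x_{23}^2\wedge\mathbf{d}x_{23}^1 = 0$. Second, and more substantively, I claim $\mathbf{d}P_{23} = 0$. One computes $\mathbf{d}P_{23} = 2\,\mathbf{d}x_{23}^1\wedge\mathbf{d}x_{23}^2$, so it suffices to show $\mathbf{d}x_{23}^1\wedge\mathbf{d}x_{23}^2 = 0$ in $\mathbf{J}_{\mathbb{A}^2}^{\{1,2,3\}}$. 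Wedging the relative differential relation $z_{23}^1\mathbf{d}x_{23}^1 + z_{23}^2\mathbf{d}x_{23}^2 = 0$ with $\mathbf{d}x_{23}^1$ and then with $\mathbf{d}x_{23}^2$ shows that both $z_{23}^1$ and $z_{23}^2$ annihilate $\mathbf{d}x_{23}^1\wedge\mathbf{d}x_{23}^2$; since the torsor relation $x_{23}^1 z_{23}^1 + x_{23}^2 z_{23}^2 = 1$ makes $z_{23}^1, z_{23}^2$ generate the unit ideal on $\mathbb{J}_{\mathbb{A}^2}^{\{1,2,3\}}$, multiplying back by $x_{23}^1 z_{23}^1 + x_{23}^2 z_{23}^2$ gives $\mathbf{d}x_{23}^1\wedge\mathbf{d}x_{23}^2 = 0$.

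With these in hand, I would multiply the Arnold relation by $P_{23}$. On the left-hand side the two terms $P_{23}\,P_{12}P_{23}$ and $P_{23}\,P_{13}P_{23}$ vanish: reordering the rightmost $P_{23}$ past one odd factor produces $P_{23}^2$ up to sign, which is zero. The surviving term $P_{23}P_{12}P_{13}$ equals $P_{12}P_{13}P_{23}$, since transporting $P_{23}$ past the two odd factors $P_{12}, P_{13}$ is sign-neutral. On the right-hand side, writing $\Xi = P_{13}\,x_{12}\wedge x_{23} + P_{12}\,x_{13}\wedge x_{23} + P_{23}\,x_{12}\wedge x_{13}$ for the primitive, the vanishing $\mathbf{d}P_{23}=0$ together with the graded Leibniz rule lets me pull $P_{23}$ through the differential, $P_{23}\,\mathbf{d}(\Xi) = \pm\,\mathbf{d}(P_{23}\,\Xi)$. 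In $P_{23}\,\Xi$ the third summand carries the factor $P_{23}^2$ and drops out, leaving precisely $P_{23}P_{13}\,x_{12}\wedge x_{23} + P_{23}P_{12}\,x_{13}\wedge x_{23}$ inside the differential, which is the stated primitive.

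The one genuinely delicate point is the Koszul sign bookkeeping: I must track signs from reordering the odd generators $P_{ij}$ and from the Leibniz rule, and verify that they conspire to reproduce the identity with the sign as stated rather than its negative. I would carry this out last, after fixing the orientation conventions used in the relative de Rham complex, treating $P_{23}^2 = 0$ and $\mathbf{d}P_{23} = 0$ as the two load-bearing vanishing statements. Beyond these, the proof requires no new input past the Arnold relation already established, so the remaining manipulation is purely formal within the graded-commutative algebra $\mathbf{J}_{\mathbb{A}^2}^{\{1,2,3\}}$.
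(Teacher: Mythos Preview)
Your proposal is correct and matches the paper's intended derivation: the corollary is stated without proof immediately after the Arnold relation, and the primitive in the corollary is visibly $P_{23}$ times (the surviving part of) the Arnold primitive, so multiplying the Arnold relation by $P_{23}$ is exactly the implicit argument. Your identification of the two load-bearing facts---$P_{23}^2=0$ from odd degree and $\mathbf{d}P_{23}=0$ from the torsor relation forcing $\mathbf{d}x_{23}^1\wedge\mathbf{d}x_{23}^2=0$---is spot on, and the remaining sign bookkeeping you flag is indeed the only thing left to pin down.
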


\begin{proof}
  Multiply the Arnold relation by \(P_{23}\) and use \(P_{23}^2=0\), keeping the
  displayed order of the odd factors. Equivalently,
  \[
    \mathbf d\left(
      P_{13}P_{23}\cdot x_{12}\wedge x_{23}
      +
      P_{12}P_{23}\cdot x_{13}\wedge x_{23}
    \right)
    =
    P_{12}P_{13}P_{23}.
  \]
\end{proof}

\begin{rem}
  The derived Arnold relation also holds for general \(\mathbb C^d\), \(d\geq 2\).
  This is proved in \cite{FGY} for the polysimplicial model. For comparison, one
  can replace \(x_{ij}^{\ii}\) by \(
    \frac{u_{ij}^{\scriptstyle\ii}}{z_{ij}^{\scriptstyle\ii}}
  \)  to obtain the Arnold relation in the Jouanolou model.
\end{rem}
\subsection{Formulas for chiral operations}\label{Formulas}

In Section~\ref{s:feynman}, we constructed a family of chiral operations using
Feynman integrals and showed that they satisfy the \(L_{\infty}\)-relations.
These are precisely the \(L_{\infty}\)-relations underlying the unit chiral
algebra on \(\mathbb A^d\).
In the case of \(\mathbb A^2\), the \(L_{\infty}\)-relations impose very strong
constraints on the chiral operations and can sometimes determine them
completely. In this section, we derive recursive formulas for chiral
operations purely from the $L_{\infty}$-relation and find a perfect match with numerous calculations that have appeared
in the work of physics
\cite{budzik2023feynman}.

\subsubsection{Recursive formula for Type I' Laman graphs}

For the standard graph terminology used in this section, we refer the reader to
\cite{graver1993combinatorial}. Here we use a slightly different graph
convention from the one used in Section~\ref{s:feynman}: a graph \(\Gamma\) is
simply a pair \(\Gamma=(V(\Gamma),E(\Gamma))\) consisting of a vertex set and an
edge set, and we do not allow multiple edges between two vertices.

\begin{defn}[Laman]
Let \(\Gamma=(V(\Gamma),E(\Gamma))\) be a graph with vertex set \(V(\Gamma)\)
and edge set \(E(\Gamma)\). We say that \(\Gamma\) is a \textit{Laman graph} if
\[
|E(\Gamma)|=2|V(\Gamma)|-3
\]
and
\[
|E(\Gamma')|\leq 2|V(\Gamma')|-3
\]
for every subgraph \(\Gamma'\subseteq \Gamma\).
\end{defn}

\begin{rem}
When \(d=2\), the notion of a Laman graph originates from Laman's
characterization of generic rigidity for graphs embedded in a two-dimensional
real vector space; see \cite{Laman1970OnGA}. The graph-theoretic definition
used here also appears in \cite{budzik2023feynman}.
\end{rem}

Starting from a single edge, a Laman graph can be constructed by a sequence of
the following two types of Henneberg operations; see
Fig.~\ref{fig:HennebergIandII}.

\begin{itemize}
  \item \textbf{Henneberg I:} add a new vertex and connect it to two old vertices.
  \item \textbf{Henneberg II:} remove an edge and add a new vertex connected to
  the two endpoints of that edge and to one additional old vertex.
\end{itemize}

\begin{figure}[htbp]
\centering
\begin{tikzpicture}[
  x=1cm,
  y=1cm,
  vertex/.style={circle,fill=black,inner sep=1.35pt},
  newvertex/.style={circle,draw,fill=black!15,inner sep=1.75pt},
  edge/.style={line width=0.55pt},
  removed/.style={edge,densely dashed},
  arrow/.style={-{Stealth[length=2mm]},line width=0.55pt},
  every node/.style={font=\small}
]

% Henneberg I
\node[font=\small\bfseries] at (2.75,2.05) {Henneberg I};

\node[vertex,label={[label distance=4pt]225:\(i\)}] (HIi0) at (0,0) {};
\node[vertex,label={[label distance=4pt]315:\(j\)}] (HIj0) at (1.5,0) {};

\draw[arrow] (2.05,0.35) -- (2.95,0.35);

\node[vertex,label={[label distance=4pt]225:\(i\)}] (HIi1) at (3.7,0) {};
\node[vertex,label={[label distance=4pt]315:\(j\)}] (HIj1) at (5.2,0) {};
\node[newvertex,label={[label distance=5pt]90:\(\star\)}] (HIs) at (4.45,1.25) {};
\draw[edge] (HIs)--(HIi1);
\draw[edge] (HIs)--(HIj1);

% Henneberg II
\node[font=\small\bfseries] at (2.75,-1.25) {Henneberg II};

\node[vertex,label={[label distance=4pt]225:\(i\)}] (HIIi0) at (0,-3.05) {};
\node[vertex,label={[label distance=4pt]315:\(j\)}] (HIIj0) at (1.5,-3.05) {};
\node[vertex,label={[label distance=5pt]90:\(k\)}] (HIIk0) at (0.75,-1.8) {};
\draw[edge] (HIIi0)--(HIIj0);

\draw[arrow] (2.05,-2.65) -- (2.95,-2.65);

\node[vertex,label={[label distance=4pt]225:\(i\)}] (HIIi1) at (3.7,-3.05) {};
\node[vertex,label={[label distance=4pt]315:\(j\)}] (HIIj1) at (5.2,-3.05) {};
\node[vertex,label={[label distance=5pt]90:\(k\)}] (HIIk1) at (4.45,-1.8) {};
\node[newvertex,label={[label distance=5pt]0:\(\star\)}] (HIIs) at (4.45,-2.45) {};

\draw[removed] (HIIi1)--(HIIj1);
\draw[edge] (HIIs)--(HIIi1);
\draw[edge] (HIIs)--(HIIj1);
\draw[edge] (HIIs)--(HIIk1);

\end{tikzpicture}
\caption{The two Henneberg operations. In Henneberg II, the dashed edge denotes the old edge that is removed, and the gray vertex denotes the newly added vertex.}
\label{fig:HennebergIandII}
\end{figure}

In this section, we mainly consider the following variant of the Henneberg I
operation.

\begin{defn}[Henneberg I']
We define the following operation.
\begin{itemize}
  \item \textbf{Henneberg I':} add a new vertex and connect it to the two
  endpoints of an existing edge.
\end{itemize}
See Fig.~\ref{fig:HennebergIprime}.
\end{defn}

\begin{figure}[htbp]
\centering
\begin{tikzpicture}[
  x=1cm,
  y=1cm,
  vertex/.style={circle,fill=black,inner sep=1.35pt},
  newvertex/.style={circle,draw,fill=black!15,inner sep=1.75pt},
  edge/.style={line width=0.55pt},
  arrow/.style={-{Stealth[length=2mm]},line width=0.55pt},
  every node/.style={font=\small}
]

\node[vertex,label={[label distance=4pt]225:\(i\)}] (i0) at (0,0) {};
\node[vertex,label={[label distance=4pt]315:\(j\)}] (j0) at (1.6,0) {};
\draw[edge] (i0)--(j0);

\draw[arrow] (2.25,0.35) -- (3.15,0.35);

\node[vertex,label={[label distance=4pt]225:\(i\)}] (i1) at (3.8,0) {};
\node[vertex,label={[label distance=4pt]315:\(j\)}] (j1) at (5.4,0) {};
\node[newvertex,label={[label distance=5pt]90:\(\star\)}] (s) at (4.6,1.35) {};

\draw[edge] (i1)--(j1);
\draw[edge] (s)--(i1);
\draw[edge] (s)--(j1);

\end{tikzpicture}
\caption{A Henneberg I' operation: a new vertex \(\star\) is added and connected to the two endpoints of an existing edge.}
\label{fig:HennebergIprime}
\end{figure}

We will mainly focus on a special class of Laman graphs, which we call
\textit{Type I'}.

\begin{defn}[Type I' Laman graph]
A Laman graph is called \textit{Type I'} if it can be constructed by a sequence
of Henneberg I' operations.
\end{defn}

Suppose \(\Gamma\) is a Type I' Laman graph.
Fix a vertex \(o\in V(\Gamma)\) and an edge \(e=[ov]\in E(\Gamma)\).
Then \(\Gamma\) can be constructed by a sequence of Henneberg I' operations
\[
\mathbf{H}_{\Gamma}(o,e)
=
[(e_0=e,v_1),(e_1,v_2),\dots,(e_{k-1},v_k)].
\]
Here the vertex set of \(\Gamma\) is
\[
V(\Gamma)=\{o,v,v_1,\dots,v_k\}.
\]
Starting from the edge \(e_0=e=[ov]\), we add a new vertex \(v_1\) and draw two
edges connecting \(v_1\) to \(o\) and \(v\).
Inductively, at the \(i\)-th stage we add a new vertex \(v_i\), choose an edge
\(e_{i-1}\) of the current graph, allowing \(e_{i-1}=e_{i'}\) for some
\(i'<i-1\), and connect \(v_i\) to the two endpoints of \(e_{i-1}\).
This procedure is illustrated in Fig.~\ref{fig:TypeIprimeSequence}.

\begin{figure}[htbp]
\centering
\begin{tikzpicture}[
  x=1cm,
  y=1cm,
  vertex/.style={circle,fill=black,inner sep=1.35pt},
  newvertex/.style={circle,draw,fill=black!15,inner sep=1.65pt},
  edge/.style={line width=0.55pt},
  arrow/.style={-{Stealth[length=2mm]},line width=0.55pt},
  edgelabel/.style={font=\scriptsize,fill=white,inner sep=1.3pt},
  every node/.style={font=\small}
]

% Left graph
\begin{scope}
\node[vertex] (Lo) at (0,0) {};
\node[vertex] (Lv) at (0,2.1) {};
\node[vertex] (Lvone) at (2.55,1.5) {};
\node[vertex] (Lvtwo) at (1.25,3.55) {};
\node[vertex] (Lvthree) at (3.55,0.35) {};

\draw[edge] (Lo)--(Lv);
\draw[edge] (Lo)--(Lvone);
\draw[edge] (Lv)--(Lvone);
\draw[edge] (Lv)--(Lvtwo);
\draw[edge] (Lvone)--(Lvtwo);
\draw[edge] (Lv)--(Lvthree);
\draw[edge] (Lvone)--(Lvthree);

\node at (-0.35,-0.15) {\(o\)};
\node at (-0.35,2.25) {\(v\)};
\node at (2.9,1.55) {\(v_1\)};
\node at (1.25,3.9) {\(v_2\)};
\node at (3.9,0.35) {\(v_3\)};

\node[edgelabel] at (-0.32,1.05) {\(e\)};
\node[edgelabel] at (1.25,2.03) {\(e_1=e_2\)};
\end{scope}

% Arrow
\draw[arrow] (4.3,1.6) -- node[above=3pt]{\((e_3,v_4)\)} (5.75,1.6);

% Right graph
\begin{scope}[xshift=6.85cm]
\node[vertex] (Ro) at (0,0) {};
\node[vertex] (Rv) at (0,2.1) {};
\node[vertex] (Rvone) at (2.55,1.5) {};
\node[vertex] (Rvtwo) at (1.25,3.55) {};
\node[vertex] (Rvthree) at (3.55,0.35) {};
\node[newvertex] (Rvfour) at (4.25,3.05) {};

\draw[edge] (Ro)--(Rv);
\draw[edge] (Ro)--(Rvone);
\draw[edge] (Rv)--(Rvone);
\draw[edge] (Rv)--(Rvtwo);
\draw[edge] (Rvone)--(Rvtwo);
\draw[edge] (Rv)--(Rvthree);
\draw[edge] (Rvone)--(Rvthree);
\draw[edge] (Rvfour)--(Rvone);
\draw[edge] (Rvfour)--(Rvtwo);

\node at (-0.35,-0.15) {\(o\)};
\node at (-0.35,2.25) {\(v\)};
\node at (2.9,1.55) {\(v_1\)};
\node at (1.25,3.9) {\(v_2\)};
\node at (3.9,0.35) {\(v_3\)};
\node at (4.6,3.1) {\(v_4\)};

\node[edgelabel] at (-0.32,1.05) {\(e\)};
\node[edgelabel] at (1.25,2.03) {\(e_1=e_2\)};
\node[edgelabel] at (2.05,2.88) {\(e_3\)};
\end{scope}

\end{tikzpicture}
\caption{A sequence of Henneberg I' operations constructing a Type I' Laman graph.}
\label{fig:TypeIprimeSequence}
\end{figure}

\paragraph{Formal edge variables and graph weights.}
For each edge \(e=\{i,j\}\in E(\Gamma)\), we choose once and for all an
orientation \(\vec e=(i\to j)\). Let
\[
\vec E(\Gamma)=(\vec e_1,\dots,\vec e_N)
\]
be the resulting ordered list of oriented edges. For \(\vec e=(i\to j)\), we
write
\[
\mathfrak z_{\vec e}\define\mathfrak z_{ij}=-\mathfrak z_{ji}.
\]
For later use, we record the corresponding incidence sign. If
\(\vec e=(i\to j)\), set
\[
  t(\vec e)=i,
  \qquad
  h(\vec e)=j,
\]
and define
\[
  \varepsilon_v(\vec e)
  \define
  \begin{cases}
  +1, & v=t(\vec e),\\
  -1, & v=h(\vec e),\\
  0,  & v\notin \vec e.
  \end{cases}
\]
Thus \(\varepsilon_v(\vec e)\) records whether the formal edge variable
\(\mathfrak z_{\vec e}\) points away from or toward the vertex \(v\).
The variables \(\mathfrak z_{ij}^{\1},\mathfrak z_{ij}^{\2}\) are formal
variables recording derivatives of the propagator, as in
Section~\ref{ResidueOp}; they are not coordinate differences.

For an oriented edge \(\vec e=(i\to j)\), set
\[
P_{\vec e}(\mathfrak z_{\vec e})
\define
P_{ij}(\mathfrak z_{ij}).
\]
With the convention \(\mathfrak z_{ji}=-\mathfrak z_{ij}\), the derivative
generating propagator is symmetric in dimension two:
\[
P_{ji}(\mathfrak z_{ji})=P_{ij}(\mathfrak z_{ij}).
\]
Indeed, \(x_{ji}=-x_{ij}\), \(P_{ji}=P_{ij}\), and
\((x_{ji}\mid\mathfrak z_{ji})=(x_{ij}\mid\mathfrak z_{ij})\).

We define
\[
W_\Gamma(\mathfrak z_{\vec e})
\define
P_{\vec e_1}(\mathfrak z_{\vec e_1})
\cdots
P_{\vec e_N}(\mathfrak z_{\vec e_N})
\,
d\mathbf z_{V(\Gamma)}^{\blacklozenge}.
\]
Since the propagators are odd one-forms, the displayed order of the product is
part of the convention. Changing the edge order changes the sign by the sign of
the corresponding permutation. When the shifted holomorphic volume form is clear
from context, we sometimes omit it and write \(W_\Gamma(\mathfrak z_{\vec e})\)
for the propagator product alone.

A Type I' Laman graph can in general be constructed from different sequences of
Henneberg I' operations,
\[
\mathbf{H}_{\Gamma}(o,e),
\qquad
\mathbf{H}'_{\Gamma}(o,e),
\]
starting from the same initial data \((o,e)\).
Using the Arnold relation, we can show that the propagator part of
\(W_{\Gamma}(\mathfrak z_{\vec e})\) is exact in the Jouanolou model. Hence,
by the \(L_{\infty}\)-relations for chiral operations, the computation reduces
to chiral operations associated to smaller graphs.
The Type I' condition ensures that these smaller graphs are still Type I'
Laman graphs.
In this section, we carry out this construction in detail and obtain an
explicit recursive formula for the chiral operation associated to a Type I'
Laman graph.

\begin{thm}\label{MainTheoremRecursiveRS}
Let \(\Gamma\) be a Type I' Laman graph, and let
\(\mathbf{H}_{\Gamma}(o,e)\) be a sequence of Henneberg I' operations
constructing \(\Gamma\). Put
\[
m\define|\mathbf{H}_{\Gamma}(o,e)|=|V(\Gamma)|-2,
\qquad
n\define|V(\Gamma)|-1.
\]
Choose an auxiliary ordering of \(V(\Gamma)\setminus\{o\}\). Then
\((\lambda_1,\dots,\lambda_n)\) denotes the corresponding ordered tuple of
vertex-labelled variables \((\lambda_v)_{v\in V(\Gamma)\setminus\{o\}}\);
substitutions such as
\(\lambda_i\mapsto \lambda_i+(1-s_{\star})\lambda_{\star}\) are always made
at the vertex \(i\).

Then there are polynomials
\[
f^{\Gamma,\mathbf{H}_{\Gamma}(o,e)}_{v,\vec e}
\in
\mathbb C[r_1,s_1,\dots,r_m,s_m],
\qquad
v\in V(\Gamma)\setminus\{o\},\quad
\vec e\in \vec E(\Gamma),
\]
and
\[
\mathbf{G}^{r,s}_{\Gamma,\mathbf{H}_{\Gamma}(o,e)}
\in
\mathbb C\left[
r_1,s_1,\dots,r_m,s_m,
\lambda_v^{\sss}
\ \middle|\
v\in V(\Gamma)\setminus\{o\},\ \sss\in\{\1,\2\}
\right],
\]
such that
\[
\mu_{V(\Gamma)}
\left(W_{\Gamma}(\mathfrak z_{\vec e})\right)
=
\left(
\int_{([0,1]\times[0,1])^{m}}
e^{\mathbf{W}^{r,s}_{\Gamma,\mathbf{H}_{\Gamma}(o,e)}}
\cdot
\mathbf{G}^{r,s}_{\Gamma,\mathbf{H}_{\Gamma}(o,e)}
\cdot
\bigwedge_{k=1}^{m}(dr_k\wedge ds_k)
\right)
d\mathbf z_o^{\blacklozenge},
\]
where
\[
\mathbf{W}^{r,s}_{\Gamma,\mathbf{H}_{\Gamma}(o,e)}
=
\mathbf{W}^{r,s}_{\Gamma,\mathbf{H}_{\Gamma}(o,e)}
(\lambda;\mathfrak z_{\vec e})
\define
\sum_{v\in V(\Gamma)\setminus\{o\}}
\left(
\lambda_v
\mid
\sum_{\vec e\in \vec E(\Gamma)}
f^{\Gamma,\mathbf{H}_{\Gamma}(o,e)}_{v,\vec e}
\mathfrak z_{\vec e}
\right).
\]

Now construct a new graph \(\Gamma^{\star}\) by one Henneberg I' operation
\[
\mathbf{H}_{\Gamma^{\star}}(o,e)
=
[\mathbf{H}_{\Gamma}(o,e),(e^\triangleright,\star)].
\]
Then the following recursive formulas compute
\[
\mu_{V(\Gamma^{\star})}
\left(W_{\Gamma^{\star}}(\mathfrak z_{\vec e})\right).
\]

\begin{enumerate}
\item Suppose that \(\star\) is connected to \(i\) and \(j\), with
\(i,j\in V(\Gamma)\setminus\{o\}\). We order the two endpoints so that the chosen orientation of the distinguished old edge is
\[
  \vec{e^\triangleright}=(i\to j).
\]
Introduce new variables by
\[
\begin{cases}
\mathfrak z^\triangleright_{ij}
=
\mathfrak z_{ij}
+
(1-r_{\star})
(\mathfrak z_{\star j}-\mathfrak z_{\star i}-\mathfrak z_{ij}),
& \\[2mm]
\mathfrak z^\triangleright_{\vec e}
=
\mathfrak z_{\vec e},
& \mbox{if } \vec e\neq \vec{e^\triangleright},\\[2mm]
\lambda^\triangleright_i
=
\lambda_i+(1-s_{\star})\lambda_{\star},
& \\[2mm]
\lambda^\triangleright_j
=
\lambda_j+s_{\star}\lambda_{\star},
& \\[2mm]
\lambda^\triangleright_v
=
\lambda_v,
& \mbox{if } v\neq i,j.
\end{cases}
\]
Then
\begin{align*}
\mathbf{W}^{r,s}_{\Gamma^{\star},\mathbf{H}_{\Gamma^{\star}}(o,e)}
&=
\mathbf{W}^{r,s}_{\Gamma,\mathbf{H}_{\Gamma}(o,e)}
(\lambda^\triangleright;\mathfrak z^\triangleright_{\vec e})
-
\left(
\lambda_{\star}
\mid
(1-s_{\star})\mathfrak z_{\star i}
+
s_{\star}\mathfrak z_{\star j}
\right) \\
&=
\sum_{v\in V(\Gamma)\setminus\{o\}}
\left(
\lambda^\triangleright_v
\mid
\sum_{\vec e\in \vec E(\Gamma)}
f^{\Gamma,\mathbf{H}_{\Gamma}(o,e)}_{v,\vec e}
\mathfrak z^\triangleright_{\vec e}
\right)
-
\left(
\lambda_{\star}
\mid
(1-s_{\star})\mathfrak z_{\star i}
+
s_{\star}\mathfrak z_{\star j}
\right),
\end{align*}
and
\begin{align*}
\mathbf{G}^{r,s}_{\Gamma^{\star},\mathbf{H}_{\Gamma^{\star}}(o,e)}
&=
\left(
\partial_{\mathfrak z_{ij}}
\mathbf{W}^{r,s}_{\Gamma,\mathbf{H}_{\Gamma}(o,e)}
(\lambda;\mathfrak z_{\vec e})
\wedge
\lambda_{\star}
\right)
\cdot r_{\star}\cdot
\mathbf{G}^{r,s}_{\Gamma,\mathbf{H}_{\Gamma}(o,e)}
(\lambda^\triangleright) \\
&=
\left(
\partial_{\mathfrak z_{ij}}
\mathbf{W}^{r,s}_{\Gamma,\mathbf{H}_{\Gamma}(o,e)}
(\lambda;\mathfrak z_{\vec e})
\wedge
\lambda_{\star}
\right)
\cdot r_{\star}\cdot
\mathbf{G}^{r,s}_{\Gamma,\mathbf{H}_{\Gamma}(o,e)}
(\lambda_1,\dots,
\lambda_i+(1-s_{\star})\lambda_{\star},
\dots,
\lambda_j+s_{\star}\lambda_{\star},
\dots,\lambda_n).
\end{align*}

\item Suppose that \(\star\) is connected to \(i\) and \(o\), and write
\[
\vec{e^\triangleright}=io.
\]
Introduce new variables by
\[
\begin{cases}
\mathfrak z^\triangleright_{io}
=
\mathfrak z_{io}
+
(1-r_{\star})
(\mathfrak z_{\star o}-\mathfrak z_{\star i}-\mathfrak z_{io}),
& \\[2mm]
\mathfrak z^\triangleright_{\vec e}
=
\mathfrak z_{\vec e},
& \mbox{if } \vec e\neq \vec{e^\triangleright},\\[2mm]
\lambda^\triangleright_i
=
\lambda_i+(1-s_{\star})\lambda_{\star},
& \\[2mm]
\lambda^\triangleright_v
=
\lambda_v,
& \mbox{if } v\neq i.
\end{cases}
\]
Then
\begin{align*}
\mathbf{W}^{r,s}_{\Gamma^{\star},\mathbf{H}_{\Gamma^{\star}}(o,e)}
&=
\mathbf{W}^{r,s}_{\Gamma,\mathbf{H}_{\Gamma}(o,e)}
(\lambda^\triangleright;\mathfrak z^\triangleright_{\vec e})
-
\left(
\lambda_{\star}
\mid
(1-s_{\star})\mathfrak z_{\star i}
+
s_{\star}\mathfrak z_{\star o}
\right) \\
&=
\sum_{v\in V(\Gamma)\setminus\{o\}}
\left(
\lambda^\triangleright_v
\mid
\sum_{\vec e\in \vec E(\Gamma)}
f^{\Gamma,\mathbf{H}_{\Gamma}(o,e)}_{v,\vec e}
\mathfrak z^\triangleright_{\vec e}
\right)
-
\left(
\lambda_{\star}
\mid
(1-s_{\star})\mathfrak z_{\star i}
+
s_{\star}\mathfrak z_{\star o}
\right),
\end{align*}
and
\begin{align*}
\mathbf{G}^{r,s}_{\Gamma^{\star},\mathbf{H}_{\Gamma^{\star}}(o,e)}
&=
\left(
\partial_{\mathfrak z_{io}}
\mathbf{W}^{r,s}_{\Gamma,\mathbf{H}_{\Gamma}(o,e)}
(\lambda;\mathfrak z_{\vec e})
\wedge
\lambda_{\star}
\right)
\cdot r_{\star}\cdot
\mathbf{G}^{r,s}_{\Gamma,\mathbf{H}_{\Gamma}(o,e)}
(\lambda^\triangleright) \\
&=
\left(
\partial_{\mathfrak z_{io}}
\mathbf{W}^{r,s}_{\Gamma,\mathbf{H}_{\Gamma}(o,e)}
(\lambda;\mathfrak z_{\vec e})
\wedge
\lambda_{\star}
\right)
\cdot r_{\star}\cdot
\mathbf{G}^{r,s}_{\Gamma,\mathbf{H}_{\Gamma}(o,e)}
(\lambda_1,\dots,
\lambda_i+(1-s_{\star})\lambda_{\star},
\dots,\lambda_n).
\end{align*}
\end{enumerate}
\end{thm}
\begin{rem}
The parameters \(r\) and \(s\) here are closely related to the Schwinger
parameters in the previous section. However, the precise relation is
complicated; we leave it to future investigation.
\end{rem}

Using the above formula, we can write down formulas for some loop diagrams and
match them with results in \cite{budzik2023feynman}.
The remarkable point is that the chiral operation associated to a Type I'
Laman graph is completely constrained by the \(L_{\infty}\)-relations and can
be solved recursively without explicitly evaluating Feynman integrals.
In other words, the recursive formula gives another way to compute these
complicated Feynman integrals.
We provide some explicit examples.
\subsection{Example: one-loop diagrams}\label{s:2d1loop}

\begin{figure}[htbp]
\centering
\begin{tikzpicture}[
  x=1cm,
  y=1cm,
  vertex/.style={circle,fill=black,inner sep=1.35pt},
  newvertex/.style={circle,draw,fill=black!15,inner sep=1.65pt},
  edge/.style={line width=0.55pt},
  arrow/.style={-{Stealth[length=2mm]},line width=0.55pt},
  edgelabel/.style={font=\scriptsize,fill=white,inner sep=1.2pt},
  every node/.style={font=\small}
]

% Initial edge
\node[vertex] (Lo) at (0,0) {};
\node[vertex] (Lone) at (0,2.0) {};
\draw[edge] (Lo)--(Lone);

\node[left=5pt of Lo] {\(o\)};
\node[left=5pt of Lone] {\(1\)};
\node[edgelabel] at (-0.28,1.0) {\(e\)};

% Arrow
\draw[arrow] (1.15,1.0) -- node[above=3pt]{\((e,2)\)} (2.6,1.0);

% One-loop triangle
\node[vertex] (Ro) at (3.8,0) {};
\node[vertex] (Rone) at (3.8,2.0) {};
\node[newvertex] (Rtwo) at (6.0,1.0) {};

\draw[edge] (Ro)--(Rone);
\draw[edge] (Rone)--(Rtwo);
\draw[edge] (Rtwo)--(Ro);

\node[left=5pt of Ro] {\(o\)};
\node[left=5pt of Rone] {\(1\)};
\node[right=5pt of Rtwo] {\(2\)};
\node[edgelabel] at (3.52,1.0) {\(e\)};

\end{tikzpicture}
\caption{The one-loop graph obtained from the edge \(e=[1o]\) by the Henneberg I' operation \((e,2)\).}
\label{fig:OneLoopHenneberg}
\end{figure}

We start from
\[
\lineW(\lambda;\mathfrak z_{\vec e})
=
-(\lambda_1|\mathfrak z_{1o}).
\]
By the recursive formula, we have
\begin{align*}
\triangleW(\lambda;\mathfrak z_{\vec e})
&=
-(\lambda^\triangleright_1|\mathfrak z^\triangleright_{1o})
-
(1-s_2)(\lambda_2|\mathfrak z_{21})
-
s_2(\lambda_2|\mathfrak z_{2o})                                      \\
&=
-\left(
\lambda_1+(1-s_2)\lambda_2
\mid
\mathfrak z_{1o}
+
(1-r_2)(\mathfrak z_{2o}-\mathfrak z_{21}-\mathfrak z_{1o})
\right)                                                              \\
&\qquad
-
(1-s_2)(\lambda_2|\mathfrak z_{21})
-
s_2(\lambda_2|\mathfrak z_{2o})                                      \\
&=
-(\lambda_1|\mathfrak z_{1o})
-(\lambda_2|\mathfrak z_{2o})
-(1-r_2)
\left(
\lambda_1
\mid
\mathfrak z_{2o}-\mathfrak z_{21}-\mathfrak z_{1o}
\right)                                                              \\
&\qquad
+
r_2(1-s_2)
\left(
\lambda_2
\mid
\mathfrak z_{2o}-\mathfrak z_{21}-\mathfrak z_{1o}
\right).
\end{align*}
Moreover,
\[
\triangleG
=
-r_2\cdot
(\partial_{\mathfrak z_{1o}}\lineW)\wedge\lambda_2
=
r_2\cdot\lambda_1\wedge\lambda_2.
\]

Put
\[
\mathfrak z
=
\mathfrak z_{2o}-\mathfrak z_{21}-\mathfrak z_{1o}
=
\mathfrak z_{2o}+\mathfrak z_{12}-\mathfrak z_{1o}.
\]
The three-point operation is given by
\begin{align}
&\mu_{\{o,1,2\}}
\left(
P_{1o}(\mathfrak z_{1o})
P_{12}(\mathfrak z_{12})
P_{2o}(\mathfrak z_{2o})
d\mathbf z^{\blacklozenge}_{\{o,1,2\}}
\right)                                                        \notag \\
&\quad =
\int_{[0,1]\times[0,1]}
e^{\triangleW(\lambda;\mathfrak z_{\vec e})}
\cdot
\triangleG
\cdot
dr_2\,ds_2
\cdot
d\mathbf z^{\blacklozenge}_o                                  \notag \\
&\quad =
e^{-(\lambda_2|\mathfrak z_{2o})-(\lambda_1|\mathfrak z_{1o})}
\cdot
\lambda_1\wedge\lambda_2                                      \notag \\
&\qquad\qquad\cdot
\left(
-\frac{1}{(\lambda_1|\mathfrak z)(\lambda_2|\mathfrak z)}
+
\frac{e^{-(\lambda_1|\mathfrak z)}}
{(\lambda_1|\mathfrak z)((\lambda_1+\lambda_2)|\mathfrak z)}
+
\frac{e^{(\lambda_2|\mathfrak z)}}
{(\lambda_2|\mathfrak z)((\lambda_1+\lambda_2)|\mathfrak z)}
\right)
d\mathbf z^{\blacklozenge}_o                                  \notag \\
&\quad =
e^{-(\lambda_2|\mathfrak z_{2o})-(\lambda_1|\mathfrak z_{1o})}
\cdot
\lambda_1\wedge\lambda_2
\cdot
\int_{\ell_1+\ell_2\leq 1}
e^{-\ell_1(\lambda_1|\mathfrak z)+\ell_2(\lambda_2|\mathfrak z)}
\,d\ell_1\,d\ell_2
\cdot
d\mathbf z^{\blacklozenge}_o .
\label{3PointOneloop}
\end{align}
After identifying the base vertex \(o\) with vertex \(3\) in their notation, the one-loop formula above agrees exactly with the shifted triangle formula in
\cite[Section~4.3, Eqs.~(4.22)--(4.23)]{budzik2023feynman}.

The leading terms are as follows. Put
\[
\Lambda\define\lambda_1\wedge\lambda_2.
\]
Then
\begin{align}
\mu_{\{o,1,2\}}
\left(
P_{1o}P_{12}P_{2o}
d\mathbf z^{\blacklozenge}_{\{o,1,2\}}
\right)
&=
\frac{1}{2}\Lambda
d\mathbf z^{\blacklozenge}_o ,
\label{OneloopConst}
\\[2mm]
\mu_{\{o,1,2\}}
\left(
\partial_{z_1^{\scriptstyle\ii}}P_{1o}
\cdot
P_{12}P_{2o}
d\mathbf z^{\blacklozenge}_{\{o,1,2\}}
\right)
&=
-\Lambda
\left(
\frac{1}{3}\lambda_1^{\ii}
+
\frac{1}{6}\lambda_2^{\ii}
\right)
d\mathbf z^{\blacklozenge}_o ,
\label{Oneloop1order}
\\[2mm]
\mu_{\{o,1,2\}}
\left(
\partial_{z_1^{\scriptstyle\ii}}P_{1o}
\cdot
P_{12}
\cdot
\partial_{z_2^{\scriptstyle\widetilde{\ii}}}P_{2o}
d\mathbf z^{\blacklozenge}_{\{o,1,2\}}
\right)
&=
\frac{\Lambda}{24}
\Big(
2\lambda_1^{\ii}\lambda_1^{\widetilde{\ii}}
+
5\lambda_1^{\ii}\lambda_2^{\widetilde{\ii}}
+
\lambda_2^{\ii}\lambda_1^{\widetilde{\ii}}
+
2\lambda_2^{\ii}\lambda_2^{\widetilde{\ii}}
\Big)
d\mathbf z^{\blacklozenge}_o ,
\label{Oneloop2order}
\end{align}
\begin{align}
&\mu_{\{o,1,2\}}
\left(
\partial_{z_1^{\scriptstyle\ii}}P_{1o}
\cdot
\partial_{z_2^{\scriptstyle\widetilde{\widetilde{\ii}}}}P_{12}
\cdot
\partial_{z_2^{\scriptstyle\widetilde{\ii}}}P_{2o}
d\mathbf z^{\blacklozenge}_{\{o,1,2\}}
\right)                                                        \notag \\
&\qquad =
\frac{\Lambda}{120}
\Big[
\lambda_1^{\ii}
\big(
\lambda_1^{\widetilde{\widetilde{\ii}}}
(4\lambda_1^{\widetilde{\ii}}+7\lambda_2^{\widetilde{\ii}})
-
\lambda_2^{\widetilde{\widetilde{\ii}}}
(3\lambda_1^{\widetilde{\ii}}+7\lambda_2^{\widetilde{\ii}})
\big)                                                         \notag \\
&\qquad\qquad\quad
+
\lambda_2^{\ii}
\big(
\lambda_1^{\widetilde{\widetilde{\ii}}}
(2\lambda_1^{\widetilde{\ii}}+3\lambda_2^{\widetilde{\ii}})
-
\lambda_2^{\widetilde{\widetilde{\ii}}}
(2\lambda_1^{\widetilde{\ii}}+4\lambda_2^{\widetilde{\ii}})
\big)
\Big]
d\mathbf z^{\blacklozenge}_o .
\label{Oneloop3order}
\end{align}
\subsection{Example: two-loop diagrams}

\begin{figure}[htbp]
\centering
\begin{tikzpicture}[
  x=1cm,
  y=1cm,
  vertex/.style={circle,fill=black,inner sep=1.35pt},
  newvertex/.style={circle,draw,fill=black!15,inner sep=1.65pt},
  edge/.style={line width=0.55pt},
  arrow/.style={-{Stealth[length=2mm]},line width=0.55pt},
  edgelabel/.style={font=\scriptsize,fill=white,inner sep=1.2pt},
  every node/.style={font=\small}
]

% One-loop graph
\node[vertex] (Lo) at (0,0) {};
\node[vertex] (Lone) at (0,2.0) {};
\node[vertex] (Ltwo) at (2.25,1.0) {};

\draw[edge] (Lo)--(Lone);
\draw[edge] (Lone)--(Ltwo);
\draw[edge] (Ltwo)--(Lo);

\node[left=5pt of Lo] {\(o\)};
\node[left=5pt of Lone] {\(1\)};
\node[right=5pt of Ltwo] {\(2\)};
\node[edgelabel] at (1.25,0.50) {\([2o]\)};

% Arrow
\draw[arrow] (3.05,1.0) -- node[above=3pt]{\(([2o],3)\)} (4.55,1.0);

% Two-loop graph
\node[vertex] (Ro) at (5.45,0) {};
\node[vertex] (Rone) at (5.45,2.0) {};
\node[vertex] (Rtwo) at (7.75,2.0) {};
\node[newvertex] (Rthree) at (7.75,0) {};

\draw[edge] (Ro)--(Rone);
\draw[edge] (Rone)--(Rtwo);
\draw[edge] (Rtwo)--(Ro);
\draw[edge] (Rtwo)--(Rthree);
\draw[edge] (Rthree)--(Ro);

\node[left=5pt of Ro] {\(o\)};
\node[left=5pt of Rone] {\(1\)};
\node[right=5pt of Rtwo] {\(2\)};
\node[right=5pt of Rthree] {\(3\)};
\node[edgelabel] at (6.75,0.95) {\([2o]\)};

\end{tikzpicture}
\caption{The two-loop graph obtained by applying the Henneberg I' operation
\(([2o],3)\) to the one-loop graph.}
\label{fig:TwoLoopHenneberg}
\end{figure}

We start from the one-loop graph and apply a Henneberg I' operation along the
edge \([2o]\), adding the new vertex \(3\). Define the two loop variables
\[
\mathfrak z_{[21o]}
\define
\mathfrak z_{2o}-\mathfrak z_{21}-\mathfrak z_{1o},
\qquad
\mathfrak z_{[32o]}
\define
\mathfrak z_{3o}-\mathfrak z_{32}-\mathfrak z_{2o}.
\]
The recursive formula gives
\begin{align*}
\agraphW
&=
\triangleW(\lambda^\triangleright;\mathfrak z^\triangleright_{\vec e})
-
\left(
\lambda_3
\mid
(1-s_3)\mathfrak z_{32}
+
s_3\mathfrak z_{3o}
\right)                                                        \\
&=
-(\lambda_1|\mathfrak z_{1o})
-(\lambda_2|\mathfrak z_{2o})
-(\lambda_3|\mathfrak z_{3o})                                  \\
&\quad
+
\left(
\lambda_1
\mid
-(1-r_2)\mathfrak z_{[21o]}
-
(1-r_3)(1-r_2)\mathfrak z_{[32o]}
\right)                                                        \\
&\quad
+
\left(
\lambda_2
\mid
r_2(1-s_2)\mathfrak z_{[21o]}
+
(1-r_3)\bigl(r_2(1-s_2)-1\bigr)\mathfrak z_{[32o]}
\right)                                                        \\
&\quad
+
\left(
\lambda_3
\mid
r_2(1-s_2)(1-s_3)\mathfrak z_{[21o]}
+
(1-s_3)
\bigl(1-(1-r_3)(1-r_2(1-s_2))\bigr)
\mathfrak z_{[32o]}
\right).
\end{align*}
Similarly,
\begin{align*}
\agraphG
&=
\left(
\partial_{\mathfrak z_{2o}}\triangleW
\right)
\wedge\lambda_3
\cdot r_3\cdot
\triangleG(\lambda^\triangleright)                            \\
&=
-
r_3r_2
\left(
\bigl((1-r_2)\lambda_1+(1-r_2(1-s_2))\lambda_2\bigr)
\wedge\lambda_3
\right)
\cdot
\left(
\lambda_1
\wedge
\bigl(\lambda_2+(1-s_3)\lambda_3\bigr)
\right).
\end{align*}
Therefore
\begin{align*}
&\mu_{\{o,1,2,3\}}
\left(
P_{3o}(\mathfrak z_{3o})
P_{32}(\mathfrak z_{32})
P_{1o}(\mathfrak z_{1o})
P_{12}(\mathfrak z_{12})
P_{2o}(\mathfrak z_{2o})
d\mathbf z^{\blacklozenge}_{\{o,1,2,3\}}
\right)                                                        \\
&\qquad =
\int_{([0,1]\times[0,1])^2}
e^{\agraphW}
\cdot
\agraphG
\,dr_2\,ds_2\,dr_3\,ds_3
\cdot
d\mathbf z^{\blacklozenge}_o .
\end{align*}
In particular, the constant term, obtained by setting
\(\mathfrak z_{\vec e}=0\), is
\begin{align*}
&\mu_{\{o,1,2,3\}}
\left(
P_{3o}P_{32}P_{1o}P_{12}P_{2o}
d\mathbf z^{\blacklozenge}_{\{o,1,2,3\}}
\right)                                                        \\
&\qquad =
\int_{([0,1]\times[0,1])^2}
\agraphG
\,dr_2\,ds_2\,dr_3\,ds_3
\cdot
d\mathbf z^{\blacklozenge}_o                                  \\
&\qquad =
\frac{1}{24}
\left(
\lambda_1\wedge(\lambda_3+2\lambda_2)
\right)
\cdot
\left(
\lambda_3\wedge(\lambda_1+2\lambda_2)
\right)
d\mathbf z^{\blacklozenge}_o .
\end{align*}
After the relabelling
\((\lambda_1,\lambda_2,\lambda_3)\) to \((\lambda_1,\lambda_3,\lambda_2)\),
the constant term above agrees exactly with the bitriangle formula
\(f[\lambda_1,\lambda_2,\lambda_3;0,0]\) in
\cite[Section~4.6, Eq.~(4.44)]{budzik2023feynman}. 

\subsection{Example: three-loop diagrams}

\begin{figure}[htbp]
\centering
\begin{tikzpicture}[
  x=1cm,
  y=1cm,
  vertex/.style={circle,fill=black,inner sep=1.35pt},
  newvertex/.style={circle,draw,fill=black!15,inner sep=1.65pt},
  edge/.style={line width=0.55pt},
  arrow/.style={-{Stealth[length=2mm]},line width=0.55pt},
  edgelabel/.style={font=\scriptsize,fill=white,inner sep=1.2pt},
  every node/.style={font=\small}
]

% Two-loop graph
\node[vertex] (Lo) at (0,0) {};
\node[vertex] (Lone) at (0,2.15) {};
\node[vertex] (Ltwo) at (2.3,2.15) {};
\node[vertex] (Lthree) at (2.3,0) {};

\draw[edge] (Lo)--(Lone);
\draw[edge] (Lone)--(Ltwo);
\draw[edge] (Ltwo)--(Lo);
\draw[edge] (Ltwo)--(Lthree);
\draw[edge] (Lthree)--(Lo);

\node[left=5pt of Lo] {\(o\)};
\node[left=5pt of Lone] {\(1\)};
\node[right=5pt of Ltwo] {\(2\)};
\node[right=5pt of Lthree] {\(3\)};
\node[edgelabel] at (1.25,0.55) {\([3o]\)};

% Arrow
\draw[arrow] (3.15,1.05) -- node[above=3pt]{\(([3o],4)\)} (4.65,1.05);

% Three-loop graph
\node[vertex] (Ro) at (5.55,0) {};
\node[vertex] (Rone) at (5.55,2.15) {};
\node[vertex] (Rtwo) at (7.85,2.15) {};
\node[vertex] (Rthree) at (7.85,0) {};
\node[newvertex] (Rfour) at (10.05,1.05) {};

\draw[edge] (Ro)--(Rone);
\draw[edge] (Rone)--(Rtwo);
\draw[edge] (Rtwo)--(Ro);
\draw[edge] (Rtwo)--(Rthree);
\draw[edge] (Rthree)--(Ro);
\draw[edge] (Rthree)--(Rfour);
\draw[edge] (Rfour)--(Ro);

\node[left=5pt of Ro] {\(o\)};
\node[left=5pt of Rone] {\(1\)};
\node[above=5pt of Rtwo] {\(2\)};
\node[below=5pt of Rthree] {\(3\)};
\node[right=5pt of Rfour] {\(4\)};
\node[edgelabel] at (6.75,0.55) {\([3o]\)};

\end{tikzpicture}
\caption{The three-loop graph obtained by applying the Henneberg I' operation
\(([3o],4)\) to the two-loop graph.}
\label{fig:ThreeLoopHenneberg}
\end{figure}

We now apply a Henneberg I' operation to the two-loop graph along the edge
\([3o]\), adding the new vertex \(4\). A direct recursive computation gives
\(\bgraphW\), but the expression is lengthy; for the constant term below, it is enough
to record the corresponding factor \(\bgraphG\). We have
\begin{align*}
\bgraphG
&=
r_4\cdot
\agraphG(\lambda^\triangleright)
\cdot
\left(
\partial_{\mathfrak z_{3o}}\agraphW
\right)
\wedge\lambda_4                                      \\
&=
r_4r_3r_2
\left(
\left(
(1-r_2)\lambda_1+
(1-r_2(1-s_2))\lambda_2
\right)
\wedge
\left(
\lambda_3+(1-s_4)\lambda_4
\right)
\right)                                             \\
&\quad\cdot
\left(
\lambda_1
\wedge
\left(
\lambda_2+
(1-s_3)\lambda_3+
(1-s_3)(1-s_4)\lambda_4
\right)
\right)                                             \\
&\quad\cdot
\left(
(1-r_3)(1-r_2)\lambda_1
-
(1-r_3)(r_2(1-s_2)-1)\lambda_2
\right.                                             \\
&\qquad\qquad\left.
+
\left(
s_3+
(1-s_3)(1-r_3)(1-r_2(1-s_2))
\right)\lambda_3
\right)
\wedge\lambda_4 .
\end{align*}

The constant term, obtained by setting
\(\mathfrak z_{\vec e}=0\), is
\begin{align*}
&\mu_{\{o,1,2,3,4\}}
\left(
P_{4o}P_{43}P_{3o}P_{32}P_{1o}P_{12}P_{2o}
d\mathbf z^{\blacklozenge}_{\{o,1,2,3,4\}}
\right)                                                        \\
&\qquad =
\int_{([0,1]\times[0,1])^3}
\bgraphG
\,dr_2\,ds_2\,dr_3\,ds_3\,dr_4\,ds_4
\cdot
d\mathbf z^{\blacklozenge}_o                                  \\
&\qquad =
F(\lambda_1,\lambda_2,\lambda_3,\lambda_4)
\cdot
d\mathbf z^{\blacklozenge}_o .
\end{align*}
After the relabelling
\[
(\lambda_1,\lambda_2,\lambda_3,\lambda_4)
\quad\text{to}\quad
(\lambda_1,\lambda_4,\lambda_2,\lambda_3),
\]
the polynomial \(F(\lambda_1,\lambda_2,\lambda_3,\lambda_4)\) agrees exactly with
the tritriangle formula
\(f[\lambda_1,\lambda_2,\lambda_3,\lambda_4;0,0,0]\) in
\cite[Section~4.8, Eq.~(4.67)]{budzik2023feynman}.
We have provided explicit \texttt{Mathematica} code for this calculation hosted at \cite{code}. 

\subsection{Proof of Theorem~\ref{MainTheoremRecursiveRS}}

We now turn to the proof of Theorem~\ref{MainTheoremRecursiveRS}.
We begin with three elementary integral identities that will be used in the
induction step.

\begin{lem}\label{IntegralXP}
Let \(F\) and \(G\) be polynomials in two variables, and let
\(\ii\in\{\1,\2\}\). Then
\begin{align*}
&\left(
G(\partial_{z_1^{\scriptstyle\1}},\partial_{z_1^{\scriptstyle\2}})
x^{\ii}_{12}(\mathfrak z)
\right)
\cdot
\left(
F(\partial_{z_1^{\scriptstyle\1}},\partial_{z_1^{\scriptstyle\2}})
P_{12}(\mathfrak w)
\right)                                                        \\
&\qquad =
G(\partial_{\mathfrak z^{\scriptstyle\1}},\partial_{\mathfrak z^{\scriptstyle\2}})
F(\partial_{\mathfrak w^{\scriptstyle\1}},\partial_{\mathfrak w^{\scriptstyle\2}})
(-\partial_{\mathfrak w^{\scriptstyle\ii}})
\int_0^1
P_{12}\bigl((1-s)\mathfrak z+s\mathfrak w\bigr)\,ds             \\
&\qquad =
\int_0^1
G\bigl((1-s)\partial_{z_1^{\scriptstyle\1}},(1-s)\partial_{z_1^{\scriptstyle\2}}\bigr)
F\bigl(s\partial_{z_1^{\scriptstyle\1}},s\partial_{z_1^{\scriptstyle\2}}\bigr)
(-s\partial_{z_1^{\scriptstyle\ii}})
P_{12}\bigl((1-s)\mathfrak z+s\mathfrak w\bigr)\,ds .
\end{align*}
\end{lem}

\begin{proof}
It is enough to compute directly. We have
\begin{align*}
&\left(
G(\partial_{z_1^{\scriptstyle\1}},\partial_{z_1^{\scriptstyle\2}})
x^{\ii}_{12}(\mathfrak z)
\right)
\cdot
\left(
F(\partial_{z_1^{\scriptstyle\1}},\partial_{z_1^{\scriptstyle\2}})
P_{12}(\mathfrak w)
\right)                                                        \\
&\quad =
G(\partial_{\mathfrak z^{\scriptstyle\1}},\partial_{\mathfrak z^{\scriptstyle\2}})
F(\partial_{\mathfrak w^{\scriptstyle\1}},\partial_{\mathfrak w^{\scriptstyle\2}})
\left(
x^{\ii}_{12}(\mathfrak z)P_{12}(\mathfrak w)
\right)                                                        \\
&\quad =
G(\partial_{\mathfrak z^{\scriptstyle\1}},\partial_{\mathfrak z^{\scriptstyle\2}})
F(\partial_{\mathfrak w^{\scriptstyle\1}},\partial_{\mathfrak w^{\scriptstyle\2}})
\left(
\frac{x^{\ii}_{12}(\mathfrak z)P_{12}(\mathfrak z)}
{\bigl(1+(x_{12}(\mathfrak z)|\mathfrak w-\mathfrak z)\bigr)^2}
\right)                                                        \\
&\quad =
G(\partial_{\mathfrak z^{\scriptstyle\1}},\partial_{\mathfrak z^{\scriptstyle\2}})
F(\partial_{\mathfrak w^{\scriptstyle\1}},\partial_{\mathfrak w^{\scriptstyle\2}})
\sum_{m=0}^{\infty}
\bigl(-(x_{12}(\mathfrak z)|\mathfrak w-\mathfrak z)\bigr)^m
(m+1)x^{\ii}_{12}(\mathfrak z)P_{12}(\mathfrak z)              \\
&\quad =
G(\partial_{\mathfrak z^{\scriptstyle\1}},\partial_{\mathfrak z^{\scriptstyle\2}})
F(\partial_{\mathfrak w^{\scriptstyle\1}},\partial_{\mathfrak w^{\scriptstyle\2}})
\sum_{m=0}^{\infty}
(\partial_{z_1}|\mathfrak w-\mathfrak z)^m
\frac{-\partial_{z_1^{\scriptstyle\ii}}P_{12}(\mathfrak z)}
{m!(m+2)}                                                       \\
&\quad =
G(\partial_{\mathfrak z^{\scriptstyle\1}},\partial_{\mathfrak z^{\scriptstyle\2}})
F(\partial_{\mathfrak w^{\scriptstyle\1}},\partial_{\mathfrak w^{\scriptstyle\2}})
\int_0^1
e^{s(\partial_{z_1}|\mathfrak w-\mathfrak z)}
s(-\partial_{z_1^{\scriptstyle\ii}})P_{12}(\mathfrak z)\,ds                \\
&\quad =
G(\partial_{\mathfrak z^{\scriptstyle\1}},\partial_{\mathfrak z^{\scriptstyle\2}})
F(\partial_{\mathfrak w^{\scriptstyle\1}},\partial_{\mathfrak w^{\scriptstyle\2}})
(-\partial_{\mathfrak w^{\scriptstyle\ii}})
\int_0^1
P_{12}\bigl((1-s)\mathfrak z+s\mathfrak w\bigr)\,ds             \\
&\quad =
\int_0^1
G\bigl((1-s)\partial_{z_1^{\scriptstyle\1}},(1-s)\partial_{z_1^{\scriptstyle\2}}\bigr)
F\bigl(s\partial_{z_1^{\scriptstyle\1}},s\partial_{z_1^{\scriptstyle\2}}\bigr)
(-s\partial_{z_1^{\scriptstyle\ii}})
P_{12}\bigl((1-s)\mathfrak z+s\mathfrak w\bigr)\,ds .
\end{align*}
\end{proof}

The next lemma is an immediate consequence of Lemma~\ref{IntegralXP}. It
relates the graph weight to the weight obtained by inserting a coordinate
function into one specified edge factor.

\begin{lem}\label{IntegralR}
Let \(\Gamma\) be a graph with vertices \(\{o,1,\dots,n\}\), and suppose that
\(\vec e=(i\to j)\in \vec E(\Gamma)\). Let
\[
F(\lambda_1,\dots,\lambda_n;\mathfrak z_{\vec e})
\define
\mu_{\{o,1,\dots,n\}}
\left(W_{\Gamma}(\mathfrak z_{\vec e})\right).
\]
Then, for \(\sss\in\{\1,\2\}\),
\[
\mu_{\{o,1,\dots,n\}}
\left(W_{\Gamma}(\mathfrak z_{\vec e})\cdot x_{ij}^{\ii}\right)
=
-\partial_{\mathfrak z_{ij}^{\scriptstyle\ii}}
\int_0^1
F(\lambda_1,\dots,\lambda_n;\dots,r\mathfrak z_{ij},\dots)\,dr.
\]
Here \(W_{\Gamma}(\mathfrak z_{\vec e})\cdot x_{ij}^{\ii}\) means that the
edge factor \(P_{ij}(\mathfrak z_{ij})\) is replaced by
\(x_{ij}^{\ii}P_{ij}(\mathfrak z_{ij})\), while all other edge factors are
unchanged.
\end{lem}

\begin{figure}[htp]
  \centering

\tikzset{every picture/.style={line width=0.75pt}}

\begin{tikzpicture}[x=0.75pt,y=0.75pt,yscale=-1,xscale=1]
%uncomment if require: \path (0,214);

%Straight Lines [id:da5823024143210767]
\draw    (318.95,172.29) -- (306.1,135.94) ;
\draw [shift={(306.1,135.94)}, rotate = 250.52] [color={rgb, 255:red, 0; green, 0; blue, 0 }][fill={rgb, 255:red, 0; green, 0; blue, 0 }][line width=0.75] (0, 0) circle [x radius= 3.35, y radius= 3.35] ;
\draw [shift={(318.95,172.29)}, rotate = 250.52] [color={rgb, 255:red, 0; green, 0; blue, 0 }][fill={rgb, 255:red, 0; green, 0; blue, 0 }][line width=0.75] (0, 0) circle [x radius= 3.35, y radius= 3.35] ;

%Straight Lines [id:da8147858505316599]
\draw [dash pattern={on 4.5pt off 4.5pt}] (328.95,57.74) -- (254.67,63.33) ;
\draw [shift={(254.67,63.33)}, rotate = 175.7] [color={rgb, 255:red, 0; green, 0; blue, 0 }][fill={rgb, 255:red, 0; green, 0; blue, 0 }][line width=0.75] (0, 0) circle [x radius= 3.35, y radius= 3.35] ;
\draw [shift={(328.95,57.74)}, rotate = 175.7] [color={rgb, 255:red, 0; green, 0; blue, 0 }][fill={rgb, 255:red, 0; green, 0; blue, 0 }][line width=0.75] (0, 0) circle [x radius= 3.35, y radius= 3.35] ;

%Straight Lines [id:da07628695201790259]
\draw    (274.67,147.14) -- (306.1,135.94) ;
\draw [shift={(306.1,135.94)}, rotate = 340.37] [color={rgb, 255:red, 0; green, 0; blue, 0 }][fill={rgb, 255:red, 0; green, 0; blue, 0 }][line width=0.75] (0, 0) circle [x radius= 3.35, y radius= 3.35] ;
\draw [shift={(274.67,147.14)}, rotate = 340.37] [color={rgb, 255:red, 0; green, 0; blue, 0 }][fill={rgb, 255:red, 0; green, 0; blue, 0 }][line width=0.75] (0, 0) circle [x radius= 3.35, y radius= 3.35] ;

%Straight Lines [id:da19384431212872455]
\draw    (274.67,147.14) -- (318.95,172.29) ;
\draw [shift={(318.95,172.29)}, rotate = 29.59] [color={rgb, 255:red, 0; green, 0; blue, 0 }][fill={rgb, 255:red, 0; green, 0; blue, 0 }][line width=0.75] (0, 0) circle [x radius= 3.35, y radius= 3.35] ;
\draw [shift={(274.67,147.14)}, rotate = 29.59] [color={rgb, 255:red, 0; green, 0; blue, 0 }][fill={rgb, 255:red, 0; green, 0; blue, 0 }][line width=0.75] (0, 0) circle [x radius= 3.35, y radius= 3.35] ;

%Straight Lines [id:da6790651752661128]
\draw    (254.67,63.33) -- (246.08,97.12) ;
\draw [shift={(254.67,63.33)}, rotate = 104.26] [color={rgb, 255:red, 0; green, 0; blue, 0 }][fill={rgb, 255:red, 0; green, 0; blue, 0 }][line width=0.75] (0, 0) circle [x radius= 3.35, y radius= 3.35] ;

%Straight Lines [id:da9419467958720287]
\draw    (328.95,57.74) -- (340.38,78.07) ;
\draw [shift={(328.95,57.74)}, rotate = 60.66] [color={rgb, 255:red, 0; green, 0; blue, 0 }][fill={rgb, 255:red, 0; green, 0; blue, 0 }][line width=0.75] (0, 0) circle [x radius= 3.35, y radius= 3.35] ;

%Straight Lines [id:da5446816494299196]
\draw    (306.1,135.94) -- (340.38,123.4) ;
\draw [shift={(340.38,123.4)}, rotate = 339.91] [color={rgb, 255:red, 0; green, 0; blue, 0 }][fill={rgb, 255:red, 0; green, 0; blue, 0 }][line width=0.75] (0, 0) circle [x radius= 3.35, y radius= 3.35] ;
\draw [shift={(306.1,135.94)}, rotate = 339.91] [color={rgb, 255:red, 0; green, 0; blue, 0 }][fill={rgb, 255:red, 0; green, 0; blue, 0 }][line width=0.75] (0, 0) circle [x radius= 3.35, y radius= 3.35] ;

%Straight Lines [id:da5595647047388295]
\draw    (274.67,147.14) -- (256.1,130.38) ;
\draw [shift={(274.67,147.14)}, rotate = 222.07] [color={rgb, 255:red, 0; green, 0; blue, 0 }][fill={rgb, 255:red, 0; green, 0; blue, 0 }][line width=0.75] (0, 0) circle [x radius= 3.35, y radius= 3.35] ;

%Straight Lines [id:da8588411657017472]
\draw    (306.1,135.94) -- (303.24,120.6) ;
\draw [shift={(306.1,135.94)}, rotate = 259.45] [color={rgb, 255:red, 0; green, 0; blue, 0 }][fill={rgb, 255:red, 0; green, 0; blue, 0 }][line width=0.75] (0, 0) circle [x radius= 3.35, y radius= 3.35] ;

%Straight Lines [id:da103614605215544]
\draw    (340.38,123.4) -- (342.08,110.21) ;
\draw [shift={(340.38,123.4)}, rotate = 277.36] [color={rgb, 255:red, 0; green, 0; blue, 0 }][fill={rgb, 255:red, 0; green, 0; blue, 0 }][line width=0.75] (0, 0) circle [x radius= 3.35, y radius= 3.35] ;

%Straight Lines [id:da4242958639619645]
\draw    (318.95,172.29) -- (340.38,123.4) ;
\draw [shift={(340.38,123.4)}, rotate = 293.67] [color={rgb, 255:red, 0; green, 0; blue, 0 }][fill={rgb, 255:red, 0; green, 0; blue, 0 }][line width=0.75] (0, 0) circle [x radius= 3.35, y radius= 3.35] ;
\draw [shift={(318.95,172.29)}, rotate = 293.67] [color={rgb, 255:red, 0; green, 0; blue, 0 }][fill={rgb, 255:red, 0; green, 0; blue, 0 }][line width=0.75] (0, 0) circle [x radius= 3.35, y radius= 3.35] ;

% Text Node
\draw (315.1,63.11) node [anchor=north west][inner sep=0.75pt]    {$z_{j}{}$};
\draw (260.53,71.49) node [anchor=north west][inner sep=0.75pt]    {$z_{i}$};
\draw (286.48,97.71) node [anchor=north west][inner sep=0.75pt]    {$\cdots $};
\draw (272.78,43.54) node [anchor=north west][inner sep=0.75pt]  [font=\tiny,rotate=-359.06]  {$x_{ij}^{\ii} P_{ij}(\mathfrak{z}_{ij})$};

\end{tikzpicture}
  \caption{The edge modification in Lemma~\ref{IntegralR}: the distinguished edge factor is replaced by \(x_{ij}^{\ii}P_{ij}(\mathfrak z_{ij})\).}
  \label{fig:IntegralR}
\end{figure}

\begin{proof}
By Lemma~\ref{IntegralXP}, applied with \(F=G=1\), we have
\[
x_{ij}^{\ii}P_{ij}(\mathfrak z_{ij})
=
-\partial_{\mathfrak z_{ij}^{\scriptstyle\ii}}
\int_0^1 P_{ij}(r\mathfrak z_{ij})\,dr .
\]
Therefore
\begin{align*}
&\mu_{\{o,1,\dots,n\}}
\left(W_{\Gamma}(\mathfrak z_{\vec e})\cdot x_{ij}^{\ii}\right)     \\
&\quad =
\mu_{\{o,1,\dots,n\}}
\left(
-\partial_{\mathfrak z_{ij}^{\scriptstyle\ii}}
\int_0^1
W_{\Gamma}(\dots,r\mathfrak z_{ij},\dots)\,dr
\right)                                                            \\
&\quad =
-\partial_{\mathfrak z_{ij}^{\scriptstyle\ii}}
\int_0^1
\mu_{\{o,1,\dots,n\}}
\left(
W_{\Gamma}(\dots,r\mathfrak z_{ij},\dots)
\right)\,dr                                                        \\
&\quad =
-\partial_{\mathfrak z_{ij}^{\scriptstyle\ii}}
\int_0^1
F(\lambda_1,\dots,\lambda_n;\dots,r\mathfrak z_{ij},\dots)\,dr .
\end{align*}
\end{proof}

\begin{lem}\label{IntegralS}
Let \(\Gamma\) be a graph with vertices \(\{o,1,\dots,n\}\), and let
\(i,j\in\{1,\dots,n\}\) be vertices such that \((i,j)\in E(\Gamma)\).
Suppose that
\[
\mu_{\{o,1,\dots,n\}}(\alpha)
=
F_{\alpha}(\lambda_1,\dots,\lambda_n;\mathfrak z_{\vec e}).
\]
Then, for \(\ii\in\{\1,\2\}\),
\begin{align*}
&\mu_{\{\star,\bullet\}}\circ\mu_{\{o,1,\dots,n\}}
\left(\alpha\cdot x_{\star i}^{\ii}P_{\star j}\right)              \\
&\qquad =
\int_0^1
F_{\alpha}
(\lambda_1,\dots,
\lambda_i+(1-s_{\star})\lambda_{\star},
\dots,
\lambda_j+s_{\star}\lambda_{\star},
\dots,\lambda_n;\mathfrak z_{\vec e})
\cdot
s_{\star}\lambda_{\star}^{\ii}
\,ds_{\star},
\end{align*}
and
\begin{align*}
&\mu_{\{\star,\bullet\}}\circ\mu_{\{o,1,\dots,n\}}
\left(\alpha\cdot x_{\star j}^{\ii}P_{\star i}\right)              \\
&\qquad =
\int_0^1
F_{\alpha}
(\lambda_1,\dots,
\lambda_i+(1-s_{\star})\lambda_{\star},
\dots,
\lambda_j+s_{\star}\lambda_{\star},
\dots,\lambda_n;\mathfrak z_{\vec e})
\cdot
(1-s_{\star})\lambda_{\star}^{\ii}
\,ds_{\star}.
\end{align*}
Similarly, if \(\star\) is joined to \(o\) and \(i\), then
\begin{align*}
&\mu_{\{\star,\bullet\}}\circ\mu_{\{o,1,\dots,n\}}
\left(\alpha\cdot x_{\star i}^{\ii}P_{\star o}\right)              \\
&\qquad =
\int_0^1
F_{\alpha}
(\lambda_1,\dots,
\lambda_i+(1-s_{\star})\lambda_{\star},
\dots,\lambda_n;\mathfrak z_{\vec e})
\cdot
s_{\star}\lambda_{\star}^{\ii}
\,ds_{\star},
\end{align*}
and
\begin{align*}
&\mu_{\{\star,\bullet\}}\circ\mu_{\{o,1,\dots,n\}}
\left(\alpha\cdot x_{\star o}^{\ii}P_{\star i}\right)              \\
&\qquad =
\int_0^1
F_{\alpha}
(\lambda_1,\dots,
\lambda_i+(1-s_{\star})\lambda_{\star},
\dots,\lambda_n;\mathfrak z_{\vec e})
\cdot
(1-s_{\star})\lambda_{\star}^{\ii}
\,ds_{\star}.
\end{align*}
Figure~\ref{TwoMultOperation} illustrates the first identity.
\end{lem}

\begin{proof}
We prove the first identity; the other three are identical, with the roles of
the two endpoint factors interchanged. Write
\[
\mu_{\{o,1,\dots,n\}}(\alpha)
=
F_{\alpha}(\lambda)
=
\sum G(\lambda_i)H(\lambda_j),
\]
where the variables \(\lambda_i\) and \(\lambda_j\) are formally separated.
Then
\begin{align*}
&\mu_{\{\star,\bullet\}}\circ\mu_{\{o,1,\dots,n\}}
\left(\alpha\cdot x_{\star i}^{\ii}P_{\star j}\right)              \\
&\quad =
\mu_{\{\star,\bullet\}}
\left(
\sum
G(\lambda_i-\partial_{z_{\star}})
x_{\star\bullet}^{\ii}
\cdot
H(\lambda_j-\partial_{z_{\star}})
P_{\star\bullet}
\right)                                                            \\
&\quad =
\mu_{\{\star,\bullet\}}
\left(
\int_0^1
\sum
G(\lambda_i-(1-s)\partial_{z_{\star}})
H(\lambda_j-s\partial_{z_{\star}})
(-s\partial_{z_{\star}^{\scriptstyle\ii}})
P_{\star\bullet}
\,ds
\right)                                                            \\
&\quad =
\int_0^1
\sum
G(\lambda_i+(1-s)\lambda_{\star})
H(\lambda_j+s\lambda_{\star})
s\lambda_{\star}^{\ii}
\,ds                                                               \\
&\quad =
\int_0^1
F_{\alpha}
(\lambda_1,\dots,
\lambda_i+(1-s_{\star})\lambda_{\star},
\dots,
\lambda_j+s_{\star}\lambda_{\star},
\dots,\lambda_n;\mathfrak z_{\vec e})
\cdot
s_{\star}\lambda_{\star}^{\ii}
\,ds_{\star}.
\end{align*}
The second equality uses Lemma~\ref{IntegralXP}.
\end{proof}

\begin{figure}[htp]
  \centering

\tikzset{every picture/.style={line width=0.75pt}}

% [inline block 0: 3 envs, 78762 chars -> data_tex | \begin{tikzpicture}[x=0.75pt,y=0.75pt,yscale=-1,xscale=1] %uncomment if require: \path (0,300);...]

  \caption{The two vanishing terms in the \(L_{\infty}\)-relation. After the binary operation collapses \(\star\) with \(i\) or \(j\), the coordinate factor contains \(x_{ij}\wedge x_{ij}\).}\label{Cancellations}
\end{figure}
We prove the case in which \(\star\) is joined to
\(i,j\in V(\Gamma)\setminus\{o\}\), with
\[
\vec{e^\triangleright}=ij.
\]
The case in which \(\star\) is joined to \(i\) and \(o\) is similar. First observe that
\begin{align*}
&\mu_{\{o,1,\dots,n,\star\}}
\left(W_{\Gamma^{\star}}(\mathfrak z_{\vec e})\right)        \\
&\qquad =
\mu_{\{o,1,\dots,n,\star\}}
\left(
W_{\Gamma}(\mathfrak z_{\vec e})
P_{\star i}(\mathfrak z_{\star i})
P_{\star j}(\mathfrak z_{\star j})
\right)                                                       \\
&\qquad =
\mu_{\{o,1,\dots,n,\star\}}
\left(
W_{\Gamma}(\tilde{\mathfrak z}_{\vec e})P_{\star i}P_{\star j}
\right)
\cdot
 e^{(\lambda_i|\mathfrak z_{\star i})+(\lambda_j|\mathfrak z_{\star j})}.
\end{align*}
Here the shifted formal variables are defined by
\[
\begin{cases}
\tilde{\mathfrak z}_{\vec e}
=
\mathfrak z_{\vec e}+\varepsilon_i(\vec e)\mathfrak z_{\star i},
& \mbox{if } i \mbox{ is an endpoint of } \vec e,
\vec e\neq\vec{e^\triangleright},\\[1mm]
\tilde{\mathfrak z}_{\vec e}
=
\mathfrak z_{\vec e}+\varepsilon_j(\vec e)\mathfrak z_{\star j},
& \mbox{if } j \mbox{ is an endpoint of } \vec e,
\vec e\neq\vec{e^\triangleright},\\[1mm]
\tilde{\mathfrak z}_{ij}
=
\mathfrak z_{ij}+\mathfrak z_{\star i}-\mathfrak z_{\star j},
& \mbox{if } \vec e=\vec{e^\triangleright},\\[1mm]
\tilde{\mathfrak z}_{\vec e}
=
\mathfrak z_{\vec e},
& \mbox{otherwise}.
\end{cases}
\]
With this convention,
\[
\mathfrak z_{ij}-(1-r_{\star})\tilde{\mathfrak z}_{ij}
=
\mathfrak z_{ij}
+
(1-r_{\star})
(\mathfrak z_{\star j}-\mathfrak z_{\star i}-\mathfrak z_{ij})
=
\mathfrak z_{ij}^{\triangleright}.
\]

Using Corollary~\ref{RecursiveLemma}, we have
\[
P_{\star i}P_{\star j}P_{ij}(\tilde{\mathfrak z}_{ij})
=
\mathbf d\left(
P_{ij}(\tilde{\mathfrak z}_{ij})P_{\star j}\cdot
x_{\star i}\wedge x_{ij}
+
P_{ij}(\tilde{\mathfrak z}_{ij})P_{\star i}\cdot
x_{\star j}\wedge x_{ij}
\right).
\]
Substituting this into
\[
\mu_{\{o,1,\dots,n,\star\}}
\left(
W_{\Gamma}(\tilde{\mathfrak z}_{\vec e})P_{\star i}P_{\star j}
\right),
\]
and using the \(L_{\infty}\)-relation, we obtain, as illustrated in
Fig.~\ref{dExactGraph} and Fig.~\ref{Cancellations},
\begin{align*}
&\mu_{\{o,1,\dots,n,\star\}}
\left(
W_{\Gamma}(\tilde{\mathfrak z}_{\vec e})P_{\star i}P_{\star j}
\right)                                                        \\
&\quad =
\mu_{\{\star,\bullet\}}\circ
\mu_{\{o,1,\dots,n\}}
\left(
W_{\Gamma}(\tilde{\mathfrak z}_{\vec e})
\cdot
(x_{ij}\wedge x_{\star j})P_{\star i}
\right)                                                        \\
&\qquad
+
\mu_{\{\star,\bullet\}}\circ
\mu_{\{o,1,\dots,n\}}
\left(
W_{\Gamma}(\tilde{\mathfrak z}_{\vec e})
\cdot
(x_{ij}\wedge x_{\star i})P_{\star j}
\right).
\end{align*}
The two remaining terms in the \(L_{\infty}\)-relation vanish: after the
binary operation collapses \(\star\) with \(i\) or \(j\), the remaining
coordinate factor contains \(x_{ij}\wedge x_{ij}\), hence is zero.

We prove the recursive formula together with the auxiliary identity
\[
  \sum_{\vec e\in\vec E(\Gamma)}
  \varepsilon_v(\vec e)\,
  \partial_{\mathfrak z_{\vec e}}
  \mathbf W^{r,s}_{\Gamma,\mathbf H_{\Gamma}(o,e)}
  =
  -\lambda_v,
  \qquad
  v\in V(\Gamma)\setminus\{o\}.
\]

Assume this identity holds for \(\Gamma\). Then
\begin{align*}
&
\mathbf W^{r,s}_{\Gamma,\mathbf H_{\Gamma}(o,e)}
(\lambda;\dots,r_{\star}\tilde{\mathfrak z}_{ij},\dots)  =
\mathbf W^{r,s}_{\Gamma,\mathbf H_{\Gamma}(o,e)}
(\lambda;\tilde{\mathfrak z}_{\vec e})
-
(1-r_{\star})
\left(
\partial_{\mathfrak z_{ij}}
\mathbf W^{r,s}_{\Gamma,\mathbf H_{\Gamma}(o,e)}
(\lambda;\mathfrak z_{\vec e})
\mid
\tilde{\mathfrak z}_{ij}
\right)                                                        \\
&\quad =
\mathbf W^{r,s}_{\Gamma,\mathbf H_{\Gamma}(o,e)}
+
\left(
\sum_{\substack{\vec e\in\vec E(\Gamma)\\ i\in\vec e}}\varepsilon_i(\vec e)
\partial_{\mathfrak z_{\vec e}}
\mathbf W^{r,s}_{\Gamma,\mathbf H_{\Gamma}(o,e)}
\mid
\mathfrak z_{\star i}
\right)                                                        \\
&\qquad
+
\left(
\sum_{\substack{\vec e\in\vec E(\Gamma)\\ j\in\vec e}}\varepsilon_j(\vec e)
\partial_{\mathfrak z_{\vec e}}
\mathbf W^{r,s}_{\Gamma,\mathbf H_{\Gamma}(o,e)}
\mid
\mathfrak z_{\star j}
\right)
-
(1-r_{\star})
\left(
\partial_{\mathfrak z_{ij}}
\mathbf W^{r,s}_{\Gamma,\mathbf H_{\Gamma}(o,e)}
\mid
\tilde{\mathfrak z}_{ij}
\right)                                                        \\
&\quad =
\mathbf W^{r,s}_{\Gamma,\mathbf H_{\Gamma}(o,e)}
-
(\lambda_i|\mathfrak z_{\star i})
-
(\lambda_j|\mathfrak z_{\star j})+
(1-r_{\star})
\left(
\partial_{\mathfrak z_{ij}}
\mathbf W^{r,s}_{\Gamma,\mathbf H_{\Gamma}(o,e)}
\mid
\mathfrak z_{\star j}-\mathfrak z_{\star i}-\mathfrak z_{ij}
\right)                                                        \\
&\quad =
\mathbf W^{r,s}_{\Gamma,\mathbf H_{\Gamma}(o,e)}
(\lambda;\mathfrak z_{\vec e}^{\triangleright})
-
(\lambda_i|\mathfrak z_{\star i})
-
(\lambda_j|\mathfrak z_{\star j}).
\end{align*}

For simplicity, write
\[
\int_{\Box_{\mathbf H_{\Gamma}(o,e)}}
\define
\int_{([0,1]\times[0,1])^{|\mathbf H_{\Gamma}(o,e)|}}
\bigwedge_{k=1}^{|\mathbf H_{\Gamma}(o,e)|}
(dr_k\wedge ds_k).
\]
By Lemma~\ref{IntegralR}, for \(\ii\in\{\1,\2\}\), we have
\begin{align*}
&
\mu_{\{o,1,\dots,n\}}
\left(
W_{\Gamma}(\tilde{\mathfrak z}_{\vec e})
\cdot x_{ij}^{\ii}
\right)                                                        \\
&\quad =
-\partial_{\tilde{\mathfrak z}_{ij}^{\ii}}
\int_0^1
\int_{\Box_{\mathbf H_{\Gamma}(o,e)}}
e^{
\mathbf W^{r,s}_{\Gamma,\mathbf H_{\Gamma}(o,e)}
(\lambda;\dots,r_{\star}\tilde{\mathfrak z}_{ij},\dots)
}
\cdot
\mathbf G^{r,s}_{\Gamma,\mathbf H_{\Gamma}(o,e)}
\,dr_{\star}                                                    \\
&\quad =
-\int_0^1
\int_{\Box_{\mathbf H_{\Gamma}(o,e)}}
e^{
\mathbf W^{r,s}_{\Gamma,\mathbf H_{\Gamma}(o,e)}
(\lambda;\mathfrak z_{\vec e}^{\triangleright})
-
(\lambda_i|\mathfrak z_{\star i})
-
(\lambda_j|\mathfrak z_{\star j})
}
\cdot
\partial_{\mathfrak z_{ij}^{\scriptstyle\ii}}
\mathbf W^{r,s}_{\Gamma,\mathbf H_{\Gamma}(o,e)}
\cdot
\mathbf G^{r,s}_{\Gamma,\mathbf H_{\Gamma}(o,e)}
\cdot r_{\star}\,dr_{\star}.
\end{align*}

Applying Lemma~\ref{IntegralS} gives
\begin{align*}
&
\mu_{\{\star,\bullet\}}\circ
\mu_{\{o,1,\dots,n\}}
\left(
W_{\Gamma}(\tilde{\mathfrak z}_{\vec e})
\cdot
(x_{ij}\wedge x_{\star i})P_{\star j}
\right)                                                        \\
&\quad =
\int_{([0,1]\times[0,1])^{|\mathbf H_{\Gamma}(o,e)|+1}}
e^{
\mathbf W^{r,s}_{\Gamma,\mathbf H_{\Gamma}(o,e)}
(\lambda^\triangleright;\mathfrak z_{\vec e}^\triangleright)
-
(\lambda_i|\mathfrak z_{\star i})
-
(\lambda_j|\mathfrak z_{\star j})
-
\left(
\lambda_{\star}
\mid
(1-s_{\star})\mathfrak z_{\star i}
+
s_{\star}\mathfrak z_{\star j}
\right)
}                                                               \\
&\qquad\cdot
\left(
\partial_{\mathfrak z_{ij}}
\mathbf W^{r,s}_{\Gamma,\mathbf H_{\Gamma}(o,e)}
\wedge\lambda_{\star}
\right)
\cdot r_{\star}s_{\star}
\cdot
\mathbf G^{r,s}_{\Gamma,\mathbf H_{\Gamma}(o,e)}
(\lambda^\triangleright) \cdot
(dr_{\star}\wedge ds_{\star})
\bigwedge_{k=1}^{|\mathbf H_{\Gamma}(o,e)|}
(dr_k\wedge ds_k).
\end{align*}
Similarly,
\begin{align*}
&
\mu_{\{\star,\bullet\}}\circ
\mu_{\{o,1,\dots,n\}}
\left(
W_{\Gamma}(\tilde{\mathfrak z}_{\vec e})
\cdot
(x_{ij}\wedge x_{\star j})P_{\star i}
\right)                                                        \\
&\quad =
\int_{([0,1]\times[0,1])^{|\mathbf H_{\Gamma}(o,e)|+1}}
e^{
\mathbf W^{r,s}_{\Gamma,\mathbf H_{\Gamma}(o,e)}
(\lambda^\triangleright;\mathfrak z_{\vec e}^\triangleright)
-
(\lambda_i|\mathfrak z_{\star i})
-
(\lambda_j|\mathfrak z_{\star j})
-
\left(
\lambda_{\star}
\mid
(1-s_{\star})\mathfrak z_{\star i}
+
s_{\star}\mathfrak z_{\star j}
\right)
}                                                               \\
&\qquad\cdot
\left(
\partial_{\mathfrak z_{ij}}
\mathbf W^{r,s}_{\Gamma,\mathbf H_{\Gamma}(o,e)}
\wedge\lambda_{\star}
\right)
\cdot r_{\star}(1-s_{\star})
\cdot
\mathbf G^{r,s}_{\Gamma,\mathbf H_{\Gamma}(o,e)}
(\lambda^\triangleright)                                       \\
&\qquad\cdot
(dr_{\star}\wedge ds_{\star})
\bigwedge_{k=1}^{|\mathbf H_{\Gamma}(o,e)|}
(dr_k\wedge ds_k).
\end{align*}
Here we use
\[
\partial_{\mathfrak z_{ij}}
\mathbf W^{r,s}_{\Gamma,\mathbf H_{\Gamma}(o,e)}
(\lambda^\triangleright)
\wedge\lambda_{\star}
=
\partial_{\mathfrak z_{ij}}
\mathbf W^{r,s}_{\Gamma,\mathbf H_{\Gamma}(o,e)}
(\lambda)
\wedge\lambda_{\star},
\]
because the difference between the two sides before wedging is proportional to
\(\lambda_{\star}\), and \(\lambda_{\star}\wedge\lambda_{\star}=0\).

Adding the two contributions, using
\(s_{\star}+(1-s_{\star})=1\), and multiplying by
\(e^{(\lambda_i|\mathfrak z_{\star i})+(\lambda_j|\mathfrak z_{\star j})}\), we obtain
\begin{align*}
&
\mu_{\{o,1,\dots,n,\star\}}
\left(W_{\Gamma^{\star}}(\mathfrak z_{\vec e})\right)            \\
&\quad =
\int_{([0,1]\times[0,1])^{|\mathbf H_{\Gamma}(o,e)|+1}}
e^{
\mathbf W^{r,s}_{\Gamma,\mathbf H_{\Gamma}(o,e)}
(\lambda^\triangleright;\mathfrak z_{\vec e}^\triangleright)
-
\left(
\lambda_{\star}
\mid
(1-s_{\star})\mathfrak z_{\star i}
+
s_{\star}\mathfrak z_{\star j}
\right)
}                                                               \\
&\qquad\cdot
\left(
\partial_{\mathfrak z_{ij}}
\mathbf W^{r,s}_{\Gamma,\mathbf H_{\Gamma}(o,e)}
\wedge\lambda_{\star}
\right)
\cdot r_{\star}
\cdot
\mathbf G^{r,s}_{\Gamma,\mathbf H_{\Gamma}(o,e)}
(\lambda^\triangleright)                                       \\
&\qquad\cdot
(dr_{\star}\wedge ds_{\star})
\bigwedge_{k=1}^{|\mathbf H_{\Gamma}(o,e)|}
(dr_k\wedge ds_k).
\end{align*}
This is precisely the recursive formula for
\(\mathbf W_{\Gamma^{\star},\mathbf H_{\Gamma^{\star}}(o,e)}^{r,s}\) and
\(\mathbf G_{\Gamma^{\star},\mathbf H_{\Gamma^{\star}}(o,e)}^{r,s}\) in the
first case of the theorem.

It remains to check that the auxiliary identity is preserved. For
\(\Gamma^{\star}\), we have
\[
\mathbf W^{r,s}_{\Gamma^{\star},\mathbf H_{\Gamma^{\star}}(o,e)}
=
\mathbf W^{r,s}_{\Gamma,\mathbf H_{\Gamma}(o,e)}
(\lambda^\triangleright;\mathfrak z_{\vec e}^\triangleright)
-
\left(
\lambda_{\star}
\mid
(1-s_{\star})\mathfrak z_{\star i}
+
s_{\star}\mathfrak z_{\star j}
\right).
\]
Equivalently,
\begin{align*}
\mathbf W^{r,s}_{\Gamma^{\star},\mathbf H_{\Gamma^{\star}}(o,e)}
&=
\left(
1+
(\lambda_{\star}|
(1-s_{\star})\partial_{\lambda_i}
+
s_{\star}\partial_{\lambda_j})
\right)                                                        \\
&\quad\cdot
\left(
1+
(1-r_{\star})
(\mathfrak z_{\star j}-\mathfrak z_{\star i}-\mathfrak z_{ij}
|
\partial_{\mathfrak z_{ij}})
\right)
\mathbf W^{r,s}_{\Gamma,\mathbf H_{\Gamma}(o,e)}                \\
&\quad
-
\left(
\lambda_{\star}
\mid
(1-s_{\star})\mathfrak z_{\star i}
+
s_{\star}\mathfrak z_{\star j}
\right).
\end{align*}
Therefore
\begin{align*}
&
\sum_{\substack{\vec e\in\vec E(\Gamma^{\star})\\ i\in\vec e}}
 \varepsilon_i(\vec e)\partial_{\mathfrak z_{\vec e}}
\mathbf W^{r,s}_{\Gamma^{\star},\mathbf H_{\Gamma^{\star}}(o,e)}
\\
&\quad =
\left(
1+
(\lambda_{\star}|
(1-s_{\star})\partial_{\lambda_i}
+
s_{\star}\partial_{\lambda_j})
\right)
\left(
1+
(1-r_{\star})
(\mathfrak z_{\star j}-\mathfrak z_{\star i}-\mathfrak z_{ij}
|
\partial_{\mathfrak z_{ij}})
\right)                                                        \\
&\qquad\cdot
\sum_{\substack{\vec e\in\vec E(\Gamma)\\ i\in\vec e}}
 \varepsilon_i(\vec e)\partial_{\mathfrak z_{\vec e}}
\mathbf W^{r,s}_{\Gamma,\mathbf H_{\Gamma}(o,e)}
+
(1-s_{\star})\lambda_{\star}                                   \\
&\quad =
\left(
1+
(\lambda_{\star}|
(1-s_{\star})\partial_{\lambda_i}
+
s_{\star}\partial_{\lambda_j})
\right)(-\lambda_i)
+
(1-s_{\star})\lambda_{\star}                                   \\
&\quad =
-\lambda_i.
\end{align*}
The same computation gives
\[
\sum_{\substack{\vec e\in\vec E(\Gamma^{\star})\\ j\in\vec e}}
 \varepsilon_j(\vec e)\partial_{\mathfrak z_{\vec e}}
\mathbf W^{r,s}_{\Gamma^{\star},\mathbf H_{\Gamma^{\star}}(o,e)}
=
-\lambda_j.
\]
For vertices \(v\in V(\Gamma)\setminus\{o,i,j\}\), the identity is unchanged
from the induction hypothesis. Finally,
\[
\sum_{\substack{\vec e\in\vec E(\Gamma^{\star})\\ \star\in\vec e}}
 \varepsilon_\star(\vec e)\partial_{\mathfrak z_{\vec e}}
\mathbf W^{r,s}_{\Gamma^{\star},\mathbf H_{\Gamma^{\star}}(o,e)}
=
(\partial_{\mathfrak z_{\star i}}+\partial_{\mathfrak z_{\star j}})
\mathbf W^{r,s}_{\Gamma^{\star},\mathbf H_{\Gamma^{\star}}(o,e)}
=
-\lambda_{\star}.
\]
Thus the induction step is complete, and the theorem follows.
\end{proof}

\newpage
\appendix
\section{Graph theory background}\label{graph theory}

In this appendix, we introduce some concepts and facts from graph theory used in section \ref{s:feynman}.
\begin{defn}
    A directed graph $\vec{\Gamma}$ consists of the following data:
    \begin{enumerate}
        \item An ordered set $\vec{\Gamma}_{0}$ of vertices. We use $|\Gamma_{0}|$ to denote the number of vertices.
        \item An ordered set $\vec{\Gamma}_{1}$ of directed edges. We use $|\Gamma_{1}|$ to denote the number of directed edges.
        \item Two maps
        $$
        t,h:\Gamma_{1}\rightarrow\Gamma_{0},
        $$
        which are assignments of tail and head to each directed edge. We require that
        $$
        t(e)\neq h(e)
        $$
        for any $e\in\vec{\Gamma}_{1}$, i.e., the graph $\vec{\Gamma}$ has no self-loops.
    \end{enumerate}
    Furthermore, we say $\vec{\Gamma}$ is decorated (Feynman) if we have a special edge $e_{l}\in \vec{\Gamma}_{1}$ and a map
    $$
    \mmm:\Gamma_{1}\rightarrow\{1,2,\dots,d\}.
    $$
\end{defn}

\begin{defn}
    Let $\vec{\Gamma}$ be a directed graph. The incidence matrix $\rho=(\rho_{ei})_{e\in\vec{\Gamma}_{1},i\in\vec{\Gamma}_{0}}$ is given by
    $$
    \rho_{ei}=
    \begin{cases}
        1, &\text{if }t(e)=i,\\
        -1, &\text{if }h(e)=i,\\
        0, &\text{otherwise.}
    \end{cases}
    $$
\end{defn}

\begin{defn}
  Given a connected directed graph $\vec{\Gamma}$, a tree $T \subset \vec{\Gamma}$ is
  said to be a spanning tree if every vertex of $\vec{\Gamma}$ lies in $T$. We
  denote the set of all spanning trees by $\mathrm{Tree} (\vec{\Gamma})$.
\end{defn}

\begin{defn}
  Given a connected directed graph $\vec{\Gamma}$ and two
  disjoint subsets of vertices $V_1, \text{ } V_2 \subset \vec{\Gamma}_0$, we
  define $\mathrm{Cut} (\vec{\Gamma} ; V_1, V_2)$ to be the set of subsets $C
  \subset \vec{\Gamma}_1$ satisfying the following properties:
  \begin{enumerate}
    \item  Removing the edges in $C$ from $\vec{\Gamma}$ divides $\vec{\Gamma}$ into
    exactly two connected trees, which we denote by $\vec{\Gamma}' (C), \text{ }
    \vec{\Gamma}'' (C) $, such that $V_1 \subset \vec{\Gamma}_0' (C), \text{ } V_2
    \subset \vec{\Gamma}_0'' (C)$.

    \item $C$ doesn't contain any proper subset satisfying property 1.
  \end{enumerate}
\end{defn}

\begin{defn}
  Given a connected directed graph $\vec{\Gamma}$, and a function
  maps each $e \in \vec{\Gamma}_1$ to $t_e \in (0, + \infty)$, we define the
  {{weighted Laplacian}} of $\vec{\Gamma}$ by the following formula:
  \[ M_{\vec{\Gamma}} (t)_{ij} = \sum_{e \in \vec{\Gamma}_1} \rho^e_i \frac{1}{t_e}
     \rho_j^e, \text{\quad} 1 \leqslant i, j \leqslant | \Gamma_0 | - 1. \]
\end{defn}

The following facts will be used to show the finiteness of the Feynman graph
integrals.

\begin{thm}[Kirchhoff]
  Given a connected graph $\vec{\Gamma}$, the determinant of
  weighted Laplacian is given by the following formula:
  \[ \det M_{\vec{\Gamma}} (t) = \frac{\underset{T \in \mathrm{Tree} (\vec{\Gamma})}{\sum}
     \underset{e \notin T}{\prod} t_e}{\underset{e \in \vec{\Gamma}_1}{\prod} t_e} \]
\end{thm}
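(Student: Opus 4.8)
The plan is to recognize $\det M_{\vec\Gamma}(t)$ as an instance of the weighted matrix--tree theorem, and to prove it via the Cauchy--Binet formula together with the unimodularity of the incidence matrix. First I would record a factorization of the reduced weighted Laplacian. Writing $\tilde\rho = (\rho_{ei})_{e \in \vec\Gamma_1,\, i \in \vec\Gamma_0 - \{n\}}$ for the reduced incidence matrix (of size $|\Gamma_1| \times (|\Gamma_0|-1)$, where $n = |\Gamma_0|$) and $W = \mathrm{diag}(t_e^{-1})_{e \in \vec\Gamma_1}$, the defining formula for $M_{\vec\Gamma}(t)$ says precisely that
\[
M_{\vec\Gamma}(t) = \tilde\rho^{\,T} W \tilde\rho .
\]
Setting $A = \tilde\rho^{\,T} W^{1/2}$ and $B = W^{1/2}\tilde\rho$, so that $M_{\vec\Gamma}(t) = AB$ with $A$ of size $(n-1)\times|\Gamma_1|$ and $B$ of size $|\Gamma_1|\times(n-1)$, the Cauchy--Binet formula yields
\[
\det M_{\vec\Gamma}(t) = \sum_{\substack{S \subset \vec\Gamma_1,\ |S| = n-1}} \det(\tilde\rho_S)^2 \prod_{e \in S} \frac{1}{t_e},
\]
where $\tilde\rho_S$ is the square $(n-1)\times(n-1)$ submatrix of $\tilde\rho$ keeping only the rows indexed by the edges of $S$; here the two $W^{1/2}$ factors combine into $\prod_{e \in S} t_e^{-1/2}$ on each side.

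The crux is the combinatorial lemma that $\det(\tilde\rho_S) = \pm 1$ when the edge set $S$ spans a tree on $\vec\Gamma_0$ and $\det(\tilde\rho_S) = 0$ otherwise; this is the step I expect to be the main obstacle, since it is the genuine content of the matrix--tree theorem. To prove it I would argue as follows. If $S$ contains a cycle, the signed sum of the incidence rows around an orientation of that cycle vanishes (each vertex on the cycle is entered and left exactly once), and this relation persists after deleting the column indexed by $n$, so the rows of $\tilde\rho_S$ are linearly dependent and $\det(\tilde\rho_S)=0$. If $S$ is acyclic, then since $|S| = n-1$ on $n$ vertices it has exactly one connected component, hence is a spanning tree. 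For a spanning tree I would induct on $n$: a tree on at least two vertices has at least two leaves, hence a leaf $v \neq n$; the column of $\tilde\rho_S$ indexed by $v$ has a single nonzero entry $\pm 1$, namely the unique edge incident to $v$, and Laplace expansion along that column reduces the determinant to $\pm 1$ times the reduced incidence determinant of the spanning tree obtained by deleting $v$ and its edge, which is $\pm 1$ by induction (the base case $n=1$ being the empty determinant $1$). Consequently $\det(\tilde\rho_S)^2$ is exactly the indicator that $S \in \mathrm{Tree}(\vec\Gamma)$.

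Finally I would reassemble. Substituting the lemma into the Cauchy--Binet expansion collapses the sum to
\[
\det M_{\vec\Gamma}(t) = \sum_{T \in \mathrm{Tree}(\vec\Gamma)} \prod_{e \in T} \frac{1}{t_e},
\]
and clearing the common denominator through $\prod_{e \in T} t_e^{-1} = \bigl(\prod_{e \notin T} t_e\bigr)\big/\bigl(\prod_{e \in \vec\Gamma_1} t_e\bigr)$ produces precisely the claimed identity. The connectedness hypothesis on $\vec\Gamma$ ensures $\mathrm{Tree}(\vec\Gamma) \neq \emptyset$, so the right-hand side is a nonzero Laurent monomial combination, which in particular re-proves the invertibility asserted in Proposition \ref{Minverse}.
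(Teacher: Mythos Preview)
Your proof is correct and follows the standard Cauchy--Binet route to the weighted matrix--tree theorem. The paper, however, does not prove this statement at all: it is stated in the appendix as a classical result attributed to Kirchhoff and used as a black box (feeding into Corollary~\ref{det of laplacian} and Proposition~\ref{boundness}). So there is nothing to compare at the level of argument; you have supplied a complete proof where the paper simply quotes the theorem.
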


\begin{cor}\label{det of laplacian}
  The inverse of $M_{\vec{\Gamma}} (t)$ is given by the following
  formula:\label{Minverse}
  \[ M_{\vec{\Gamma}} (t)^{- 1}_{i j} = \frac{1}{\underset{T \in \mathrm{Tree}
     (\vec{\Gamma})}{\sum} \underset{e \notin T}{\prod} t_e} \cdot \left( \sum_{C \in
     \mathrm{Cut} (\vec{\Gamma} ; \{ i, j \}, \{ | \Gamma_0 | \})} \prod_{e \in C} t_e
     \right) \]
\end{cor}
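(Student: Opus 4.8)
The plan is to deduce the formula for $M_{\vec{\Gamma}}(t)^{-1}$ from the already-established weighted matrix-tree theorem for $\det M_{\vec{\Gamma}}(t)$ together with its all-minors (Chaiken) generalization. First I would apply Cramer's rule: since $M_{\vec{\Gamma}}(t)$ is symmetric, $M_{\vec{\Gamma}}(t)^{-1}_{ij} = (-1)^{i+j}\det M^{(i,j)}/\det M_{\vec{\Gamma}}(t)$, where $M^{(i,j)}$ is obtained by deleting row $i$ and column $j$. The key observation is that $M_{\vec{\Gamma}}(t)$ is the full Laplacian $\mathcal{L}$ of $\vec{\Gamma}$, taken with edge weights $w_e = 1/t_e$, from which the row and column of the base vertex $n = |\Gamma_0|$ have been removed; hence $\det M^{(i,j)}$ is the second-order minor of $\mathcal{L}$ obtained by deleting rows $\{i,n\}$ and columns $\{j,n\}$.

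Next I would invoke the all-minors matrix-tree theorem, which expresses such a second-order minor of a graph Laplacian as a signed weighted sum over spanning $2$-forests whose two tree components each contain exactly one of the deleted row-indices and one of the deleted column-indices. Because the vertex $n$ is common to both deleted index sets, the only admissible pairing pins $n$ to one component and forces $i$ and $j$ together into the other; this is precisely the condition defining $\mathrm{Cut}(\vec{\Gamma};\{i,j\},\{n\})$ under the complementation $F = \vec{\Gamma}_1 \setminus C$, where $C$ is a minimal cut and $F$ the resulting $2$-forest. Thus the numerator becomes $\sum_{F}\prod_{e\in F}(1/t_e)$ while the denominator is $\det M_{\vec{\Gamma}}(t) = \sum_{T}\prod_{e\in T}(1/t_e)$. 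Clearing the common factor $\prod_{e\in\vec{\Gamma}_1} t_e$ from numerator and denominator converts each $\prod_{e\in F}(1/t_e)$ into $\prod_{e\notin F} t_e = \prod_{e\in C} t_e$ and each $\prod_{e\in T}(1/t_e)$ into $\prod_{e\notin T} t_e$, yielding exactly the claimed expression (the identity $\prod_{e}t_e\cdot\det M_{\vec{\Gamma}}(t) = \sum_{T}\prod_{e\notin T}t_e$ being the content of the stated Kirchhoff theorem).

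The main obstacle is the sign bookkeeping. One must check that the Chaiken sign attached to each $2$-forest, multiplied by the Cramer sign $(-1)^{i+j}$, is the same positive constant for every forest in the sum, so that no cancellation occurs and the combinatorial total is genuinely a sum of positive monomials in the $t_e$. I expect this to follow cleanly from the fact that the matching of deleted indices to tree components is forced to be the identity, since $n$ is confined to its own component, but verifying it requires care with the orientation conventions in the incidence matrix $\rho$ and with the chosen vertex ordering. As an alternative that avoids the all-minors machinery, I could instead verify $M_{\vec{\Gamma}}(t)\,N = \mathrm{Id}$ directly, where $N$ denotes the proposed inverse, by a deletion-contraction induction on $|\Gamma_1|$; this reduces each entry of $M_{\vec{\Gamma}}(t)\,N$ to a telescoping identity among weighted spanning-tree and $2$-forest sums, but is computationally heavier, so I would keep it only as a consistency check.
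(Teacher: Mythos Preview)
The paper does not actually supply a proof of this corollary: it is stated immediately after Kirchhoff's theorem as a consequence, with no argument given. So there is no ``paper's proof'' to benchmark against, and your proposal is effectively filling in what the authors left implicit.

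Your strategy is the standard one and is correct. The identification of $\det M^{(i,j)}$ with the second-order minor of the full weighted Laplacian obtained by deleting rows $\{i,n\}$ and columns $\{j,n\}$ is right, and the all-minors matrix-tree theorem (Chaiken) is exactly the tool that evaluates such a minor as a sum over spanning $2$-forests. Your observation that the pairing is forced because $n$ appears in both deleted index sets is the key point: it guarantees that every contributing forest has $i,j$ in one component and $n$ in the other, and also that the Chaiken permutation sign is the identity permutation's sign, hence constant across all terms. Combined with the Cramer cofactor sign this gives a uniform $+1$, so your worry about cancellation is unfounded once you trace through the convention (the forced matching $n\leftrightarrow n$, $i\leftrightarrow j$ has no crossings in the vertex ordering since $n=|\Gamma_0|$ is last). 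The bijection between cuts $C\in\mathrm{Cut}(\vec{\Gamma};\{i,j\},\{n\})$ and spanning $2$-forests $F=\vec{\Gamma}_1\setminus C$ is also clean: the paper's definition of $\mathrm{Cut}$ requires the complement to be two connected \emph{trees}, which forces $|F|=|\Gamma_0|-2$ and makes minimality automatic. Your deletion--contraction alternative would work too but is unnecessary here.
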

\begin{defn}
    Let $\vec{\Gamma}$ be a connected directed graph. For any $e\in\vec{\Gamma}_{1}$, $t_e\in(0,+\infty)$. The graphical Green’s function $d^{-1}_{\vec{\Gamma}}(t)=(d^{-1}_{\vec{\Gamma}}(t)_{ei})_{e\in\vec{\Gamma}_{1},i\in\vec{\Gamma}_{0}-\{| \Gamma_0 |\}}$ is given by
    $$
    d^{-1}_{\vec{\Gamma}}(t)_{ei}=\sum_{j=1}^{| \Gamma_0 |-1}\frac{1}{t_{e}}\rho_{ej}M^{-1}_{\vec{\Gamma}}(t)_{ji}.
    $$
\end{defn}

\begin{prop}[see appendix B of \cite{Li:2011mi}]
  \label{boundness}We have the following equality:
  \begin{eqnarray*}
    &  & d^{-1}_{\vec{\Gamma}}(t)_{ei}\\
    & = & \frac{1}{\underset{T \in \mathrm{Tree} (\vec{\Gamma})}{\sum} \underset{e
    \notin T}{\prod} t_e} \left( \sum_{C \in \mathrm{Cut} (\vec{\Gamma} ; \{ i, t (e)
    \}, \{ | \Gamma_0 |, h (e) \})} \frac{\prod_{e' \in C} t_{e'}}{t_e}
    \right.\\
    & - & \left. \sum_{C \in \mathrm{Cut} (\vec{\Gamma} ; \{ i, h (e) \}, \{ |
    \Gamma_0 |, t (e) \})} \frac{\prod_{e' \in C} t_{e'}}{t_e} \right)
  \end{eqnarray*}
  In particular, every term of the numerator also appears in the denominator,
  so we have
  \[ \left| d^{-1}_{\vec{\Gamma}}(t)_{ei} \right| \leqslant 2. \]
\end{prop}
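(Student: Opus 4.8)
The plan is to start from the definition $d^{-1}_{\vec{\Gamma}}(t)_{ei} = \sum_{j=1}^{n-1} \frac{1}{t_e}\rho_{ej} M^{-1}_{\vec{\Gamma}}(t)_{ji}$ (writing $n = |\Gamma_0|$) and substitute the closed form for $M^{-1}_{\vec{\Gamma}}(t)_{ji}$ supplied by Corollary \ref{det of laplacian}. Since $\rho_{ej}$ is nonzero only for $j = t(e)$ (where it equals $+1$) and $j = h(e)$ (where it equals $-1$), the sum over $j$ collapses to at most two terms, giving
\[
d^{-1}_{\vec{\Gamma}}(t)_{ei} = \frac{1}{t_e \sum_{T \in \mathrm{Tree}(\vec{\Gamma})}\prod_{e'' \notin T} t_{e''}}\left( \sum_{C \in \mathrm{Cut}(\vec{\Gamma};\{t(e),i\},\{n\})}\prod_{e'\in C}t_{e'} - \sum_{C \in \mathrm{Cut}(\vec{\Gamma};\{h(e),i\},\{n\})}\prod_{e'\in C}t_{e'}\right),
\]
with the convention that a cut set is empty whenever the two prescribed vertex groups overlap, which uniformly handles the degenerate cases $t(e)=n$ or $h(e)=n$ (one of the two terms simply drops out).

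First I would establish the key combinatorial identity by classifying each cut according to which side the remaining marked vertex lands on. Since a cut $C$ determines a partition of $\vec{\Gamma}_0$ into two sides, I can split
\[
\mathrm{Cut}(\vec{\Gamma};\{t(e),i\},\{n\}) = \mathrm{Cut}(\vec{\Gamma};\{t(e),i,h(e)\},\{n\}) \sqcup \mathrm{Cut}(\vec{\Gamma};\{t(e),i\},\{n,h(e)\}),
\]
and symmetrically for the second sum with $t(e)$ and $h(e)$ interchanged. The term $\mathrm{Cut}(\vec{\Gamma};\{t(e),i,h(e)\},\{n\})$ is common to both decompositions, hence it cancels in the difference, leaving precisely the two sums in the statement. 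This reduces the formula to bookkeeping about where the endpoints of $e$ sit relative to the cut.

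Next I would prove $|d^{-1}_{\vec{\Gamma}}(t)_{ei}| \le 2$ by matching each numerator monomial with a spanning-tree monomial of the denominator. For a cut $C \in \mathrm{Cut}(\vec{\Gamma};\{i,t(e)\},\{n,h(e)\})$ the endpoints $t(e)$ and $h(e)$ lie on opposite sides, so the edge $e$ crosses the cut and therefore $e \in C$; consequently $T := (\vec{\Gamma}_1 \setminus C)\cup\{e\}$ reconnects the two trees $\vec{\Gamma}'(C),\vec{\Gamma}''(C)$ into a single spanning tree with $\vec{\Gamma}_1 \setminus T = C \setminus \{e\}$. Thus $\frac{1}{t_e}\prod_{e'\in C}t_{e'} = \prod_{e''\notin T}t_{e''}$ is exactly one of the positive terms of the denominator, and the assignment $C \mapsto T$ is injective (since $C = (\vec{\Gamma}_1\setminus T)\cup\{e\}$). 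The same holds for the second cut family. Writing the Green's function as $(A-B)/D$ with $A,B,D \ge 0$ and $A,B \le D$, one concludes $|A-B| \le A+B \le 2D$, giving the bound.

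I expect the main obstacle to be making the graph-theoretic bookkeeping fully rigorous: verifying that the side-decomposition of each cut is well defined, that the disjoint-union splitting and subsequent cancellation are exact with no double counting, and that $C \mapsto T$ lands injectively in $\mathrm{Tree}(\vec{\Gamma})$. The analytic input (cited from \cite{Li:2011mi}) is minimal; essentially all of the care lies in the cut/spanning-tree identities and in treating the boundary cases $t(e)=n$ or $h(e)=n$ uniformly through the empty-cut convention.
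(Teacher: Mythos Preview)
Your proposal is correct. The paper itself does not supply a proof of this proposition---it simply records the statement with a pointer to appendix~B of \cite{Li:2011mi}---so there is nothing in the text to compare against; your argument (substitute Corollary~\ref{det of laplacian} into the definition, split each cut family according to the side of the remaining endpoint of $e$, cancel the common piece, then inject each surviving cut into $\mathrm{Tree}(\vec{\Gamma})$ via $C\mapsto (\vec{\Gamma}_1\setminus C)\cup\{e\}$) is exactly the standard route and goes through without obstruction.
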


\begin{rem}
  If we view graphs as discrete spaces, the incidence matrix can be viewed as
  the de Rham differential. Then $d^{-1}_{\vec{\Gamma}}(t)_{ei}$ can be viewed as
  the Green's function of the de Rham differential on a graph. This might explain
 the importance of $d^{-1}_{\vec{\Gamma}}(t)_{ei}$.
\end{rem}

%Finally, we introduce the concept of Laman graphs.
%
%\begin{defn}
%  \label{Laman graph1}Given a connected directed graph $\vec{\Gamma}$, and a nonnegative integer
%  $d$, we call $\vec{\Gamma}$ a $d$ dimensional Laman graph, if the following conditions hold:
%  \begin{enumerate}
%    \item For any subgraph $\vec{\Gamma}'$ such that $| \Gamma'_0 | \geqslant 2$, we
%    have the following inequality:
%    \begin{equation}
%      d | \Gamma'_0 | \geqslant (d - 1) | \Gamma_1' | + d +
%      1. \label{Laman condition1}
%    \end{equation}
%    \item The following equality holds:
%    \begin{equation}
%      d | \Gamma_0 | = (d - 1) | \Gamma_1 | + d + 1.
%      \label{Laman condition2}
%    \end{equation}
%  \end{enumerate}
%  We use the notation $\mathrm{Laman} (\vec{\Gamma})$ to denote all the Laman
%  subgraphs of $\vec{\Gamma}$.
%\end{defn}
%
%\begin{rem}
%  When $d = 2$, the concept of Laman graph originate from Laman's
%  characterization of generic rigidity of graphs embeded in two dimensional
%  real linear space (see {\cite{Laman1970OnGA}}). Our definition appears in
%  {\cite{budzik2023feynman}}.
%\end{rem}

\section{Compactification of Schwinger spaces}\label{Schwinger spaces}

In this appendix, we review the construction of a compactification of
Schwinger space in {\cite{wang2024feynman,Wang:2024tjf}}.

\begin{defn}
  Given a directed graph $\vec{\Gamma}$, and $L > 0$, the {{Schwinger
  space}} is defined by $(0, L]^{\vec{\Gamma}_1 }$. The orientation is given by
  the following formula:
  \[ \int_{(0, L]^{\vec{\Gamma}_1}} \prod_{e \in \vec{\Gamma}_1} d t_e = L^{| \Gamma_1
     |} . \]
\end{defn}

Assume $\vec{\Gamma}$ is a connected directed graph, $L > 0$ is a positive real
number. Let $S \subset \vec{\Gamma}_1$ be a subset of $\vec{\Gamma}_1$, we define the
following submanifold with corners of Schwinger space:
\[ \Delta_S = \left\{ (t_1, t_2, \ldots, t_{| \Gamma_1 |}) \in [0, L]^{\vec{\Gamma}_1} |  t_e = 0 \quad \text{if } e \in S \right\} . \]
The compactification of Schwinger space is obtained by iterated real blow ups
of $[0, L]^{\vec{\Gamma}_1}$ along $\Delta_S$ for all $S \subset \vec{\Gamma}_1$ in
a certain order (see {\cite{Ammann2019ACO,epub47792}}). To avoid getting into
technical details of real blow ups of manifolds with corners, we will use
another simpler definition. Instead, we present a typical example of real blow
up, which will be helpful to understand our construction:
\begin{exa}
  Let $S = \{ 1, 2, \ldots, k \} \subset \vec{\Gamma}_1$, the real blow up of $[0,
  + \infty)^{\vec{\Gamma}_1}$ along $\Delta_S$ is the following manifold (with
  corners):
  \[ [[0, + \infty)^{\vec{\Gamma}_1 } : \Delta_S] \define \left\{ (\rho, \xi_1,
     \ldots, \xi_k, t_{k + 1}, \ldots t_{| \Gamma_1 |}) \in [0, + \infty)^{|
     \Gamma_1 | + 1} \left| \sum_{i = 1}^k \xi_i^2 = 1 \right. \right\} . \]
  We have a natural map from $[[0, + \infty)^{\vec{\Gamma}_1 } : \Delta_S]$ to
  $[0, + \infty)^{\vec{\Gamma}_1 }$:
  \[ (\rho, \xi_1, \ldots, \xi_k, t_{k + 1}, \ldots t_{| \Gamma_1 |})
     \rightarrow (t_1 = \rho \xi_1, \ldots, t_k = \rho \xi_k, t_{k + 1},
     \ldots, t_{| \Gamma_1 |}) . \]
  We also have a natural inclusion map from $(0, + \infty)^{\vec{\Gamma}_1 }$ to
  $[[0, + \infty)^{\vec{\Gamma}_1 } : \Delta_S]$:
  \begin{eqnarray*}
    &  & (t_1, \ldots, t_{| \Gamma_1 |})\\
    & \rightarrow & \left( \rho = \sqrt{\sum_{i = 1}^k t_i^2}, \xi_1 =
    \frac{t_1}{\sqrt{\sum_{i = 1}^k t_i^2}}, \ldots, \xi_k =
    \frac{t_k}{\sqrt{\sum_{i = 1}^k t_i^2}}, t_{k + 1}, \ldots, t_{| \Gamma_1
    |} \right) .
  \end{eqnarray*}
  For us, the most important property is that $\frac{t_i}{\sqrt{\sum_{i = 1}^k
  t_i^2}}$ can be extended to a smooth function $\xi_i$ on $[[0, + \infty)^{\vec{\Gamma}_1 } : \Delta_S]$.
\end{exa}

Let's consider the following natural inclusion map:
\[ i : (0, + \infty)^{\vec{\Gamma}_1 } \rightarrow \prod_{S \subset \vec{\Gamma}_1}
   [[0, + \infty)^{\vec{\Gamma}_1 } : \Delta_S] \]
\begin{defn}
  We call the closure of the image of $(0, L]^{\vec{\Gamma}_1 }$ under $i$ the
  {{compactified Schwinger space}} of $\vec{\Gamma}$. We denote it by
  $\widetilde{[0, L]}^{\vec{\Gamma}_1 }$.
\end{defn}

\begin{rem}
  Sometimes, by abuse of terminology, we will also call the closure of the image
  of $(0, + \infty)^{\vec{\Gamma}_1 }$ under $i$ the compactified Schwinger
  space, although it is not compact. We will use $\widetilde{[0, + \infty)}^{\vec{\Gamma}_1}$ to denote it.
\end{rem}

\begin{prop}[\cite{Ammann2019ACO}]
  $\widetilde{[0, L]}^{\vec{\Gamma}_1}$ is a compact manifold with corners.
\end{prop}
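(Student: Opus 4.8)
The plan is to treat the two assertions — compactness and the manifold-with-corners structure — separately, reducing the second to an explicit local model. Throughout I write $n=|\Gamma_1|$ and regard $(0,L]^{\vec{\Gamma}_1}$ as a subset of the cube $[0,L]^n$, which is a manifold with corners whose closed boundary faces are exactly the coordinate strata $\Delta_S=\{t_e=0: e\in S\}$. Recall that $\widetilde{[0,L]}^{\vec{\Gamma}_1}$ is by definition the closure of the image of $(0,L]^{\vec{\Gamma}_1}$ under the embedding $i$ into the finite product $\prod_{S\subset\vec{\Gamma}_1}[[0,+\infty)^{\vec{\Gamma}_1}:\Delta_S]$.

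Compactness is the easy half. A point $t=(t_e)\in(0,L]^n$ maps under $i$, in the $S$-factor, to the tuple with radial coordinate $\rho_S=(\sum_{e\in S}t_e^2)^{1/2}\le\sqrt{|S|}\,L$, angular coordinates $\xi_e=t_e/\rho_S$ lying in the (compact) positive octant of the unit sphere in $\mathbb{R}^S$, and surviving coordinates $t_e\le L$ for $e\notin S$. Hence the image of $(0,L]^n$ lies in the product of these closed bounded regions; as this is a \emph{finite} product of compact sets it is compact, and $\widetilde{[0,L]}^{\vec{\Gamma}_1}$, being a closed subset of it, is compact as well.

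The manifold-with-corners structure is the substantive part, and is where the real work lies. The strategy is to localize: near any point of $[0,L]^n$ exactly the coordinates indexed by some subset $S_0$ vanish, and in a neighborhood only the blow-ups along $\Delta_S$ with $S\subseteq S_0$ affect the geometry, so it suffices to analyze the model corner $[0,+\infty)^{S_0}$ together with its simultaneous blow-up along all $\Delta_S$, $S\subseteq S_0$. I would then produce charts of generalized polar type on the closure, indexed by complete flags $\emptyset\subsetneq S_1\subsetneq\cdots\subsetneq S_0$: on each such chart one uses the successive radial ratios $\rho_{S_1},\,\rho_{S_2}/\rho_{S_1},\dots$ together with the angular variables $\xi_e$ as coordinates valued in $[0,+\infty)^a\times\mathbb{R}^b$. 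The points to check are that these charts cover the closure and that the transition maps between overlapping flags are smooth, which rests on the fact — recorded in the example preceding the definition — that each $\xi_e=t_e/\rho_S$ extends smoothly across the real blow-up along $\Delta_S$.

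The main obstacle, and the reason the statement is attributed to \cite{Ammann2019ACO}, is to verify that this simultaneous construction (closure of the image in the product over all $S$) coincides with the iterated real blow-up performed in the order dictated by inclusion — deepest strata $\Delta_S$, those with $|S|$ largest, blown up first — and that this iterated blow-up remains within the category of manifolds with corners at every stage. This requires the coordinate strata $\{\Delta_S\}$ to form a lattice under inclusion with clean, normal-crossing intersections, so that the blow-ups commute appropriately and introduce no singularities; granting the general blow-up formalism of \emph{loc.\ cit.} for manifolds with corners, the flag charts described above assemble into a smooth atlas and, together with the compactness already established, the proof concludes.
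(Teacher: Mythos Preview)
The paper does not actually prove this proposition: it is stated with a citation to \cite{Ammann2019ACO} and no argument is given in the text; the next sentence simply moves on to discuss the extension properties of $M_{\vec\Gamma}(t)^{-1}$ and $d^{-1}_{\vec\Gamma}(t)$. So there is no paper proof to compare your attempt against.

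That said, your proposal is a reasonable outline of what such a proof would involve, and goes well beyond what the paper offers. Your compactness argument is correct and self-contained. Your discussion of the corner structure correctly identifies the key mechanism --- flag-indexed polar charts and the smooth extension of the ratios $t_e/\rho_S$ --- and you are honest that the real content (commutation of the iterated blow-ups, identification of the closure-in-the-product with the iterated construction) is exactly what is deferred to \cite{Ammann2019ACO}. If anything, your write-up makes explicit the structure that the paper leaves entirely implicit in the citation.
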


The matrices $M_{\vec{\Gamma}} (t)^{- 1}_{i j}$ and $d^{-1}_{\vec{\Gamma}}(t)_{ei}$
defined in appendix \ref{graph theory} admit extensions to matrix-valued smooth functions on
$\widetilde{[0, + \infty)^{| \Gamma_1 |}}$:

\begin{lem}[\cite{wang2024feynman}]
  \label{extended functions}Given a connected graph $\vec{\Gamma}$, the following functions can be extended to smooth functions on
  $\widetilde{[0, + \infty)}^{ \vec{\Gamma}_1 }$:
  \begin{enumerate}
    \item $M_{\vec{\Gamma}} (t)^{- 1}_{i j}$ \text{for} $1 \leqslant i, j \leqslant |
    \Gamma_0 | - 1$.

    \item $d^{-1}_{\vec{\Gamma}}(t)_{ei}$ \text{for} $e \in \Gamma_1, \text{ } 1
    \leqslant i \leqslant | \Gamma_0 | - 1$.
  \end{enumerate}
\end{lem}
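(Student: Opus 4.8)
The plan is to reduce both statements to a single structural fact about the Kirchhoff polynomial and then to show that the iterated real blow-up defining $\widetilde{[0,+\infty)}^{\vec{\Gamma}_1}$ resolves it into a monomial times a nowhere-vanishing smooth function. Write $\Psi_{\vec{\Gamma}}(t)=\sum_{T\in \mathrm{Tree}(\vec{\Gamma})}\prod_{e\notin T}t_e$, so that by Kirchhoff's theorem $\det M_{\vec{\Gamma}}(t)=\Psi_{\vec{\Gamma}}(t)/\prod_{e}t_e$. By Corollary \ref{det of laplacian} and Proposition \ref{boundness} both families are ratios $N/\Psi_{\vec{\Gamma}}$ in which $N$ is a polynomial with non-negative (indeed $\{0,1\}$) coefficients, obtained as a sum over cuts; these ratios are homogeneous ($M_{\vec{\Gamma}}(t)^{-1}_{ij}$ of degree $1$, $d^{-1}_{\vec{\Gamma}}(t)_{ei}$ of degree $0$) and are manifestly smooth on the open stratum $(0,+\infty)^{\vec{\Gamma}_1}$, where $\Psi_{\vec{\Gamma}}>0$. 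The content of the lemma is that they extend across every boundary face introduced by the blow-up.

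First I would analyze a single blow-up along $\Delta_S$ for a subset $S\subset\vec{\Gamma}_1$, using the coordinates $t_e=\rho\,\xi_e$ ($e\in S$, $\sum_{e\in S}\xi_e^2=1$, $\xi_e\ge 0$) and $t_e$ unchanged for $e\notin S$, as in the example preceding the lemma. For each spanning tree $T$ the monomial $\prod_{e\notin T}t_e$ acquires $\rho$-degree $|S\setminus T|$, whose minimum over $T$ equals the cycle rank $r_S$ of the subgraph $(\vec{\Gamma}_0,S)$. Thus $\Psi_{\vec{\Gamma}}=\rho^{r_S}\widetilde{\Psi}$ with $\widetilde{\Psi}$ smooth and $\widetilde{\Psi}|_{\rho=0}=\sum_{T:\,|S\setminus T|=r_S}\prod_{e\in S\setminus T}\xi_e\prod_{e\notin T,\,e\notin S}t_e$ again a $\{0,1\}$-coefficient polynomial. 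The same factorization applies to each numerator $N$; boundedness of both families as $\rho\to 0$ — which follows from Proposition \ref{boundness} for $d^{-1}_{\vec{\Gamma}}(t)_{ei}$, and from the effective-resistance interpretation of $M_{\vec{\Gamma}}(t)^{-1}_{ij}$ (where sending some $1/t_e$ to infinity only contracts edges) — forces $N=O(\rho^{r_S})$, so the factor $\rho^{r_S}$ cancels and the ratio stays finite along $\rho=0$.

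The key step, and the main obstacle, is the strict positivity of $\widetilde{\Psi}|_{\rho=0}$ on each exceptional face: one must show that the family of $\rho$-minimizing trees is never annihilated by the vanishing of some angular coordinates. This is a combinatorial positivity property of the Kirchhoff polynomial, and it is exactly what the \emph{iterated} blow-up along \emph{all} the $\Delta_S$ is designed to achieve: in a chart adapted to a maximal nested chain $S_1\subsetneq\cdots\subsetneq S_k$ of subsets, every $t_e$ becomes a monomial in the radial coordinates times a nowhere-vanishing angular factor, and $\Psi_{\vec{\Gamma}}$ becomes (radial monomial) $\times$ (nowhere-vanishing smooth function). Establishing this ``monomialization'' — equivalently, that $\widetilde{[0,+\infty)}^{\vec{\Gamma}_1}$ is a resolution on which the Symanzik-type polynomial $\Psi_{\vec{\Gamma}}$ is locally monomial-times-unit — is where the real work lies, and where I would lean on the sector-decomposition / wonderful-compactification analysis of \cite{wang2024feynman,Wang:2024tjf}.

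Finally I would globalize: in every coordinate chart adapted to a nested chain, each function of the lemma is a ratio $(\text{monomial}\times\text{smooth})/(\text{monomial}\times\text{positive smooth})$ with matching radial monomials, hence extends smoothly across the boundary of $\widetilde{[0,+\infty)}^{\vec{\Gamma}_1}$. The bound $|d^{-1}_{\vec{\Gamma}}(t)_{ei}|\le 2$ of Proposition \ref{boundness} is then recovered as the $C^0$ shadow of this stronger statement, and the smoothness of $d^{-1}_{\vec{\Gamma}}(t)_{ei}$ is precisely what makes the operator $d\hat{y}_e^s$, and hence the whole Feynman graph integrand of Proposition \ref{explicit formula}, smooth on the compactified Schwinger space.
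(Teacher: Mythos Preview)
The paper does not actually prove this lemma: it is stated with the attribution \cite{wang2024feynman} and no argument is given in the text. So there is no ``paper's own proof'' to compare against; the paper simply imports the result.

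Your sketch is a faithful outline of how such a proof goes. You correctly express both families as $N/\Psi_{\vec{\Gamma}}$ with $\Psi_{\vec{\Gamma}}$ the Kirchhoff polynomial, you identify the local factorization $\Psi_{\vec{\Gamma}}=\rho^{r_S}\widetilde{\Psi}$ in each blow-up chart, and you isolate the genuine obstruction: positivity of $\widetilde{\Psi}$ on the exceptional face, equivalently that the iterated blow-up along all $\Delta_S$ monomializes $\Psi_{\vec{\Gamma}}$. You are also honest that this monomialization is where the work lies and that you would invoke \cite{wang2024feynman,Wang:2024tjf} for it---which is exactly what the present paper does. Two minor corrections: the numerator of $d^{-1}_{\vec{\Gamma}}(t)_{ei}$ in Proposition~\ref{boundness} has coefficients in $\{-1,0,1\}$, not $\{0,1\}$; and your ``boundedness forces $N=O(\rho^{r_S})$'' step is valid because $N$ is polynomial in $\rho$ in each chart, but by itself it only yields continuous extension along a \emph{single} face---smoothness across the full corner stratification really does require the nested-chain monomialization you flag as the key step. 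In short, your proposal is not a self-contained proof, but neither is the paper's treatment; both defer to the same reference, and your outline accurately reconstructs the architecture of that argument.
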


Let $\bar{S}^{+}((0,+\infty)^{\vec{\Gamma}_{1}})$ be the closure of
$$
    S^{+}((0,+\infty)^{\vec{\Gamma}_{1}})=\{(t_{e_{1}},\dots,t_{e_{|\Gamma_{1}|}})\in(0,+\infty)^{\vec{\Gamma}_{1}}|\sum_{i=1}^{|\Gamma_{1}|}t_{i}^{2}=1\}
$$in $\widetilde{[0,+\infty)}^{\vec{\Gamma}_{1}}$, the following proposition gives a description of the boundary of $\bar{S}^{+}((0,+\infty)^{\vec{\Gamma}_{1}})$:

\begin{prop}
  \label{boundary description}Given a connected graph $\vec{\Gamma}$, the boundary of
  $\bar{S}^{+}((0,+\infty)^{\vec{\Gamma}_{1}})$ has the following decomposition:
  \[ \partial \bar{S}^{+}((0,+\infty)^{\vec{\Gamma}_{1}}) = \bigcup_{\vec{\Gamma}'
       \subset \vec{\Gamma}} \mathrm{sgn}(\sigma_{\vec{\Gamma}'_{1}\subset \vec{\Gamma}_{1}})
       \bar{S}^{+}((0,+\infty)^{\vec{\Gamma}'_{1}}) \times
       \bar{S}^{+}((0,+\infty)^{\vec{\Gamma}_{1}- \vec{\Gamma}'_{1}}). \]
\end{prop}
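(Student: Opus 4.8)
The plan is to analyze $\partial\bar S^{+}((0,+\infty)^{\vec\Gamma_1})$ one boundary hypersurface at a time using the explicit real-blow-up coordinates recorded in the Example of this appendix, and then to reconcile the induced boundary orientation with the product orientation on the right-hand side. Although $S^{+}((0,+\infty)^{\vec\Gamma_1})$ lies in the open orthant, its closure in the uncompactified octant $\{\sum_e t_e^2=1,\ t_e\ge 0\}$ meets every coordinate stratum; the purpose of the iterated blow-up along the corners $\Delta_{S}$ is precisely to separate the directions along which the parameters $t_e$, $e\in S$, tend to zero, replacing a degenerate limit by the front face $H_{S}$. Consequently each boundary hypersurface of $\bar S^{+}$ in $\widetilde{[0,+\infty)}^{\vec\Gamma_1}$ is an intersection $H_{S}\cap\bar S^{+}$ for a unique nonempty proper subset $S\subset\vec\Gamma_1$, and it suffices to identify this intersection as a set and then to fix its orientation.

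First I would compute the intersection in the blow-up chart. On $H_S$ one has coordinates $(\rho,(\xi_e)_{e\in S},(t_{e'})_{e'\notin S})$ with $\sum_{e\in S}\xi_e^{2}=1$ and $t_e=\rho\xi_e$ for $e\in S$. The sphere equation $\sum_{e\in\vec\Gamma_1}t_e^{2}=1$ becomes $\rho^{2}+\sum_{e'\notin S}t_{e'}^{2}=1$, so the front face $\{\rho=0\}$ of $\bar S^{+}$ is cut out by the two independent conditions $\sum_{e\in S}\xi_e^{2}=1$ and $\sum_{e'\notin S}t_{e'}^{2}=1$. Taking $S=\vec\Gamma'_1$, this exhibits $H_S\cap\bar S^{+}$ as $\bar S^{+}((0,+\infty)^{\vec\Gamma'_1})\times\bar S^{+}((0,+\infty)^{\vec\Gamma_1-\vec\Gamma'_1})$, the $\xi$ being the parameters of $\vec\Gamma'$ on their unit sphere and the $t_{e'}$ those of the complement; moreover the remaining blow-ups along $\Delta_{S'}$ with $S'\subset S$ (resp. $S'\subset S^{c}$) restrict on the front face to exactly the iterated blow-ups defining the two compactified sub-spheres, so the product is already compactified. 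Exhaustiveness and non-overlap in top dimension follow because any point of $\partial\bar S^{+}$ lies in the interior of the single face indexed by the set of edges whose parameters degenerate at the leading rate.

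The main work, and the expected obstacle, is the orientation factor $\mathrm{sgn}(\sigma_{\vec\Gamma'_1\subset\vec\Gamma_1})$. Here I would orient $\bar S^{+}$ by the ambient form $\bigwedge_{e\in\vec\Gamma_1}dt_e$ together with the outward radial normal, and compute the induced orientation of $\{\rho=0\}$ as $\iota_{-\partial_\rho}$ of the ambient form. Using the standard spherical factorization $\bigwedge_{e\in S}dt_e=\rho^{|S|-1}\,d\rho\wedge d\sigma_{S}$ and gathering the block $S$ to the front of the ordered edge set produces exactly the shuffle sign $\mathrm{sgn}(\sigma_{\vec\Gamma'_1\subset\vec\Gamma_1})$ together with the positive factor $\rho^{|S|-1}$; the complementary factor $\bigwedge_{e'\notin S}dt_{e'}$ then restricts, again radially, to the complement sphere's orientation. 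Matching the resulting form on $\{\rho=0\}$ against the product of the two radial-induced sphere orientations yields the claimed sign. The delicate point is to carry this induced-orientation computation consistently through the \emph{iterated} blow-up, i.e.\ to verify that once the two sub-spheres are oriented by their own radial normals no further signs arise from the nesting; I would settle this by induction on $|\vec\Gamma_1|$, peeling off a single degeneration $\Delta_S$ and invoking the $|\vec\Gamma_1|-1$ statement on each factor.
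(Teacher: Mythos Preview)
Your proposal is correct and in fact provides far more detail than the paper does: the paper's entire proof is the single sentence ``This follows from the construction of $\widetilde{[0,+\infty)}^{\vec{\Gamma}_{1}}$.'' Your explicit computation in the front-face coordinates $(\rho,(\xi_e)_{e\in S},(t_{e'})_{e'\notin S})$, showing that the sphere equation degenerates on $\{\rho=0\}$ to the pair of independent constraints $\sum_{e\in S}\xi_e^{2}=1$ and $\sum_{e'\notin S}t_{e'}^{2}=1$, is exactly the content the paper is gesturing at, and your treatment of the orientation via the shuffle sign is the natural way to make the $\mathrm{sgn}(\sigma_{\vec\Gamma'_1\subset\vec\Gamma_1})$ factor precise.
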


\begin{proof}
  This follows from the construction of $\widetilde{[0,+\infty)}^{\vec{\Gamma}_{1}}$.
\end{proof}
\section{The coefficient calculation}
\label{app:ViraCoefficient}
\label{app:CentralCoefficientCalculation}

We record the finite coefficient calculation used in Proposition~\ref{PureVira}
and Theorem~\ref{FullCentral}. The Wick-theoretic reduction has already been
explained in the proofs of Propositions~\ref{PureAffine} and~\ref{PureVira};
here we only extract the scalar one-loop coefficients from
\eqref{OneloopConst}--\eqref{Oneloop3order}.

Throughout this appendix we work in dimension \(2\). For
\[
  T_i=\sum_{\sss=1}^{2}T_i^{\sss}(z)\partial_{z^{\scriptstyle\sss}},
\]
set
\[
  D_i\define\mathrm{div}(T_i)
  =
  \sum_{\sss=1}^{2}
  \partial_{z^{\scriptstyle\sss}}T_i^{\sss},
  \qquad
  \eta_i\define\partial D_i .
\]
We also use
\[
  \langle \partial T_i,\partial T_j\rangle
  \define
  \sum_{\rrr,\sss=1}^{2}
  \partial(\partial_{z^{\scriptstyle\rrr}}T_i^{\sss})
  \wedge
  \partial(\partial_{z^{\scriptstyle\sss}}T_j^{\rrr}).
\]
Thus \(\eta_i\) is a one-form and
\(\langle \partial T_i,\partial T_j\rangle\) is a two-form.

We use the following diagonal \(\mathcal D\)-module convention. A monomial in
\(\lambda_1,\lambda_2\) multiplying \(d\mathbf z_o^{\blacklozenge}\) in
\eqref{OneloopConst}--\eqref{Oneloop3order} acts on the coefficient functions by
\[
  \lambda_i^{\sss}\rightsquigarrow
  \partial_{z_i^{\scriptstyle\sss}},
  \qquad i=1,2,\quad \sss=1,2.
\]
After applying these derivatives, we restrict to the diagonal
\(z_o=z_1=z_2=z\) and apply \(\mathrm{Res}\). We suppress the common shifted
top form \(d\mathbf z_o^{\blacklozenge}\). Since \(\mathrm{Res}\) kills total
derivatives, we use
\[
  \mathrm{Res}\bigl(\partial_{z^{\scriptstyle\sss}}H\bigr)=0
\]
to move derivatives between factors whenever needed.

For each input type, we sum the two oriented cyclic contractions
\[
  (o\to 1\to 2\to o),
  \qquad
  (o\to 2\to 1\to o),
\]
and then rewrite the result as an alternating cochain on the ordered exterior
input.

For three affine inputs \(a_i=x_i\otimes f_i\), the zero-derivative coefficient
\eqref{OneloopConst} gives
\[
  C_{AAA}(a_0\wedge a_1\wedge a_2)
  =
  \mathrm{ch}^{V}_{3}(a_0\wedge a_1\wedge a_2),
\]
as in Proposition~\ref{PureAffine}.

For one vector field and two affine inputs, the one-derivative coefficient
\eqref{Oneloop1order} gives
\[
\begin{aligned}
&C_{TAA}
\left(
T\wedge (x_0\otimes f_0)\wedge (x_1\otimes f_1)
\right)                                                     \\
&\qquad =
\frac12\,\mathrm{tr}_V(x_0x_1)\,
\mathrm{Res}
\left(
\mathrm{div}(T)\,
\partial f_0\wedge\partial f_1
\right)                                                     \\
&\qquad =
\frac12\,
\mathrm{ch}^{\mathcal T}_{1}\mathrm{ch}^{V}_{2}
\left(
T\wedge (x_0\otimes f_0)\wedge (x_1\otimes f_1)
\right).
\end{aligned}
\]

For one affine input and two vector fields, the two-derivative coefficient
\eqref{Oneloop2order} gives
\[
\begin{aligned}
&C_{ATT}
\left(
(x\otimes f)\wedge T_1\wedge T_2
\right)                                                     \\
&\qquad =
\mathrm{tr}_V(x)\,
\mathrm{Res}
\left(
\frac14\,f\,\eta_1\wedge\eta_2
-
\frac1{12}\,f\,
\langle \partial T_1,\partial T_2\rangle
\right)                                                     \\
&\qquad =
\left(
\frac18\,\mathrm{ch}^{V}_{1}(\mathrm{ch}^{\mathcal T}_{1})^{2}
-
\frac1{12}\,\mathrm{ch}^{V}_{1}\mathrm{ch}^{\mathcal T}_{2}
\right)
\left(
(x\otimes f)\wedge T_1\wedge T_2
\right).
\end{aligned}
\]
Here we use the polarized convention
\[
\mathrm{ch}^{V}_{1}(\mathrm{ch}^{\mathcal T}_{1})^{2}
\left((x\otimes f)\wedge T_1\wedge T_2\right)
=
2\,\mathrm{tr}_V(x)\,
\mathrm{Res}\left(f\,\eta_1\wedge\eta_2\right).
\]

Finally, for three vector fields, the three-derivative coefficient
\eqref{Oneloop3order} gives
\[
\begin{aligned}
C_{TTT}(T_0\wedge T_1\wedge T_2)
&=
r\left[
\frac1{24}
\sum_{\mathrm{Cyc}}
\mathrm{Res}
\left(
D_i\,\eta_j\wedge\eta_k
\right)
-
\frac1{24}
\sum_{\mathrm{Cyc}}
\mathrm{Res}
\left(
D_i\,
\langle \partial T_j,\partial T_k\rangle
\right)
\right]                                                    \\
&=
r\left(
\frac1{48}(\mathrm{ch}^{\mathcal T}_{1})^{3}
-
\frac1{24}\mathrm{ch}^{\mathcal T}_{1}\mathrm{ch}^{\mathcal T}_{2}
\right)
(T_0\wedge T_1\wedge T_2).
\end{aligned}
\]
Here
\[
  \sum_{\mathrm{Cyc}}
\]
means the cyclic sum over
\[
  (i,j,k)=(0,1,2),(1,2,0),(2,0,1),
\]
and \(r=\dim V\) is the trace over the ghost index.

Combining the four input types gives the arity-three central cochain
\[
\begin{aligned}
C^{\mathrm{vir}\ltimes\mathrm{aff}}_{3}
&=
\mathrm{ch}^{V}_{3}
+
\frac12\,\mathrm{ch}^{\mathcal T}_{1}\mathrm{ch}^{V}_{2}      \\
&\quad+
\left(
\frac18\,\mathrm{ch}^{V}_{1}(\mathrm{ch}^{\mathcal T}_{1})^{2}
-
\frac1{12}\,\mathrm{ch}^{V}_{1}\mathrm{ch}^{\mathcal T}_{2}
\right)                                                     \\
&\quad+
r\left(
\frac1{48}(\mathrm{ch}^{\mathcal T}_{1})^{3}
-
\frac1{24}\mathrm{ch}^{\mathcal T}_{1}\mathrm{ch}^{\mathcal T}_{2}
\right).
\end{aligned}
\]
This is the coefficient identity used in Theorem~\ref{FullCentral}.

\printbibliography

\end{document}